\title[Fundamental group of partial compactification of arrangements]{Fundamental groups of partial compactifications of arrangements and homology planes}
\author{Rodolfo Aguilar Aguilar}\thanks{Partially supported by ANR project Hodgefun ANR-16-CE40-0011-01 and by the National program ``Young scientists and Postdoctoral students", Funded by the  Ministry of Education and Science of Bulgaria}
\date{}
\tikzset{
    arc arrow/.style args={%
    to pos #1 with length #2}{
    decoration={
        markings,
         mark=at position 0 with {\pgfextra{%
         \pgfmathsetmacro{\tmpArrowTime}{#2/(\pgfdecoratedpathlength)}
         \xdef\tmpArrowTime{\tmpArrowTime}}},
        mark=at position {#1-\tmpArrowTime} with {\coordinate(@1);},
        mark=at position {#1-2*\tmpArrowTime/3} with {\coordinate(@2);},
        mark=at position {#1-\tmpArrowTime/3} with {\coordinate(@3);},
        mark=at position {#1} with {\coordinate(@4);
        \draw[-{Stealth[length=#2,bend]}]       
        (@1) .. controls (@2) and (@3) .. (@4);},
        },
     postaction=decorate,
     }
}
\theoremstyle{plain} 
\newtheorem{thm}{Theorem}[section]
\newtheorem{lem}[thm]{Lemma}
\newtheorem{prop}[thm]{Proposition}
\newtheorem{cor}[thm]{Corollary}
\newtheorem{ques}{Question}
\theoremstyle{definition}
\newtheorem{defn}{Definition}[section]
\newtheorem{exmp}{Example}[section]
\theoremstyle{remark}
\newtheorem{rem}[thm]{Remark}
\newtheoremstyle{case}{}{}{}{}{}{:}{ }{}
\theoremstyle{case}
\DeclareMathOperator{\Bl}{Bl}
\DeclareMathOperator{\Hom}{Hom}
\DeclareMathOperator{\Sing}{Sing}
\DeclareMathOperator{\Ceva}{Ceva}
\newcommand{\abs}[1]{\left\vert#1\right\vert}
\newcommand{\A}{\mathscr{A}}
\begin{document}
\begin{abstract}
Following our previous work, we develop an algorithm to compute a presentation of the fundamental group of certain partial compactifications of the complement of a complex arrangement of lines in the projective plane. It applies, in particular, to homology planes arising from arrangements of lines. This permits to determine the infinitude of the fundamental group of	 homology planes of log-general type that may be the first examples of this type.
\end{abstract}

\maketitle
\section{Introduction}
The computation of the fundamental group of smooth quasi-projective complex varieties is in general a difficult task. There exist, however, certain classes of these type of varieties where specific methods have been developed in order to obtain at least, a presentation for these groups. For example, for the complement $\mathbb{P}^2\setminus \A$ of an arrangement of lines $\A$ in $\mathbb{P}^2$, a presentation for $\pi_1(\mathbb{P}^2\setminus \A)$ is obtained in \cite{Randell}, \cite{arvola}, \cite{suciu}, \cite{florens}.

In \cite{aguilar2019fundamental}, by modifying the method in \cite{Randell}, we have developed a method to obtain a presentation for the fundamental group of certain partial compactifications of the complement $\mathbb{P}^2\setminus \A$ of an arrangement of lines $\A\subset \mathbb{P}^2$ under the hypothesis that the lines in $\A$ are defined by real linear forms.

The purpose of this note is to generalize our method in two directions:
\begin{itemize}
\item to admit a general arrangement $\A\subset \mathbb{P}^2$ defined by complex linear forms and 
\item to admit a more general class $M(\A,I,P)$ of partial compactifications of $\mathbb{P}^2\setminus \A$. See \ref{ss:partialCArrangements} for a precise definition.
\end{itemize}

In particular, some of these partial compactifications give rise to homology planes, see below. It also contains some partial compactifications constructed by Fowler in the study of rational homology disks in \cite{Fowler}, see also \cite{wahl2020complex,bartolo2021rational}.

We proceed in two different ways: firstly, following \cite{arvola} and \cite{suciu}, whose work generalize \cite{Randell} to complex arrangements, we define a wiring diagram $\mathcal{W}$ that encodes some over or under-crossing of the lines in $\A$ arising by the complex nature of the forms defining them. The graph $\mathcal{W}$ encodes enough information to obtain a presentation of $\pi_1(M(\A,I,P))$.

\begin{thm}\label{thm:Int1} A presentation for $\pi_1(M(\A,I,P))$ can be obtained from $\mathcal{W}$. The set of generators are in correspondence with the set of lines in $\A$ and the set of relations has two types of them: 
\begin{itemize}
\item those relations $R_p$ coming from a singular point $p$ of $\A$. These relations already appeared in a presentation of $\pi_1(M(\A))$ and 
\item for each element $\iota$ either in  $I$ or in $P$, a relation $R_\iota$ which is a product of conjugates of some generators depending on $\iota$. 
\end{itemize}
\end{thm}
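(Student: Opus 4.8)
The plan is to adapt the Randell/Arvola–Suciu braid-monodromy argument to the partial compactification $M(\A,I,P)$, treating it as $\mathbb{P}^2\setminus\A$ with certain strata glued back in. First I would recall the standard construction: choose a generic pencil of lines (a generic projection $\mathbb{P}^2\dashrightarrow\mathbb{P}^1$ from a point not on any line of $\A$ and off all singular points), so that $\mathbb{P}^2\setminus\A$ fibers over $\mathbb{C}\setminus\{\text{finitely many points}\}$ with fiber a punctured line. The wiring diagram $\mathcal{W}$ records, as the base parameter moves along a real path sweeping out $\mathbb{C}$, how the punctures (intersection points of $\A$ with the moving fiber) braid around one another, including the over/under data forced by the complex slopes. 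Applying the Zariski–van Kampen theorem to this fibration gives a presentation of $\pi_1(\mathbb{P}^2\setminus\A)$ with one generator $x_i$ per line and relations $R_p$ at each singular point $p$, obtained by conjugating the local Artin-type relations by the positive braid encoded in the segment of $\mathcal{W}$ that connects a fixed base fiber to a small loop around $p$.

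**Next** I would analyze what changes upon passing to $M(\A,I,P)$. By the definition in \ref{ss:partialCArrangements}, $M(\A,I,P)$ is obtained from $\mathbb{P}^2\setminus\A$ by adjoining, for each $\iota\in I$, an open piece of a line of $\A$ (removing some of its singular points), and for each $\iota\in P$, a singular point $P$ together with germs of the lines through it — equivalently, one performs the relevant blow-ups/blow-downs or simply takes an appropriate open subset of a modification. In either case the key homotopy fact is that adjoining such a stratum to a quasi-projective variety kills precisely the class of a meridian loop around that stratum: if $U\subset X$ is the complement of a smooth divisor component $D$ and $U\cup(D\setminus S)$ adds back $D$ minus a closed subset, then $\pi_1(U\cup(D\setminus S))=\pi_1(U)/\langle\!\langle \mu_D\rangle\!\rangle$ where $\mu_D$ is the meridian of $D$; and the meridian, expressed in the van Kampen generators, is exactly a word of the form described — a product of conjugates of the $x_i$'s, where the conjugating braids come from the portion of $\mathcal{W}$ joining the base fiber to a small loop encircling the adjoined stratum $\iota$. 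I would make this precise by a Mayer–Vietoris / van Kampen decomposition $M(\A,I,P)=(\mathbb{P}^2\setminus\A)\cup N$, with $N$ a regular neighborhood of the union of the added strata and the intersection $N\cap(\mathbb{P}^2\setminus\A)$ a union of punctured-disk or circle bundles over the strata, whose $\pi_1$ is generated by the relevant meridians; the Seifert–van Kampen theorem then yields the extra relator $R_\iota$ for each such $\iota$.

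**The main obstacle** is to give a uniform and correct description of the word $R_\iota$ from the combinatorics of $\mathcal{W}$ — that is, to pin down which conjugates of which generators appear, with the correct conjugating elements, in each of the several cases of $\iota$ (an adjoined open segment of a line, versus an adjoined singular point, versus whatever mixed strata the definition in \ref{ss:partialCArrangements} allows). This requires carefully tracking the braid monodromy along the chosen sweeping path from the base fiber up to each adjoined stratum, and in the $P$-case identifying the local fundamental group of the stratum's neighborhood complement and how its meridian classes map into $\pi_1(\mathbb{P}^2\setminus\A)$; the over/under-crossing data in $\mathcal{W}$ is exactly what is needed to resolve the ambiguity in these conjugating braids when the defining forms are complex. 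Once the meridian of each stratum is correctly read off $\mathcal{W}$, the presentation follows formally: start from the known presentation of $\pi_1(\mathbb{P}^2\setminus\A)$ (generators $\leftrightarrow$ lines, relations $R_p$), adjoin the relators $R_\iota$, and invoke Seifert–van Kampen to conclude that this presents $\pi_1(M(\A,I,P))$. I would also verify that no generators are lost or identified — i.e. that adding strata of the codimension allowed here never merges two lines' meridians except through the explicitly recorded relations — which is automatic since each added stratum has real codimension $\ge 2$ in $M(\A,I,P)$ and so only imposes relations, never removes generators.
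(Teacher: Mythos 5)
Your overall strategy is the same as the paper's: obtain the presentation of $\pi_1(M(\A))$ from the wiring diagram by sweeping a pencil and applying van Kampen fiberwise (this is Theorem \ref{thm:Arvola} in the text), then observe via Proposition \ref{prop:meridians} that adjoining the divisor components indexed by $I$ and the exceptional divisors over $P$ kills exactly the normal closure of their meridians, so that the presentation of $\pi_1(M(\A,I,P))$ is the old one plus one relator per adjoined component. Up to that point your outline is sound, including the remark that adding strata of complex codimension one only imposes relations and never removes generators.

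The gap is that the statement you are proving asserts more than ``each $R_\iota$ is the meridian of the adjoined component'': it asserts that $R_\iota$ can be read off from $\mathcal{W}$ as an explicit product of conjugates of the generators, and producing that word is the actual content of the theorem. You name this as ``the main obstacle'' and then assume it resolved (``once the meridian of each stratum is correctly read off $\mathcal{W}$, the presentation follows formally''), but you never carry out the computation, and it is not a routine consequence of braid-monodromy bookkeeping. In the paper this is done case by case: for an exceptional divisor $D_i$ over a point $p$ of multiplicity $\geq 3$ lying in the fiber over $t_\kappa$, one must show that $\lambda_m^{(\kappa)}\cdots\lambda_j^{(\kappa)}$, taken in the \emph{local} generating set $\Gamma^{(\kappa)}$ (not $\Gamma^{(1)}$), is a meridian of $D_i$; the proof (Lemma \ref{lem:ExcMerProduct}) decomposes each $\lambda_r^{(\kappa)}$ into a loop inside the boundary $\partial B_p$ of a small $4$-ball and a connecting path, and uses Mumford's presentation of $\pi_1(\partial U_i^*)$ together with $D_i\cdot D_i=-1$ to identify the product with the $S^1$-fiber of the boundary manifold, i.e.\ with a meridian. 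For $p\in P$ lying on $D_r\cap D_k$ with $D_k$ exceptional (case (\ref{en:Mer4})), one additionally needs the commutation $[\lambda_{{\sigma^{(\kappa)}}^{-1}(r)}^{(\kappa)},\lambda(D_k)]=1$ and a local-coordinate analysis of the new exceptional divisor's boundary manifold to conclude that $\lambda(D_p')=\lambda_{{\sigma^{(\kappa)}}^{-1}(r)}^{(\kappa)}\lambda(D_k)$ is a meridian (Lemma \ref{lem:doubleBlowUp}); nothing in your sketch produces this word or the commutation relation it relies on. Without these computations the relators $R_\iota$ remain unspecified, so the proposal establishes only the abstract quotient description, not the theorem as stated.
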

As in \cite{aguilar2019fundamental}, a main step in the proof of Theorem \ref{thm:Int1} consists in the explicit computation of an expression for the meridians around certain exceptional divisors, obtained by blowing-up $\A$ in some singular points, in terms of the generators. We also obtain in Theorem \ref{thm:OrbifoldPresentation}, similar to \textit{loc. cit.}, a presentation for the fundamental group of an orbifold whose moduli space $\Bl_{P_0}\mathbb{P}^2$ is a blow-up of $\mathbb{P}^2$ at some singular points $P_0$ of the arrangement $\A$ and the non-trivial isotropy groups lie in a divisor $D$ contained in the total transform of $\A$ in $\Bl_{P_0}\mathbb{P}^2$.   

Secondly, let $U$ denote a closed regular tubular neighborhood of $\A$ in $\mathbb{P}^2$. We call $\partial U$ the boundary manifold of $\A$. In \cite{florens}, a presentation for $\pi_1(M(\A))$ is obtained from a presentation of $\pi_1(\partial U)$ by studying the map $\pi_1(\partial U)\to \pi_1(M(\A))$ induced by the inclusion $\partial U\hookrightarrow M(\A)$.  For the homological version see \cite{guerville}. 

It turns out that their methods can also be applied to determine a presentation for the fundamental group of some partial compactifications $M(\A,I,P)$. However, in order to study the boundary manifolds $\partial U_D$ of strict transforms $D$ of $\A$ in some birational model of $\mathbb{P}^2$, we start from a different presentation for the boundary manifold $\partial U$ of $\A$ than that used in \cite{florens}.

Indeed, when $D=\sum D_i$ is a connected, simple normal crossing divisor such that $\pi_1(D)$ is trivial, Mumford gave a presentation for $\pi_1(\partial U_D)$ in \cite{mumfordtopology}. This, together with the graph-manifold structure in the sense of Waldhausen \cite{Waldhausen1967}, permitted Westlund to give a presentation of $\pi_1(\partial U)$ in \cite{westlund} (see also \cite{Cohen_2008}). Here, by a choice of a surface birational to $\mathbb{P}^2$ where the strict transform of $\A$ satisfies the hypothesis for the presentation of Mumford, we obtain the same presentation of Westlund. See Theorem \ref{thm:Westlund}. Following this construction, we are able to give a presentation for the fundamental group of a boundary manifold $\partial U_D$ of a divisor $D$ lying in a surface $\bar{X}$ obtained by successive blows-up of $\mathbb{P}^2$ such that $M(\A,I,P)=\bar{X}\setminus D$.

We obtain in Theorem \ref{thm:Guerville} a presentation for $\pi_1(M(\A))$ by studying the map $i_*:\pi_1(\partial U)\to \pi_1(M(\A))$. Moreover, as the construction for $\pi_1(\partial U)$ depends of several choices, we can make them in such a way that the image under $i$ of the meridians of the lines in $\A$ lying in $\partial U$, whose homotopy class are part of the generators of $\pi_1(\partial U)$, lie in the same homotopy class as the meridians constructed for Theorem \ref{thm:Int1}. From this, we do not only obtain that the presentation of Theorem \ref{thm:Guerville} and \ref{thm:Int1} are equivalent, but that the image of the set of relation in the presentation of $\pi_1(M(\A))$ coincides with the relations as in Theorem \ref{thm:Int1}. From this, we can obtain a presentation for partial compactifications $\pi_1(M(\A,I,P)$, see Theorem \ref{thm:BoundaryLAC}.

The presentation of Theorem \ref{thm:Int1} applies in particular for homology planes, affine smooth surfaces with trivial reduced integral homology, arising from arrangement of lines. To the knowledge of the author, no general algorithm for computing a presentation for their fundamental groups were known.

The arrangements of lines giving rise to homology planes were classified by T. Tom Dieck and T. Petri, see \cite{Dieck1989,tomDieck1991, weko_39408_1} . There exists one infinite family $L(1,n+1)$ and six arrangement $L(2),\ldots, L(7)$ of at most ten lines.

As in algebraic surfaces, the study of homology planes is usually divided by its log-Kodaira dimension: the only homology plane $X$ with $\bar{k}(X)=-\infty$ satisfies $X\cong \mathbb{C}^2$ \cite[Theorem 3.2]{fujita1979zariski}, there are no homology planes with $\bar{k}(X)=0$ and all the homology planes with $\bar{k}(X)=1$ can be obtained from $L(1,n+1)$, see \cite{miyanishiopen}.

For homology planes $X$ with $\bar{k}(X)=2$, Tom Dieck and Petri gave examples arising from $L(2)$ in \cite{tomdieck1993} as well as a general algorithm to obtain homology planes out of arrangement of lines. As the fundamental group $\pi_1(\mathbb{P}^2\setminus L(2))$ is abelian, all homology planes arising from $L(2)$ are contractible.  In \cite{AST_1993__217__251_0},  Zaidenberg gave a countable number of contractible homology planes arising from the arrangement $L(3)$. Here we can obtain the following result.

\begin{thm} There exists homology planes of log-general type with infinite fundamental group.
\end{thm}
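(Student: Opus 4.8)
\medskip
\noindent\emph{Strategy of proof.}
The plan is to realize an explicit homology plane $X=M(\A,I,P)$ of log-general type coming from one of the Tom Dieck--Petri arrangements, to compute a finite presentation of $\pi_1(X)$ by means of Theorem \ref{thm:Int1} (equivalently Theorem \ref{thm:BoundaryLAC}), and then to exhibit a surjection of $\pi_1(X)$ onto an infinite group.

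First I would fix an arrangement $\A\subset\mathbb P^2$ from the list $L(3),\dots,L(7)$ --- a case requiring the generalization developed in this paper, whether through a complex realization of $\A$ or through the more general class of partial compactifications $M(\A,I,P)$ introduced here --- together with a set $P$ of points of $\A$ to be blown up and a set $I$ of components to be added back, following Tom Dieck and Petri's general recipe \cite{tomdieck1993} for producing homology planes out of line arrangements. For such a choice $X=M(\A,I,P)=\bar X\setminus D$ one verifies $\widetilde H_*(X;\Z)=0$ by computing the intersection matrix of the components of $D$ in $\bar X$ and running the long exact sequence of the pair $(\bar X,X)$ together with Poincar\'e--Lefschetz duality; this is bookkeeping once $\A,I,P$ are pinned down and it is forced by the construction. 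One also checks $\bar k(X)=2$ by the standard criteria, verifying that the dual graph of $D$ and the position of the blown-up points rule out a $\mathbb C$- or $\mathbb C^{*}$-fibration on $X$ (recall that all homology planes with $\bar k\le 1$ arise from $L(1,n+1)$), so that $X$ is neither one of Zaidenberg's contractible $L(3)$ surfaces \cite{AST_1993__217__251_0} nor covered by the abelian case of $L(2)$.

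To see that $\pi_1(X)$ is infinite, note that its abelianization vanishes, so one needs a non-abelian --- hence perfect --- infinite quotient; the source of it is an orbifold fibration. A suitable pencil on $\bar X$ (induced, say, by a subpencil of lines of $\A$ through a common point, or by a pencil through the blown-up centers in $P$) restricts on $X$ to a fibration $X\to B$ onto a curve whose generic fiber $F$ is an open curve with $\bar k(F)=1$, so that $\bar k(X)=\bar k(F)+\bar k(B^{\mathrm{orb}})=2$ is consistent, and whose orbifold base $B^{\mathrm{orb}}$ is a hyperbolic $2$-orbifold; choosing the multiplicities of the multiple fibers pairwise coprime --- for instance so that $B^{\mathrm{orb}}$ is the $(2,3,7)$ triangle orbifold, whose fundamental group is infinite and perfect --- keeps $\chi^{\mathrm{orb}}(B^{\mathrm{orb}})<0$ without forcing any homology on $X$. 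By the orbifold fibration principle, and concretely by comparing the presentation of $\pi_1(X)$ produced by Theorem \ref{thm:Int1} with that of the orbifold fundamental group in Theorem \ref{thm:OrbifoldPresentation}, the map $\pi_1(X)\to\pi_1^{\mathrm{orb}}(B^{\mathrm{orb}})$ is surjective, whence $\pi_1(X)$ is infinite.

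The main obstacle is to meet all these constraints at once: $D$ must be rigid enough to force $\widetilde H_*(X;\Z)=0$ and $\bar k(X)=2$, yet the chosen pencil must have multiple fibers of the right multiplicities over the right points of the base so that $B^{\mathrm{orb}}$ is hyperbolic with pairwise coprime cone orders, and one must then check, from the explicit expressions for the meridians around the exceptional divisors --- the computational heart of Theorem \ref{thm:Int1} --- that the relations $R_p$ and $R_\iota$ are exactly compatible with the desired surjection and do not collapse the quotient. A further, purely combinatorial complication is that, since $\A$ is genuinely complex, the over/under-crossing data encoded in the wiring diagram $\mathcal W$ must be tracked with care, as it enters the conjugating factors appearing in the relations $R_\iota$.
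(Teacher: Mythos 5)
Your strategy is the same as the paper's: run the Tom Dieck--Petri recipe on one of the arrangements $L(3)$--$L(5)$, present $\pi_1(X)$ via the wiring-diagram theorem, and get infinitude from a pencil of lines inducing a surjection onto the fundamental group of a hyperbolic orbicurve with pairwise coprime cone orders; this is exactly what is done in Examples \ref{exmp:L3}, \ref{exmp:L4.1} and \ref{exmp:L5.1}. However, for an existence statement the decisive step is the one you defer: you must exhibit parameters for which the Tom Dieck--Petri determinant criterion (Proposition \ref{prop:MatrixCrit}, $\abs{\det M}=1$) holds \emph{simultaneously} with the hyperbolicity of the induced orbifold base, and it is not a priori clear that these Diophantine constraints are compatible. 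The paper resolves this by explicit solutions, e.g.\ for $L(3)$ the multiplicities $b=3$, $d=f=5$, $g=7$ (giving the infinite $(3,5,7)$ triangle quotient) together with $a=c=11$, $e=14$, $h=16$ making the determinant equal to one. Without such a witness the argument proves only that the strategy is plausible, not the theorem.

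A second point needs tightening: infinitude of $\pi_1(X)$ does not by itself place $X$ outside the $\bar k\le 1$ range, since homology planes arising from $L(1,n+1)$ (which exhaust $\bar k=1$) can themselves have infinite fundamental group. Your suggestion of ruling out $\mathbb{C}$- and $\mathbb{C}^*$-fibrations is a legitimate alternative, but you do not carry it out; the paper instead passes to the absolutely minimal completion and invokes Zaidenberg's uniqueness (Proposition \ref{prop:ZaidUniqueness}) to compare the dual graph at infinity with the graphs of Lemma \ref{lem:DualGraphL1}, concluding $\bar k(X)=2$ since $\bar k=-\infty$ forces $X\cong\mathbb{C}^2$ and no homology plane has $\bar k=0$. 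Also, your aside that the general fiber has $\bar k(F)=1$ so that additivity ``is consistent'' is not needed and is not even true in all of the paper's examples (in Examples \ref{exmp:L4.1} and \ref{exmp:L5.1} the general fiber is $\mathbb{C}$); nothing in the actual argument relies on it.
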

These homology planes arise as in \cite{tomdieck1993}. They may be the first examples of this type. The infinitude was obtained at first, using the presentation of Theorem \ref{thm:Int1}, but the nature of the groups suggested the following more general construction: using certain pencils of lines associated to the (sub)-arrangements $L(3),L(4),L(5)$, as presented by Suciu in \cite{inproceedings}, we can construct homology planes $X$ having a fibration that induces a surjective homomorphism to the fundamental group of an orbicurve which is infinite. In fact, all homology planes with infinite fundamental group that we obtain are of this form. We present examples where the general fiber is $\mathbb{C}$, see examples \ref{exmp:L4.1}, \ref{exmp:L5.1} and others where the general fiber is $\mathbb{P}^1\setminus \{3-\text{points}\}$, see examples \ref{exmp:L3}, \ref{exmp:L4.2}. This suggests the following question:

\begin{ques} Let $X$ be a homology plane with infinite fundamental group. Does $\pi_1(X)$ always admit an infinite quotient isomorphic to the fundamental group of an orbicurve?
\end{ques}

This is the case for the homology planes $X$ with $\bar{k}(X)=1$, see \cite{miyanishiopen}. Adapting these arguments, we show in Proposition \ref{prop:L4Structure}, that this is also the case for the homology planes arising from $L(3)$.

Finally, by using again Theorem \ref{thm:Int1}, we give examples of contractible homology planes arising from $L(6)$ and $L(7)$. Following the arguments of Zaidenberg in \cite{AST_1993__217__251_0}, we can show that these examples are not isomorphic to those obtained from $L(1,n+1)$ and $L(3)$. These arguments also show that these homology planes are of log-general type.

\noindent \textbf{Acknowledgments}. I would like to thank my advisor P. Eyssidieux for a careful reading and valuable comments of this note. I would also like to thank M. Zaidenberg who introduced me to homology planes and the referees of my thesis E. Artal Bartolo and M. Teicher.

\section{Preliminaries}
\subsection{Notations}
We will denote by $\mathbb{P}^2$ the complex projective plane. 

Let $\A=\{L_1,\ldots,L_{n+1}\}$ be an arrangement of $n+1$ lines in $\mathbb{P}^2$. The complement of the arrangement will be sometimes denoted by $M(\A):=\mathbb{P}^2\setminus \A$. 

Let $X$ be a complex manifold, for $p\in X$ we denote by $\pi:\Bl_p X\to X$ the blow up of $X$ at $p$. If $D\subset X$ is a divisor, we denote by $\abs{D}$ the reduced divisor with the same support as $D$ and by  $\Sing D$ the set of singular points of $D$. 

We will denote by  $a^b=b^{-1} a b$ if $a,b\in G$ with $G$ a group. If $a\in G$ and $b\in \mathbb{Z}$, we denote as well by $a^b$ the $b$-power of $a$.

\subsection{Meridians}
Let $X$ be a complex manifold and $H\subset X$ a hypersurface. Let $p\in H$ be a smooth point and $\Delta$ a disc cutting transversaly $H$ at $p$. A loop $\gamma$ in $\pi_1(X\setminus H)$ freely homotopic to the boundary of $\Delta$ with the natural orientation is called a \emph{meridian}.

The following Proposition is well-known, for a proof see \cite{shimada}.
\begin{prop}\label{prop:meridians}
Let $X$ be a complex manifold and $D=\sum D_i$ a divisor such that each irreducible reduced component $\abs{D_i}$ of $D$ is smooth. Let $\gamma_i$ be a meridian of $\abs{D_i}$, then every other meridian of $\abs{D_i}$ is a conjugate of $\gamma_i$ in $\pi_1(X\setminus D)$ and the kernel of the map $\pi_1(X\setminus D) \to \pi_1(X)$ is the normal subgroup generated by the meridians of its irreducible components.
\end{prop}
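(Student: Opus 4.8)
The plan is to treat the two assertions separately; both are classical, but the second is the substantive one.

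\textbf{Conjugacy of the meridians of a fixed component.} Fix $i$ and put $D_i^\circ:=\abs{D_i}\setminus\bigcup_{j\ne i}\abs{D_j}$. Since $\abs{D_i}$ is a connected complex manifold and $\bigcup_{j\ne i}(\abs{D_i}\cap\abs{D_j})$ is a proper closed analytic subset of it, $D_i^\circ$ is connected. By definition, any meridian of $\abs{D_i}$ that is a loop in $X\setminus D$ is freely homotopic in $X\setminus D$ to the boundary $\partial\Delta_p$ of a small disc $\Delta_p$ transverse to $\abs{D_i}$ at a point $p\in D_i^\circ$. Choosing a tubular neighbourhood of $D_i^\circ$ inside $X\setminus\bigcup_{j\ne i}\abs{D_j}$ and dragging $\Delta_p$ along a path in $D_i^\circ$ joining two chosen points shows that any two such boundary circles are freely homotopic in $X\setminus D$. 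As free homotopy classes of loops correspond to conjugacy classes in $\pi_1(X\setminus D)$, every meridian of $\abs{D_i}$ is conjugate to $\gamma_i$.

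\textbf{The kernel.} Write $j\colon X\setminus D\hookrightarrow X$, let $K=\ker j_*$, and let $N\trianglelefteq\pi_1(X\setminus D)$ be the normal subgroup generated by the $\gamma_i$ (equivalently, by all meridians, by the first part). First, $j_*$ is surjective: a loop in $X$ can be put in general position with respect to the real-codimension-$2$ set $\abs{D}$, hence pushed off it. Second, $N\subseteq K$, since each $\gamma_i$ is freely homotopic to the boundary of a disc meeting $D$ only at its centre, hence bounds that disc in $X$. It remains to prove $K\subseteq N$, and for this I will construct a homomorphism $\psi\colon\pi_1(X)\to G:=\pi_1(X\setminus D)/N$ with $\psi\circ j_*$ equal to the quotient map $q$; granting this, any $x\in K$ satisfies $q(x)=\psi(j_*(x))=\psi(1)=1$, so $x\in N$, and therefore $K=N$.

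\textbf{Construction of $\psi$.} Let $Y\to X\setminus D$ be the connected Galois covering with deck group $G$ and monodromy $q$. Because $q(\gamma_i)=1$, the monodromy of $Y$ around each component $\abs{D_i}$ is trivial, so $Y$ extends to a covering over $(X\setminus D)\cup D^{\mathrm{sm}}=X\setminus\Sing(\abs{D})$, where $D^{\mathrm{sm}}$ denotes the set of points of $\abs{D}$ lying on exactly one component: near such a point the complement of $D$ is a punctured polydisc, with fundamental group generated by a meridian, and trivial monodromy allows the cover to be filled in. Finally $\Sing(\abs{D})=\bigcup_{i\ne k}(\abs{D_i}\cap\abs{D_k})$ is a closed analytic subset of complex codimension $\ge 2$, hence real codimension $\ge 3$, in $X$; therefore restriction of coverings along $X\setminus\Sing(\abs{D})\hookrightarrow X$ is an equivalence of categories, and the cover extends uniquely to a connected covering $\bar Y\to X$. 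The deck transformations of $Y$ extend to $\bar Y$, so $\bar Y\to X$ is Galois with group $G$ and determines a surjection $\psi\colon\pi_1(X)\to G$; since $\bar Y$ restricts to $Y$ over $X\setminus D$, compatibility of monodromies under pullback gives $\psi\circ j_*=q$, as required. The step needing genuine care is this last one — verifying that triviality of the meridional monodromy permits extension of the covering across the smooth part of $\abs{D}$, and that the codimension-$2$ singular locus is no obstruction; the rest is formal.
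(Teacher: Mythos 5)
Your argument is correct, but note that the paper does not actually prove Proposition \ref{prop:meridians}: it is quoted as well known, with a reference to Shimada, and the proof implicit in that reference is the transversality one — push a loop off $\abs{D}$ (real codimension $2$) to get surjectivity, and for the kernel take a disc in $X$ bounding a given nullhomotopic loop, perturb it so that it misses $\Sing\abs{D}$ (which has real codimension $\geq 4$) and meets the smooth locus of $\abs{D}$ transversally in finitely many points, and then read off the loop in $\pi_1(X\setminus D)$ as a product of conjugates of meridians, one per intersection point. You instead run the dual, covering-theoretic argument: pass to the Galois cover of $X\setminus D$ attached to the quotient by the normal closure $N$ of the meridians, extend it across the one-component locus $D^{\mathrm{sm}}$ using triviality of the local ($\cong\mathbb{Z}$) monodromy, and across $\Sing\abs{D}$ using that a closed analytic subset of complex codimension $\geq 2$ (so real codimension $\geq 4$, even better than the $\geq 3$ you state) does not affect fundamental groups, thereby producing $\psi\colon\pi_1(X)\to\pi_1(X\setminus D)/N$ splitting the quotient through $j_*$. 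Both routes are standard; the disc-transversality proof is shorter and produces the factorization of a trivialized loop explicitly, while yours avoids putting a map of a disc in general position with respect to a possibly singular divisor and isolates exactly where the hypotheses enter (smoothness of each $\abs{D_i}$ for the local model, codimension of the pairwise intersections). Your first part (connectedness of $D_i^\circ$ plus dragging a transverse disc through a tubular neighbourhood) is the usual argument and is fine; the only step of yours that would need to be written out in a self-contained text is the gluing that extends the cover over $D^{\mathrm{sm}}$ (e.g.\ via the Fox completion or local triviality of the restricted cover), which you correctly flag.
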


\subsection{Dual graph of a divisor and partial compactifications of its complement}\label{sbs:dualgraph}
Let $\bar{X}$ be a projective smooth surface and let $D=\sum_{i=1}^{N} D_i\subset \bar{X}$ be a reduced simple normal crossing divisor with the $D_i$ being the irreducible components of $D$ and denote by $w_i=D_i\cdot D_i$ the self-intersection number of $D_i$. Let $\Delta$ be the unoriented graph where the vertices $V(\Delta):=\{v_1,\ldots,v_{N}\}$ are in correspondence with the irreducible components $D_i$ of $D$ and the edges $\mathcal{E}(\Delta)$ correspond with the intersection of the irreducible components of $D$, this is, there is an unoriented edge joining $v_i$ and $v_j$ for each point in $D_i\cap D_j$. Denote by $X:=\bar{X}\setminus D$.

We want to define some partial compactifications of $X$. The idea goes as follows: we choose a subset of irreducible components of $D$ indexed by $I$ which are not to be removed from $\bar{X}$, we then select a subset $P$ of points in $\Sing \sum_{i \not \in I} D_i$ to be blown-up and remove the \emph{strict} transform of $\sum_{i\not \in I} D_i$ in $\Bl_P \bar{X}$.

More precisely, let $I\subset \{1,\ldots, N\}$, $P=\{p_1,\ldots, p_{s_1}\}\subset \Sing (\sum_{i\not \in I}D_i)$ and denote by $\pi:\Bl_P \bar{X}\to \bar{X}$ the composite of the blow-ups at the points in $P$. Denote by $\pi^* D=\sum_{i=1}^{N+s_1} D_i'$ the total transform of the divisor $D$ in $\Bl_P\bar{X}$, suppose that for $i=1,\ldots,N$, we have that $D_i'$ is a strict transform of $D_i$ and for $j=1,\ldots,s_1$, the $D_{N+j}'$ are exceptional divisors. Define the divisor $$D'(I,P)=\pi^*D -\sum_{i\in I}D_i'-\sum_{N<j}D_j'.$$ Note that $\Bl_P\bar{X}\setminus \pi^* D\hookrightarrow X'(I,P):=\Bl_P\bar{X}\setminus D'(I,P)$. By restricting $\pi$, we obtain an isomorphism $\Bl_P\bar{X}\setminus \pi^* D\overset{\sim}{\to} X$. We call $X'(I,P)$ a \emph{partial compactification} of $X=\bar{X}\setminus D$. By Proposition \ref{prop:meridians}, the induced homomorphism $\pi_1(X)\to \pi_1(X'(P,I))$ is surjective. 

We comment on the effects of this construction in the dual graph. Denote by $\Delta'(I,P)$ the dual graph of $D'(I,P)$. It is obtained from $\Delta$ by deleting the following vertices and edges: for the set $I$, we have a subset $V(I)\subset V(\Delta)$ of vertices corresponding to the lines $D_i$ for $i\in I$, remove these vertices from $\Delta$, together with all edges in $\mathcal{E}(\Delta)$ having an endpoint in $V(I)$. 
We also remove the edges corresponding to $P$: let $p_j\in P$, there exists $j_1,j_2\in \{1,\ldots,N\}$ such that $p_j=D_{j_1}\cap D_{j_2}$. In the dual graph of $\pi^* D$ the edge corresponding to $p_j$ in $\Delta$ has been divided in two, with a vertex in between corresponding to the exceptional divisor coming from $p_j$.

\subsubsection{Partial compactifications for an arrangement of lines}\label{ss:partialCArrangements}
We can carry the above construction for a divisor $D\subset \bar{X}$ coming from an arrangement of lines $\A=\{L_1,\ldots,L_{n+1}\}\subset \mathbb{P}^2$. In fact, this will be the only case we will be interested in.

Let $\A\subset \mathbb{P}^2$ be an arrangement of lines. Denote by $P_0:=\{p_1,\ldots, p_{s_0}\}\subset \Sing \A$ the points with multiplicity strictly bigger that $2$. Define $\pi:\bar{X}:=\Bl_{P_0}\mathbb{P}^2\to \mathbb{P}^2$ and denote by $D=\abs{\pi^* \A}=\sum_{i=1}^{n+1+s_0} D_i$ the reduced total transform of $\A$ in $\bar{X}$. Note that $D$ is simple normal crossing. For a divisor $D$ where the irreducible components are smooth rational curves, the set of edges $\mathcal{E}(\Delta)$ of the dual graph $\Delta$ can be described as $\mathcal{E}(\Delta)=\{(i,j)\in \{1,\ldots,n+1+s_0\}^2\mid D_i\cap D_j\not =\varnothing, i<j\}$ once the irreducible components of $D$ are numbered. We assume that $D_i$ is the strict transform of $L_i$.

Let $I\subset \{1,\ldots,N=n+1+s_0\}$ and $P=\{p_1',\ldots,p_{s_1}'\}\subset \Sing \sum_{i\not \in I}D_i$. Consider $\pi':\Bl_{P}\bar{X}\to \bar{X}$ and let $D'=\pi'^*(D)-\sum_{i\in I} D_i'-\sum_{N<j} D_j'$ as above. We write $M(\A,I,P):=X'(I,P)=\Bl_{P}\bar{X}\setminus D'$ for a \emph{partial compactification} of the complement of an arrangement $M(\A)=\mathbb{P}^2\setminus \A$.

We can iterate this construction in the following way; consider a sequence of blow-ups: 
$$\Bl_{P_k,\ldots,P_1}\bar{X}\overset{\pi^{(k)}}{\to} \Bl_{P_{k-1},\ldots,P_1}\bar{X}\overset{\pi^{(k-1)}}{\to}\ldots \overset{\pi^{(2)}}{\to} \Bl_{P_1}\bar{X}\overset{\pi^{(1)}}\to \bar{X}\overset{\pi^{(0)}}{\to}\mathbb{P}^2$$
with $P_l\subset \Sing ((\pi^{(0)}\circ\pi^{(1)}\circ \cdots\circ \pi^{(l-1)})^* \A)$ for $l=1,\ldots,k$ and $\pi^{(l)}:\Bl_{P_l,\ldots,P_1}\bar{X}\to \Bl_{P_{l-1},\ldots, P_1} \bar{X}$ denoting the blow-up of $\Bl_{P_{l-1},\ldots, P_1} \bar{X}$ at $P_l$.
	 We can suppose that  the irreducible components of the reduced divisor $$D':=\abs{(\pi^{(0)}\circ\cdots\circ \pi^{(k)})^* \A}=\sum_{1}^{\abs{\A}} D_i'+\sum_{\abs{\A}}^{\abs{\A}+\abs{P_0}} D_j'+\ldots +\sum_{\abs{\A}+\abs{P_0}+\ldots+\abs{P_{k-1}}}^{\abs{\A}+\abs{P_0}+\ldots +\abs{P_k}} D_l',$$
where $\abs{P}$ denotes the cardinality of the set $P$, are ordered in such a way that $\pi^{(l)}\circ\cdots\circ \pi^{(k)}$ contracts the curves $D_i'$ with $i>\abs{\A}+\ldots+ \abs{P_{l-1}}$ for $l=1,\ldots,k$. Let $I\subset \{1,\ldots,\abs{\A}+\ldots+\abs{P_k}\}$ and define $M(\A,I,P_1,\ldots,P_k):=\Bl_{P_k,\ldots,P_1}\bar{X}\setminus D'-\sum_{i\in I} D_i'$ as an \emph{iterated partial compactification} of $M(\A)$. 

\begin{lem} Let $(\bar{X}',D')$ be a smooth projective surface such that 
\begin{enumerate}
\item the divisor $D'$ is a simple normal crossing divisor,
\item there is a birational morphism $\bar{X}'\overset{\psi}{\to} \bar{X}$,
\item we have that $\psi^* D \supset D'$
\end{enumerate}
then there exists an iterated partial compactification $(\bar{X}'', D'')$ and a proper birational morphism $\bar{X}'\overset{\psi ''}{\to} \bar{X}''$ such that $(\psi'')^{-1}D''\supset D'$ and $\pi_1(\bar{X}''\setminus D'') \overset{\sim}{\leftarrow} \pi_1(\bar{X}'\setminus D')$ is an isomorphism.
\end{lem}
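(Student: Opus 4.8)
The plan is to reduce $(\bar X',D')$ to an iterated partial compactification by successively contracting the superfluous exceptional curves of $\psi$, verifying at each step that the fundamental group of the complement, the normal crossing property, and the containment relation are all preserved. First I would factor $\psi$ as a composition of point blow-ups $\bar X'=Y_N\to\cdots\to Y_0=\bar X$, which is possible by the structure theory of birational morphisms of smooth projective surfaces. Since $\psi^*D\supset D'$, every component of $D'$ is either the strict transform of a component of $D$ --- hence of a line of $\A$ or of an exceptional divisor over $P_0$ --- or an exceptional divisor of one of these blow-ups lying over a point of the corresponding total transform of $\A$.

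The technical core is the following contraction principle, which I would prove by a local computation around the contracted curve. Let $E\subset\bar X'$ be a $\psi$-exceptional $(-1)$-curve and $\sigma\colon\bar X'\to Y$ its contraction, and suppose that $E\cdot D'\le 2$ when $E\not\subset D'$, or that $1\le E\cdot(D'-E)\le 2$ when $E\subset D'$. Then $\sigma_*D'$ is again simple normal crossing --- here one uses that the components of $\psi^*D$ meet each other transversally, being transforms of lines, so that contracting $E$ creates neither a tangency nor a triple point --- one has $\sigma^{-1}(\sigma_*D')\supset D'$, and $\pi_1(\bar X'\setminus D')\cong\pi_1(Y\setminus\sigma_*D')$. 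The last point is the heart of the matter. If $E\not\subset D'$ then, according as $E\cdot D'=0,1,2$, a tubular neighbourhood of $E$ with $D'$ deleted is a disc bundle over $\mathbb P^1$ from which zero, one or two fibre discs have been removed, i.e.\ respectively this disc bundle, a copy of $\mathbb C^2$, or a copy of $\mathbb C^*\times\mathbb C$ (using that $\mathcal O_{\mathbb P^1}(-1)$ is trivial over any affine open of $\mathbb P^1$); in all three cases the meridian of $E$ is nullhomotopic, so the surjection $\pi_1(\bar X'\setminus(D'\cup E))\to\pi_1(\bar X'\setminus D')$, whose kernel is the normal closure of that meridian, is an isomorphism, and combined with the identification $\pi_1(\bar X'\setminus(D'\cup E))\cong\pi_1(Y\setminus\sigma_*D')$ coming from $\sigma$ this gives the claim. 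If instead $E\subset D'$, then $\sigma^{-1}(\sigma_*D')=D'$ and $\sigma$ restricts to an isomorphism of varieties $\bar X'\setminus D'\cong Y\setminus\sigma_*D'$.

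With this at hand I would run the reduction: as long as $\bar X'$ carries a $\psi$-exceptional $(-1)$-curve to which the principle applies, contract it and update $(\bar X',D',\psi)$ accordingly; this terminates because the Picard number decreases. One arrives at a pair $(\bar X'',D'')$ carrying a proper birational morphism $\psi''\colon\bar X'\to\bar X''$, with $\bar X''\to\bar X$ still a morphism, with $(\psi'')^{-1}D''\supset D'$, and with $\pi_1(\bar X'\setminus D')\cong\pi_1(\bar X''\setminus D'')$. It then remains to recognise $\bar X''$ as an iterated partial compactification, i.e.\ to check that each blow-up producing it from $\bar X$ is centred at a singular point of the corresponding total transform of $\A$: any residual exceptional divisor over a smooth point of the total transform and \emph{not} contained in $D''$ would be a $(-1)$-curve meeting the rest of $D''$ in at most one point, hence still contractible, contradicting that the reduction has stopped.

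The step I expect to be the main obstacle is the one remaining class of exceptional divisors, namely those contained in $D'$ that sit over a smooth point of a component of $D$ which has been removed: for such an $E$ one has $E\cdot(D'-E)=0$, so the principle does not apply, and contracting $E$ destroys the containment $(\psi'')^{-1}D''\supset D'$ while keeping it prevents $\bar X''$ from being an iterated partial compactification. For the pairs $(\bar X',D')$ that arise in the applications --- with $\bar X'$ obtained from $\bar X$ by blowing up singular points and $D'$ a subdivisor of the total transform --- such $E$ do not occur, so the reduction terminates at an iterated partial compactification; handling this configuration for a completely arbitrary input, which requires controlling the contribution to $\pi_1(\bar X'\setminus D')$ of the removed component through which $E$ lies, is the delicate point and where the genuine work lies.
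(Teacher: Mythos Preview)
The paper does not actually prove this lemma: it is stated without proof, and the text immediately moves on with ``Here we will restrict the study to $M(\A,I,P)$ unless otherwise stated.'' There is therefore no proof in the paper to compare your proposal against.

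That said, your strategy --- factoring $\psi$ into point blow-ups and iteratively contracting the superfluous $(-1)$-curves via the contraction principle you describe --- is the natural approach and is essentially sound. The local analysis you give for why the meridian of $E$ dies when $E\not\subset D'$ and $E\cdot D'\le 2$ is correct, as is the observation that when $E\subset D'$ with $1\le E\cdot(D'-E)\le 2$ the contraction restricts to an isomorphism of complements. Your own diagnosis of the residual difficulty is accurate: the case of an exceptional component $E\subset D'$ with $E\cdot(D'-E)=0$, lying over a smooth point of a component of $D$ whose strict transform has been dropped from $D'$, is not covered by your principle, and contracting such an $E$ breaks the containment $(\psi'')^{-1}D''\supset D'$. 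Since the paper neither proves the lemma nor uses it beyond motivating the restriction to the concrete partial compactifications $M(\A,I,P)$, it is reasonable to read the lemma as a statement about the class of pairs that actually arise in the paper's constructions, for which --- as you note --- this bad configuration does not occur and your reduction terminates cleanly.
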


Here we will restrict the study to $M(\A,I,P)$ unless otherwise stated. The results are easily generalized to the above more general setting of iterated partial compactifications.

\begin{rem} We have that $\pi_1(\bar{X}'\setminus D')$ is a quotient group of $\pi_1(M(\A))$ by proposition \ref{prop:meridians}.
\end{rem}
\subsection{Boundary manifolds}\label{ss:mumford}



Let $\bar{X}$ be a projective smooth surface and $D=\sum_{i=1}^k D_i\subset \bar{X}$ be a connected divisor. We can construct a regular tubular neighborhood $U$ of $D$ in $\bar{X}$ which comes with a surjective continuous retraction $\varphi:U \to D$ such that $\varphi|_D=id_D$. The boundary $\partial U$ of $U$ is an oriented, connected, closed $3$-manifold (see \cite{mumfordtopology}). We call the $3$-manifold $\partial U$ the \emph{boundary manifold} of $D$ and denote by $\psi:\partial U \to D$ the restriction of $\varphi$ to $\partial U$. 

Suppose now that $(X,D)$ is simple normal crossing and assume that: 
\begin{itemize}
\item the divisor $D$ is connected,
\item the irreducible components $D_i$ of $D$ are rational curves ,
\item the dual graph of $D$ has no cycles, in particular $\# D_i\cap D_j=0 \text{ or } 1 \text{ if } i\not =j$. This dual graph is a tree that we denote by $\mathcal{T}$.
\end{itemize} For such a pair, a presentation of $\pi_1(\partial U)$ is given in \cite[p. 235]{mumfordtopology} (See also \cite{hirzerbruchbour}). As we shall need the notations, let us describe it.

 Fix a base point $Q_i\in D_i\setminus \cup_{i\not = m} D_m$ in every rational curve $i=1,\ldots,k$. Denote by $P_{im}'$ the unique point in $D_i\cap D_m$, if any. Select a simple contractible oriented curve $l_i\subset D_i$ containing $Q_i$ and passing through every point $P_{im}'\in D_i$ as in figure \ref{fig:meridiansMumford} and denote by $l=\cup l_i\subset D$. We can construct a continuous map $h:l\to \partial U$ such that $\psi\circ h|_{l_i}=id_{l_i}$ and $h(l_i)\cap h(l_m)\not =\varnothing$ if $P_{im}'=D_i\cap D_m=l_i\cap l_m\not =\varnothing$.

\begin{figure}[h]
\begin{subfigure}[b]{0.3\textwidth}
        \centering

        \begin{tikzpicture} 
\draw (0,0) circle (2);

\draw (-.8,1) node [left]{\small $\partial\Delta(P_{i{1}})$};
\draw (-.8,0) node [left]{\small $\partial\Delta(P_{i2})$};
\draw (-.8,-1) node [left]{\small $\partial \Delta(P_{i{k_1}})$};

\draw(0,1) node[anchor=east]  {\tiny $P_{i{1}} $};
\draw(0,0) node[anchor=east]  {\tiny $P_{i2} $};
\draw(0,-1) node[anchor=east]  {\tiny $P_{i{k_i}} $};

\draw (.3,-1.6) node[right] {\tiny $Q_i$};

\draw (.3,.5) node[right] {\tiny $l_i$};

\draw (1.4,-1.5) node[right] {$D_i$};


\draw [arc arrow=to pos 0.45 with length 2mm] (-.7,0) to[out=-90,in=-90] 
(.7,0) [arc arrow=to pos 0.95 with length 2mm] 
to[out=90,in=90] cycle;
\draw [arc arrow=to pos 0.45 with length 2mm] 
  (0,0) to[out=0,in=0] (0,1);

\draw [arc arrow=to pos 0.45 with length 2mm] (-.7,1) to[out=-90,in=-90] 
(.7,1) [arc arrow=to pos 0.95 with length 2mm] 
to[out=90,in=90] cycle;
\draw  (0,-1) to[out=0,in=0] (0,0);

\draw [arc arrow=to pos 0.45 with length 2mm] (-.7,-1) to[out=-90,in=-90] 
(.7,-1) [arc arrow=to pos 0.95 with length 2mm] 
to[out=90,in=90] cycle;

\draw  (.3,-1.6) to[out=90,in=0] (0,-1);

\foreach \Point in {(0,-1), (0,0), (0,1), (.3,-1.6)}{
    \node at \Point {\textbullet};
}


\end{tikzpicture}
\caption{Mumford generators}
\label{fig:meridiansMumford}     
    \end{subfigure}
~ ~ ~~~   
    \centering
     \begin{subfigure}[b]{0.3\textwidth}
        \centering

        \begin{tikzpicture}

\draw(0,1) node[anchor=east]  {\tiny $P_{i{1}} $};
\draw(0,0) node[anchor=east]  {\tiny $P_{i2} $};
\draw(0,-1) node[anchor=east]  {\tiny $P_{i{k_1}} $};

\draw (.3,-1.6) node[right] {\tiny $Q_i$};

\draw (.5,0) node[right] {\tiny $l_i'$};
\draw (.5,1.5) node[right] {\tiny $\beta_{i1}'$};

\draw (1.4,-1.5) node[right] {$D_i$};

\draw (0,0) circle (2);


\draw  (.3,-1.6) -- (.3,-1.35);
\draw (.3,-1.35) to[out=0,in=0] (.3,-.65);
\draw  (.3,-.65) -- (.3,-.35);
\draw [arc arrow=to pos 0.45 with length 2mm] 
 (.3,-.35) to[out=0,in=0] 
(.3,.35);
\draw  (.3,.35) -- (.3,.63);

\draw [arc arrow=to pos 0.45 with length 2mm] (-.7,1) to[out=-90,in=-90] 
(.7,1) [arc arrow=to pos 0.95 with length 2mm] 
to[out=90,in=90] cycle;


\foreach \Point in {(0,-1), (0,0), (0,1), (.3,-1.6)}{
    \node at \Point {\textbullet};
}


\end{tikzpicture}
\caption{Paths in $D_i^*$}
\label{fig:meridiansMumford2}   
    \end{subfigure}
    \caption{Generators}\label{fig:MerMum}
\end{figure}
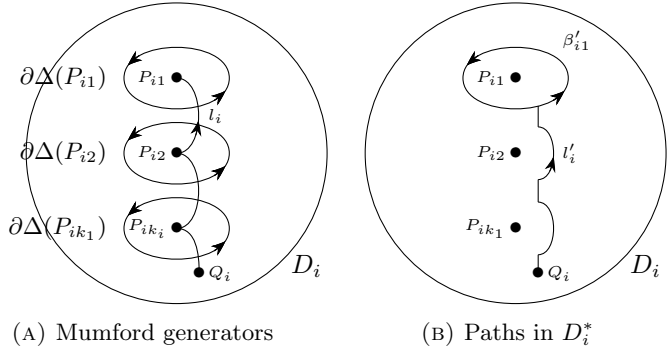

It is easy to see that $l$ is a homeomorphic image of a tree and deformation retracts to a point.

Label the points $P_{im}'\in D_i$ by the order they intersect $l_i$ as $P_{i1},\ldots, P_{i{k_i}}$, see Figure \ref{fig:meridiansMumford}.
Denote by $\psi_i:\partial U_i	\to D_i$ the boundary manifold of $D_i$. Let $D_i^*=D_i\setminus \cup_{m=1}^{k_i} \Delta(P_{im})$ with $\Delta(P_{im})$ a small open disk around $P_{im}$ in $D_i$. Define $\partial U_i^*:=\psi_i^{-1}(D_i^*)$. We may suppose that $\partial U\cap \partial U_i=\partial U_i^*$.


We may also assume that $Q_i\in D_i^*$. Define another contractible path $l_i'\subset D_i^*$ as follows: join every two connected components of $l_i\cap D_i^*$ touching the boundary of a disk $\partial \Delta(P_{im})$, by the segment of $\partial \Delta(P_{im})$ that connects these two points when traveled in the natural orientation, see figure \ref{fig:meridiansMumford2}. We assume $l_i$ and $\partial \Delta(P_{im})$ intersect transversally at two points for all $m =1,\ldots,k_i-1$.

 Consider the circle $\partial\Delta(P_{im})$ traveled in the natural orientation and connect it to $Q_i$ via a segment of $l_i'$. We obtain a path $\beta_{im}'\in \pi_1(D_i^*,Q_i)$, for $m=1,\ldots,k_i$, see figure \ref{fig:meridiansMumford2}. Note that $\beta_{i1}'\cdots \beta_{ik_i}'=1$ in $\pi_1(D_i^*)$.

We can construct continuous maps $h_i:\cup_{m=1}^{k_i} \beta_{im}'\to \partial U_i^*$ such that $\psi_i\circ h_i|_{\beta_{im}'}=id_{\beta_{im}'}$ for every $i=1,\ldots, k$. Let $h_i(Q_i)$ be a base point in $\partial U_i^*$, denote by $\gamma_{im}'=h_i(\beta_{im}')$ and let $\gamma_i'$ be a fiber $S^1$ at $Q_i$ of $\partial U_i^*$ traveled in the natural orientation.

By using the long homotopy sequence of a fiber bundle, Mumford obtained the following presentation in \cite{mumfordtopology}. See also \cite{hirzerbruchbour}.
\begin{lem}\cite[p. 236-237]{mumfordtopology} \label{lem:RelsInBundle} The fundamental group of $\partial U_i^*$ is given by the following presentation
\begin{equation}\label{eq:RelsInBundle}
\left\langle \gamma_{i1}',\ldots, \gamma_{i{k_i}}', \gamma_i' \mid [\gamma_{im}',\gamma_i'] \ m=1,\ldots,k_i, \gamma_i'^{-w_i}=\gamma_{i1}'\cdots \gamma_{ik_i}'   \right\rangle 
\end{equation}
with $w_i=D_i\cdot D_i$ the self-intersection number of $D_i$.
\end{lem}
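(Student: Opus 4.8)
This is Mumford's lemma, and the plan is to recover it — as he does — from the homotopy exact sequence of a circle bundle, the only delicate point being the precise exponent $-w_i$.

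First I would pin down the geometry. A regular tubular neighborhood of the smooth rational curve $D_i\cong\mathbb{P}^1$ in $\bar{X}$ is diffeomorphic to a disk-bundle neighborhood of the zero section of the normal line bundle $N_{D_i/\bar{X}}$, whose degree equals $D_i\cdot D_i=w_i$; hence $\psi_i:\partial U_i\to D_i$ is the unit circle bundle of $N_{D_i/\bar{X}}$, an oriented $S^1$-bundle, and so is its restriction $\psi_i:\partial U_i^*\to D_i^*$. The base $D_i^*=D_i\smallsetminus\bigcup_{m=1}^{k_i}\Delta(P_{im})$ is a $2$-sphere with $k_i$ open disks removed, hence homotopy equivalent to a wedge of $k_i-1$ circles; in particular $\pi_2(D_i^*)=0$, and with respect to the boundary loops $\beta_{im}'$ one has $\pi_1(D_i^*)=\langle\beta_{i1}',\ldots,\beta_{i{k_i}}'\mid\beta_{i1}'\cdots\beta_{i{k_i}}'\rangle$, free of rank $k_i-1$.

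Next I would feed the fibration $S^1\hookrightarrow\partial U_i^*\xrightarrow{\psi_i}D_i^*$ into the homotopy exact sequence: since $\pi_2(D_i^*)=0$ and $S^1$ is connected, this yields a short exact sequence $1\to\mathbb{Z}\to\pi_1(\partial U_i^*)\to\pi_1(D_i^*)\to1$ with the $\mathbb{Z}$ generated by the fiber class $\gamma_i'$, and since the bundle is oriented the conjugation action on this normal $\mathbb{Z}$ is trivial, so the extension is central — which is exactly the family of relations $[\gamma_{im}',\gamma_i']$. Using the standard dictionary between central extensions and presentations, I lift the free generators of $\pi_1(D_i^*)$ to $\gamma_{im}'=h_i(\beta_{im}')$ and lift the single relator $\beta_{i1}'\cdots\beta_{i{k_i}}'$; as it dies in $\pi_1(D_i^*)$, its lift lands in the central subgroup, so $\gamma_{i1}'\cdots\gamma_{i{k_i}}'=\gamma_i'^{\,c}$ for a well-defined integer $c$ (depending on the lift $h_i$), and this is the presentation claimed, with $-w_i$ replaced by $c$.

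The crux is then the identification $c=-w_i$, and it cannot be read off homologically, because the bundle over $D_i^*$ is trivial; the integer $c$ records how the section $h_i$ interacts with the circle bundle over the \emph{entire} curve $D_i\cong S^2$. I would fix a section $\sigma_0$ of $\partial U_i^*\to D_i^*$ and write $\gamma_{im}'=\sigma_0(\beta_{im}')\,\gamma_i'^{\,-d_m}$; by the construction of $h_i$ its framing near $\partial\Delta(P_{im})$ extends over the disk $\Delta(P_{im})$, so $d_m$ is precisely the local obstruction to extending $\sigma_0$ across that disk, whence $\sum_m d_m$ equals the global obstruction to extending $\sigma_0$ over $D_i$, namely the Euler number of $N_{D_i/\bar{X}}$, which is $w_i$. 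Since $\gamma_i'$ is central and $\sigma_0$ carries the null-homotopy of $\beta_{i1}'\cdots\beta_{i{k_i}}'$ in $D_i^*$ to a null-homotopy of $\sigma_0(\beta_{i1}')\cdots\sigma_0(\beta_{i{k_i}}')$ in $\partial U_i^*$, one gets $\gamma_{i1}'\cdots\gamma_{i{k_i}}'=\sigma_0(\beta_{i1}'\cdots\beta_{i{k_i}}')\,\gamma_i'^{\,-\sum_m d_m}=\gamma_i'^{\,-w_i}$. I expect the main obstacle to be exactly this last piece of bookkeeping: one must reconcile the natural orientation of the meridians $\partial\Delta(P_{im})$, the chosen orientation of $\gamma_i'$, and the sign of the Euler number relative to the complex orientation, so that the exponent comes out $-w_i$ and not $+w_i$. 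A convenient sanity check is that collapsing all the disks — that is, killing every $\gamma_{im}'$ — must recover $\pi_1(\partial U_i)$, the fundamental group of the $S^1$-bundle over $\mathbb{P}^1$ of Euler number $w_i$, which already forces $|c|=|w_i|$.
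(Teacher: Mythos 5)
Your proof is correct and takes essentially the route the paper relies on: the paper gives no independent argument for this lemma but cites Mumford's derivation via the long homotopy exact sequence of the circle bundle, which is exactly what you do (central extension with central fiber class, then Euler-number bookkeeping to identify the exponent). Your explicit normalization of $h_i$ — requiring the lifts over $\partial\Delta(P_{im})$ to extend across the disks — is the choice implicit in Mumford's construction and flagged in Remark \ref{rem:Mumford}, and it is precisely what pins the exponent to $-w_i$ rather than leaving it choice-dependent.
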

\begin{rem}\label{rem:Mumford}
Note that $\partial U_i^*$ is non canonically homeomorphic to the trivial bundle $S^1\times D_i^*$, but the image of the paths $\gamma_{im}'$ are not longer identified with a path freely homotopic to one of the form $\{\text{point}\}\times \partial \Delta(P_{i_m})$. In fact, we need to  twist this image by a multiple of $\gamma_i'$ for it to be of such form. See \cite[p. 235]{mumfordtopology}.

\end{rem}

Now, to globalize this construction to $\partial U$, we can use $h(l)\subset \partial U$ as a skeleton to define paths generating $\pi_1(\partial U)$. Let $\gamma_i$ be the loop based at $h(Q_1)$ constructed as follows. Join $h(Q_1)$ to $h(Q_i)$ by a segment $\lambda$ of $h(l)$, follow $\gamma_i'$ and come back by $\lambda^{-1}$. Then it is homotopic to the canonical representative of $\gamma_i'$ in $\pi_1(\partial U_i^*\cup h(l),h(Q_1))$ using the natural isomorphism $\pi_1(\partial U_i^*\cup h(l),h(Q_1))\to \pi_1(\partial U_i^*,Q_i)$ thus obtained. Define similarly $\gamma_{im}$ for $1\leq m\leq k_i$. Then $\gamma_{im}=\gamma_{j_\mathcal{T}(i,m)}$ for some injective map $m \mapsto j_\mathcal{T}(i,m)$ from $\{1,\ldots,k_i\}$ to $\{1,\ldots, k\}$. 

 By gluing the $\partial U_i^*$ together and by using van Kampen theorem, Mumford obtained the following presentation for $\pi_1(\partial U)$.


\begin{thm}[\cite{mumfordtopology}]\label{thm:Mumford}  With the notations and assumptions as above, a presentation for $\pi_1(\partial U)$ is given by:
 $$\pi_1(\partial U)=\left\langle \gamma_1, \ldots, \gamma_{k}  \mid [\gamma_i,\gamma_{j_{\mathcal{T}}(i,m)}],  \ m=1,\ldots, k_i, \gamma_i^{-w_i}=\prod_{m=1}^{k_i} \gamma_{j_{\mathcal{T}}(i,m)},  1\leq i \leq k
 \right\rangle. $$
\end{thm}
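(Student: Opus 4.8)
The plan is to globalize the local bundle presentations of Lemma~\ref{lem:RelsInBundle} via the van Kampen theorem, using the skeleton $h(l)\subset \partial U$ as the combinatorial backbone. First I would thicken the tree: since $l=\cup l_i$ is a homeomorphic image of a tree that deformation retracts to a point, a regular neighborhood $N$ of $h(l)$ in $\partial U$ is contractible, and $\partial U = N\cup \bigcup_i \partial U_i^*$ where the pieces $\partial U_i^*$ are glued to $N$ and to each other only along the tori $\psi_i^{-1}(\partial \Delta(P_{im}))$ sitting over the intersection points $P_{im}'=D_i\cap D_m$. Because the dual graph $\mathcal{T}$ is a tree, these gluings are organized along a tree and there are no global cycles to worry about; this is exactly what lets the van Kampen iteration terminate with a finite presentation and no extra relations beyond the edge identifications.

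Next I would set up the van Kampen computation. Using $N$ (contractible) as the ``connecting tissue'', $\pi_1(\partial U)$ is the colimit over the tree $\mathcal{T}$ of the groups $\pi_1(\partial U_i^*)$ amalgamated over the $\pi_1$ of the common boundary tori. Each $\pi_1(\partial U_i^*)$ is presented by \eqref{eq:RelsInBundle} with generators $\gamma_{i1}',\ldots,\gamma_{ik_i}',\gamma_i'$. The key identification is the following: when $P_{im}'=D_i\cap D_m$, the torus $\psi^{-1}(P_{im}')\cong S^1\times S^1$ has its two circle factors given, on the $\partial U_i^*$ side, by the fiber $\gamma_i'$ and the boundary loop $\gamma_{im}'$, and on the $\partial U_m^*$ side by the fiber $\gamma_m'$ and the corresponding boundary loop; under the gluing the fiber of one component is identified with the boundary-of-disk loop of the other. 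Transporting everything to the common base point $h(Q_1)$ along segments of $h(l)$ converts the $\gamma_{im}'$ into the $\gamma_{im}$, and the combinatorics of ``which $D_j$ meets $D_i$ at the $m$-th point along $l_i$'' is precisely recorded by the injective map $m\mapsto j_{\mathcal{T}}(i,m)$. Thus the boundary-disk generator $\gamma_{im}$ of the $i$-th piece gets identified with the fiber generator $\gamma_{j_{\mathcal{T}}(i,m)}$ of the piece it is glued to, which yields the substitution $\gamma_{im}'\leftrightarrow \gamma_{j_{\mathcal{T}}(i,m)}$. Feeding this into the two families of relations of \eqref{eq:RelsInBundle} produces exactly the commutators $[\gamma_i,\gamma_{j_{\mathcal{T}}(i,m)}]$ and the relations $\gamma_i^{-w_i}=\prod_{m=1}^{k_i}\gamma_{j_{\mathcal{T}}(i,m)}$, over $1\le i\le k$.

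Finally I would check that no further relations appear and that all generators are accounted for. Every circle fiber $\gamma_i'$ becomes a generator $\gamma_i$; the boundary-disk loops $\gamma_{im}'$ do not contribute new generators precisely because each is, across a glued torus, one of the already-present $\gamma_{j_{\mathcal{T}}(i,m)}$ — this uses connectedness of $D$ and the tree structure so that every component is reached. Since $N$ is simply connected and each overlap torus $\psi^{-1}(P_{im}')$ is connected with $\pi_1 \cong \mathbb Z^2$ generated by the two curves identified above, van Kampen introduces exactly the relations identifying these two $\mathbb Z^2$'s inside the two adjacent vertex groups and nothing else. Collating over all vertices and edges of $\mathcal{T}$ gives the asserted presentation. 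The main obstacle, and the step deserving the most care, is the bookkeeping of the gluing maps on the overlap tori: one must verify that with consistent orientation conventions the fiber class of one component is glued to the vanishing-cycle (boundary of $\Delta$) class of the neighboring component — and that the base-point transport along $h(l)$ does not introduce spurious conjugating factors — so that the commutator and product relations come out with the stated signs of $w_i$ and without extra twists; this is the point alluded to in Remark~\ref{rem:Mumford}, and it is where a naive identification would give a wrong presentation.
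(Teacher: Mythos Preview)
Your proposal is correct and follows essentially the same approach that the paper indicates: the paper does not give a detailed proof of this theorem but simply records, in the paragraph preceding the statement, that one globalizes the local presentations of Lemma~\ref{lem:RelsInBundle} via the skeleton $h(l)$, notes the identification $\gamma_{im}=\gamma_{j_{\mathcal{T}}(i,m)}$, and then says ``By gluing the $\partial U_i^*$ together and by using van Kampen theorem, Mumford obtained the following presentation.'' Your write-up is a faithful and more explicit unpacking of exactly this argument, including the correct identification on the overlap tori (fiber of one piece $\leftrightarrow$ boundary-of-disk loop of the neighbor) and the caveat about twisting flagged in Remark~\ref{rem:Mumford}.
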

\noindent where $w_i=D_i\cdot D_i$, $[a,b]=aba^{-1}b^{-1}$, $a^0=1$, the identity of the group.

\section{Wiring diagrams and a first presentation of the fundamental group of a partial compactification}\label{sec:Wirtinger}

We will describe the construction of a diagram permitting to express some me\-ri\-dians around different points in the lines of $\A$,  lying in a pencil of lines passing through a base point $R\in \mathbb{P}^2\setminus \A$, in terms of a fixed set of meridians lying in a fixed fiber of the pencil.

As an application we obtain a first presentation for the fundamental group of a partial compactification $M(\A,I,P)$. To do that we will use a modification of the presentation of the fundamental group of $M(\A)$ given in \cite{arvola} and \cite{suciu}. 

This diagram will also carry the information to compute the image of the cycles in the boundary manifolds of $\A$ into $M(\A)$. This will be done in section \ref{s:BoundaryM}
\subsection{Wiring diagram associated to a complex arrangement}\label{ss:111} Consider an arrangement of lines $\A$ in $\mathbb{P}^2$. 
Let us fix a base point $R\in\mathbb{P}^2\setminus \A$ and denote by $\pi_R:\Bl_R\mathbb{P}^2\to \mathbb{P}^2$ the blow-up at $R$. 
Let $\bar{f}:\Bl_R \mathbb{P}^2 \to \mathbb{P}^1$ be the morphism defined by the pencil of lines passing through $R$. In what follows, we assume that we have chosen $R$ in such a way that $\bar{f}|_{\Sing \A}:\Sing \A\to \mathbb{P}^1$ is injective. 

 Let $\ast\in \mathbb{P}^1$, consider a simple piece-wise linear path $\beta:([0,1],0)\to (\mathbb{P}^1,\ast)$ starting at $\ast$ and passing through every point $\bar{f}(p)$ for all $p\in \Sing \A$, being locally linear around these points.  

 By abuse of notation let us denote by $\A$ the union of the lines of arrangement in $\mathbb{P}^2$. 

\begin{defn}\label{def:wire} The \emph{wiring diagram} of $\A$ with respect to $\beta$ is $\mathcal{W}=\bigcup_{t\in [0,1]} (\A\cap \bar{f}^{-1}{(\beta(t)}))\subset \Bl_R\mathbb{P}^2$. The $i$-wire $W_i$ is $L_i\cap \mathcal{W}$. Here, we view $\A, L_i$ as subvarieties of $\Bl_R\mathbb{P}^2$ since $R\not \in \A$.
\end{defn}
By the choice of $\beta$, as it passes through the points $\bar{f}(p)$ for $p\in \Sing \A$, we have that $\Sing \A\subset \mathcal{W}$.


\begin{lem} Every wire is a piece-wise linear simple curve.
\end{lem}
\begin{proof}
As no line in $\A$ passes through $R$, every $L_i\in \A$ induces a section of $\Bl_R \mathbb{P}^2\to \mathbb{P}^1$ which is in fact an isomorphism. By the choice of $\beta$ the result follows.
\end{proof}

\subsubsection{Planar representation of the wiring diagram} \label{ss:PlanarWiring}
By considering the pullback $\beta^\ast(\mathcal{W})$ and a trivialization $\beta^*\Bl_R\mathbb{P}^2\cong [0,1]\times \mathbb{P}^1$, we can view $\beta^* (\mathcal{W})$ as a closed graph embedded in  $[0,1]\times \mathbb{P}^1$. Sometimes we will continuing writing $\mathcal{W}$ for $\beta^*(\mathcal{W})$. Moreover we can remove the exceptional divisor $\pi_R^{-1}(R)$ from $[0,1]\times \mathbb{P}^1$ and we can view  $\mathcal{W}$ as a closed graph embedded in  $[0,1]\times \mathbb{C}$ via a piece-wise linear isomorphism.

 There exists a complex coordinate $z$ in $\mathbb{C}$ such that the  projection $(p:[0,1]\times \mathbb{C}\to [0,1]\times\mathbb{R}, (t,z)\mapsto (t,\Re(z)))$ is generic, in the sense that the extra crossings in $p(\mathcal{W})$ arise as transversal intersection of only two wires $p(W_i)$ and $p(W_k)$ for certain $t\in [0,1]$ and wires $W_i, W_k$ that do not intersect in $\bar{f}^{-1}{(\beta(t))}$. We call these crossings virtual vertices. We obtain a planar diagram which can be represented as in the figure \ref{fig:Wiring}. 
 
 We assume that the order of the lines $L_1,\ldots, L_n$ is such that, at the very right of the planar representation of $\mathcal{W}$, the wire $W_1$ is at the bottom of $\mathcal{W}$, above it is the wire $W_2$ and then $W_3$, continuing in this way until $W_n$.
 
\begin{defn} Consider coordinates $(t,x,y)$ in $\mathbb{R}^3$.  We say that a wire $W_{i}$ passes above $W_{k}$ at a point $t'\in [0,1]$ if $(t',x,y_i)\in W_{i}, (t',x,y_k)\in W_{k}$ and $y_i<y_k$.
\end{defn}  
 In order to distinguish the virtual vertices arising in the projection we mark the projection $p(W_i)\cap p(W_k)$ to indicate if the wires over or under crossed in $\beta^\ast \mathcal{W}$ as in Figure \ref{fig:VirtualVertices}.  We call the first a positive braiding (or positive virtual vertex) and the second a negative braiding (or negative virtual vertex).
\begin{rem} As in the \cite{suciu}, we read the wiring diagram from right to left.
\end{rem}

\begin{figure}[h]
        \centering
      \begin{tikzpicture}
\draw(4.7,0)--(6,0)node [anchor=west] {$W_1$};
\draw (4.4,1)  -- (4.7,0);
\draw(4.1,1)--(4.4,1);
\draw (3.9,1.5)  -- (4.1,1);
\draw (3.1,1.5)--(3.9,1.5);
\draw(2.9,2)  -- (3.1,1.5);
\draw(0.4,2) node[anchor=east]{$W_1$} --(2.9,2);

\draw (2.1,.5)--(6,.5)node [anchor=west]{$W_2$} ;
\draw (1.9,1)  -- (2.1,.5);
\draw (1.1,1)--(1.9,1);
\draw(0.9,1.5)  -- (1.1,1);
\draw(0.4,1.5)node[anchor=east]{$W_2$}--(0.9,1.5);

\draw (5.5,1)--(6,1)node [anchor=west]{$W_3$};
\draw (5.2,2)  -- (5.5,1);
\draw (3.1,2)  -- (5.2,2);
\draw (2.9,1.5)  -- (3.1,2);
\draw (1.1,1.5)--(2.9,1.5);
\draw (.9,1)  -- (1.1,1.5);
\draw(0.4,1) node[anchor=east]{$W_3$} --(.9,1);

\draw(4.1,1.5)--(6,1.5)node[anchor=west]{$W_4$} ;
\draw(3.9,1)  -- (4.1,1.5);
\draw (2.1,1)--(3.9,1);
\draw(1.9,.5)  -- (2.1,1);
\draw (0.4,.5)node[anchor=east]{$W_4$}--(1.9,.5);

\draw(5.5,2)--(6,2)node[anchor=west]{$W_5$};
\draw(5.2,1)--(5.5,2);
\draw(4.7,1)--(5.2,1);
\draw(4.4,0)--(4.7,1);
\draw(0.4,0)node[anchor=east]{$W_5$}--(4.4,0);

\node at (3.2,-.2) {$\downarrow$};     

\draw(0.4,-.5) node[anchor=east]{$1$} -- (6,-.5)node[anchor=west]{$0$};
\draw(0.4,-.4) -- (0.4,-.6);
\draw(6,-.4) -- (6,-.6);

\end{tikzpicture}
        \caption{Wiring Diagram}
        \label{fig:Wiring}
\end{figure}
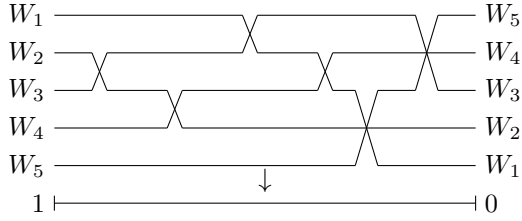

\begin{exmp}\label{ex:5linesWiring} Let $(z_1:z_2:z_3)$ be homogeneous coordinates of $\mathbb{P}^2$. Consider the arrangement consisting of two transverse pairs of parallel lines in $\mathbb{C}^2\cong \mathbb{P}^2 \setminus \{z_3=0\}$, defined by the equation $(z_2-z_1)(z_2-z_1+z_3) (z_2+z_1)(z_2+z_1-z_3)z_3=0$. The wiring diagram associated to this arrangement is shown in Figure \ref{fig:Wiring}. There are no virtual vertices since the arrangement is real and $\beta$ is a real segment.
\end{exmp}

\begin{rem} When no under or overcrossing is marked in a wiring diagram $\mathcal{W}$, it coincides with the notion of wiring diagram in \cite{matroids}. They are in correspondence with arrangement of "pseudo-lines", in particular there exists a wiring diagram of $9$ wires that does not comes from an arrangement of lines (the so called non-Pappus arrangement, see \cite[Proposition 8.3.1]{matroids}), however for $8$-wires or less they are in correspondence with the real arrangement of lines \cite[Thm 6.3.1]{matroids}.
\end{rem}

\subsection{Using the diagram to obtain presentations}

\subsubsection{Algorithm for computing a presentation of the fundamental group of $M(\A)$} \label{ss:AlgorithmComplement}

\medskip

We will use the following well-known Lemma.
\begin{lem}\label{lem:IsoFunda} Let $Z\subset X$ be an algebraic  subvariety of an algebraic smooth surface $X$. Fix a point $R\in X\setminus Z$. Denote by $\pi_R:\Bl_R X\to X$, then $\pi_1(X\setminus Z)\cong \pi_1(\Bl_R X \setminus \pi_R^*Z)$.
\end{lem}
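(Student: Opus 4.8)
The plan is to factor the comparison through the open set $X\setminus(Z\cup\{R\})$ and apply van Kampen's theorem twice, once on each side, with all the attaching pieces and all the intersections simply connected.

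First, since $R\notin Z$, the total transform $\pi_R^{*}Z=\pi_R^{-1}(Z)$ coincides with the strict transform of $Z$; it is disjoint from the exceptional divisor $E:=\pi_R^{-1}(R)\cong\mathbb{P}^1$, and $\pi_R$ restricts to a biholomorphism $\Bl_R X\setminus E\xrightarrow{\ \sim\ }X\setminus\{R\}$ carrying $\pi_R^{*}Z$ onto $Z$. In particular $\Bl_R X\setminus(\pi_R^{*}Z\cup E)\cong X\setminus(Z\cup\{R\})$, and the two spaces $X\setminus Z$ and $\Bl_R X\setminus\pi_R^{*}Z$ are both obtained from this common open set by re-attaching a small neighbourhood: a ball around $R$ in the first case, a tubular neighbourhood of $E$ in the second.

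For $X\setminus Z$, choose a small closed ball $B$ around $R$ contained in $X\setminus Z$, which is possible since $X$ is a smooth surface and $R\notin Z$. Then $X\setminus Z=(X\setminus(Z\cup\{R\}))\cup \Int(B)$, with $(X\setminus(Z\cup\{R\}))\cap \Int(B)=\Int(B)\setminus\{R\}$ homotopy equivalent to $S^{3}$, hence simply connected; since $\Int(B)$ is contractible, van Kampen gives $\pi_1(X\setminus Z)\cong\pi_1(X\setminus(Z\cup\{R\}))$. For $\Bl_R X\setminus\pi_R^{*}Z$, let $N$ be a closed tubular neighbourhood of $E$ in $\Bl_R X$, which we may take disjoint from $\pi_R^{*}Z$; then $\Bl_R X\setminus\pi_R^{*}Z=(X\setminus(Z\cup\{R\}))\cup \Int(N)$, with intersection $\Int(N)\setminus E$. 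Now $N$ deformation retracts onto $E\cong\mathbb{P}^1$, so $\pi_1(\Int(N))=1$, while $\Int(N)\setminus E$ is, up to homotopy, the unit circle bundle of the normal bundle $\mathcal{O}_{\mathbb{P}^1}(-1)$ of $E$, i.e.\ the Hopf bundle $S^{3}\to S^{2}$, which is simply connected. Van Kampen again yields $\pi_1(\Bl_R X\setminus\pi_R^{*}Z)\cong\pi_1(X\setminus(Z\cup\{R\}))$, and chaining the two isomorphisms proves the Lemma.

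The argument is entirely routine; the only point needing a (small) verification is that the two ``links'' appearing in these decompositions — the boundary $3$-sphere of a ball around $R$ and the boundary of a tubular neighbourhood of $E$ — are simply connected, which is immediate in both cases. Alternatively, the first reduction $\pi_1(X\setminus(Z\cup\{R\}))\cong\pi_1(X\setminus Z)$ may be obtained directly by general position, since a point has real codimension $4$ in the surface and so can be avoided both by loops and by homotopies of loops.
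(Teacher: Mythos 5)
Your proof is correct. The paper offers no argument for this lemma at all — it is invoked as ``well-known'' — so there is nothing to compare against; your double van Kampen argument through the common open set $X\setminus(Z\cup\{R\})$, using that the punctured $4$-ball around $R$ and the complement of $E$ in a tubular neighbourhood (the Hopf $S^3$, the circle bundle of $\mathcal{O}_{\mathbb{P}^1}(-1)$) are simply connected, is a complete and standard justification. One small remark worth keeping in mind: in the paper the lemma is used in the form ``the morphism $\Bl_R X\setminus\pi_R^*Z\to X\setminus Z$ induces an isomorphism on $\pi_1$'', and your argument does deliver this stronger, natural statement, since both van Kampen isomorphisms are induced by inclusions of $X\setminus(Z\cup\{R\})$ and these are intertwined by the restriction of $\pi_R$, which is a biholomorphism off $E$.
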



This allow us to compute $\pi_1(M(\A))$ in the total space of the fiber bundle $\bar{f}:\Bl_R\mathbb{P}^2\to \mathbb{P}^1$. We will find suitable subspaces of the total space of this fiber bundle to apply the van-Kampen Theorem.

Let $\mathcal{W}\subset \Bl_R\mathbb{P}^2$ be a wiring diagram. Let $\beta^*(\mathcal{W})\subset [0,1]\times \mathbb{P}^1$ be as in \ref{ss:PlanarWiring}. Every vertical line $t\times \mathbb{P}^1$ in $[0,1]\times \mathbb{P}^1$ corresponds to the fiber $\bar{f}^{-1}{(\beta(t))}$. Recall that if $p\in \bar{f}^{-1}{(\beta(t_p))}$ for $p\in \Sing \A$ and $t_p\in [0,1]$, then no other point in $\Sing \A$ lies in the same fiber. Suppose that there are $s$ points $t_{p_1},\ldots,t_{p_s}$ corresponding to $p_1,\ldots, p_s$ in $\Sing \A$. 

By fixing a planar representation $p(\beta^*(\mathcal{W}))$ of $\beta^*(\mathcal{W})$ as in \ref{ss:PlanarWiring}, some under or over-crossing can arise. As the projection is generic, they correspond to a finite number $t_1',\ldots, t_\nu'$ of elements of $[0,1]$ distinct from the $t_{p_r}$.

Order the set $\{t_{p_1},\ldots,t_{p_s},t_1',\ldots,t_{\nu}'\}$ by increasing order and relabel them by $t_\kappa$ for $\kappa=1,\ldots, \nu+s$. Let $B_\kappa\subset \mathbb{P}^1$ be a neighborhood of $\beta(t_\kappa)$ homeomorphic to a disk in $\mathbb{C}$ such that $B_\kappa\cap B_j=\varnothing$ if $\abs{\kappa-j}>1$ and $B_\kappa\cap B_{\kappa+1}$ is homeomorphic to a disk. Consider $M_\kappa:=\bar{f}^{-1}(B_\kappa)\subset \Bl_R(\mathbb{P}^2)$ for  $\kappa=1,\ldots, \nu+s$ and denote by $M_\kappa(\A):=M_\kappa\setminus M_\kappa\cap \pi_R^*\A$.

\begin{lem}\label{lem:FreeGroup}
We have that 
$$\pi_1(M_\kappa(\A)\cap M_{\kappa+1}(\A))\cong F_n \text{ for } \kappa=1,\ldots, \nu+s-1 $$ 
with $F_n$ the free group in $n$ generators.
\end{lem}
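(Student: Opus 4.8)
The plan is to analyze the intersection $M_\kappa(\A)\cap M_{\kappa+1}(\A)$ as a fiber bundle over a contractible base and show the fiber is a punctured sphere. First I would observe that $M_\kappa \cap M_{\kappa+1} = \bar f^{-1}(B_\kappa \cap B_{\kappa+1})$, where $B_\kappa \cap B_{\kappa+1}$ is homeomorphic to a disk in $\mathbb{C}$, hence contractible. By the choice of the $B_\kappa$ and the relabeling, the intersection $B_\kappa \cap B_{\kappa+1}$ contains none of the special values $\beta(t_\kappa)$: it lies in the interval of $\beta$ strictly between two consecutive special parameters, so over it the pencil has no singular point of $\A$ and no virtual vertex of the chosen planar representation. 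Consequently $\bar f$ restricted to $\bar f^{-1}(B_\kappa\cap B_{\kappa+1})$ is a locally trivial fibration whose base is contractible, so it is trivial: $\bar f^{-1}(B_\kappa\cap B_{\kappa+1}) \cong (B_\kappa\cap B_{\kappa+1}) \times \bar f^{-1}(\ast)$ for a generic fiber $\ast$, compatibly with $\pi_R^*\A$.

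Next I would identify the generic fiber with its trace of the arrangement removed. The generic fiber $\bar f^{-1}(\ast)$ is a line in $\mathbb{P}^2$ (more precisely its strict transform in $\Bl_R\mathbb{P}^2$, which maps isomorphically to a line through $R$), i.e.\ a copy of $\mathbb{P}^1$. It meets each of the $n+1$ lines $L_1,\dots,L_{n+1}$ of $\A$ in exactly one point; one of these intersection points is the point where the pencil line meets the line $L_{n+1}$ which, after removing the exceptional divisor over $R$ as in \ref{ss:PlanarWiring}, plays the role of the line at infinity and is excised when we pass from $\mathbb{P}^1$ to $\mathbb{C}$. Since $\ast$ is generic these $n+1$ points are distinct, so $\bar f^{-1}(\ast) \cap \pi_R^*\A$ consists of $n+1$ distinct points, and $\bar f^{-1}(\ast) \setminus \pi_R^*\A \cong \mathbb{P}^1 \setminus \{n+1 \text{ points}\} \cong \mathbb{C}\setminus\{n \text{ points}\}$, whose fundamental group is the free group $F_n$.

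Finally, combining the two steps, $M_\kappa(\A)\cap M_{\kappa+1}(\A) \cong (B_\kappa\cap B_{\kappa+1})\times(\mathbb{C}\setminus\{n\text{ points}\})$ is homotopy equivalent to $\mathbb{C}\setminus\{n\text{ points}\}$, so $\pi_1(M_\kappa(\A)\cap M_{\kappa+1}(\A)) \cong F_n$, as claimed.

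I expect the main obstacle to be the bookkeeping that guarantees the restricted map $\bar f$ is genuinely a locally trivial fibration over $B_\kappa\cap B_{\kappa+1}$ with the arrangement removed: one must be sure that no intersection point of $\A$ (equivalently, no vertex of the wiring diagram, real or virtual) projects into $B_\kappa\cap B_{\kappa+1}$, which is exactly what the ordering of the $t$'s and the disjointness conditions $B_\kappa\cap B_j=\varnothing$ for $|\kappa-j|>1$ are set up to provide; granting this, Ehresmann's theorem over the contractible base $B_\kappa\cap B_{\kappa+1}$ does the rest. The count of punctures on the generic fiber — accounting correctly for the line removed at infinity so that one gets $F_n$ and not $F_{n-1}$ or $F_{n+1}$ — is the other point that deserves care.
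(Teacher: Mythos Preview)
Your proposal is correct and follows essentially the same approach as the paper: both argue that $M_\kappa(\A)\cap M_{\kappa+1}(\A)=\bar f^{-1}(B_\kappa\cap B_{\kappa+1})$ is the restriction of a fiber bundle to a contractible base (since $B_\kappa\cap B_{\kappa+1}$ avoids $\bar f(\Sing\A)$), with generic fiber $\mathbb{P}^1$ minus $n+1$ points and hence $\pi_1\cong F_n$. You supply more detail than the paper (Ehresmann, the explicit puncture count), and your remark about virtual vertices is harmless but unnecessary---virtual vertices are artifacts of the planar projection, not singular values of $\bar f$, so only the actual points of $\Sing\A$ matter for local triviality.
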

\begin{proof}
First note that as $B_\kappa\cap B_{\kappa+1}\subset \mathbb{P}^1\setminus \{\bar{f}(p)\mid p\in \Sing \A\}$ we have that
$ M_\kappa(\A)\cap M_{\kappa+1}(\A)=\bar{f}^{-1}(B_\kappa\cap B_{\kappa+1})$
is the restriction of a fiber bundle to a contractible base. The fundamental group of any fiber in $B_\kappa\cap B_{\kappa+1}$ is a free group in $n$ generators.
\end{proof}

\begin{prop}\label{prop:VanKampen} We have that 
$$\pi_1(M(\A))\cong\pi_1(M_1(\A))\underset{\pi_1(M_1(\A)\cap M_2(\A))}{*}\cdots \underset{\pi_1(M_{\nu+s-1}(\A)\cap M_{\nu+s}(\A))}{*}\pi_1( M_{\nu+s}(\A)). $$
\end{prop}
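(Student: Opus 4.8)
The plan is to realize $M(\A)$ as the union of the open sets $M_1(\A),\ldots,M_{\nu+s}(\A)$ and apply the van Kampen theorem to this cover. First I would note that, by Lemma~\ref{lem:IsoFunda}, $\pi_1(M(\A))\cong\pi_1(\Bl_R\mathbb{P}^2\setminus\pi_R^*\A)$, so it suffices to compute the fundamental group of the total space of the fibration $\bar f$ minus the pulled-back arrangement. The base $\mathbb{P}^1$ is covered by the disks $B_1,\ldots,B_{\nu+s}$: since $\beta$ passes through every critical value $\bar f(p)$, $p\in\Sing\A$, and through every extra crossing value $t_j'$, and these are the only values of $t$ where the topological type of the fiber-with-arrangement changes, one arranges the $B_\kappa$ so that $\bigcup_\kappa B_\kappa\supset\beta([0,1])$ and, after a homotopy retracting $\mathbb{P}^1$ onto a neighborhood of $\beta([0,1])$ (or simply noting that the complement of $\beta$ contributes nothing since a generic fiber of $\bar f$ meets $\pi_R^*\A$ transversally and the piece over $\mathbb{P}^1\setminus\beta$ is a trivial bundle that deformation-retracts onto a fiber over a point of some $B_\kappa$), the $M_\kappa$ cover $\Bl_R\mathbb{P}^2\setminus\pi_R^*\A$. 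Thus $\pi_1(M(\A))$ is computed from this cover.

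Next I would set up the iterated van Kampen argument. The cover $\{M_\kappa(\A)\}$ is \emph{linear}: by the construction of the $B_\kappa$ one has $M_\kappa(\A)\cap M_j(\A)=\varnothing$ when $|\kappa-j|>1$, each $M_\kappa(\A)$ is path-connected (it fibers over a disk), and each pairwise intersection $M_\kappa(\A)\cap M_{\kappa+1}(\A)=\bar f^{-1}(B_\kappa\cap B_{\kappa+1})\setminus\pi_R^*\A$ is path-connected with $\pi_1\cong F_n$ by Lemma~\ref{lem:FreeGroup}, while the triple intersections are empty. Fixing a base point in $M_1(\A)\cap M_2(\A)$ and connecting the other pieces by paths through the chain of intersections, the van Kampen theorem for an open cover with this nerve (a path graph) gives precisely the iterated amalgamated product
$$\pi_1(M(\A))\cong\pi_1(M_1(\A))\underset{\pi_1(M_1(\A)\cap M_2(\A))}{*}\cdots\underset{\pi_1(M_{\nu+s-1}(\A)\cap M_{\nu+s}(\A))}{*}\pi_1(M_{\nu+s}(\A)),$$
the maps being those induced by the inclusions $M_\kappa(\A)\cap M_{\kappa+1}(\A)\hookrightarrow M_\kappa(\A)$ and $\hookrightarrow M_{\kappa+1}(\A)$.

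The one point that needs genuine care — and which I expect to be the main obstacle — is verifying that the $M_\kappa(\A)$ really do cover all of $\Bl_R\mathbb{P}^2\setminus\pi_R^*\A$, i.e. that nothing is lost over $\mathbb{P}^1\setminus\beta([0,1])$ and that the $B_\kappa$ can be chosen simultaneously overlapping consecutively, disjoint non-consecutively, and with the right intersection types. This is handled by choosing $\beta$ first, thickening it to a regular neighborhood $N\subset\mathbb{P}^1$ that is a disk containing all the $\beta(t_\kappa)$, observing that $\bar f^{-1}(N)\setminus\pi_R^*\A\hookrightarrow\Bl_R\mathbb{P}^2\setminus\pi_R^*\A$ is a deformation retract (the complement of $N$ being a disk over which $\bar f$ restricts to a trivial $\pi_R^*\A$-complement bundle, which retracts onto its fiber over $\partial N$), and then subdividing $N$ into the disks $B_\kappa$ along arcs of $\beta$ between consecutive critical values; over each component of $N\setminus\bigcup_\kappa\partial B_\kappa$ the bundle-with-arrangement is a product, which is what makes the $\pi_1$ of the overlaps free of rank $n$. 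Once the cover is in place and its nerve identified with a segment, the conclusion is a formal consequence of van Kampen and the connectivity statements of Lemmas~\ref{lem:FreeGroup} and~\ref{lem:IsoFunda}.
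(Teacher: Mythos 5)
Your overall strategy (reduce to $\Bl_R\mathbb{P}^2\setminus\pi_R^*\A$ via Lemma~\ref{lem:IsoFunda}, then run an iterated van Kampen argument along the linear chain $M_1(\A),\ldots,M_{\nu+s}(\A)$ using Lemma~\ref{lem:FreeGroup}) is the same as the paper's, and that part is fine. The genuine gap is in how you dispose of the part of the total space lying over the complement of $\bigcup_\kappa B_\kappa$. Two of your justifications are false as stated: $\mathbb{P}^1$ does \emph{not} deformation retract onto a neighborhood of the arc $\beta([0,1])$ (a sphere does not retract onto a disk), and $\bar f^{-1}(N)\setminus\pi_R^*\A\hookrightarrow\Bl_R\mathbb{P}^2\setminus\pi_R^*\A$ is \emph{not} in general a deformation retract. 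For a single line $L$ with $R\notin L$ one has $\Bl_R\mathbb{P}^2\setminus\pi_R^*L=\Bl_R\mathbb{C}^2\simeq S^2$, while $\bar f^{-1}(N)\setminus\pi_R^*L$ is a $\mathbb{C}$-bundle over a disk, hence contractible; gluing the trivialized piece over the complementary disk caps off circles and changes the homotopy type (it changes $H_2$), even though it happens not to change $\pi_1$. Also, the fact that the piece over the complementary disk ``deformation-retracts onto a fiber'' does not by itself show that gluing it in imposes no relation on $\pi_1$: by van Kampen over the gluing annulus, it imposes the relation killing the image in $\pi_1(\bar f^{-1}(N)\setminus\pi_R^*\A)$ of the horizontal boundary loop (equivalently, of a meridian of the fiber $\bar f^{-1}(\infty)$), and one must prove that this class is \emph{already} trivial there.

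That missing step is exactly what the paper supplies. It removes the fiber $D_\infty=\bar f^{-1}(\infty)$ over a point $\infty\notin\bigcup_\kappa B_\kappa$, uses Proposition~\ref{prop:meridians} to write $\pi_1(\Bl_R\mathbb{P}^2\setminus\pi_R^*\A)$ as the quotient of $\pi_1\bigl(f^{-1}(\mathbb{P}^1\setminus\{\infty\})\bigr)$ by the normal closure of a meridian $\gamma_{D_\infty}$, and then observes that this meridian can be chosen inside the exceptional section $\pi_R^{-1}(R)$ — which is disjoint from $\pi_R^*\A$ and whose restriction to $f^{-1}(\mathbb{P}^1\setminus\{\infty\})$ is a copy of $\mathbb{C}$ — so $\gamma_{D_\infty}$ bounds a disk and is already trivial. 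Only then is $\pi_1\bigl(f^{-1}(\mathbb{P}^1\setminus\{\infty\})\bigr)\cong\pi_1(M(\A))$, and $\bigcup_\kappa M_\kappa(\A)$ is homotopy equivalent to $f^{-1}(\mathbb{P}^1\setminus\{\infty\})$ since now the base $\mathbb{C}$ does retract onto the chain of disks rel the critical values. Your argument never invokes the exceptional section or the meridian of the fiber at infinity, so the claim that the cap at infinity ``contributes nothing'' is left unjustified; patch it with the Proposition~\ref{prop:meridians} argument above and the rest of your proof goes through.
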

\begin{proof}
By Lemma \ref{lem:IsoFunda}, we have that the morphism $\Bl_R\mathbb{P}^2\setminus \pi_R^* \A\to M(\A)=\mathbb{P}^2\setminus \A$ induces an isomorphism in the fundamental groups. 

Denote the restriction of $\bar{f}$ to $\Bl_R\mathbb{P}^2\setminus \pi_R^*\A$ by  $f:\Bl_R\mathbb{P}^2\setminus \pi_R^*\A\to \mathbb{P}^1$. Let $\infty\in \mathbb{P}^1\setminus \cup_{\kappa=1}^{\nu+s} B_\kappa$ and note that $f^{-1}(\mathbb{P}^1\setminus \{\infty\})$ is the complement in $\Bl_R\mathbb{P}^2\setminus \pi_R^*\A$ of a smooth irreducible divisor $D_\infty$ that is the restriction to $\Bl_R\mathbb{P}^2\setminus \pi_R^*\A$ of the strict transform of a line in $\mathbb{P}^2$ passing through $R$ .

By Proposition \ref{prop:meridians}, we have that 
$$\pi_1(\Bl_R\mathbb{P}^2\setminus \pi_R^* \A)=\pi_1\left(f^{-1}(\mathbb{P}^1\setminus \{\infty\})\right)/\langle\langle\gamma_{D_\infty}\rangle\rangle	 $$
where $\gamma_{D_\infty}$ is a meridian around $D_\infty$.

Note that, as $R\in \mathbb{P}^2\setminus \A$, we have that $\pi_R^{-1}(R)\subset \Bl_R\mathbb{P}^2\setminus \pi_R^*\A$ and its restriction to $f^{-1}(\mathbb{P}^1\setminus\{\infty\})$ is isomorphic to $\mathbb{C}$. The meridian $\gamma_{D_\infty}$ can be chosen to lie inside this restriction and therefore $\gamma_{D_\infty}=1$. We obtain that $\pi_1(f^{-1}(\mathbb{P}^1\setminus \{\infty\}))\cong \pi_1(M(\A))$.

Observe that $\cup_{\kappa=1}^{\nu+s}M_\kappa(\A)$ has the same homotopy as $(\Bl_R\mathbb{P}^2\setminus \A)\setminus f^{-1}(\infty)$. We conclude by successive applications of the van-Kampen Theorem: by construction $B_1\cap B_{\nu+s}=\varnothing$, we obtain that $\pi_1(\cup_{\kappa=1}^{\nu+s}M_\kappa(\A))$ is isomorphic to $$\pi_1(M_1(\A))\underset{\pi_1(M_1(\A)\cap M_2(\A))}{*}\cdots \underset{\pi_1(M_{\nu+s-1}(\A)\cap M_{\nu+s}(\A))}{*}\pi_1( M_{\nu+s}(\A))$$ 

\end{proof}

We want to compute now $\pi_1(M_\kappa(\A))$ for $\kappa=1,\ldots, \nu+s$ and the morphisms of amalgamation 
$\pi_1(M_\kappa(\A))\leftarrow \pi_1(M_\kappa(\A)\cap M_{\kappa+1}(\A))\to \pi_1(M_{\kappa+1}(\A)) $. In fact, if no point of $\Sing \A$ lies in $M_\kappa(\A)$ we will have that $\pi_1(M_\kappa(\A))\cong F_n$. However, some conjugations may arise in the meridians due to braiding of the wires in $\mathcal{W}$.

 We have to distinguish $3$ cases depending in the nature of $M_\kappa$: $M_\kappa$ contains a point of $\Sing \A$, it contains a \emph{positive braiding} of $\mathcal{W}$ or it contains a \emph{negative braiding}.

Let $\theta_\kappa<t_\kappa$ be sufficiently close so that $\beta(\theta_\kappa)\in B_\kappa\cap B_{\kappa-1}$ and denote by $x_1^{(\kappa)}, \ldots, x_{n+1}^{(\kappa)}$ the set of points in the planar representation $p(\beta^*(\mathcal{W}))$ of the wiring diagram $\mathcal{W}$ labeled from bottom to top corresponding to the points in $\bar{f}^{-1}{(\beta(\theta_\kappa)})\cap\A$.
\begin{defn}\label{def:generating set} A \emph{geometric generating set} $\Gamma^{(\kappa)}=\{\lambda_1^{(\kappa)},\ldots,\lambda_{n+1}^{(\kappa)}\}$ of the group $\pi_1(\bar{f}^{-1}{(\beta(\theta_\kappa)})\setminus (\bar{f}^{-1}{(\beta(\theta_\kappa)})\cap\A),q_\kappa)$ with $q_\kappa=\pi_R^{-1}(R)\cap \bar{f}^{-1}{(\beta(\theta_\kappa)})$ is the datum of $\lambda_1^{(\kappa)}, \ldots,$ $\lambda_{n+1}^{(\kappa)}$ meridians around $x_1^{(\kappa)},\ldots, x_{n+1}^{(\kappa)}$ respectively, all of them based at $q_\kappa$ such that $\lambda_{n+1}^{(\kappa)}\cdots\lambda_1^{(\kappa)}$ is nullhomotopic in $\bar{f}^{-1}{(\beta(\theta_\kappa)})\setminus \{x_1^{(\kappa)},\ldots,x_{n+1}^{(\kappa)}\}\cong \mathbb{P}^1\setminus \{(n+1)-\text{points}\}$.
\end{defn}

\begin{rem} A geometric generating set $\Gamma^{(\kappa)}=\{\lambda_1^{(\kappa)},\ldots,\lambda_{n+1}^{(\kappa)}\}$ induces a geometric base ${\Gamma^{(\kappa)}}'=\{\lambda_1^{(\kappa)},\ldots,\lambda_n^{(\kappa)}\}$ of $\pi_1(\mathbb{C}\setminus\{x_1^{(\kappa)},\ldots,x_n^{(\kappa)}\},q_\kappa)$.	
\end{rem}
 
  We consider here the geometric generating set  $\Gamma^{(\kappa)}=\{\lambda_1^{(\kappa)},\ldots, \lambda_{n+1}^{(\kappa)}\}$ as in figure \ref{fig:elgeobas}. As $\pi_1(\pi_R^{-1}(R))$ is trivial, we can fix a point $q\in \pi_R^{-1}(R)$ as a global base point for all the geometric generating set $\Gamma^{(\kappa)}$ with $\kappa=1, \ldots , \nu+s$ by joining $q_\kappa$ to $q$ by a simple path in $\pi_R^{-1}(R)$.

\begin{figure}[t]
\begin{subfigure}[b]{0.3\textwidth}
        \centering
\begin{tikzpicture} 
\draw (-.4,-1) node [left]{$\lambda_j^{(\kappa)}$};
\draw (-.4,0) node [left]{$\lambda_{j+1}^{(\kappa)}$};
\draw (-.4,1) node [left]{$\lambda_m^{(\kappa)}$};

\draw (0,-1) circle (.3);

\draw (0,-1.5) -- (0,-1.3);

\draw [arc arrow=to pos 0.35 with length 2mm] (-.5,0) to[out=-90,in=-90] 
(.5,0) [arc arrow=to pos 0.85 with length 2mm] 
to[out=90,in=90] cycle;
\draw  (0,-1.5) to[out=0,in=0] (.5,1);

\draw [arc arrow=to pos 0.35 with length 2mm] (-.5,1) to[out=-90,in=-90] 
(.5,1) [arc arrow=to pos 0.85 with length 2mm] 
to[out=90,in=90] cycle;
\draw  (0,-1.5) to[out=0,in=-90] (.5,0);

\foreach \Point in {(0,-1),(0,0), (0,1)}{
    \node at \Point {\textbullet};
}


\end{tikzpicture}
\caption{$\Gamma^{(\kappa)}$ in $\bar{f}^{-1}{(\beta(\theta_\kappa)})$}
\label{fig:elgeobas}        
    \end{subfigure}
~ ~ ~~~   
    \centering
     \begin{subfigure}[b]{0.3\textwidth}
        \centering
     
     \begin{tikzpicture} 

\draw (-.1,-1.5) node {\small $\lambda_m^{(\kappa)}$};
\draw (-.1,-0.5) node {\small $\lambda_{j+1}^{(\kappa)}$};
\draw (-.1,.5) node {\small $\lambda_j^{(\kappa)}$};

\draw (0,-1) circle (.3);

\draw  (.5,-1.5) to[out=20,in=-20] (.4,.8);

\draw [arc arrow=to pos 0.1 with length 2mm] (-.5,0) to[out=-90,in=-90] 
(.5,0) [arc arrow=to pos 0.6 with length 2mm] 
to[out=90,in=90] cycle;
\draw  (.5,-1.5) to[out=0,in=0] (0,2);
\draw  (0,2) to[out=180,in=180] (-.3,-1);

\draw [arc arrow=to pos 0.1 with length 2mm] (-.5,1) to[out=-90,in=-90] 
(.5,1) [arc arrow=to pos 0.6 with length 2mm] 
to[out=90,in=90] cycle;
\draw  (.5,-1.5) to[out=0,in=0] (0,1.6);
\draw  (0,1.6) to[out=180,in=180] (-.5,0);

\foreach \Point in {(0,-1),(0,0), (0,1)}{
    \node at \Point {\textbullet};
}


\end{tikzpicture}
\caption{$\Gamma^{(\kappa)}$ in $\bar{f}^{-1}{(\beta(\theta_{\kappa+1})})$ .}
\label{fig:DoubleBase}
    \end{subfigure}
    \caption{Geometric generating set in different fibers	}\label{fig:DifConj}
\end{figure}
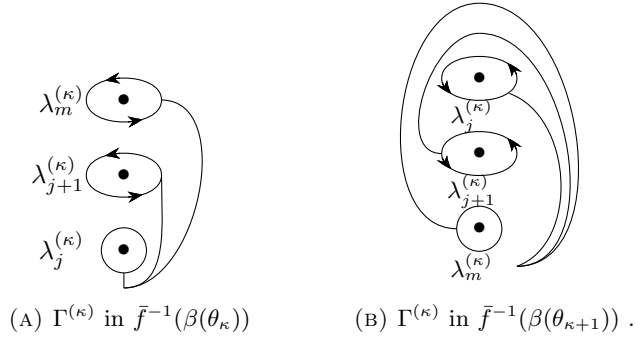

   We des\-cri\-be how the meridians change when we move the generators of $\Gamma^{(\kappa)}$ to the fiber $\bar{f}^{-1}{(\beta(\theta_{\kappa+1})})$ and express them in the generators $\Gamma^{(\kappa+1)}$, see figure \ref{fig:DoubleBase}. We record as well the relations arising in between. 
 
  Suppose that $p\in \Sing \A\cap  M_\kappa$, and let $\Gamma^{(\kappa)}$ be as above. Denote by $j$ the first index of the meridians of $\Gamma^{(\kappa)}$ corresponding to a line passing through $p$, and by $m$ the last. We have that $\lambda_k^{(\kappa+1)}=\lambda_k^{(\kappa)}$ for $k<j$ and $k>m$ as we can deform continuously $\lambda_k^{(\kappa)}$ to $\lambda_k^{(i+1)}$ having the same homotopy type in $\pi_1(M_\kappa(\A))$. 
  
Let $R_\kappa=[\lambda_m^{(\kappa)},\lambda_{m-1}^{(\kappa)},\ldots,\lambda_j^{(\kappa)}]$ denote the set of equations of the form $\lambda_m^{(\kappa)}\cdot \lambda_{m-1}^{(\kappa)}\cdots\lambda_j^{(\kappa)}= \lambda_{\sigma(m)}^{(\kappa)}\lambda_{\sigma(m-1)}^{(\kappa)}\cdots\lambda_{\sigma(j)}^{(\kappa)}$ where $\sigma$ varies in the set of cyclic permutations in $m-j+1$ elements.

\begin{figure}[h]
    \centering
      \begin{tikzpicture}
      
\node at (0,0) [anchor=north] {$p$};

\draw (-1,.75) node [anchor=east] {$a$}    -- (1,-.75) node [anchor=west] {$a$};
\draw (-1,0.25) node [anchor=east] {$b^a$} -- (1,-.25)node [anchor=west] {$b$} ;
\draw (-1,-.25) node [anchor=east] {$c^{ba}$}  -- (1,.25)node [anchor=west] {$c$};
\draw (-1,-.75) node [anchor=east] {$d^{cba}$}  -- (1,.75)node [anchor=west] {$d$};

\draw [dashed,->] (1,-1.1) node [anchor=north] {$\Gamma^{(\kappa)} $} -- (1,1.32);
\draw [dashed,->] (-1,-1.1) node [anchor=north] {$\Gamma^{(\kappa+1)} $} -- (-1,1.32);

\end{tikzpicture}
        \caption{Actual vertex}
   \label{fig:ActualVertices}
\end{figure}
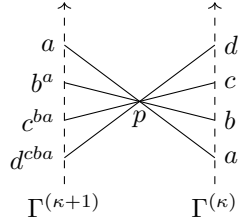

\begin{lem}\label{lem:ActualVertex} Let $p\in \Sing \A\cap M_\kappa$. Then $\pi_1(M_\kappa(\A),q_\kappa)$ is generated by the elements of $\Gamma^{(\kappa)}$ and $\Gamma^{(\kappa+1)}$ together with the relations $R_\kappa$, $\lambda_k^{(\kappa+1)}=\lambda_k^{(\kappa)}$ for $k<j$ or $m<k$, $\lambda_{n+1}^{(\kappa)}\cdots\lambda_1^{(\kappa)}=1$ and
\begin{align*}\label{eq:ConjugationActual}
&\lambda_m^{(\kappa+1)}=\lambda_j^{(\kappa)},\\
&\lambda_{m-1}^{(\kappa+1)}={\lambda_{j+1}^{(\kappa)}}^{{\lambda_j}^{(\kappa)}},\\
& \lambda_{m-2}^{(\kappa+1)}={\lambda_{j+2}^{(\kappa)}}^{{\lambda_{j+1}}^{(\kappa)}{\lambda_j}^{(\kappa)}},\\
&\vdots\\
&\lambda_j^{(\kappa+1)}={\lambda_m^{(\kappa)}}^{\lambda_{m-1}^{(\kappa)}\cdots\lambda_j^{(\kappa)}}.
\end{align*}
\end{lem}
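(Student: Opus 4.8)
The plan is to run a Zariski--van Kampen computation inside the pencil fibration $\bar f$, in the spirit of \cite{Randell,arvola,suciu} and as in \cite{aguilar2019fundamental}, splitting the neighbourhood $M_\kappa(\A)$ of the unique singular fibre into a piece recording only the local picture at $p$ and a piece fibring over a disk. First, since no line of $\A$ passes through $R$, each $L_i$ gives a section of $\bar f$, so $\bar f$ restricts to a locally trivial fibration $\bar f^{-1}(B_\kappa\setminus\{\beta(t_\kappa)\})\setminus\A\to B_\kappa\setminus\{\beta(t_\kappa)\}$ with fibre $\mathbb{P}^1$ minus $n+1$ points, the only fibre over $B_\kappa$ along which these sections collide being $F_0:=\bar f^{-1}(\beta(t_\kappa))$, where precisely the $r:=m-j+1$ lines through $p$ come together at $p$. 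Using that $B_\kappa$ deformation retracts onto the arc $\beta([\theta_\kappa,\theta_{\kappa+1}])$, which is piecewise linear and \emph{linear} near $\beta(t_\kappa)$, a standard good-neighbourhood argument lifts this retraction to a deformation retraction of $M_\kappa(\A)$ onto $\bar f^{-1}(\beta([\theta_\kappa,\theta_{\kappa+1}]))\setminus\A$; in particular the two fibres $F^{\pm}$ over $\beta(\theta_\kappa)$ and $\beta(\theta_{\kappa+1})$ carry $\Gamma^{(\kappa)}$ and $\Gamma^{(\kappa+1)}$, and after joining $q_\kappa$ and $q_{\kappa+1}$ to a common base point on $\pi_R^{-1}(R)$ (possible because $\pi_1(\pi_R^{-1}(R))=1$) all the $\lambda_i^{(\kappa)}$ and $\lambda_i^{(\kappa+1)}$ become loops based at $q_\kappa$.

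I would then apply van Kampen with $B:=\Delta_p\setminus\A$ a small bidisk neighbourhood of $p$, chosen so that $\bar f(\Delta_p)\supset B_\kappa$ and $\Delta_p$ meets every fibre over $B_\kappa$ in a disk containing exactly the $r$ cluster points and no other point of $\A$; then $B$ is diffeomorphic to the complement of $r$ concurrent lines in $\mathbb{C}^2$, so $\pi_1(B)=\langle\,\lambda_j^{(\kappa)},\dots,\lambda_m^{(\kappa)}\mid\lambda_m^{(\kappa)}\cdots\lambda_j^{(\kappa)}\ \text{central}\,\rangle$, the generators forming a geometric basis of $F^-\cap\Delta_p$ and the relations being exactly $R_\kappa$ (that $\lambda_m^{(\kappa)}\cdots\lambda_j^{(\kappa)}$ be central among the $\lambda_i^{(\kappa)}$ is the cyclic-permutation form of $R_\kappa$; this is the affine incarnation of Mumford's Lemma~\ref{lem:RelsInBundle} for the exceptional divisor over $p$). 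Letting $A:=M_\kappa(\A)\setminus\Delta_p'$ with $\Delta_p'\subset\Delta_p$ a slightly smaller such bidisk, the restriction of $\bar f$ makes $A$ a fibre bundle over the contractible $B_\kappa$ with constant fibre ($\mathbb{P}^1$ minus $n+1-r$ points and a disk); hence $\pi_1(A)$ is free on $\{\lambda_k^{(\kappa)}:k<j\text{ or }k>m\}$, parallel transport over $B_\kappa$ gives $\lambda_k^{(\kappa+1)}=\lambda_k^{(\kappa)}$ for those $k$, and the overlap $A\cap B$ is homotopy equivalent to a circle whose class is $\lambda_m^{(\kappa)}\cdots\lambda_j^{(\kappa)}$ in $\pi_1(B)$ and $\bigl(\lambda_{n+1}^{(\kappa)}\cdots\lambda_{m+1}^{(\kappa)}\bigr)^{-1}\bigl(\lambda_{j-1}^{(\kappa)}\cdots\lambda_1^{(\kappa)}\bigr)^{-1}$ in $\pi_1(A)$. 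Amalgamating over $\pi_1(A\cap B)\cong\mathbb{Z}$ then gives that $\pi_1(M_\kappa(\A),q_\kappa)$ is generated by $\Gamma^{(\kappa)}$ with relations $R_\kappa$ and $\lambda_{n+1}^{(\kappa)}\cdots\lambda_1^{(\kappa)}=1$ (the latter being the amalgamation relation rewritten); as a consistency check, for $\A$ equal to $r=n+1$ concurrent lines this returns $\pi_1\cong F_n$.

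It remains to express the cluster part $\lambda_j^{(\kappa+1)},\dots,\lambda_m^{(\kappa+1)}$ of the standard geometric generating set of $F^+$ in terms of $\Gamma^{(\kappa)}$, which is the heart of the statement. For this I would transport $\Gamma^{(\kappa)}$ along $\beta$ from $\beta(\theta_\kappa)$ to $\beta(\theta_{\kappa+1})$, pushing the fibre past the singular value along a small detour around $\beta(t_\kappa)$. Because $\beta$ is linear there, the $r$ cluster points $\bar f^{-1}(\beta(t))\cap\A$ move along rays through their common limit $p$, so the transport realises the half-twist braid $\Delta_r$ on those strands, the remaining strands being carried rigidly (no virtual vertex occurs at $t_\kappa$). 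Following the loops $\lambda_i^{(\kappa)}$ through this motion, exactly as in Figure~\ref{fig:ActualVertices} --- the bottom cluster strand crosses to the top and each other cluster strand is conjugated by the product of the strands it passes --- produces
\[
\lambda_m^{(\kappa+1)}=\lambda_j^{(\kappa)},\qquad
\lambda_{m-1}^{(\kappa+1)}=\bigl(\lambda_{j+1}^{(\kappa)}\bigr)^{\lambda_j^{(\kappa)}},\qquad\dots,\qquad
\lambda_j^{(\kappa+1)}=\bigl(\lambda_m^{(\kappa)}\bigr)^{\lambda_{m-1}^{(\kappa)}\cdots\lambda_j^{(\kappa)}}.
\]
Adjoining these identities, together with $\lambda_k^{(\kappa+1)}=\lambda_k^{(\kappa)}$ for $k<j$ or $m<k$, as definitions of the redundant generators $\Gamma^{(\kappa+1)}$ gives the asserted presentation; one verifies in passing that $\lambda_m^{(\kappa+1)}\cdots\lambda_j^{(\kappa+1)}=\lambda_m^{(\kappa)}\cdots\lambda_j^{(\kappa)}$, so the relations $R_\kappa$ and $\lambda_{n+1}^{(\kappa)}\cdots\lambda_1^{(\kappa)}=1$ hold equally for $\Gamma^{(\kappa+1)}$.

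The step I expect to be the main obstacle is this last one: pinning down the transport across $\beta(t_\kappa)$ as the correct half-twist --- relative to the specific geometric generating sets of Definition~\ref{def:generating set} and Figure~\ref{fig:elgeobas}, and for the orientation of the detour fixed by reading $\mathcal{W}$ from right to left --- so that the conjugating factors come out as the prefixes $\lambda_{m-1}^{(\kappa)}\cdots\lambda_j^{(\kappa)}$ rather than as suffixes, conjugates, or inverses, and so that ``bottom'' and ``top'' get attached to the right strands. A secondary technical point is the good-neighbourhood deformation retraction of the first paragraph and the precise nesting $\Delta_p'\subset\Delta_p$ making $A$ and $B$ fibre as claimed; these are routine but need care because of the degenerate fibre $F_0$.
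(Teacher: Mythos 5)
Your plan is correct and follows essentially the same route as the paper: a van Kampen decomposition of $M_\kappa(\A)$ into a product neighbourhood of $p$ (whose fundamental group is the concurrent-lines/Hopf-link group, yielding $R_\kappa$ and the centrality of $\lambda_m^{(\kappa)}\cdots\lambda_j^{(\kappa)}$) and its complement (free of rank $n-(m-j)$), with the relation $\lambda_{n+1}^{(\kappa)}\cdots\lambda_1^{(\kappa)}=1$ coming from the amalgamation and the conjugation formulas read off from transporting the geometric generating set past the singular fibre, exactly as encoded in the paper's figures. The only cosmetic difference is that you glue along an annular region disjoint from $\A$ (amalgamating over $\mathbb{Z}$), whereas the paper glues along a collar of $\partial V_p\setminus\A$ whose group equals the link group; both give the same presentation.
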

(see figures \ref{fig:DifConj} and \ref{fig:ActualVertices}.)
\begin{proof}
Let $V_p$ be a neighborhood around $p$ homeomorphic to a product $B_\kappa\times D$ with $D$ a disk not intersecting $L_k\in \A$ with $k<j$ or $k>m$.  The local fundamental $\pi_1(V_p\setminus \A)$ equals the fundamental group of the link associated to the singularity $p$ which is a Hopf link of $m-j+1$ circles (see \cite[Lemma 5.75]{Orlik}).

For the complement $M_\kappa(\A)\setminus V_p$ we have $\pi_1(M_\kappa(\A)\setminus V_p)\cong F_{n-(m-j)}$ and if $V'$ is a small neighborhood of $V_p$ we have that as $V_p\setminus \A$ retracts to $\partial V_p \setminus \A$ then $\pi_1((M_\kappa(\A)\setminus V_p)\cap V')\cong \pi_1(V_p\setminus \A)$. By van-Kampen we obtain the relation $\lambda_{n+1}^{(\kappa)}\cdots\lambda_1^{(\kappa)}=1$.
\end{proof}

\begin{lem}\label{lem:PosVirtual} Suppose that there is a \emph{positive braiding} of the wires $j$ and $j+1$ in $M_\kappa(\A)$. Then the group $\pi_1(M_\kappa(\A),q_\kappa)$ admits the presentation
$$\left\langle \lambda_1^{(\kappa)}, \ldots, \lambda_{n+1}^{(\kappa)}, \lambda_j^{(\kappa+1)}, \lambda_{j+1}^{(\kappa+1)} \mid \lambda_{j+1}^{(\kappa+1)}=\lambda_j^{(\kappa)},\lambda_j^{(\kappa+1)}={\lambda_{j+1}^{(\kappa)}}^{\lambda_j^{(\kappa)}} \right\rangle $$
(See fig. \ref{fig:PosBraiding}.)
\end{lem}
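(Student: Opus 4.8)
The plan is to mirror the argument of Lemma \ref{lem:ActualVertex}, but with the local model replaced by a trivial (unbraided) piece that nonetheless records the effect of an over-crossing of two consecutive wires. First I would isolate the positive braiding inside a neighborhood $V\subset M_\kappa$ of the virtual vertex, chosen homeomorphic to a product $B_\kappa\times D$ with $D$ a disk in $\mathbb{P}^1$ meeting only the two wires $W_j$ and $W_{j+1}$ and none of the other lines $L_k$. Since the two wires do not actually intersect in any fiber $\bar f^{-1}(\beta(t))$ over $B_\kappa$, the space $V\setminus \A$ is a fiber bundle over the contractible base $B_\kappa$ with fiber a twice-punctured disk, hence $\pi_1(V\setminus\A)\cong F_2$; in particular no new relation is contributed locally, which accounts for the absence of relations of type $R_p$ in the statement.

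Next I would track the two meridians across the braiding. The parametrized motion of the points $x_j,x_{j+1}$ in the fibers, as $t$ runs through the virtual vertex, is a half-twist of the two strands; since the braiding is \emph{positive}, wire $j$ passes above wire $j+1$ (in the sense of the orientation convention fixed before the definition of positive braiding), so the strand that was in position $j$ ends in position $j+1$ and vice versa, with the monodromy of the twice-punctured-disk bundle acting on the geometric generators exactly as the corresponding generator of the braid group $B_2$. Computing that action on the geometric generating set $\Gamma^{(\kappa)}$ of figure \ref{fig:elgeobas} — comparing it with $\Gamma^{(\kappa+1)}$ in the fiber $\bar f^{-1}(\beta(\theta_{\kappa+1}))$ as in figure \ref{fig:DoubleBase} — yields precisely $\lambda_{j+1}^{(\kappa+1)}=\lambda_j^{(\kappa)}$ and $\lambda_j^{(\kappa+1)}={\lambda_{j+1}^{(\kappa)}}^{\lambda_j^{(\kappa)}}$, while all other meridians $\lambda_k^{(\kappa)}$ for $k\neq j,j+1$ deform unchanged (these equalities $\lambda_k^{(\kappa+1)}=\lambda_k^{(\kappa)}$ being absorbed, or simply left implicit, since they identify generators). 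Finally I would assemble the pieces by van Kampen: writing $M_\kappa(\A)$ as the union of $V\setminus\A$ with the complement of a smaller neighborhood $V'\supset V$, whose fundamental group is free on the remaining $n-1$ meridians, glued along $\pi_1((M_\kappa(\A)\setminus V)\cap V')\cong\pi_1(V\setminus\A)\cong F_2$, gives the asserted presentation with generators $\lambda_1^{(\kappa)},\ldots,\lambda_{n+1}^{(\kappa)},\lambda_j^{(\kappa+1)},\lambda_{j+1}^{(\kappa+1)}$ and only the two braiding relations.

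The main obstacle I expect is purely bookkeeping rather than conceptual: getting the direction of the conjugation and the roles of $j$ versus $j+1$ exactly right, i.e. verifying that a \emph{positive} virtual vertex gives $\lambda_j^{(\kappa+1)}={\lambda_{j+1}^{(\kappa)}}^{\lambda_j^{(\kappa)}}$ and not the mirror-image formula (which would be the content of the companion lemma for a negative braiding, cf. figure \ref{fig:VirtualVertices}). This amounts to carefully matching the chosen orientation of the loops in $\Gamma^{(\kappa)}$, the left-to-right reading convention of the wiring diagram (Remark following Definition \ref{def:wire}), and the sign convention of figure \ref{fig:PosBraiding}; once these are pinned down, the half-twist monodromy computation is the standard one for the Artin action of $B_2$ on $F_2$, and the van Kampen gluing is immediate from the product structure of $V$.
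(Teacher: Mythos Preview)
Your proposal is correct and follows essentially the same approach as the paper. The paper's proof is more terse: it notes, as you do, that the meridians $\lambda_k^{(\kappa)}$ with $k\neq j,j+1$ deform unchanged (by the same argument as in Lemma \ref{lem:ActualVertex}), and then reads off the two relations directly as the Wirtinger presentation of the half-twist, referring to figure \ref{fig:DoubleBase} and to \cite[Lemmas 5.73, 5.74]{Orlik}; it does not spell out the van Kampen decomposition with a local neighborhood $V$ as you do, since the absence of an actual intersection makes $M_\kappa(\A)$ a trivial bundle over $B_\kappa$ and the whole computation reduces to comparing the two geometric generating sets in a single fiber.
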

\begin{proof}
As in lemma \ref{lem:ActualVertex}, we have that we can deform $\lambda_{k}^{(\kappa+1)}$ to $\lambda_k^{(\kappa)}$ for $k<j$ or $j+1<k$ without changing the homotopy type.

The result follows from the Wirtinger presentation of a braid interchanging the $j$ and the $j+1$ wire: consider the meridians $\lambda_{j}^{(\kappa)},\lambda_{j+1}^{(\kappa)}$ in $\bar{f}^{-1}(\beta(\theta_{\kappa+1}))$ as in figure \ref{fig:DoubleBase}. Note that in $\pi_1(\bar{f}^{-1}(\beta(\theta_{\kappa+1})),q_{\kappa+1})$ these meridians satisfy the relations: 
$$\lambda_{j+1}^{(\kappa+1)}=\lambda_j^{(\kappa)}, \quad \lambda_j^{(\kappa+1)}={\lambda_{j+1}^{(\kappa)}}^{\lambda_{j+1}^{(\kappa+1)}}={\lambda_{j+1}^{(\kappa)}}^{\lambda_j^{(\kappa)}}.$$
This can be seen directly from Figure \ref{fig:DoubleBase}. (C.f. \cite[Lemmas 5.73, 5.74]{Orlik}.)
\end{proof}

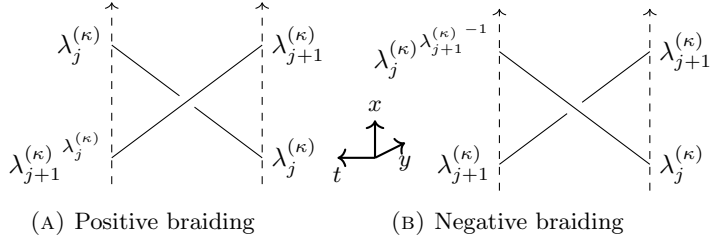
\begin{figure}[t]
\begin{subfigure}[b]{0.3\textwidth}
        \centering
         \begin{tikzpicture}
      \draw [thick, ->] (2.5,-0.75) -- (2,-0.75);
  \draw [thick, ->] (2.5,-0.75) -- (2.5,-.25);
    \draw [thick, ->] (2.5,-0.75) -- (2.9,-.55);      

  \node [below] at (2,-0.75) {\small $t$};
    \node [above] at (2.5,-.25) {\small $x$};
        \node [below] at (2.9,-.6) {\small $y$};

\draw (-1,0.75) node [anchor=east] {${\lambda_j^{(\kappa)}}$}    -- (-.1,.075);      
\draw (.1,-0.075)  -- (1,-.75) node [anchor=west] {$\lambda_j^{(\kappa)}$};      

\draw (-1,-.75) node [anchor=east] {${\lambda_{j+1}^{(\kappa)}}^{\lambda_j^{(\kappa)}}$}  -- (1,.75)node [anchor=west] {$\lambda_{j+1}^{(\kappa)}$};

\draw [dashed,->] (1,-1.1)  -- (1,1.32);
\draw [dashed,->] (-1,-1.1) -- (-1,1.32);
\end{tikzpicture} 
        \caption{Positive braiding}
        \label{fig:PosBraiding}
    \end{subfigure}
~ ~ ~~~   
    \centering
     \begin{subfigure}[b]{0.3\textwidth}
        \centering
     
      \begin{tikzpicture}

\draw (-1,.75) node [anchor=east] {${\lambda_j^{(\kappa)}}^{{\lambda_{j+1}^{(\kappa)}}^{-1}}$}    --(1,-.75) node [anchor=west] {$\lambda_j^{(\kappa)}$};      

\draw (-1,-.75) node [anchor=east] {${\lambda_{j+1}^{(\kappa)}}$}  -- (-.1,-0.075);
\draw (.1,0.075) -- (1,.75)node [anchor=west] {$\lambda_{j+1}^{(\kappa)}$};

\draw [dashed,->] (1,-1.1)  -- (1,1.32);
\draw [dashed,->] (-1,-1.1)  -- (-1,1.32);

\end{tikzpicture}
        \caption{Negative braiding}
        \label{fig:NegativeBraiding}
    \end{subfigure}
    \caption{Braiding in $\mathcal{W}$}\label{fig:VirtualVertices}
\end{figure}

\begin{lem}\label{lem:NegVirtual} Suppose that there is a \emph{negative braiding} in $M_\kappa(\A)$, then the group $\pi_1(M_\kappa(\A),q_\kappa)$ admits the presentation
$$\left\langle \lambda_1^{(\kappa)}, \ldots, \lambda_{n+1}^{(\kappa)}, \lambda_j^{(\kappa+1)}, \lambda_{j+1}^{(\kappa+1)} \mid \lambda_{j+1}^{(\kappa+1)}={\lambda_j^{(\kappa)}}^{{\lambda_{j+1}^{(\kappa)}}^{-1}},
\lambda_{j}^{(\kappa+1)}=\lambda_{j+1}^{(\kappa)} \right\rangle $$
(See fig. \ref{fig:NegativeBraiding}.)
\end{lem}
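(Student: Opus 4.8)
The proof of Lemma \ref{lem:NegVirtual} should follow exactly the same template as the proof of Lemma \ref{lem:PosVirtual}, with the roles of over- and under-crossings exchanged. The plan is as follows. First I would note, as in Lemma \ref{lem:ActualVertex} and Lemma \ref{lem:PosVirtual}, that the only wires affected are $W_j$ and $W_{j+1}$; hence for $k<j$ or $k>j+1$ the meridian $\lambda_k^{(\kappa)}$ can be deformed continuously to $\lambda_k^{(\kappa+1)}$ without changing its homotopy class in $\pi_1(M_\kappa(\A),q_\kappa)$, so these give no new relations and we may concentrate on the local picture near the negative braiding. Since $M_\kappa$ contains no point of $\Sing\A$, the bundle structure away from the braiding is trivial and $\pi_1(M_\kappa(\A))$ is free; all the content is in how the two affected meridians transform.

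Next I would reduce to the standard Wirtinger computation for a single negative crossing of strands $j$ and $j+1$. Concretely, take the geometric generating set $\Gamma^{(\kappa)}$ of Definition \ref{def:generating set} in the fiber $\bar f^{-1}(\beta(\theta_\kappa))$ and transport its elements across $B_\kappa$ to the fiber $\bar f^{-1}(\beta(\theta_{\kappa+1}))$, reading off the new meridians from the planar diagram as in Figure \ref{fig:DoubleBase}. Because the crossing is negative — $W_{j+1}$ passes \emph{above} $W_j$, opposite to the positive case of Figure \ref{fig:PosBraiding} — the bottom strand of $\Gamma^{(\kappa)}$, whose meridian is $\lambda_j^{(\kappa)}$, becomes the upper strand and its meridian is conjugated by the loop that now encircles what was below it, namely $\lambda_{j+1}^{(\kappa)}$, but traversed in the reverse orientation; this yields $\lambda_{j+1}^{(\kappa+1)} = {\lambda_j^{(\kappa)}}^{{\lambda_{j+1}^{(\kappa)}}^{-1}}$. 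The strand whose meridian was $\lambda_{j+1}^{(\kappa)}$ simply slides underneath without picking up any conjugation, giving $\lambda_j^{(\kappa+1)} = \lambda_{j+1}^{(\kappa)}$. This is precisely the inverse of the Hurwitz move realized by a positive braiding, which one can also cite directly from \cite[Lemmas 5.73, 5.74]{Orlik}.

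Finally I would assemble the presentation: $\pi_1(M_\kappa(\A),q_\kappa)$ is generated by $\lambda_1^{(\kappa)},\ldots,\lambda_{n+1}^{(\kappa)}$ together with the two new generators $\lambda_j^{(\kappa+1)}$, $\lambda_{j+1}^{(\kappa+1)}$, subject only to the two relations just derived (the relations $\lambda_k^{(\kappa+1)} = \lambda_k^{(\kappa)}$ for the other indices being used to eliminate redundant generators, exactly as in Lemma \ref{lem:PosVirtual}). The one point requiring a little care — and the step I expect to be the main obstacle — is fixing the orientation conventions so that the conjugating element and the direction of conjugation come out correctly: one must check against Figure \ref{fig:NegativeBraiding} and Figure \ref{fig:DoubleBase} that it is ${\lambda_{j+1}^{(\kappa)}}^{-1}$, and not $\lambda_{j+1}^{(\kappa)}$, that appears, and that the base point $q_{\kappa+1}\in\pi_R^{-1}(R)$ is joined to $q_\kappa$ by the same path used throughout, so that the two local presentations glue consistently in the van Kampen amalgamation of Proposition \ref{prop:VanKampen}. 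Once the sign is pinned down the verification is a routine isotopy argument in the punctured fiber.
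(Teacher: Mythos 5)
Your proposal is correct and is exactly the argument the paper intends: the paper proves the positive-braiding case (Lemma \ref{lem:PosVirtual}) by deforming the unaffected meridians and reading the Wirtinger relations for the single crossing off Figure \ref{fig:DoubleBase} (citing \cite[Lemmas 5.73, 5.74]{Orlik}), and leaves the negative case to the same mirrored computation, which is what you carry out, with the correct relations $\lambda_{j+1}^{(\kappa+1)}={\lambda_j^{(\kappa)}}^{{\lambda_{j+1}^{(\kappa)}}^{-1}}$ and $\lambda_j^{(\kappa+1)}=\lambda_{j+1}^{(\kappa)}$. Your attention to the orientation convention determining that the conjugator is ${\lambda_{j+1}^{(\kappa)}}^{-1}$ is precisely the only point of care, so no gap remains.
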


We can summarize the information carried by a wiring diagram $\mathcal{W}$ and the changes in the geometric sets $\Gamma^{(\kappa)}$ as they cross a vertex in $\mathcal{W}$ as follows. 

 For every $t_\kappa\in\{t_1,\ldots, t_{\nu+s}\}$ there exists a crossing $p_\kappa$ in the planar representation of $\mathcal{W}$, let $\Pi^{(\kappa)}=\{\sigma^{(\kappa)}(1)<\ldots<\sigma^{(\kappa)}(n+1)\}$ be an ordered set, with $\sigma^{(\kappa)}$ a permutation of $\{1,\ldots,n+1\}$ such that the $k$-th element $\sigma^{(\kappa)}(k)$ records the position of the wire $W_{\sigma^{(\kappa)}(k)}$ in the fiber $\bar{f}^{-1}{(\beta(\theta_\kappa)})$, when $\mathcal{W}$ is read from bottom to top, with $\theta_\kappa$ as in Definition \ref{def:generating set}. This is, $x_k^{(\kappa)}\in W_{\sigma^{(\kappa)}(k)}$ for $k=1,\ldots, n+1$. Note that $\sigma^{(1)}=id$.

 The order in $\Pi^{(\kappa)}$  records the \emph{local position} of the wires of $\mathcal{W}$ in $\bar{f}^{-1}{(\beta(\theta_\kappa)})$, while the order $\{1,\ldots,n+1\}$ induced from the order of the lines in $\A$ is a \emph{global order}. For a wire $W_k$ of $\mathcal{W}$, we write ${\sigma^{(\kappa)}}^{-1}(k)$ to indicate that the wire $W_k$ is in the ${\sigma^{(\kappa)}}^{-1}(k)$  position in the fiber $\bar{f}(\beta(\theta_\kappa))$.
 
 Consider the free group $F_{n+1}^{(\kappa)}$ generated by the meridians in $\Gamma^{(\kappa)}$ and let $\tau^{(\kappa)}:\{1,\ldots, n+1\}\to F_{n+1}^{(\kappa)}$ defined as follows: 

Suppose that the crossing $p_\kappa$ corresponding to $t_\kappa$ satisfies $p_\kappa=W_{\sigma^{(\kappa)}(j)}\cap W_{\sigma^{(\kappa)}(j+1)}\cap \ldots \cap W_{\sigma^{(\kappa)}(m)}$, then $$\tau^{(\kappa)}(k)=\left\lbrace \begin{array}{ll}
e  & \text{ for } k=1,\ldots,j,m+1,\ldots,n+1,\\
\lambda_{k-1}^{(\kappa)}\cdots\lambda_j^{(\kappa)}   & \text{ for } j<k\leq m,

\end{array} \right. $$  if $t_\kappa$ is an actual vertex, 
$$\tau^{(\kappa)}(k)=\left\lbrace \begin{array}{ll}
e  & \text{ for } k=1,\ldots,j,j+2,\ldots,n+1,\\
\lambda_{j}^{(\kappa)}   & \text{ for } k=j+1,

\end{array} \right. $$
if $t_\kappa$ is a positive virtual vertex, and 
$$\tau^{(\kappa)}(k)=\left\lbrace \begin{array}{ll}
e  & \text{ for } k=1,\ldots,j-1,j+1,\ldots,n+1,\\
(\lambda_{j+1}^{(\kappa)})^{-1}   & \text{ for } k=j,

\end{array} \right. $$
if $t_\kappa$ is a negative virtual vertex. 

The  Lemmas \ref{lem:ActualVertex}, \ref{lem:PosVirtual} and \ref{lem:NegVirtual} imply the following proposition.
\begin{prop}\label{prop:ConjTogether} Let $\Gamma^{(\kappa)}=\{\lambda_1^{(\kappa)},\ldots, \lambda_{n+1}^{(\kappa)}\},\Gamma^{(\kappa+1)}=\{\lambda_1^{(\kappa+1)},\ldots,\lambda_{n+1}^{(\kappa+1)}\}$ be geometric generating set as in \ref{ss:AlgorithmComplement} and suppose that $p_\kappa\in M_\kappa$. Then we have that in $\pi_1(M_\kappa(\A),q_\kappa)$:
$$\lambda_{{\sigma^{(\kappa+1)}}^{-1}(\sigma^{(\kappa)}(k))}^{(\kappa+1)}=(\lambda_k^{(\kappa)})^{\tau^{(\kappa)}(k)}\quad \text{ for } k=1,\ldots, n+1, $$
or equivalently,
$$\lambda_k^{(\kappa+1)}=(\lambda_{{\sigma^{(\kappa)}}^{-1}(\sigma^{(\kappa)}(k))}^{(\kappa)})^{\tau^{(\kappa)}\left({\sigma^{(\kappa)}}^{-1}(\sigma^{(\kappa+1)}(k))\right)} \quad  \text{ for } k=1,\ldots, n+1. $$
\end{prop}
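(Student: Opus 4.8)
The plan is to deduce the statement directly from Lemmas~\ref{lem:ActualVertex}, \ref{lem:PosVirtual} and~\ref{lem:NegVirtual} by translating them into the permutation language fixed just before the Proposition. The key preliminary observation, which I would establish first, concerns the behaviour of the local order as one crosses the unique crossing $p_\kappa$ contained in $M_\kappa$: the wires $W_{\sigma^{(\kappa)}(j)},\dots,W_{\sigma^{(\kappa)}(m)}$ meeting at $p_\kappa$ occupy the consecutive local positions $j,\dots,m$ in the fibre $\bar f^{-1}(\beta(\theta_\kappa))$ (since no other crossing occurs in $M_\kappa$), and upon passing to $\bar f^{-1}(\beta(\theta_{\kappa+1}))$ they \emph{reverse} their vertical order while every other wire keeps its place. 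For an actual vertex this is the familiar fact that concurrent lines exchange the order of their directions across the common point; for a virtual vertex it is simply the transposition of two adjacent wires, i.e.\ the special case $m=j+1$. Consequently, if $\rho^{(\kappa)}$ denotes the involution of $\{1,\dots,n+1\}$ acting by $k\mapsto j+m-k$ on $\{j,\dots,m\}$ and trivially elsewhere, then $\sigma^{(\kappa+1)}\circ\rho^{(\kappa)}=\sigma^{(\kappa)}$, hence $({\sigma^{(\kappa+1)}})^{-1}\circ\sigma^{(\kappa)}=\rho^{(\kappa)}$, and the left-hand side of the asserted identity is nothing but $\lambda^{(\kappa+1)}_{\rho^{(\kappa)}(k)}$.

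After this reduction, what remains is the identity
\[
\lambda^{(\kappa+1)}_{\rho^{(\kappa)}(k)}=\bigl(\lambda^{(\kappa)}_k\bigr)^{\tau^{(\kappa)}(k)}\qquad(k=1,\dots,n+1),
\]
which I would check by going through the three types of $p_\kappa$. When $k\notin\{j,\dots,m\}$ one has $\rho^{(\kappa)}(k)=k$ and $\tau^{(\kappa)}(k)=e$, and each of the three lemmas records precisely $\lambda^{(\kappa+1)}_k=\lambda^{(\kappa)}_k$ (the ``deform without changing the homotopy type'' part of their proofs). For $k\in\{j,\dots,m\}$ at an actual vertex, Lemma~\ref{lem:ActualVertex} lists exactly $\lambda^{(\kappa+1)}_{j+m-k}=(\lambda^{(\kappa)}_k)^{\lambda^{(\kappa)}_{k-1}\cdots\lambda^{(\kappa)}_j}$, its first line $\lambda^{(\kappa+1)}_m=\lambda^{(\kappa)}_j$ being $k=j$ with empty product $e$ and its last line being $k=m$; and $\lambda^{(\kappa)}_{k-1}\cdots\lambda^{(\kappa)}_j$ is by definition $\tau^{(\kappa)}(k)$. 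For a positive braiding, Lemma~\ref{lem:PosVirtual} gives $\lambda^{(\kappa+1)}_{j+1}=\lambda^{(\kappa)}_j$ (case $k=j$: $\rho^{(\kappa)}(j)=j+1$, $\tau^{(\kappa)}(j)=e$) and $\lambda^{(\kappa+1)}_j=(\lambda^{(\kappa)}_{j+1})^{\lambda^{(\kappa)}_j}$ (case $k=j+1$: $\rho^{(\kappa)}(j+1)=j$, $\tau^{(\kappa)}(j+1)=\lambda^{(\kappa)}_j$); for a negative braiding, Lemma~\ref{lem:NegVirtual} gives $\lambda^{(\kappa+1)}_j=\lambda^{(\kappa)}_{j+1}$ ($k=j+1$, $\tau^{(\kappa)}(j+1)=e$) and $\lambda^{(\kappa+1)}_{j+1}=(\lambda^{(\kappa)}_j)^{(\lambda^{(\kappa)}_{j+1})^{-1}}$ ($k=j$, $\tau^{(\kappa)}(j)=(\lambda^{(\kappa)}_{j+1})^{-1}$). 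This exhausts the cases and yields the first displayed identity; the equivalent form follows by substituting $k\mapsto\rho^{(\kappa)}(k)$ and using $(\rho^{(\kappa)})^2=\mathrm{id}$ together with $\rho^{(\kappa)}=({\sigma^{(\kappa)}})^{-1}\circ\sigma^{(\kappa+1)}$, so that $\rho^{(\kappa)}(k)=({\sigma^{(\kappa)}})^{-1}(\sigma^{(\kappa+1)}(k))$.

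Since the argument is in essence a matter of bookkeeping, the only genuine difficulty — and the step I would be most careful about — is keeping all directions consistent: that the wires through $p_\kappa$ reverse rather than preserve their order; that this is encoded by $\sigma^{(\kappa+1)}=\sigma^{(\kappa)}\circ\rho^{(\kappa)}$ and not by the opposite convention; and that the conjugating word $\lambda^{(\kappa)}_{k-1}\cdots\lambda^{(\kappa)}_j$ appearing in Lemma~\ref{lem:ActualVertex} (read in the order dictated by Figures~\ref{fig:DifConj} and~\ref{fig:ActualVertices}) is literally $\tau^{(\kappa)}(k)$, not its inverse or its reverse word. I would sanity-check the indices and the conjugators against the quadruple point of Figure~\ref{fig:ActualVertices} and the two braidings of Figure~\ref{fig:VirtualVertices} before recording the general formula.
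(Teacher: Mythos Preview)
Your proposal is correct and follows exactly the route the paper intends: the paper's entire proof is the one-line remark ``The Lemmas \ref{lem:ActualVertex}, \ref{lem:PosVirtual} and \ref{lem:NegVirtual} imply the following proposition,'' together with the note immediately after the statement computing ${\sigma^{(\kappa+1)}}^{-1}(\sigma^{(\kappa)}(k))$ as the reversal on $\{j,\dots,m\}$ --- precisely your $\rho^{(\kappa)}$. Your write-up is simply a careful unpacking of that bookkeeping, case by case, and matches the paper's conventions throughout.
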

Note that if $p_\kappa=W_{\sigma^{(\kappa)}(j)}\cap \ldots\cap W_{\sigma^{(\kappa)}(m)}$ we have that
$${\sigma^{(\kappa+1)}}^{-1}(\sigma^{(\kappa)}(k))= \left\lbrace \begin{array}{ll}
k & \text{for } k=1, \ldots,j-1,m+1,\ldots,n+1,\\
m-\iota & \text{for } k=j+\iota \text{ and } \iota=0, \ldots, m-j.
\end{array} \right. $$

As the fundamental group of $M(\A)$ is generated by the meridians around each line, we fix the geometric generating set $\Gamma^{(1)}=\{\lambda_1^{(1)},\ldots,\lambda_{n+1}^{(1)}\}=\{\lambda_1,\ldots, \lambda_{n+1}\}\subset M_1(\A)$.

\begin{thm}\label{thm:Arvola}
Let $\A=\{L_1,\ldots, L_{n+1}\}$ be a complex arrangement of lines in $\mathbb{P}^2$ and let $\Gamma^{(1)}$ be a geometric generating set as above. A
 presentation for the fundamental group of $M(\A)$ is given by
$$\pi_1(M(\A),q)=\left\langle \lambda_1,\ldots,\lambda_{n+1} \mid \bigcup_\kappa R_\kappa, \lambda_{n+1}\cdots \lambda_1 \right\rangle$$
with $R_\kappa$ as Lemma \ref{lem:ActualVertex} and each $\kappa$ corresponding to a point $p_\kappa\in\Sing \A$.
\end{thm}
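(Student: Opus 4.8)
The plan is to assemble the presentation from the local pieces established in Lemmas \ref{lem:ActualVertex}, \ref{lem:PosVirtual} and \ref{lem:NegVirtual} via the iterated van Kampen decomposition of Proposition \ref{prop:VanKampen}. Concretely, one runs through the ordered list of vertices $t_1<\dots<t_{\nu+s}$ and tracks the chain of amalgamated free products $\pi_1(M_1(\A))\ast_{\pi_1(M_1\cap M_2)}\cdots\ast_{\pi_1(M_{\nu+s-1}\cap M_{\nu+s})}\pi_1(M_{\nu+s}(\A))$. Each $\pi_1(M_\kappa(\A))$ is, by the three lemmas, generated by $\Gamma^{(\kappa)}\cup\Gamma^{(\kappa+1)}$ with relations either of Wirtinger type (a braiding, Lemmas \ref{lem:PosVirtual}, \ref{lem:NegVirtual}) or of the form $R_\kappa$ together with the conjugation equations and the fiber relation $\lambda_{n+1}^{(\kappa)}\cdots\lambda_1^{(\kappa)}=1$ (an actual vertex, Lemma \ref{lem:ActualVertex}), while each intersection $\pi_1(M_\kappa\cap M_{\kappa+1})\cong F_{n}$ is freely generated by $\Gamma^{(\kappa+1)}$ (equivalently, after the isomorphism, by $\Gamma^{(\kappa)}$ translated through the vertex), so that the amalgamation identifies the ``outgoing'' generating set of the $\kappa$-th piece with the ``incoming'' set of the $(\kappa+1)$-st.

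The first step I would carry out is to use these identifications to \textbf{eliminate} all of the intermediate generating sets $\Gamma^{(\kappa)}$ for $\kappa\ge 2$. Proposition \ref{prop:ConjTogether} expresses every $\lambda^{(\kappa+1)}_k$ as an explicit word (a single conjugate of a permuted generator) in the $\lambda^{(\kappa)}_\ell$; composing these substitutions from $\kappa=1$ upward writes each $\lambda^{(\kappa)}_k$ as a word $w^{(\kappa)}_k(\lambda_1,\dots,\lambda_{n+1})$ in the original geometric generating set $\Gamma^{(1)}=\{\lambda_1,\dots,\lambda_{n+1}\}$. Tietze transformations (adding the defining relation $\lambda^{(\kappa)}_k = w^{(\kappa)}_k$ and then removing the generator $\lambda^{(\kappa)}_k$) reduce the whole amalgamated product to a presentation on the $n+1$ generators $\lambda_1,\dots,\lambda_{n+1}$. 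The second step is to see what becomes of the relations: the fiber relation at $\kappa=1$ is $\lambda_{n+1}\cdots\lambda_1=1$; the fiber relation at any later $\kappa$ becomes a consequence of that one together with the conjugation substitutions (since a cyclic-type reordering of a product equal to $1$ stays equal to $1$), hence is redundant; a braiding vertex contributes only the substitution equations $\lambda^{(\kappa+1)}=(\lambda^{(\kappa)})^{\dots}$, which after elimination say nothing new; and an actual vertex at $p_\kappa\in\Sing\A$ contributes, besides the (now redundant) substitutions, precisely the relations $R_\kappa$, rewritten in the $\lambda_i$. This yields exactly $\pi_1(M(\A),q)=\langle \lambda_1,\dots,\lambda_{n+1}\mid \bigcup_\kappa R_\kappa,\ \lambda_{n+1}\cdots\lambda_1\rangle$.

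The main obstacle, and the step deserving the most care, is the bookkeeping in the elimination: one must check that the substitutions coming from the braiding vertices are genuinely invertible word maps (so that no generator is lost), that the order in which the $R_\kappa$ are rewritten through the later vertices does not matter up to conjugation (the relations $R_\kappa$ are stable under conjugation by any word, being a \emph{set} of cyclic-permutation equalities, so pushing them through subsequent virtual or actual vertices only conjugates them, which is harmless in the normal closure), and that the redundancy of the higher fiber relations is exact rather than merely ``morally clear''. For the latter, I would argue that in $\pi_1(M_\kappa(\A))$ the product $\lambda^{(\kappa+1)}_{n+1}\cdots\lambda^{(\kappa+1)}_1$ is conjugate (by an explicit word read off from $\tau^{(\kappa)}$) to $\lambda^{(\kappa)}_{n+1}\cdots\lambda^{(\kappa)}_1$, using that a Hopf-link relation $R_\kappa$ forces the local sub-product to commute appropriately; inducting on $\kappa$ then shows all fiber relations follow from $\lambda_{n+1}\cdots\lambda_1=1$. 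Finally one invokes Lemma \ref{lem:IsoFunda} and the computation in the proof of Proposition \ref{prop:VanKampen} that $\pi_1(f^{-1}(\mathbb{P}^1\setminus\{\infty\}))\cong\pi_1(M(\A))$ and that $\cup_\kappa M_\kappa(\A)$ carries the same $\pi_1$, so the presentation obtained is indeed one for $\pi_1(M(\A),q)$.
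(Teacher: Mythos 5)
Your proposal is correct and follows essentially the same route as the paper: the iterated van Kampen decomposition of Proposition \ref{prop:VanKampen}, the local presentations of Lemmas \ref{lem:ActualVertex}, \ref{lem:PosVirtual}, \ref{lem:NegVirtual}, and elimination of the intermediate generating sets $\Gamma^{(\kappa)}$ via the substitutions of Proposition \ref{prop:ConjTogether}, leaving only the relations $R_\kappa$ and the single fiber relation. The only (harmless) divergence is your treatment of the higher fiber relations: the paper disposes of them by the telescoping free-group identity $\lambda_m^{(\kappa+1)}\cdots\lambda_j^{(\kappa+1)}=\lambda_m^{(\kappa)}\cdots\lambda_j^{(\kappa)}$, so no conjugation or use of the Hopf-link relations $R_\kappa$ is actually needed there.
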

\begin{rem}\label{rem:GammaKinGamma1} The relations $R_\kappa$ are  expressed in terms of the geometric generating set $\Gamma^{(1)}$ by substituting $\lambda_k^{(\kappa)}$ by a conjugate of $\lambda_{\sigma^{(\kappa)}(k)}^{(1)}$ by elements of $\Gamma^{(1)}$ by repeated applications of Proposition \ref{prop:ConjTogether}.
\end{rem}
\begin{proof}
From Proposition \ref{prop:VanKampen} we know that $\pi_1(M(\A))=\pi_1(M_1(\A))*_{F_n}\cdots*_{F_n}\pi_1(M_{\nu+s}(\A))$. Now, the groups $\pi_1(M_\kappa(\A))$ are presented in generators $\Gamma^{(\kappa)}$ and $\Gamma^{(\kappa+1)}$, and relations which are words in these letters (see Lemmas \ref{lem:ActualVertex}, \ref{lem:PosVirtual}, \ref{lem:NegVirtual}). The geometric generating set $\Gamma^{(\kappa)}$ is chosen in such a way that it lies in a fiber over a point of $B_{\kappa-1}\cap B_\kappa$, and therefore, we can assume that the amalgamation $\pi_1(M_{\kappa-1}(\A))*_{F_n}\pi_1(M_\kappa(\A))$ permits to see $\Gamma^{(\kappa)}$ in $M_\kappa(\A)$ and $M_{\kappa-1}(\A)$ simultaneously.

Note that $\lambda_m^{(\kappa+1)}\cdots \lambda_j^{(\kappa+1)}=\lambda_m^{(\kappa)}\cdots\lambda_j^{(\kappa)}$, hence $\lambda_{n+1}^{(\kappa+1)}\ldots \lambda_1^{(\kappa+1)}=\lambda_{n+1}^{(\kappa)}\cdots \lambda_1^{(\kappa)}$ for every $\kappa=1,\ldots, \nu+s-1$.

The relations in $\pi_1(M_\kappa(\A))$ when there is a positive or virtual vertex in $M_\kappa(\A)$, can be omitted in the presentation of $\pi_1(M(\A))$ by writing every meridian of $\Gamma^{(\kappa+1)}$ in terms of $\Gamma^{(\kappa)}$ as in Lemmas \ref{lem:PosVirtual}, \ref{lem:NegVirtual}.

When there is an actual vertex in $M_\kappa(\A)$, the relation $R_\kappa=[\lambda_m^{(\kappa)},\lambda_{m-1}^{(\kappa)},\ldots,\lambda_j^{(\kappa)}]$ will appear in the presentation of $\pi_1(M(\A))$. This relation can be expressed in terms of $\Gamma^{(1)}$ in a recursive way, by expressing $\Gamma^{(\kappa)}$ in terms of $\Gamma^{(\kappa-1)}$ by using the amalgamation of $\pi_1(M_{\kappa-1}(\A))$ and $\pi_1(M_\kappa(\A))$ over $B_{\kappa-1}\cap B_\kappa$ and the presentation of $M_{\kappa-1}(\A)$ given by proposition \ref{prop:ConjTogether}. More precisely, we have that $$\lambda_k^{(\kappa)}=\lambda_{\sigma^{(\kappa)}(k)}^{\tau^{(1)}\left(\sigma^{(\kappa)}(k)\right)\cdot \tau^{(2)}\left({\sigma^{(2)}}^{-1}(\sigma^{(\kappa)}(k)\right)\cdots \tau^{(\kappa-1)}\left({\sigma^{(\kappa-1)}}^{-1}(\sigma^{(\kappa)}(k))\right)} \text{ for } k=1,\ldots, n+1,$$
and every $\tau^{(r)}\left({\sigma^{(r)}}^{-1}(\sigma^{(\kappa)}(k))\right)$ can be expressed in terms of $\Gamma^{(1)}$ in a recursive way for $r=1,\ldots, i-1$.
\end{proof}

\subsubsection{Algorithm for determining the presentation for a partial compactification  $M(\A,I,P)$ }\label{sss:AlgorithmPartial}

Let $\mathcal{W}$ be a wiring diagram and $\left\langle \lambda_1,\ldots,\lambda_{n+1} \mid \cup_k R_k, \lambda_{n+1}\cdots \lambda_1\right\rangle$ a presentation of $\pi_1(M(\A))$ as in Theorem \ref{thm:Arvola}.

Consider a partial compactification $M(\A,I,P)$ of $\mathbb{P}^2\setminus \A$ as in \ref{ss:partialCArrangements}. Here, we let $P_0=\{p_1,\ldots,p_{s_0}\}\subset \Sing \A$ denote the points of multiplicity strictly bigger than two, consider $\pi:\bar{X}=\Bl_{P_0}\mathbb{P}^2\to \mathbb{P}^2$ and denote by $D=\sum_{i=1}^{n+1+s_0}D_i=\pi^* \A$. Select $I\subset \{1,\ldots, n+1+s_0\}$ and $P=\{p_1',\ldots, p_{s_1}'\}\subset \Sing \sum_{i\not \in I} D_i$. Consider another blow-up $\pi':\Bl_{P} \bar{X}\to \bar{X}$ and write ${\pi'}^* D=\sum_{i=1}^{n+1+s_0+s_1}D_i'$. Define $D'= {\pi'}^*(D)-\sum_{i\in I}D_i' - \sum_{i>n+1+s_0} D_i'$ and $M(\A,I,P)=\Bl_{P_1}\bar{X}\setminus D'$.

 From Proposition \ref{prop:meridians}, we have that a presentation for the fundamental group $\pi_1(M(\A,I,P))$ can be obtained from $\left\langle \lambda_1,\ldots,\lambda_{n+1} \mid \cup_k R_k, \lambda_{n+1}\cdots \lambda_1\right\rangle$ by adding as relations certain words $\lambda(D_i')$ representing some meridians around the irreducible components $D_i'$ with either $i\in I$ or $n+s_0+1<i$. In order to do so, we have to distinguish four cases for these irreducible components $D_i'$ of ${\pi'}^* D$:
\begin{enumerate}
\item \label{en:Mer1} $D_i'$ is the strict transform of a line in $\A$. In this case $i\leq n+1$.
\item \label{en:Mer2} $D_i'$ is the strict transform of an exceptional divisor $D_i$ in $\bar{X}$. In this case $n+1<i\leq n+1+s_0$,
\item \label{en:Mer3} $D_p'$ is an exceptional divisor coming from a double point $p$ in $\Sing \A$. 
\item \label{en:Mer4} $D_p'$ is an exceptional divisor obtained by blowing-up a point $p=D_r\cap D_k$ with $r\leq n+1$ and $n+1<k\leq n+1+s_0$.
\end{enumerate}

For the lines as  in (\ref{en:Mer1}) we let $\lambda(D_i')=\lambda_i$.


For the lines as in (\ref{en:Mer2}), suppose that $D_i\subset \bar{X}$ is an exceptional divisor coming from a point $p\in \Sing \A$ and suppose that $p\in M_\kappa$, this is, $t_p$ is the $\kappa$-element in the ordered set of vertices $t_1,\ldots,t_{\nu+s}$ of a planar representation of $\mathcal{W}$ as in \ref{ss:AlgorithmComplement}. In other words $t_p=t_\kappa\in [0,1]$ satisfies $\beta(t_\kappa)=\bar{f}(p)$ and consider $\Gamma^{(\kappa)}=\{\lambda_1^{(\kappa)},\ldots,\lambda_{n+1}^{(\kappa)}\}$ the geometric generating set of $\bar{f}^{-1}{(\beta(\theta_\kappa)})\setminus (x_1^{(\kappa)},\ldots, x_{n+1}^{(\kappa)})\subset M_\kappa(\A)$ and suppose that $p=W_{\sigma^{(\kappa)}(j)}\cap W_{\sigma^{(\kappa)}(j+1)}\cap \ldots\cap W_{\sigma^{(\kappa)}(m)}$ with the local index $\Pi^{(\kappa)}=\{\sigma^{(\kappa)}(1)<\ldots<\sigma^{(\kappa)}(n+1)\}$ as in \ref{ss:AlgorithmComplement}. Associate to $D_i$ and to its strict transform $D_i'$, the word $\lambda(D_i)=\lambda(D_i')=\lambda_m^{(\kappa)}\cdot\lambda_{m-1}^{(\kappa)}\cdots \lambda_{j+1}^{(\kappa)}\cdot\lambda_j^{(\kappa)}. $

\begin{lem}\label{lem:ExcMerProduct} Let $D_i'$ be a line as in (\ref{en:Mer2}). Then $\lambda(D_i')=\lambda_m^{(\kappa)}\cdot\lambda_{m-1}^{(\kappa)}\cdots \lambda_{j+1}^{(\kappa)}\cdot\lambda_j^{(\kappa)}$ represents a meridian around $D_i$, and by pull-back, also around $D_i'$.
\end{lem}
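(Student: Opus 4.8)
The plan is to identify a concrete meridian loop around the exceptional divisor $D_i$ inside the fiber-bundle model and then push it into $M_\kappa(\A)$. Recall that $D_i$ is the exceptional divisor of the blow-up of $\bar X$ at the point $p \in \Sing \A$, where $p$ lies on exactly the lines $L_{\sigma^{(\kappa)}(j)},\dots,L_{\sigma^{(\kappa)}(m)}$. First I would work locally: choose the neighborhood $V_p \cong B_\kappa \times D$ of $p$ as in the proof of Lemma \ref{lem:ActualVertex}, where $D$ is a small transverse disk meeting only the lines through $p$. After blowing up $p$, the exceptional divisor $D_i'$ meets the strict transform of $\pi^*D$ at the points where the strict transforms of the lines through $p$ hit it, and a meridian of $D_i'$ is the boundary of a small disk in $\Bl_p\bar X$ transverse to $D_i'$ at a point not on any of those strict transforms. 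Under the blow-down $\Bl_p\bar X \to \bar X$ this disk maps to a disk in $\bar X$ whose boundary, traveled once, encircles all the lines $L_{\sigma^{(\kappa)}(j)},\dots,L_{\sigma^{(\kappa)}(m)}$ simultaneously — equivalently, it is the boundary of a small disk transverse to the whole local configuration at $p$.

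The key computation is then to express this loop in terms of the geometric generating set $\Gamma^{(\kappa)}$. I would place the basepoint $q_\kappa$ in the fiber $\bar f^{-1}(\beta(\theta_\kappa))$ just to one side of $t_\kappa$. Inside $V_p \setminus \A$ the fundamental group is the one of the Hopf link of $m-j+1$ circles (see \cite[Lemma 5.75]{Orlik}), and there the class of the meridian of the exceptional divisor is the product of the meridians of the $m-j+1$ components, taken in the appropriate order — this is exactly the content of the identification of the exceptional divisor's meridian with $\lambda_m^{(\kappa)}\cdot\lambda_{m-1}^{(\kappa)}\cdots\lambda_j^{(\kappa)}$, since in $\bar f^{-1}(\beta(\theta_\kappa))$ the points $x_j^{(\kappa)},\dots,x_m^{(\kappa)}$ are consecutive (their wires meet at $p$) and the product $\lambda_m^{(\kappa)}\cdots\lambda_j^{(\kappa)}$ is a meridian around a small loop enclosing all of $x_j^{(\kappa)},\dots,x_m^{(\kappa)}$, which after moving the fiber across $t_\kappa$ to near $p$ becomes the boundary of a transverse disk of the blown-up point. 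Finally I would note that the pull-back statement is automatic: since $\pi'$ restricts to an isomorphism away from the exceptional locus and a meridian of $D_i$ can be chosen in that complement, the same loop is also a meridian of the strict transform $D_i'$ in $\Bl_P\bar X$, by Lemma \ref{lem:IsoFunda} / Proposition \ref{prop:meridians} applied to the further blow-up.

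The main obstacle I expect is bookkeeping the ordering and the choice of transverse disk so that the product comes out as $\lambda_m^{(\kappa)}\cdots\lambda_j^{(\kappa)}$ rather than some conjugate or some other cyclic arrangement; one must check that the particular geometric generating set $\Gamma^{(\kappa)}$ of Figure \ref{fig:elgeobas}, with its basepoint on $\pi_R^{-1}(R)$, yields precisely this word, using that $\lambda_{n+1}^{(\kappa)}\cdots\lambda_1^{(\kappa)}=1$ to move the product to the ``outside'' if needed. Concretely, the meridian of $D_i'$ should be read as the boundary of the disk that $x_j^{(\kappa)},\dots,x_m^{(\kappa)}$ collapse to, and matching orientations between the blow-up chart and the chosen generators is the only delicate point; everything else is the standard local-to-global van Kampen argument already used in Lemma \ref{lem:ActualVertex}.
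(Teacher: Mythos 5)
Your argument is correct, but it runs along a different track than the paper's. You work entirely in the local model: identify $\pi_1(V_p\setminus\A)$ with the Hopf-link group of $m-j+1$ components, observe that the meridian of the exceptional divisor is its central element, namely the product of the meridians of the lines through $p$, and then note that $\lambda_m^{(\kappa)}\cdots\lambda_j^{(\kappa)}$ is represented by a loop in the nearby fiber enclosing exactly the consecutive points $x_j^{(\kappa)},\ldots,x_m^{(\kappa)}$, hence equal to that central class; the pull-back statement then follows because the later blow-ups at $P$ happen away from a generic transverse disk. The paper instead argues through the boundary manifold $\partial U_i^*$ of the exceptional divisor: each $\lambda_k^{(\kappa)}$ is decomposed as $\alpha_{k_1}^{\alpha_{k_2}}$ with $\alpha_{k_1}\subset\partial B_p$, Mumford's presentation (Lemma \ref{lem:RelsInBundle}) together with $D_i\cdot D_i=-1$ gives $\alpha_i=\alpha_{m_1}\cdots\alpha_{j_1}$ for the $S^1$-fiber $\alpha_i$ (which is a meridian of $D_i$), and the paper then checks that all connecting paths $\alpha_{k_2}$ are homotopic, so the based word is exactly $\lambda_m^{(\kappa)}\cdots\lambda_j^{(\kappa)}$. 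Mathematically these are two descriptions of the same fact (the boundary circle bundle of the $-1$-curve is the Hopf fibration of $\partial B_p\cong S^3$), so nothing is missing from your proof; what you lose is the boundary-manifold bookkeeping that the paper reuses immediately afterwards (the commutation relations and the fiber description feed directly into Lemma \ref{lem:doubleBlowUp} and Section \ref{s:BoundaryM}), and what you gain is a shorter, more elementary argument in the spirit of the proof of Lemma \ref{lem:ActualVertex}. Your worry about getting the exact word rather than a conjugate is also handled correctly: since a meridian is only defined up to free homotopy (and enters the presentation as a normal generator), the conjugacy ambiguity is harmless, though the paper resolves it explicitly via the homotopic connecting paths. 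The only slip is terminological: $D_i$ is the exceptional divisor of the blow-up of $\mathbb{P}^2$ at $p$ (so it lies in $\bar X=\Bl_{P_0}\mathbb{P}^2$), not of a blow-up of $\bar X$; this does not affect the argument.
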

\begin{proof} Let $\psi:U\to D$ and $\psi_i:U_i\to D_i$ be the boundary manifolds of $D$ and $D_i$ in $\bar{X}$ respectively. Note that we can use the meridians $\lambda_j^{(\kappa)},\ldots,\lambda_{m}^{(\kappa)}$ to give a presentation of $\pi_1(\partial U^*)$, with $\partial U_i^*=\partial U\cap \partial U_i$ as in \ref{ss:mumford}, as follows:
 the projection $\pi(\partial U_i)$ to $\mathbb{P}^2$ can be seen as the boundary of a $4$-real ball $B_p$ centered at $p$. There exists $R_p\in \partial B_p$ such that for each $j\leq k \leq m$ the loop $\alpha_k:=\lambda_{k}^{(\kappa)}$ is homotopic to a product $\alpha_{k_1}^{\alpha_{k_2}}$ with 
\begin{itemize}
\item The loop $\alpha_{k_1}$ starting at $R_p$, lying completely in $\partial B_p$ and surrounding the line $L_{\sigma^{(\kappa)}(k)}$.
\item The loop $\alpha_{k_2}$ is a simple path connecting $R_p$ and the point $R\in \mathbb{P}^2\setminus \A$.
 \end{itemize}
By pulling-back the meridians $\alpha_{j_1},\ldots,\alpha_{m_1}$ to $\bar{X}$ we can see them as lying in $\partial U$. By construction of the geometric generating set $\Gamma^{(\kappa)}$, the product $\alpha_{m_1}\cdots\alpha_{j_1}$ is homotopic to a path encircling the lines $L_{\sigma^{(\kappa)}(j)},\ldots,L_{\sigma^{(\kappa)}(m)}$ and therefore the projection ${\psi_i}_*(\alpha_{m_1}\cdots\alpha_{j_1})=e$ in $\pi_1(D_i^*,\psi_i(R_p))$. We can construct a continuous map $h_i:\cup_{k=j}^{m} \psi_i(\alpha_{k_1})\to \cup_{k=j}^{m} \alpha_{k_1}$ such that $h_i(\psi(\alpha_{k_1}))=\alpha_{k_1}$ and therefore the loops $\alpha_{j_1},\ldots, \alpha_{m_1}$ together with a fiber $\alpha_i$ of $\partial U_i$ generate the group $\pi_1(\partial U_i^*)$ as in Lemma \ref{lem:RelsInBundle}. Moreover, as $D_i\cdot D_i=-1$, we have the relation $\alpha_i=\alpha_{m_1}\cdots\alpha_{j_1}$ in $\pi_1(\partial U_i^*,R_p)$.

By construction of $\Gamma^{(\kappa)}$, we have that every two $\alpha_{k_2}$ and $\alpha_{k_2'}$ with $j\leq k,k'\leq m$ are homotopic. Therefore, by connecting $\alpha_i$ to $R$ via $\alpha_{j_2}$, we obtain the relation $\alpha_i^{\alpha_{j_2}}=\lambda_m^{(\kappa)}\cdots \lambda_j^{(\kappa)}$ in $\pi_1(\mathbb{P}^2\setminus \A)$.

By pulling-back $\alpha_i^{\alpha_{j_2}}$ to $\Bl_{P_1}$ we obtain that it is homotopic to a meridian around $D_i'$.
\end{proof}

For the lines $D_p'$ as in (\ref{en:Mer3}), suppose that $p=D_r\cap D_k$ with $r,k\leq n+1$. Consider the unique index $1\leq \kappa\leq \nu +s$ such that $p\in M_\kappa$ and let $\Gamma^{(\kappa)}=\{\lambda_1^{(\kappa)},\ldots, \lambda_{n+1}^{(\kappa)}\}$ be a geometric generating set of $\pi_1(M_\kappa(\A))$.  We denote $$\lambda(D_p'):=\lambda_{{\sigma^{(\kappa)}}^{-1}(r)}^{(\kappa)}\lambda_{{\sigma^{(\kappa)}}^{-1}(k)}^{(\kappa)}.$$ Recall that ${\sigma^{(\kappa)}}^{-1}(r)$ and ${\sigma^{(\kappa)}}^{-1}(k)$ record the local position of the wires $W_r, W_k$ respectively, in the local order of the wires of $\mathcal{W}$ in $\bar{f}^{-1}{(\beta(\theta_\kappa)})$ given by $\Pi^{(\kappa)}=\{\sigma^{(\kappa)}(1)<\ldots<\sigma^{(\kappa)}(n+1)\}$.

Finally, let $D_p'$ be as in (\ref{en:Mer4}) with $p\in P$. We have that $p=D_r\cap D_k$ with $r\leq n+1$ and $D_k$ an exceptional divisor coming from a point $p(k)\in P_0$. Let us suppose that $p(k)\in M_\kappa$. Denote by $\Gamma^{(\kappa)}=\{\lambda_1^{(\kappa)},\ldots,\lambda_{n+1}^{(\kappa)}\}\subset M_\kappa(\A)$ the geometric generating set as above. We can suppose that $p(k)=W_{\sigma^{(\kappa)}(j)}\cap \ldots\cap W_{\sigma^{(\kappa)}(m)}$. As $n+1\leq k\leq n+1+s_0$, we can consider the word $\lambda(D_{k})=\lambda_{m}^{(\kappa)}\cdots\lambda_{j}^{(\kappa)}$ as in Lemma \ref{lem:ExcMerProduct} above. 

\begin{lem}\label{lem:doubleBlowUp} A meridian of $D_p'$ is given by $\lambda(D_p')=\lambda_{{\sigma^{(\kappa)}}^{-1}(r)}^{(\kappa)}\lambda(D_{k})$. Moreover, $\lambda_{{\sigma^{(\kappa)}}^{-1}(r)}^{(\kappa)}$ commutes with $\lambda(D_k)$.
\end{lem}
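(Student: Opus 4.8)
The plan is to identify a meridian of the exceptional divisor $D_p'$ directly inside the local picture near the point $p(k)\in P_0$, exactly as in the proof of Lemma \ref{lem:ExcMerProduct}, and then to account for the extra blow-up at $p = D_r\cap D_k$ that introduces $D_p'$. First I would set up the local model: let $B_{p(k)}$ be a small $4$-ball around $p(k)$ in $\mathbb{P}^2$, so that $\partial B_{p(k)}$ carries the Milnor fiber picture of the multiple point, and recall from Lemma \ref{lem:ExcMerProduct} that the product $\lambda(D_k) = \lambda_m^{(\kappa)}\cdots\lambda_j^{(\kappa)}$ is (after conjugating to the global base point $R$ by the common path $\alpha_{j_2}$) homotopic to a meridian $\alpha_{D_k}$ of the exceptional divisor $D_k$ in $\bar X = \Bl_{P_0}\mathbb{P}^2$; equivalently $\lambda(D_k)$ is freely homotopic to a loop in $\partial U_{D_k}$ winding once around $D_k$. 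The line $L_r$ (with strict transform $D_r$) passes through $p(k)$, and $p = D_r\cap D_k$ lies on $D_k$; locally near $p$ the divisor $D_r + D_k$ is a smooth normal-crossing pair.

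Next I would analyze the blow-up $\pi'$ at $p$. After blowing up $p = D_r\cap D_k$, the new exceptional curve $D_p'$ meets both the strict transform of $D_r$ and the strict transform of $D_k$. The standard local computation for the blow-up of a node (as in the dual-graph discussion of \ref{sbs:dualgraph}, or the Hopf-link computation quoted from \cite{Orlik}) shows that a meridian $\mu_{D_p'}$ of the central curve $D_p'$ equals the product $\mu_{D_r}\cdot\mu_{D_k}$ of meridians of the two branches through $p$, and moreover that $\mu_{D_r}$ and $\mu_{D_k}$ commute, since near $p$ the complement of $D_r\cup D_k\cup D_p'$ deformation retracts onto a space with fundamental group $\mathbb{Z}^2$ generated by $\mu_{D_r},\mu_{D_k}$, inside which $\mu_{D_p'}$ is the diagonal class. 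Translating this to our generators: the meridian of $D_r$ at a nearby basepoint is $\lambda_{{\sigma^{(\kappa)}}^{-1}(r)}^{(\kappa)}$ (the $r$-th wire in the local order of $\Gamma^{(\kappa)}$), and the meridian of $D_k$ is $\lambda(D_k)$ by Lemma \ref{lem:ExcMerProduct}; hence $\lambda(D_p') = \lambda_{{\sigma^{(\kappa)}}^{-1}(r)}^{(\kappa)}\,\lambda(D_k)$ is a meridian of $D_p'$, and $\lambda_{{\sigma^{(\kappa)}}^{-1}(r)}^{(\kappa)}$ commutes with $\lambda(D_k)$.

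The main obstacle I anticipate is the bookkeeping needed to check that the \emph{same} basepoint and the \emph{same} connecting path $\alpha_{j_2}$ can be used for both factors, so that the product $\lambda_{{\sigma^{(\kappa)}}^{-1}(r)}^{(\kappa)}\cdot\lambda(D_k)$ is literally a based loop representing $\mu_{D_p'}$ rather than merely a free-homotopy statement. Concretely one must verify that the wire $W_r$ through $p(k)$ is among $W_{\sigma^{(\kappa)}(j)},\dots,W_{\sigma^{(\kappa)}(m)}$ — i.e. that $L_r$ is one of the lines concurrent at $p(k)$ — so that $\lambda_{{\sigma^{(\kappa)}}^{-1}(r)}^{(\kappa)}$ already appears in the block $\lambda_m^{(\kappa)}\cdots\lambda_j^{(\kappa)}$, and that the path $\alpha_{r_2}$ connecting the local basepoint $R_{p(k)}$ to $R$ is the common path used for all of $\alpha_{j},\dots,\alpha_m$ in Lemma \ref{lem:ExcMerProduct}. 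Granting that (which follows from the construction of $\Gamma^{(\kappa)}$), the commutation is immediate: both classes are represented by loops supported in the link $\partial B_{p(k)}$ around the point $p$, where $D_r$ and $D_k$ cross transversally, so their meridians commute by Proposition \ref{prop:meridians} applied to the normal-crossing pair, and pulling back to $\Bl_P\bar X$ preserves this relation while turning their product into a meridian of $D_p'$.
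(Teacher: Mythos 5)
Your proposal is correct and follows essentially the same strategy as the paper: decompose the generators $\lambda_{k'}^{(\kappa)}$ through $p(k)$ as in Lemma \ref{lem:ExcMerProduct} so that the local base point can be taken on $\partial B_p$, identify $\lambda(D_k)$ with a meridian (fiber) of $\partial U_{D_k}$, and read off the relation for $D_p'$ from the local picture of the blow-up at the node $p=D_r\cap D_k$. The only real difference is packaging: the paper extracts both facts from Mumford's presentation (Lemma \ref{lem:RelsInBundle}) of the circle bundles $\partial U_{D_k}^*$ and $\partial U_{D_p'}^*$ --- centrality of the fiber over $D_k^*$ gives that $\lambda(D_k)$ commutes with \emph{every} $\lambda_{k'}^{(\kappa)}$, $k'=j,\ldots,m$, and the relation $\alpha_p=\alpha_k\cdot\alpha_{{\sigma^{(\kappa)}}^{-1}(r)_1}$ coming from $D_p'\cdot D_p'=-1$ gives the product formula --- whereas you use the elementary local model $\Bl_p$ of a normal crossing, whose complement is $(\mathbb{C}^*)^2$, which yields the same two statements for just the two classes needed. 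One small slip: Proposition \ref{prop:meridians} does not give commutation of meridians of transverse branches (it only concerns conjugacy of meridians and the kernel of $\pi_1(X\setminus D)\to\pi_1(X)$), so drop that citation; your local $\mathbb{Z}^2$ argument, transported to the global base point by the common connecting path exactly as you describe, is what carries the commutation, and it matches the paper's proof.
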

\begin{proof}
Recall that by construction, $\lambda_{{\sigma^{(\kappa)}}^{-1}(r)}^{(\kappa)}$ is the meridian of $L_r$ lying in the geometric generating set $\Gamma^{(\kappa)}$.

Let $\psi_{D_{k}}:\partial U_{D_k} \to D_k$ be the boundary manifold of $D_k$ in $\bar{X}$. For $k'=j,\ldots, m$, let us decompose the loops $\alpha_{k'}=\lambda_{k'}^{(\kappa)}$ in two parts $\alpha_{k_1'}, \alpha_{k_2'}$, as in the proof of the Lemma \ref{lem:ExcMerProduct}, such that $\alpha_{k'}$ is homotopic to $\alpha_{k_1'}^{\alpha_{k_2'}}$. The proof of the same Lemma and \ref{lem:RelsInBundle} give us that
$$\pi_1(\partial U_{D_k}^*,R_k)=\langle \alpha_{j_1},\ldots, \alpha_{m_1}, \alpha_{k} \mid [\alpha_k, \alpha_{k_1'}],\alpha_{k}=\alpha_{m_1}\cdots \alpha_{j_1}  \rangle. $$
for a point $R_{k}\in \partial U_{D_{k}}^*$ and $\alpha_k$ a fiber of $\partial U_{D_k}^*$. We can globalize the relations in this presentation by considering $\alpha_{k_1'}^{\alpha_{k_2'}}$ and obtain that $\lambda(D_{k})$ commutes with $\lambda_{k'}^{(\kappa)}$ for $k'=j, \ldots, m$, in particular as $D_r$ intersect $D_k$, we have that $\lambda_{{\sigma^{(\kappa)}}^{-1}(r)}^{(\kappa)}$ commutes with $\lambda(D_k)$.

Furthermore, the point $R_k$ can be chosen to lie in the boundary $\partial B_{p}$ of a ball $B_{p}\subset \bar{X}$ around $p$. Let $\psi_{D_p'}:\partial U_{D_p'}\to D_{p}'$ be the boundary manifold of $D_{p}'$ in $\Bl_{P}\bar{X}$ and $\Delta_1, \Delta_2$ a pair of disks about the points $D_{p}'\cap D_r'$ and $D_{p}'\cap D_{k}'$ respectively. Denote $\partial U_{D_{p}'}^*=\psi_{D_{p}'}^{-1}(D_{p}'\setminus (\Delta_1\cup \Delta_2))$. By working in local coordinates, it can be seen that $\alpha_{k}, \alpha_{{\sigma^{(\kappa)}}^{-1}(r)_1}$ and a fiber $\alpha_{p}$ of $\partial U_{D_p'}$ at $R_{k}$ generate the group $\pi_1(\partial U_{D_{p}'}^*)$ and that 
$$\pi_1(\partial U_{D_{p}'}^*)=\left\langle \alpha_{k},\alpha_{{\sigma^{(\kappa)}}^{-1}(r)_1},\alpha_{p} \left\lvert \begin{array}{l}
[\alpha_{k},\alpha_{p}],  \ [\alpha_{{\sigma^{(\kappa)}}^{-1}(r)_1},\alpha_{p}], \\
\alpha_{p}=\alpha_{k}\cdot \alpha_{{\sigma^{(\kappa)}}^{-1}(r)_1} 
\end{array}\right.  \right\rangle $$ by Lemma \ref{lem:RelsInBundle} and because $D_{p}'\cdot D_{p}'=-1$.
\end{proof}

\begin{thm}\label{thm:KnotPresentation} Let $\A\subset \mathbb{P}^2$ be an arrangement of lines, $\mathcal{W}$ a wiring diagram and $M(\A,I,P)$ a partial compactification. Then 
$$\pi_1(M(\A,I,P),q)=\left\langle \lambda_1,\ldots, \lambda_{n+1}\mid \bigcup_k R_k, \lambda_{n+1}\cdots\lambda_1, \bigcup_{i\in I} \lambda(D_i'),\bigcup_{p\in P}\lambda(D_p'),\right\rangle $$
is a presentation for the fundamental group of the partial compactification.
\end{thm}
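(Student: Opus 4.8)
The presentation will be obtained by combining the presentation of $\pi_1(M(\A))$ from Theorem \ref{thm:Arvola} with the consequence of Proposition \ref{prop:meridians} that $\pi_1(M(\A,I,P))$ is the quotient of $\pi_1(M(\A))$ by the normal subgroup generated by meridians of the components $D_i'$ that get removed — that is, those with $i\in I$ (strict transforms of lines or of the $P_0$-exceptional divisors) together with the exceptional divisors $D_p'$ for $p\in P$. Concretely, I would first invoke the isomorphism $\pi_1(M(\A))\cong\pi_1(\Bl_{P_0}\Bl_P\bar X\setminus \pi^*D)$ coming from Lemma \ref{lem:IsoFunda} applied to the blown-up points, so that $M(\A,I,P)$ really is an open subset of a blow-up of $\mathbb P^2$ whose complement is a union of the removed components. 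Then Proposition \ref{prop:meridians} gives exactly $\pi_1(M(\A,I,P))=\pi_1(M(\A))\big/\langle\!\langle \text{meridians of removed components}\rangle\!\rangle$, so it only remains to identify, for each removed component, a meridian expressed as a word in the generators $\lambda_1,\dots,\lambda_{n+1}$ of Theorem \ref{thm:Arvola}.

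**The four cases.** The identification of these meridians is precisely the content of the case analysis (\ref{en:Mer1})--(\ref{en:Mer4}) preceding the theorem, so the proof is essentially a bookkeeping assembly of earlier lemmas. For case (\ref{en:Mer1}), a strict transform $D_i'$ of a line $L_i\subset\A$ with $i\le n+1$ has meridian $\lambda_i$ by definition of the generating set (blowing up does not change the meridian of a strict transform of a line, again by Lemma \ref{lem:IsoFunda}). For case (\ref{en:Mer2}), $D_i'$ is the strict transform of a $P_0$-exceptional divisor over a point $p=W_{\sigma^{(\kappa)}(j)}\cap\cdots\cap W_{\sigma^{(\kappa)}(m)}$, and Lemma \ref{lem:ExcMerProduct} shows $\lambda(D_i')=\lambda_m^{(\kappa)}\cdots\lambda_j^{(\kappa)}$ is such a meridian; using Remark \ref{rem:GammaKinGamma1} (i.e. repeated application of Proposition \ref{prop:ConjTogether}) this rewrites as a word in $\Gamma^{(1)}=\{\lambda_1,\dots,\lambda_{n+1}\}$. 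Case (\ref{en:Mer3}) — an exceptional divisor $D_p'$ over a double point $p=D_r\cap D_k$, $r,k\le n+1$ — is the classical fact that the meridian of the exceptional divisor of a blow-up at a node is the product of the two meridians of the branches, i.e. $\lambda(D_p')=\lambda_{{\sigma^{(\kappa)}}^{-1}(r)}^{(\kappa)}\lambda_{{\sigma^{(\kappa)}}^{-1}(k)}^{(\kappa)}$, which one proves by the same boundary-manifold / Mumford-relation argument (the relevant self-intersection being $-1$). Case (\ref{en:Mer4}) is handled by Lemma \ref{lem:doubleBlowUp}, giving $\lambda(D_p')=\lambda_{{\sigma^{(\kappa)}}^{-1}(r)}^{(\kappa)}\lambda(D_k)$ together with the commutativity that makes this word well-defined up to the order of the factors. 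In all four cases the word, after rewriting via Remark \ref{rem:GammaKinGamma1}, is a product of conjugates of the $\lambda_i$, which is the form asserted in the statement.

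**Assembling the quotient.** Having identified a meridian $\lambda(D_\iota)$ for each removed component $\iota\in I\cup P$, I would note that adding the relations $\lambda(D_\iota)=1$ to the presentation of Theorem \ref{thm:Arvola} realizes exactly the quotient by $\langle\!\langle\lambda(D_\iota)\rangle\!\rangle$: killing one chosen meridian of an irreducible component kills all of them, since by Proposition \ref{prop:meridians} any two meridians of the same component are conjugate. Iterating over all $\iota\in I\cup P$ and invoking Proposition \ref{prop:meridians} once more for the full removed divisor $D'(I,P)$ (whose components are in bijection with $I\cup P$) gives
$$\pi_1(M(\A,I,P),q)=\left\langle \lambda_1,\ldots,\lambda_{n+1}\ \middle|\ \bigcup_k R_k,\ \lambda_{n+1}\cdots\lambda_1,\ \bigcup_{i\in I}\lambda(D_i'),\ \bigcup_{p\in P}\lambda(D_p')\right\rangle,$$
which is the claimed presentation.

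**Main obstacle.** The genuinely delicate point is not the quotient formalism but the case analysis feeding it — specifically ensuring that the words $\lambda(D_i')$ in cases (\ref{en:Mer2}) and (\ref{en:Mer4}) represent \emph{honest} meridians of the exceptional divisors in the final blown-up surface, not merely in an intermediate one. This is exactly what the boundary-manifold computations of Lemmas \ref{lem:ExcMerProduct} and \ref{lem:doubleBlowUp} accomplish: one must track how a loop encircling a bundle fiber of $\partial U_{D_i}^*$ (with the Mumford twist of Remark \ref{rem:Mumford}, controlled here by the self-intersection $-1$) pulls back under the blow-up $\pi'$ to a loop encircling the strict/total transform, and then transport it to the global base point $q$ via the chosen paths in $\pi_R^{-1}(R)$. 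Since those lemmas are already available, the proof of Theorem \ref{thm:KnotPresentation} is short: it is the combination of Theorem \ref{thm:Arvola}, Proposition \ref{prop:meridians}, and the four meridian computations, rewritten in $\Gamma^{(1)}$ via Remark \ref{rem:GammaKinGamma1}.
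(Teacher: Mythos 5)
Your proposal is correct and follows essentially the same route as the paper: quotient the presentation of Theorem \ref{thm:Arvola} by meridians of the removed components via Proposition \ref{prop:meridians}, with cases (\ref{en:Mer2}) and (\ref{en:Mer4}) handled by Lemmas \ref{lem:ExcMerProduct} and \ref{lem:doubleBlowUp}, case (\ref{en:Mer1}) being immediate from the blow-up being a biholomorphism off the exceptional locus, and case (\ref{en:Mer3}) treated by the same boundary-manifold argument as Lemma \ref{lem:ExcMerProduct}. The only tiny quibble is your appeal to Lemma \ref{lem:IsoFunda} for blow-ups at points of $\Sing\A$ (that lemma concerns a point off $Z$); the paper instead uses the tautological isomorphism $\Bl_P\bar{X}\setminus\pi^*D\cong \bar{X}\setminus D$ of complements of total transforms, but this does not affect the argument.
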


\begin{proof}
We only have to justify the expression for those meridians around lines as in (\ref{en:Mer1}) and (\ref{en:Mer3}). For the meridians of lines as in (\ref{en:Mer2}) and (\ref{en:Mer4}), the expression $\lambda(D_i')$ and $\lambda(D_p')$ is explained by Lemmas \ref{lem:ExcMerProduct} and \ref{lem:doubleBlowUp} respectively. We will conclude by Proposition \ref{prop:meridians}.

For the meridians around lines as in (\ref{en:Mer1}), it is immediate by the biholomorphism property of the blow-up outside the exceptional divisor.

Consider a line $D_i$ as in (\ref{en:Mer3}) and suppose that it comes from a point $p=D_r\cap D_k$ with $r,k\leq n+1$. Note that there is essentially no difference with a line as in (\ref{en:Mer2}) besides the change of local indexation to a global one,  and therefore, we can proceed as in the proof of Lemma \ref{lem:ExcMerProduct} to obtain that $\lambda_{{\sigma^{(\kappa)}}^{-1}(r)}^{(\kappa)}\lambda_{{\sigma^{(\kappa)}}^{-1}(k)}^{(\kappa)}$ is homotopic to a fiber of $\partial U_{i}^*$ connected to the global base point $R$.

\end{proof}

As $D=\sum_{i=1}^{N} D_i$ is a simple normal crossing divisor with $N=n+1+s_0$, we can consider an orbifold structure in $(\Bl_{P_0} \mathbb{P}^2, D)$ (see \cite{Eyssidieux} for the notation) by choosing weights $r=(r_1,\ldots,r_{N})\in (\mathbb{N}^*\cup\{+\infty\})^{N}$.

\begin{thm}\label{thm:OrbifoldPresentation} Let $\A$ be a complex arrangement of lines, $\mathcal{W}$ a wiring diagram and consider the weights $r$ of $D$ as above. The fundamental group $\pi_1(\mathcal{X}(\Bl_{P_0}\mathbb{P}^2, D, r ))$ of the orbifold $\mathcal{X}(\Bl_{P_0}\mathbb{P}^2, D, r )$ admits the following presentation: 
$$\left\langle \lambda_1,\ldots, \lambda_{n+1}\mid \bigcup_k R_k, \lambda_{n+1}\cdots\lambda_1, \bigcup_{i=1}^{N}\lambda(D_i)^{r_i}\right\rangle $$
where the relation $\lambda(D_i)^{r_i}$ is omitted if $r_i=+\infty$.
\end{thm}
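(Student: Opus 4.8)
The plan is to exhibit $\pi_1(\mathcal{X}(\Bl_{P_0}\mathbb{P}^2, D, r))$ as an explicit quotient of $\pi_1(M(\A))$ and then invoke Theorem \ref{thm:Arvola}. Recall that for a smooth orbifold pair $(\bar{X}, D, r)$ with $D=\sum_i D_i$ simple normal crossing and weights $r_i\in\mathbb{N}^*\cup\{+\infty\}$, one has a canonical isomorphism
\[
\pi_1(\mathcal{X}(\bar{X}, D, r))\;\cong\;\pi_1(\bar{X}\setminus D)\big/\langle\langle\, \gamma_i^{\,r_i}\ :\ r_i<+\infty \,\rangle\rangle,
\]
the quotient of the fundamental group of the complement by the normal subgroup generated by the $r_i$-th powers of the meridians $\gamma_i$ of the components $D_i$ with finite weight (see \cite{Eyssidieux}; this can also be obtained from the local orbifold charts and the van Kampen theorem). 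By Proposition \ref{prop:meridians} the right-hand side is independent of the chosen representatives of the $\gamma_i$ within their conjugacy classes, so it suffices to compute $\pi_1(\bar{X}\setminus D)$ and to pin down, for each $i$, the class of one meridian of $D_i$ in terms of the generators $\lambda_1,\dots,\lambda_{n+1}$.

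First I would identify $\pi_1(\bar{X}\setminus D)$. Since $\bar{X}=\Bl_{P_0}\mathbb{P}^2$ and $D=\abs{\pi^*\A}$, the complement $\bar{X}\setminus D$ equals $\Bl_{P_0}\mathbb{P}^2\setminus\pi^{-1}(\A)$, which $\pi$ maps biholomorphically onto $\mathbb{P}^2\setminus\A=M(\A)$, the exceptional divisors lying over points of $\A$ (cf. Lemma \ref{lem:IsoFunda}). Hence Theorem \ref{thm:Arvola} gives
\[
\pi_1(\bar{X}\setminus D)\;\cong\;\big\langle\, \lambda_1,\dots,\lambda_{n+1}\ \big|\ \textstyle\bigcup_k R_k,\ \lambda_{n+1}\cdots\lambda_1 \,\big\rangle,
\]
with $\lambda_i$ a meridian of the line $L_i$.

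Next I would compute the class of a meridian $\gamma_i$ of each component $D_i$ of $D$; there are two types. For $i\le n+1$, $D_i$ is the strict transform of $L_i$ and, by the biholomorphism property of the blow-up away from the exceptional locus, a meridian of $D_i$ is (the pull-back of) $\lambda_i$, so $\gamma_i=\lambda_i=\lambda(D_i)$. For $n+1<i\le N$, $D_i$ is the exceptional divisor over a point $p\in P_0$ with, say, $p=W_{\sigma^{(\kappa)}(j)}\cap\cdots\cap W_{\sigma^{(\kappa)}(m)}$ in the wiring diagram; Lemma \ref{lem:ExcMerProduct} shows that a meridian of $D_i$ is represented by the word $\lambda(D_i)=\lambda_m^{(\kappa)}\cdots\lambda_j^{(\kappa)}$, which is then rewritten in the generators $\lambda_1,\dots,\lambda_{n+1}$ by repeated use of Proposition \ref{prop:ConjTogether}, as explained in Remark \ref{rem:GammaKinGamma1}. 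Adjoining to the above presentation the relation $\gamma_i^{\,r_i}$ for every $i$ with $r_i<+\infty$ — which is exactly the relation $\lambda(D_i)^{r_i}$ — and imposing no relation on $D_i$ when $r_i=+\infty$, yields the asserted presentation. This is structurally the same argument as in Theorem \ref{thm:KnotPresentation}, with the relations $\lambda(D_i')=1$ there replaced by the orbifold relations $\lambda(D_i)^{r_i}=1$.

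The main obstacle is not the group-theoretic bookkeeping but the geometric input that identifies the meridian of each exceptional divisor with the prescribed word: this is precisely the content of Lemma \ref{lem:ExcMerProduct}, and it is where the structure of the wiring diagram and of the boundary manifold $\partial U_i^*$ enters. One must also be careful that the displayed description of the orbifold fundamental group as a quotient of $\pi_1(\bar X\setminus D)$ is the one used in the orbifold category of \cite{Eyssidieux}, and that the convention that $r_i=+\infty$ imposes no relation on $D_i$ is respected throughout; both points are standard.
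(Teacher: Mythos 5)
Your proposal is correct and follows essentially the route the paper intends: the paper leaves this theorem without a separate written proof precisely because it is the same argument as Theorem \ref{thm:KnotPresentation}, namely the identification $\pi_1(\mathcal{X}(\Bl_{P_0}\mathbb{P}^2,D,r))\cong \pi_1(\Bl_{P_0}\mathbb{P}^2\setminus D)/\langle\langle \gamma_i^{r_i}: r_i<+\infty\rangle\rangle$, the presentation of $\pi_1(M(\A))$ from Theorem \ref{thm:Arvola}, and the meridian words $\lambda(D_i)$ ($\lambda_i$ for strict transforms, Lemma \ref{lem:ExcMerProduct} plus Proposition \ref{prop:ConjTogether}/Remark \ref{rem:GammaKinGamma1} for exceptional divisors), with conjugacy ambiguity absorbed by Proposition \ref{prop:meridians}. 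No gaps to report.
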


\section{Boundary Manifolds methods}\label{s:BoundaryM}
In this Section we use the results of Mumford as stated in \ref{ss:mumford} in order to study the fundamental group of the boundary manifold $\partial U$ of an arrangement of lines $\A$.

The notion of wiring diagram defined in the previous section will play an important role, a  presentation of $\pi_1(M(\A,I,P))$ will be obtained as a quotient of the presentation of $\pi_1(\partial U)$ and compared with Theorem \ref{thm:KnotPresentation}.

\subsection{Boundary manifold of an arrangement of lines}
\subsubsection{Fundamental group of the boundary manifold of an arrangement of lines}\label{ss:FundaBoundary}
Let $\A=\{L_1,\ldots,L_{n+1}\}\subset \mathbb{P}^2$ be an arrangement of lines and denote by $\pi:\bar{X}\to \mathbb{P}^2$ the blow-up of the projective plane at the $s_0$ points of $\Sing \A$ of multiplicity equal or higher than $3$ as in \ref{ss:partialCArrangements}. Recall that $D=\abs{\pi^* D}=\sum_{i=1}^{n+s_0+1} D_i$ is the reduced total transform of $\A$ in $\bar{X}$ and let $\psi:\partial U\to D$ be its boundary manifold.

Using the description of Mumford (Theorem \ref{thm:Mumford}) and that of a weighted graph, Westlund gave a presentation of the fundamental group $\pi_1(\partial U)$ of $\partial U$ \cite{westlund} (see also \cite{Cohen_2008}). Let us describe this presentation.

Denote by $\Delta$ the dual graph of $D$  and by $\mathcal{E}$ the set of edges of $\Delta$ as in \ref{sbs:dualgraph} above. Associate to each vertex $v_i$ a weight $w_i$ corresponding with the self-intersection number of the associated line $D_i$ in $\bar{X}$. 

Let $\mathcal{T}$ be a maximal tree of $\Delta$ (a subgraph of $\Delta$ containing no cycles and all the vertices of $\Delta$) and denote by $\mathcal{C}=\Delta\setminus \mathcal{T}$. Note that $g=\abs{\mathcal{C}}=b_1(\Delta)$ equals the number of independent cycles in $\Delta$. 

The edges in $\mathcal{C}$ correspond to $g$ points $\{p_1,\ldots,p_g\}$ in $\Sing D$. Let us denote by $\pi^{(1)}:\Bl_{p_1,\ldots,p_g}\bar{X}\to \bar{X}$ the blow-up at these points. Denote by $D'=\sum_{i=1}^{n+s_0+1}D_i'$ the strict transform of $D$ in $\Bl_{p_1,\ldots,p_g}\bar{X}$  and let $\psi':\partial U'\to D'$ be the boundary manifold of $D'$. Note that the dual graph of $D'$ is a tree that can be identified with $\mathcal{T}$ by removing from $\Delta$ the edges in $\mathcal{C}$. In particular, $D'$ and $\partial U'$ are connected. Let ${\pi^{(1)}}^*(D)=D'+\sum_{k=1}^g E_k$ be the total transform of $D$ with $E_1,\ldots, E_k$ exceptional divisors.


Now, if $(i,j)\in \mathcal{C}$ corresponds to the point $p_k$ for some $1\leq k \leq g$, there exists an exceptional divisor $E_k\in \Bl_{p_1,\ldots,p_g}\bar{X}$ and $D_i',D_j'$ strict transforms of irreducible components $D_i,D_j$ of $D$ respectively such that $E_k\cap D_i'\not = \varnothing$, $E_k\cap D_j'\not = \varnothing$ and $D_i\cap D_j=p_k$. Denote its boundary manifold by $\psi_{E_k}:\partial U_{E_k}\to E_k, \psi_i':\partial U_i'\to D_i', \psi_j':\partial U_j'\to D_j'$.

Select a base point $Q_i \in D_i'\setminus (\cup_{j\not =i}D_j'\bigcup \cup_{k=1}^g E_k )$ as in \ref{ss:mumford}  and a simple curve $l_i\subset D_i'$ containing $Q_i$ and every intersection of the form:
\begin{enumerate}
\item \label{en:1} $D_i'\cap D_j', \text{ with } (i,j) \text{ an edge in } \mathcal{T}$,
\item $D_i'\cap E_k$, with $E_k$ coming from a point $p_k$ corresponding to an edge $(i,j)$ in $\mathcal{C}$.
\end{enumerate}  
Let us label these points by the order they intersect $l_i$ as $P_{i1}, \ldots, P_{i{k_i'}}$. Note that for every $P_{im}$ there corresponds a unique edge $(i,j_\Delta(i,m))$ in $\Delta$. This defines an injective function $m\mapsto j_{\Delta}(i,m)$ from $\{1,\ldots,k_i'\}$ to $\{1,\ldots, n+s_0+1\}$.

We also label only the points as in (\ref{en:1}) by the order they intersect $l_i$ as $P_{i1}',\ldots, P_{i{k_i}}'$ and define a function $m\mapsto j_{\mathcal{T}}(i,m)$ from $\{1, \ldots, k_i\}$ to $\{1,\ldots, n+s_0+1\}$ as in  \ref{ss:mumford}.

Let $l=\cup l_k\subset D'$ and $h':l\to \partial U'$ be a continuous function such that $\psi'\circ h'=id_l$. For an exceptional divisor $E_k$ corresponding to an edge $(i,j)$ in $\mathcal{C}$, we let $l_{E_k}\subset E_k$ be a simple path connecting $E_k\cap D_i'$ to $E_k\cap D_j'$ and $h_{E_k}:l_{E_k}\to \partial U_{E_k}$ such that $\psi_{E_k}\circ h_{E_k}=id_{l_{E_k}}$,  $h_{E_k}(l_{E_k})\cap h'(l_i)\not =\varnothing$ and $h_{E_k}(l_{E_k})\cap h'(l_j)\not = \varnothing$. This create a cycle $c_k=c_{ij}$ in the boundary manifold of ${\pi^{(1)}}^*(D)$, which we orient passing first by $h'(l_i)$, following $h_{E_k}(l_{E_k})$ and coming back by $h'(l_j)$. We denote by $\gamma_1,\ldots,\gamma_{n+1+s_0}$ the meridians around $D_1',\ldots, D_{n+1+s_0}'$ obtained as in \ref{ss:mumford} using $h'(l)$.

\begin{thm}[Westlund]\label{thm:Westlund}
A presentation for $\pi_1(\partial U)$ is given by 
$$\pi_1(\partial U)=\left\langle 
 \begin{array}{llll}
\gamma_1, \ldots, \gamma_{n+s_0+1} & \big\lvert [\gamma_i,\gamma_j^{s_{ij}}], & (i,j)\in \mathcal{E} \\
c_1, \ldots, c_g & \bigg\lvert \gamma_i^{-w_i}=\prod_{m=1}^{k_i'} \gamma_{j_{\Delta}(i,m)}^{s_{ij_{\Delta}(i,m)}} & 1\leq i \leq n+s_0+1
\end{array} \right\rangle $$
where 
$$s_{ij}=\left\lbrace\begin{array}{ll}
c_k^{-1} & \text{ if } (i,j) \text{ equals the $k$-th element in } \mathcal{C},\\
c_k & \text{ if } (j,i) \text{ equals the $k$-th element in } \mathcal{C},\\
1 & \text{ if } (i,j) \text{ is an edge of } \mathcal{T}.\\	 
\end{array} \right. $$ 
\end{thm}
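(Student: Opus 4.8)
The plan is to reconstruct Westlund's presentation by combining Mumford's presentation (Theorem \ref{thm:Mumford}) for the boundary manifold $\partial U'$ of the tree divisor $D'$ with a van Kampen argument that reintroduces the $g$ exceptional divisors $E_1,\dots,E_g$, then to identify $\partial U$ with the resulting manifold. First I would observe that, since the dual graph of $D'$ is the tree $\mathcal{T}$ and the $D_i'$ are rational curves, Theorem \ref{thm:Mumford} applies verbatim and gives $\pi_1(\partial U')$ generated by meridians $\gamma_i$ of the $D_i'$ subject to the commutation relations $[\gamma_i,\gamma_{j_{\mathcal{T}}(i,m)}]$ and the Mumford node relations $\gamma_i^{-w_i'}=\prod_{m=1}^{k_i}\gamma_{j_{\mathcal{T}}(i,m)}$, where $w_i' = D_i'\cdot D_i'$ is the self-intersection \emph{after} blowing up $p_1,\dots,p_g$. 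The key numerical point is bookkeeping the self-intersections: each blow-up at a point $p_k=D_i\cap D_j$ lowers both $D_i\cdot D_i$ and $D_j\cdot D_j$ by one, and introduces $E_k$ with $E_k\cdot E_k=-1$; so $w_i = D_i\cdot D_i = w_i' + \#\{k : p_k \in D_i\}$, and the product over $P_{im}'$ in $\mathcal{T}$ together with the product over the $E_k$-nodes on $D_i'$ reassembles into the product over all $P_{im}$, i.e. over $\{1,\dots,k_i'\}$, which is the shape of the relation in the statement once one checks the exponents $w_i$ match.

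The second step is to pass from $\partial U'$ (and the boundary manifolds $\partial U_{E_k}$ of the exceptional curves) back to $\partial U$. The boundary manifold of ${\pi^{(1)}}^*(D) = D' + \sum E_k$ is, after removing suitable plumbing pieces, glued from $\partial U'$ and the $\partial U_{E_k}$ along fibered solid tori over the intersection points; applying van Kampen along the path $h'(l) \cup \bigcup_k h_{E_k}(l_{E_k})$ one sees that each $E_k$ contributes, via Lemma \ref{lem:RelsInBundle} with $E_k\cdot E_k = -1$, a relation expressing its own meridian as a product of the two meridians $\gamma_i,\gamma_j$ it meets, together with commutation of that meridian with $\gamma_i$ and $\gamma_j$. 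Because $E_k$ is a $(-1)$-curve meeting exactly $D_i'$ and $D_j'$, one can \emph{contract} it: eliminating the generator for the meridian of $E_k$ turns the two commutation relations $[\gamma_i, \mu_{E_k}]$, $[\gamma_j,\mu_{E_k}]$ into a single conjugation relation $[\gamma_i, \gamma_j^{s_{ij}}]$ where the conjugating element $s_{ij}$ is exactly the loop $c_k$ (or $c_k^{-1}$) recording how the path runs through $h_{E_k}(l_{E_k})$, with the sign dictated by the chosen orientation of $c_k$ (first along $l_i$, then $l_{E_k}$, then back along $l_j$). This is the origin of the case distinction in the definition of $s_{ij}$. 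Meanwhile, in the node relation $\gamma_i^{-w_i}=\prod_m \gamma_{j_{\Delta}(i,m)}^{s_{ij_{\Delta}(i,m)}}$, the factors coming from tree-edges carry $s_{ij}=1$ and the factors coming from cycle-edges carry the $c_k^{\pm 1}$-conjugation, exactly because the Mumford generator at the node $E_k\cap D_i'$ must be transported along the cycle path before it can be written in terms of the globally chosen meridians.

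The last step is to justify that the manifold so presented is genuinely $\partial U$: this is the standard fact (Mumford, Waldhausen, Westlund) that the boundary manifold of a simple normal crossing divisor is the plumbed graph-manifold associated to the weighted dual graph $\Delta$, and that blowing up a node and then contracting the $(-1)$-curve does not change the ambient $3$-manifold; so $\partial U = \partial U({\pi^{(1)}}^* D \text{-neighborhood})$ up to homeomorphism, and the cycle generators $c_1,\dots,c_g$ are precisely a basis of $H_1$ coming from the $g = b_1(\Delta)$ independent cycles of $\Delta$, free in the presentation because collapsing a maximal tree of a graph leaves a wedge of $g$ circles. I expect the main obstacle to be purely bookkeeping rather than conceptual: getting the conjugating elements $s_{ij}$ consistently oriented across \emph{all three} places they appear (the commutator relations, the node relations, and implicitly in how $\Gamma^{(\kappa)}$-style transport is threaded along $h'(l)$), and verifying that the self-intersection shift under the $g$ blow-ups reproduces exactly the weights $w_i = D_i\cdot D_i$ of the original divisor rather than the blown-up ones. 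Once the orientation conventions for $c_k$ and for the Mumford loops are pinned down, the relations fall out of Lemma \ref{lem:RelsInBundle} and van Kampen with no further computation.
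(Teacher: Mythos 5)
Your proposal is correct and follows essentially the same route as the paper: apply Mumford's Theorem \ref{thm:Mumford} to the strict transform over the maximal tree, reglue the boundary pieces $\partial U_{E_k}^*$ of the exceptional $(-1)$-curves, eliminate their meridians using $E_k\cdot E_k=-1$, and track the weight shift from $w_i'$ back to $w_i$ via the count of blown-up points on $D_i$. The only step the paper makes more precise is where $c_k$ formally enters: the first torus gluing is an ordinary van Kampen amalgamation, while the second torus gluing (a disconnected intersection) is an HNN extension whose stable letter is $c_k$ — which is exactly the ``transport along the cycle path'' mechanism you describe.
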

\begin{proof}  From Theorem \ref{thm:Mumford} we know that $$\pi_1(\partial U')=\left \langle \gamma_1,\ldots\gamma_{n+s_0+1}\bigg \lvert [\gamma_i,\gamma_{j_{\mathcal{T}(i,m)}}]\ m=1,\ldots, k_i, \gamma_i^{-w_i'}=\prod_{m=1}^{k_i}\gamma_{j_{\mathcal{T}(i,m)}} \right \rangle $$
where $w_i'$ is the intersection number of the strict transform $D_i'$ of $D_i$ in $\Bl_{p_1,\ldots,p_g}\bar{X}$. Note that $(i,l)$ is an edge of $\mathcal{T}$ if and only if $l=j_{\mathcal{T}}(i,m)$ for some $m\in \{1,\ldots, k_i\}$ and therefore the set of relations $A=\{[\gamma_i, \gamma_{j_{\mathcal{T}}(i,m)}]\mid m=1,\ldots, k_i, i=1,\ldots, n+s_0+1\}$ is the same as $B=\{[\gamma_i,\gamma_{l}]\mid (i,l) \text{ an edge of }\mathcal{T}\}$.

Let $E_k$ be an exceptional divisor corresponding to an edge $(i,j)$ in $\mathcal{C}$ as above. We can remove two disks $\Delta_1'\subset D_i',\Delta_2'\subset D_j'$ in $D'$ around the points $E_k\cap D_i'$ and $E_k\cap D_j'$ respectively, and obtain a pair of torus $T_i',T_j'$ as boundary from $\partial U'^\circ=\psi'^{-1}(D'\setminus \Delta_1'\cup \Delta_2')$. Let $\gamma(E_k)_i',\gamma_i$ and $\gamma(E_k)_j',\gamma_j$ be generators of $\pi_1(T_i')$ and $\pi_1(T_j')$ with $\gamma(E_k)_i', \gamma(E_k)_j'$ constructed from $\partial \Delta_1'$, $\partial \Delta_2'$ as in \ref{ss:mumford}. We obtain the following presentation for $\pi_1(\partial U'^\circ)$:
$$\left \langle \gamma_1,\ldots\gamma_{n+s_0+1}, \gamma(E_k)_i', \gamma(E_k)_j' \Bigg \lvert \begin{array}{l}
A, {[\gamma_i,\gamma(E_k)_i'],[\gamma_j, \gamma(E_k)_j']} \\
\gamma_l^{-w_l'}=\prod_{m=1}^{k_l}\gamma_{j_{\mathcal{T}}(l,m)}  \text{ for } l\not = i,j,\\
\gamma_i^{-w_i'}=\gamma_{j_{\mathcal{T}}(i,1)}\cdots \gamma(E_k)_i' \cdots \gamma_{j_{\mathcal{T}}(i,k_i)}, \\
 \gamma_j^{-w_j'}=\gamma_{j_{\mathcal{T}}(j,1)}\cdots \gamma(E_k)_j' \cdots \gamma_{j_{\mathcal{T}}(i,k_j)}  
\end{array} \right \rangle $$
where the products in the lowest row of the relations are taken in such a way that $1=\psi_i(\gamma_{i1}')\cdots \psi_i(\gamma(E_k)_i')\cdots \psi_i(\gamma_{i{k_i}}')$ holds in $\pi_1({D_i'}^*\setminus \Delta_1)$ and similarly $1=\psi_j(\gamma_{j1}')\cdots \psi_j(\gamma(E_k)_j')\cdots \psi_j(\gamma_{j{k_j}}')$ in $\pi_1({D_j'}^*\setminus \Delta_2)$ for generators $\gamma_r, \gamma_{r1}',\ldots, \gamma_{r{k_r}}'$ generators of $\pi_1(\partial {U_r'}^*)$ for $r=i,j$ as in \ref{ss:mumford}.

  Let $E_k^*$ denote the submanifold of $E_k$ obtained by removing another pair of disks $\Delta_1,\Delta_2$ of $E_k$ about the points $E_k\cap D_i'$ and $E_k\cap D_j'$  as in \ref{ss:mumford}. Write $\partial U_{E_k}^*$ for $\psi_{E_k}^{-1}(E_k^*)$. Note that the boundary of $\partial U_{E_k}^*$ consists also of a pair of torus $T_i,T_j$ corresponding to $\Delta_1$ and $\Delta_2$ respectively. Let $\gamma_i', \gamma(E_k)$ and $\gamma_j', \gamma(E_k)$  be generators of $\pi_1(T_i)$ and $\pi_1(T_j)$ respectively. 
By (\ref{eq:RelsInBundle}), we have that $\pi_1(\partial U_{E_k}^*)=\langle \gamma(E_k),\gamma_i',\gamma_j'\mid [\gamma(E_k),\gamma_i'],[\gamma(E_k),\gamma_j'], \gamma(E_k)=\gamma_i'\gamma_j' \rangle \cong \langle \gamma_i', \gamma_j' \mid [\gamma_i',\gamma_j'] \rangle \cong \mathbb{Z}^2$ because $E_k\cdot E_k=-1$.

 We can glue $\partial U'^\circ$ to $\partial U_{E_k}^*$ by first gluing $T_i$ to $T_i'$ by  a longitude-to-meridian orientation-preserving attaching map $f$, and similarly $T_j$ to $T_j'$ by a map $g$. 
 
 First, by the van Kampen Theorem we obtain that $\gamma_i=\gamma_i'$ and $\gamma(E_k)=\gamma(E_k)_i'$. Then, from HNN extension we get $\gamma_j=c_k^{-1} \gamma_j' c_k$ and $\gamma(E_k)_j'=c_k^{-1} \gamma(E_k) c_k$.
 
 We obtain the following presentation of $\pi_1(\partial U'^\circ\cup_{f,g} \partial U_{E_k}^*)$ by replacing $\gamma_i'=\gamma_i,\gamma_j'=\gamma_j^{c_k^{-1}},\gamma(E_k)=\gamma_i\gamma_j^{c_k^{-1}},$ $\gamma(E_k)_i'=\gamma_i\gamma_j^{c_k^{-1}},\gamma(E_k)_j'=\gamma_i^{c_k}\gamma_j$ in terms of $\gamma_i,\gamma_j,c_k$
 
 $$\left \langle \gamma_1,\ldots\gamma_{n+s_0+1}, c_k \bigg \lvert \begin{array}{l} {[\gamma_i,\gamma_j^{c_k^{-1}}]}, [\gamma_i,\gamma_l] \text{ with } (i,l)\in \mathcal{T}  \\
\gamma_l^{-w_l'}=\prod_{m=1}^{k_l}\gamma_{j_{\mathcal{T}}(l,m)}  \text{ for } l\not = i,j,\\
\gamma_i^{-w_i'}=\gamma_{j_{\mathcal{T}}(i,1)}\cdots \gamma_i \gamma_j^{c_k^{-1}} \cdots \gamma_{j_{\mathcal{T}(i,k_i)}},\\
\gamma_j^{-w_j'}=\gamma_{j_{\mathcal{T}(j,1)}}\cdots \gamma_i^{c_k}\gamma_j \cdots \gamma_{j_{\mathcal{T}(j,k_j)}}  
\end{array} \right \rangle $$
Note that the row of the relations corresponding to $i$ can be simplified to \begin{equation}\label{eq:above}
\gamma_i^{-(w_i'+1)}=\gamma_{j_{\mathcal{T}}(i,1)}\cdots\gamma_j^{c_k^{-1}} \cdots \gamma_{j_{\mathcal{T}(i,k_i)}},
\end{equation} as $\gamma_i$ commutes with every $\gamma_{j_\mathcal{T}(i,m)}$. A similar simplification can be made for the relation corresponding to $j$.

We repeat the above process for every $E_k$ with $k=1,\ldots, g$. After this, the order for the product as in (\ref{eq:above}), is given by the function $m\mapsto j_\Delta(i,m)$ and the conjugations $s_{ij}$ as in the statement of the Theorem. We get that $\gamma_i^{-(w_i'+(k_i'-k_i))}=\prod_{m=1}^{k_i'}\gamma_{j_\Delta(i,m)}^{s_{ij_\Delta(i,m)}}$. Note that $k_i'-k_i$ equals the number of points in $\{p_1,\ldots, p_g\}\cap D_i$, and therefore $w_i'+(k_i'-k_i)=w_i$.

 This gives a presentation for the fundamental group of the boundary manifold of the total transform of $D$, which is homeomorphic to $\partial U$.
\end{proof}

A central computation in our work is the expression of the meridians around the exceptional divisors $D_{n+2},\ldots, D_{n+s_0+1}$ in $D=\sum_{i=1}^{n+s_0+1}D_i$ in terms of meridians of the lines in $\A$.  As a partial result we obtain an expression in the following corollary. The cycles $c_k$ will be expressed in terms of meridians of the lines in \ref{ss:BoundaryToGlobal}.
\begin{cor}\label{cor:ExcDiv} For $r=n+2, \ldots, n+s_0+1$, we have that in $\pi_1(\partial U)$,
$$\gamma_r=\prod_{m=1}^{k_r'}\gamma_{j_{\Delta}(r,m)}^{s_{rj_{\Delta}(r,m)}} \text{ with } s_{rj}=\left\lbrace \begin{array}{cc}
c_k^{-1} & \text{ if } (r,j) \text{ equals the $k$-th element in }\mathcal{C},\\
c_k & \text{ if } (j,r) \text{ equals the $k$-th element in }\mathcal{C},\\
1 & \text{ if } (r,j) \text{ is an edge in } \mathcal{T}.\\
\end{array} \right.$$
\end{cor}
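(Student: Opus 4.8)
The plan is to read the asserted formula off directly from the presentation of $\pi_1(\partial U)$ established in Theorem \ref{thm:Westlund}. That presentation contains, for every index $i$ with $1\le i\le n+s_0+1$, the relation
$$\gamma_i^{-w_i}=\prod_{m=1}^{k_i'}\gamma_{j_\Delta(i,m)}^{s_{ij_\Delta(i,m)}},$$
where $w_i=D_i\cdot D_i$ and the conjugating elements $s_{ij}$ are exactly those recalled in the statement of the corollary. Hence the whole content to be verified is that $w_r=-1$ for the indices $r=n+2,\dots,n+s_0+1$ labelling the exceptional divisors.

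This last point is immediate from the construction in \ref{ss:partialCArrangements}: $\bar X=\Bl_{P_0}\mathbb{P}^2$ is obtained by blowing up the finitely many pairwise distinct points of $P_0\subset\Sing\A$ of multiplicity $\ge 3$, so each $D_r$ with $r\ge n+2$ is the exceptional divisor of the blow-up of a smooth point of a smooth surface and therefore has $D_r\cdot D_r=-1$. Substituting $w_r=-1$, i.e. $-w_r=1$, into the relation above replaces the left-hand side $\gamma_r^{-w_r}$ by $\gamma_r$ and produces precisely the claimed identity; the table of conjugations $s_{rj}$ in the corollary is simply the restriction to these rows of the general $s_{ij}$ of Theorem \ref{thm:Westlund}.

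I do not expect any genuine obstacle: the corollary is a specialization of Theorem \ref{thm:Westlund} together with the single observation $w_r=-1$. The only care needed is bookkeeping — checking that the enumeration $m\mapsto j_\Delta(r,m)$ of the neighbours of the vertex $v_r$ in $\Delta$, and the sign convention for the $c_k$ (whether the edge attached to $E_k$ is traversed as $(r,j)$ or as $(j,r)$ under the chosen orientation of the cycles in $\mathcal{C}$), are exactly those fixed in \ref{ss:FundaBoundary}, which is how the three cases defining $s_{rj}$ are phrased. The substantive work, namely re-expressing the cycles $c_k$ themselves in terms of meridians of the lines of $\A$, is a separate computation carried out in \ref{ss:BoundaryToGlobal} and is deliberately not attempted here.
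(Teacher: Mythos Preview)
Your proof is correct and follows exactly the same route as the paper: invoke the relation $\gamma_r^{-w_r}=\prod_{m=1}^{k_r'}\gamma_{j_\Delta(r,m)}^{s_{rj_\Delta(r,m)}}$ from Theorem~\ref{thm:Westlund} and use $w_r=-1$ for the exceptional divisors. The paper adds in passing the remark that consequently $j_\Delta(r,m)\in\{1,\ldots,n+1\}$ (exceptional divisors meet only strict transforms of lines), but this is not needed for the formula itself.
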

\begin{proof}
It follows from the relation $\gamma_r^{-w_r}=\prod_{m=1}^{k_r'} \gamma_{j_{\Delta}(i,m)}^{s_{rj_\Delta(r,m)}}$ in the presentation of $\pi_1(\partial U)$ in Theorem \ref{thm:Westlund}, the fact that $w_r=-1$ because $D_r$ is an exceptional divisor and hence $j_{\Delta}(r,m)\in\{1,\ldots,n+1\}$.
\end{proof}
\subsubsection{Choice of a maximal tree}\label{sss:MaximalTree}
In what follows, we will define a maximal tree $\mathcal{T}'$ of the dual graph $\Delta$ of $D$ as defined in \cite[Section 3.3]{Cohen_2008}. 

In the arrangement $\A=\{L_1,\ldots,L_{n+1}\}$, we will fix the line $L_{n+1}$ as the line at infinity, recall that we denote by $D_i$ the strict transform by $L_i$ for $i\leq n+1$ in $D=\sum D_i\subset \bar{X}$.

Consider the following subset of edges $\mathcal{E}'\subset \mathcal{E}$ which defines a maximal tree $\mathcal{T}'\subset \Delta$ of the dual graph $\Delta$ of $D$: 
\begin{enumerate}
\item Let $(j,n+1),(n+1,j)\in \mathcal{E}'$ if $D_{n+1}\cap D_j\not = \varnothing$. This is, all the edges having as an endpoint the vertex corresponding to $D_{n+1}$.	
\item  Let $(i,j)\in \mathcal{E}'$ if $n+1<j$ ($D_j$ is an exceptional divisor) with either
\begin{itemize}
\item $D_j\cap D_{n+1}=\varnothing$ and $i=\min\{l\mid D_l\cap D_j\not =\varnothing\}$. Note that $D_j$ comes from a point in $\Sing \A\setminus L_{n+1}$.
\item or $D_j\cap D_{n+1}\not =\varnothing$ and $D_i\cap D_j\not =\varnothing$. The line $D_i$ corresponds then to a line $L_i$ touching $L_{n+1}$ in a point of multiplicity $>2$.
\end{itemize}
\end{enumerate}

Note that $\mathcal{E}\setminus \mathcal{E}'$ consists either:
\begin{itemize}
\item of edges corresponding to double points $L_i\cap L_j$ with $i,j<n+1$,
\item or, if $p=L_{i_1}\cap \ldots \cap L_{i_l}$ with $i_1<\ldots<i_l<n+1$, $2<l$, and $E_j$ denotes the exceptional divisor obtained by blowing up at $p$, of edges of the form $(i_r, j)$ with $r=2,\ldots,l$.
\end{itemize}
Let us consider the presentation of $\pi_1(\partial U)$ as in Theorem \ref{thm:Westlund}. If $(i,j)$ equals the $k$-th element in $\Delta\setminus \mathcal{T}'$ as in the first point above, we denote the cycle $c_k$ by $c_{i,j}$. Recall that if $i<j$, we pass first through $h'(l_i)$ and then through $h'(l_j)$.

For the cycles created by the edges in the second point, let us suppose that the irreducible component of $D$ are ordered in such a way that $D_{n+1}\cap D_j\not = \varnothing$ for $j=n+2,\ldots,s'$ and $D_{n+1}\cap D_k=\varnothing$ for $k>s'$.

For $s'<\iota\leq n+1+s_0$, we have that, as $D_{\iota}$ is an exceptional divisor,  $1\leq j_\Delta(\iota,m)\leq n$ for $1\leq m\leq k_i'$, and  $\gamma_{\iota}^{-w_\iota}=\prod_{m=1}^{k_\iota'} \gamma_{j_\Delta(\iota,m)}^{s_{\iota j_\Delta(\iota,m)}} $ holds as in Theorem \ref{thm:Westlund}. Note that if $(j_\Delta(\iota,m),\iota)$ equals the $k$-th element in $\Delta\setminus \mathcal{T}'$, we have that $s_{\iota j_\Delta(\iota,m)}=c_k$. In this case, we denote $c_k$ by $c_{j_\Delta(\iota,m),\iota}$. As $\mathcal{T}'$ is a maximal tree, the edges  corresponding to $(j_\Delta(\iota,m),\iota)$ for $1<m\leq k_i'$, give rise to $k_i'-1$ independent cycles $c_{j_\Delta(\iota,m),\iota}$ in $\Delta$.

Using the tree $\mathcal{T}'$ and corollary \ref{cor:ExcDiv}, we can express the meridian around an exceptional divisor in terms of the meridians of the lines and the cycles $c_{s_t'',s''}$:
\begin{equation}\label{eq:gammaExc}
\gamma_{\iota}=\gamma_{j_{\Delta}(\iota,1)}^{c_{j_\Delta(\iota,1),\iota}}\gamma_{j_\Delta(\iota,2)}^{c_{j_\Delta(\iota,2),\iota}}\cdots \gamma_{j_\Delta(\iota,k_\iota')}^{c_{j_\Delta(\iota,k_\iota'),\iota}} \quad \text{ for } s'<\iota\leq n+1+s_0
\end{equation}

with $c_{j_\Delta(\iota,r),\iota}=1$ if $r=\min \{j_\Delta(\iota,m)\mid m=1,\ldots, k_i'\}$.

\subsection{From a presentation for the boundary manifold of an arrangement of lines to a presentation of its complement}\label{ss:BoundaryToGlobal}
 Let $\A\subset \mathbb{P}^2$ be an arrangement of lines and $\partial U$ its boundary manifold. We identify $\partial U$ with the boundary manifold of the total transform $D$ of $\A$ in $\pi:\bar{X}\to \mathbb{P}^2$, the blow-up of $\mathbb{P}^2$ at the points of $\Sing \A$ of multiplicity higher than two.  Denote by $i:\partial U \hookrightarrow \mathbb{P}^2\setminus \A$ the inclusion map and by $i_*:\pi_1(\partial U)\to \pi_1(\mathbb{P}^2\setminus \A)$ the induced homomorphism. 

Consider the presentations $\langle\gamma_1, \ldots, \gamma_{n+1}, c_1, \ldots, c_g \mid R' \rangle$ of $\pi_1(\partial U)$ with $R'$ the set of relations as in Theorem \ref{thm:Westlund} and $\langle \lambda_1, \ldots, \lambda_{n+1} \mid \cup R_k,  \lambda_{n+1}\cdots \lambda_1\rangle$ of  $\pi_1(\mathbb{P}^2\setminus \A)$ as in Theorem \ref{thm:Arvola}.

Recall that the construction of the me\-ri\-dian $\gamma_k$ around the irreducible component $D_k$ of $D=\sum_{k=1}^{n+1+s_0} D_k$ depends on a choice of a maximal tree $\mathcal{T}$ of the dual graph $\Delta$ of $D$, contractible paths $l_k\subset D_k$, and a section $h:l=\cup l_k\to \partial U$, see \ref{ss:mumford}. We choose the maximal tree $\mathcal{T}'$ constructed at \ref{sss:MaximalTree}. For $p=L_{\eta_1}\cap \ldots \cap L_{\eta_r}\in \Sing \A\setminus L_{n+1}$, we have a unique cycle $c_{\eta_1,\eta_2}$ if $r=2$ and $r-1$ cycles if $r>2$, in this case let us denote by $D_\iota$ the corresponding exceptional divisor in $\bar{X}$, therefore we have the cycles $c_{\eta_2,\iota},\ldots, c_{\eta_r,\iota}$. See \ref{sss:MaximalTree}.

Consider a wiring diagram $\mathcal{W}$ of $\A$ as in \ref{ss:111}. There exists $\kappa\in \mathbb{N}^*$ such that $p\in M_\kappa$. Consider the geometric generating set $\Gamma^{(\kappa)}=\{\lambda_1^{(\kappa)},\ldots, \lambda_{n+1}^{(\kappa)}\}$. Recall that, as in remark \ref{rem:GammaKinGamma1}, there exists a word $\xi_j^{(\kappa)}$ in $\lambda_1^{(1)}, \ldots, \lambda_{n+1}^{(1)}$ (see also \ref{ss:CyclesInMeridians}), such that 
$$\lambda_{{\sigma^{(\kappa)}}^{-1}(j)}^{(\kappa)}={\lambda_{j}^{(1)}}^{\xi_j^{(\kappa)}} \quad  \text{ for } j=1,\ldots,n+1. $$

The main objective of this subsection is to prove the following Theorem.
\begin{thm}\label{thm:Guerville} 
The paths $l_1, \ldots, l_{n+1+s_0}$, the map $h:l\to \partial U$ and the wiring diagram $\mathcal{W}$ of $\A$ can be chosen in such a way that
\begin{enumerate}
\item\label{en:Guerville1} The generator $\gamma_k$ of $\pi_1(\partial U)$ lies in the same homotopy class as $\lambda_k$ in $\mathbb{P}^2\setminus \A$ for $k=1,\ldots, n+1$. 
\item If $p=L_{\eta_1}\cap \ldots \cap L_{\eta_r}\in \Sing \A\setminus L_{n+1}$ and $p\in M_\kappa$ as above, then 
\begin{itemize}
\item 
if $r=2$, the cycle $c_{\eta_1,\eta_2}$ is homotopic in $\mathbb{P}^2\setminus \A$ to $\xi_{\eta_1}^{(\kappa)}(\xi_{\eta_2}^{(\kappa)})^{-1}$  and 
\item if $r>2$, the cycle $c_{\eta_a,\iota}$ is homotopic to $\xi_{\eta_a}^{(\kappa)}{\xi_{\eta_1}^{(\kappa)}}^{-1}$, for $a=2, \ldots, r$. 	
\end{itemize}
By the point (\ref{en:Guerville1}), we can also consider each $\xi_{\eta_a}^{(\kappa)}$ as a word in $\gamma_1,\ldots,\gamma_{n+1}$.

\item  If $p=L_{\eta_1}\cap \ldots \cap L_{\eta_r}\in \Sing \A\setminus L_{n+1}$, denote by $R'(p)$ the set of relations:
\begin{itemize}
\item  $\{c_{\eta_1,\eta_2}^{-1}\xi_{\eta_1}^{(\kappa)}(\xi_{\eta_2}^{(\kappa)})^{-1}\}$ if $r=2$ , or
\item  $\{c_{\eta
_a,\iota}^{-1}\xi_{\eta_a}^{(\kappa)}(\xi_{\eta_1}^{(\kappa)})^{-1}\mid a=2,\ldots, r\}$ if $r>2$.
\end{itemize} We have that $\langle \gamma_1,\ldots, \gamma_{n+1}, c_1, \ldots, c_g \mid R', \cup_{p\in \Sing\A\setminus L_{n+1}} R'(p)\rangle$ and $\langle \lambda_1, \ldots, \lambda_{n+1} \mid \cup R_k, \lambda_{n+1}\cdots \lambda_1\rangle$ are Tietze-equivalent  presentations of $\pi_1(\mathbb{P}^2\setminus \A)$.

\end{enumerate}
\end{thm}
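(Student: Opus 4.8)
The plan is to construct, in a single coherent geometric picture, a tubular neighborhood $\partial U$ of $D$ inside $\Bl_R\mathbb{P}^2$ together with the Mumford--Westlund skeleton $h(l)$, in such a way that the pieces of the skeleton sit inside the subsets $M_\kappa$ of the wiring-diagram decomposition of Section \ref{sec:Wirtinger}. First I would fix the wiring diagram $\mathcal{W}$ relative to the base point $R$ and the path $\beta$, and recall that $\mathbb{P}^2\setminus\A\cong\Bl_R\mathbb{P}^2\setminus\pi_R^*\A$ by Lemma \ref{lem:IsoFunda}, so all the loops $\lambda_k$, $\lambda_k^{(\kappa)}$ and $\xi_j^{(\kappa)}$ live in this blow-up. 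The key geometric choice is to take, for each line $D_k\subset\bar X$, the contractible path $l_k\subset D_k$ to be (a small pushoff of) the wire $W_k$ itself, and to take the base point $Q_k$ near the right-hand end of $\mathcal{W}$, so that $h(Q_k)$ lies in the same fiber $\bar f^{-1}(\beta(\theta_1))$ that carries the geometric generating set $\Gamma^{(1)}=\{\lambda_1,\dots,\lambda_{n+1}\}$. With this choice the fiber $S^1$ of $\partial U_k$ over $Q_k$, pushed to the global base point $q$ along $\pi_R^{-1}(R)$, is exactly a meridian of $L_k$ sitting in $\Gamma^{(1)}$; this gives part (1), $\gamma_k\simeq\lambda_k$ in $\mathbb{P}^2\setminus\A$, for the strict transforms of the actual lines.

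Next I would analyze the cycles $c_k$. By the choice of maximal tree $\mathcal{T}'$ from \ref{sss:MaximalTree}, a cycle is attached at a singular point $p=L_{\eta_1}\cap\dots\cap L_{\eta_r}\in\Sing\A\setminus L_{n+1}$, and it runs: out along $h'(l_{\eta_1})$ (a pushoff of $W_{\eta_1}$), across the tiny exceptional torus over $p$, and back along $h'(l_{\eta_a})$ (a pushoff of $W_{\eta_a}$). Projecting to $\Bl_R\mathbb{P}^2\setminus\pi_R^*\A$ and following the wires, the outbound/inbound arcs are precisely the conjugating paths that carry $\lambda_{\eta_j}^{(1)}\in\Gamma^{(1)}$ to $\lambda^{(\kappa)}_{{\sigma^{(\kappa)}}^{-1}(\eta_j)}\in\Gamma^{(\kappa)}$, i.e. the words $\xi^{(\kappa)}_{\eta_j}$ of Remark \ref{rem:GammaKinGamma1}. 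Hence $c_{\eta_1,\eta_2}\simeq\xi^{(\kappa)}_{\eta_1}(\xi^{(\kappa)}_{\eta_2})^{-1}$ when $r=2$, and for $r>2$, because in the tree $\mathcal{T}'$ the vertex $\eta_1=\min$ is joined to the exceptional divisor $D_\iota$, the remaining cycles are $c_{\eta_a,\iota}\simeq\xi^{(\kappa)}_{\eta_a}(\xi^{(\kappa)}_{\eta_1})^{-1}$; this is part (2). Care is needed here to check that the little loop around the exceptional divisor contributes nothing extra --- for this one uses Lemma \ref{lem:ExcMerProduct} / Corollary \ref{cor:ExcDiv}, which already expresses $\gamma_\iota$ as $\prod\gamma_{j_\Delta(\iota,m)}^{c}$ with the same conjugators, so the picture is consistent.

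For part (3) I would run a Tietze argument. Start from Westlund's presentation $\langle\gamma_1,\dots,\gamma_{n+1+s_0},c_1,\dots,c_g\mid R'\rangle$ of $\pi_1(\partial U)$; using Corollary \ref{cor:ExcDiv} and formula \eqref{eq:gammaExc}, eliminate the generators $\gamma_\iota$ for $\iota>n+1$, obtaining a presentation on $\gamma_1,\dots,\gamma_{n+1},c_1,\dots,c_g$. Now impose the relations $R'(p)$, i.e. set $c_k$ equal to the corresponding word $\xi^{(\kappa)}_{\eta_a}(\xi^{(\kappa)}_{\cdot})^{-1}$ in the $\gamma$'s; this is exactly the statement that $i_*$ kills the cycles, which holds because each $c_k$ is a loop in $\partial U$ bounding a disc transverse to $D$ nowhere --- more precisely, it is freely homotoped in $\mathbb{P}^2\setminus\A$ to the indicated word by the geometric identification above. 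Using these relations to eliminate $c_1,\dots,c_g$, the commutator relations $[\gamma_i,\gamma_j^{s_{ij}}]$ become consequences of the vertex relations $R_k$ (a Hopf-link relation at $p$ implies all meridians through $p$ commute after the appropriate conjugation), the boundary relations $\gamma_i^{-w_i}=\prod\gamma_{j_\Delta(i,m)}^{s_{ij}}$ become, via $w_i=-(\text{number of singular points on }L_i) + (\text{correction})$, exactly the cyclic-permutation relations $R_k$ coming from Theorem \ref{thm:Arvola}, and the product-of-meridians relation at $L_{n+1}$ yields $\lambda_{n+1}\cdots\lambda_1=1$.

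\textbf{Main obstacle.} The hard part is the bookkeeping in the last paragraph: matching the self-intersection numbers $w_i$ (which encode how $\Bl_{P_0}$ changed the arrangement) against the combinatorics of the wiring diagram, and verifying that after eliminating the $c_k$ every Westlund relation is either trivial or a product of the $R_k$'s --- and conversely that no $R_k$ is lost. This is essentially a careful, diagram-chasing verification that the homomorphism $i_*:\pi_1(\partial U)\to\pi_1(\mathbb{P}^2\setminus\A)$ is surjective with kernel exactly normally generated by the $c_k$ and the meridian-fibers that die in $\mathbb{P}^2$, which is the content of \cite{florens,guerville} adapted to the present, wiring-diagram-compatible choice of skeleton; the novelty (and the delicate point) is precisely that the choices are rigid enough to force the identifications $\gamma_k\simeq\lambda_k$ and $c_k\simeq\xi\xi^{-1}$ on the nose rather than merely up to an automorphism.
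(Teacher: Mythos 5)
Your overall route is the same as the paper's: choose the skeleton paths $l_k$ to be the wires (sections of $\bar f$ over $\beta([0,1])$), arrange the choices so that $i_*(\gamma_k)=\lambda_k$ for the lines, identify the cycles with the words $\xi^{(\kappa)}_{\eta_a}(\xi^{(\kappa)}_{\eta_1})^{-1}$, and then match relations under $i_*$ (Lemma \ref{lem:EqMeridians}, Proposition \ref{prop:CyclestoMeridians}, Propositions \ref{thm:guerville} and \ref{prop:Redundant} in the paper). However, two of your steps are asserted where the actual work lies, and one of them is stated incorrectly. First, the heart of part (2) is your sentence ``the outbound/inbound arcs are precisely the conjugating paths that carry $\lambda^{(1)}_{\eta_j}$ to $\lambda^{(\kappa)}_{{\sigma^{(\kappa)}}^{-1}(\eta_j)}$''; this is exactly what has to be proved, and the paper does it by the inductive Lemma \ref{lem:CyclestoMer}: one shows that $(\lambda_\iota^{(\kappa)})^{(\xi_{\eta_a}^{(\kappa)})^{-1}}$ decomposes as ``follow $h(l_{\eta_a})$, circle in the fiber, return'', using at each vertex the functions $\tau^{(\kappa)}$ and the over/under conventions to know which wires $h(l_k)$ pass below $h(l_{\eta_a})$. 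Without this induction the identification $c\simeq\xi\xi^{-1}$ is only a picture. Relatedly, your device of connecting the fiber over $Q_k$ to the base point ``along $\pi_R^{-1}(R)$'' does not make sense inside $\partial U$ (which lives in $\bar X=\Bl_{P_0}\mathbb{P}^2$ and must use the skeleton $h(l)$); the paper instead takes $R$ close to $L_{n+1}$, builds $l_{n+1}$ from $\beta$, and pushes the connecting arcs of the $\lambda_k^{(1)}$ into $\partial U_{n+1}^*$, which is what makes the identification of based classes, not just conjugacy classes, work.

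Second, your bookkeeping for part (3) assigns the roles of the Westlund relations wrongly. In the paper the cyclic-permutation relations $R_k$ do \emph{not} come from the boundary relations of the lines: for every line $D_r$, $r=1,\ldots,n+1$ (not only $L_{n+1}$), the relation $\gamma_r^{-w_r}=\prod_m\gamma_{j_\Delta(r,m)}^{s_{rj_\Delta(r,m)}}$ is shown in Proposition \ref{prop:Redundant} to map under $i_*$ to the single relation $\lambda_{n+1}\cdots\lambda_1$, and the proof is genuinely geometric (one uses the commutators at each exceptional divisor to rewrite $\gamma_r^{-1}\gamma_\iota^{s_{r\iota}}$ as a loop around the remaining lines through that point, so that the whole product becomes a loop around a line close to $L_r$). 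The relations $R_\kappa$ instead arise from the commutators: at a double point from $[\gamma_k,\gamma_r^{s_{kr}}]$ directly, and at a multiple point from the commutators together with the product expression $\gamma_r=\prod\gamma^{s}$ for the exceptional divisor (Proposition \ref{thm:guerville}), after conjugating by $\xi_\iota^{(\kappa)}$. So the ``main obstacle'' you flag is not mere diagram-chasing: as sketched, the step ``boundary relations become the $R_k$'' would fail, and the correct collapse of the $n+1$ line relations to $\lambda_{n+1}\cdots\lambda_1$ is a separate argument you would still need to supply.
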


By using a different presentation of $\pi_1(\partial U)$ and different techniques, the image of the generators of $\pi_1(\partial U)$  under the map $i_*$ was computed in \cite{florens} (See Proposition 2.13 and Theorem 4.5 of loc. cit.). The proof of Theorem \ref{thm:Guerville} is inspired by the ideas of \cite{florens}.

\subsubsection{Constructing equivalent generators}\label{sss:EquivalentGenerators}  

Let us choose the point $R\in \mathbb{P}^2\setminus \A$ close to $L_{n+1}$, consider the blow-up $\pi_R:\Bl_R\mathbb{P}^2\to \mathbb{P}^2$ and denote by $\bar{f}:Bl_R\mathbb{P}^2\to \mathbb{P}^1$ the associated pencil as in \ref{ss:111}.

Let $\beta:[0,1]\to \mathbb{P}^1$ be as in \ref{ss:111} such that it passes first through the projection of the points $\Sing \A\cap L_{n+1}$ to $\mathbb{P}^1$ via $\bar{f}$. Take its associated wiring diagram $ \mathcal{W}$ co\-rres\-pon\-ding to the arrangement $\A$ and fix a planar representation $p(\beta^* \mathcal{W})$ as in \ref{ss:PlanarWiring}. 

Let us order the representation of all the singular points $\Sing \A=\{p_1,\ldots, p_s\}$ in $p(\beta^* \mathcal{W})$ together with the virtual vertices $\{p_1',\ldots,p_\nu' \}\in p(\beta^*\mathcal{W})$, by the order they are crossed by the fiber $p(\beta^* \mathcal{W})|_{t}$ with $t$ increasing in $[0,1]$, and let $t_1,\ldots,t_{s+\nu}\in (0,1)$  be such that either an actual or a virtual vertex lies in $p(\beta^*(\mathcal{W}))|_{t_\kappa}$, for all $\kappa=1,\ldots,s+\nu$. By abuse of notation we will also denote by $t_\kappa$ the crossings in $p(\beta^*\mathcal{W})$ at the fiber $p(\beta^*(\mathcal{W}))|_{t_\kappa}$ and we will write $\mathcal{W}$ for $p(\beta^* \mathcal{W})$. Let $\Gamma^{(\kappa)}=\{\lambda_1^{(\kappa)},\ldots,\lambda_{n+1}^{(\kappa)}\}$ be the geometric generating set defined in \ref{ss:AlgorithmComplement}, for $\kappa=1,\ldots, s+\nu$.  

Recall that we have assumed that the order of the lines $L_1,\ldots, L_n$ is such that, at the very right of the planar representation of $\mathcal{W}$, the wire $W_1$ is at the bottom of $\mathcal{W}$, above it is the wire $W_2$ and then $W_3$, continuing in this way until $W_n$.

For an irreducible component $D_k$ of $D\subset \bar{X}$, denote its boundary manifold by $\psi_k:\partial U_k\to D_k$ and recall that we can consider $\psi_k|_{\partial U_k^*}:\partial U_k^*=\partial U_k\cap \partial U \to D_k^*$ (see \ref{ss:mumford}).
A set of generators for $\pi_1(\partial U_k^*)$ was constructed by fixing a base point $Q_k\in D_k^*$, simple paths $l_k\subset D_k$ from which we obtain paths $l_k'\subset D_k^*$ (see figure \ref{fig:MerMum}) and $h:\cup l_k\to \partial U$ as in \ref{ss:mumford}. The generators $\gamma_1,\ldots, \gamma_{n+1+s}$ were constructed by joining the different generators of $\pi_1(\partial U_k^*)$ to a common base point $Q$ via the contractible path $h(\cup l_k)$ in $\partial U$. 

Recall that the first $n+1$ irreducible components $D_1,\ldots, D_{n+1}$ of $D$ correspond to the lines $L_1,\ldots,L_{n+1}$ respectively and that, as in the end of \ref{ss:FundaBoundary}, there exists $s'$ such that for $j=n+2, \ldots, s'$, we have that $D_{n+1}\cap D_j\not = \varnothing$ and for $j>s'$, we have $D_{n+1}\cap D_j=\varnothing$.

\begin{lem}\label{lem:EqMeridians} For $k=n+1,\ldots, s'$, we can choose $l_k'\subset D_k^*$, a continuous map $h_k':l_k'\to \partial U_k^*$ and a base point $Q\in h_{n+1}(l_{n+1}')$ for the fundamental group $\pi_1(\partial U)$ in such a way that $i_*(\gamma_r)$ lies in the same homotopy class as $\lambda_r^{(1)}$ for $r=1,\ldots, n+1$.
\end{lem}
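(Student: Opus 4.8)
The plan is to construct all the auxiliary data (base point, paths $l_k'$, section $h_k'$) concurrently for the chain of components $D_{n+1}, \ldots, D_{s'}$ in such a way that the Mumford generator $\gamma_r$, when pushed into $\pi_1(\mathbb{P}^2\setminus\A)$ through the inclusion $\partial U \hookrightarrow \mathbb{P}^2\setminus\A$, is visibly a small loop around $L_r$ based near $R$, hence is freely homotopic to the meridian $\lambda_r^{(1)}$ of the geometric generating set $\Gamma^{(1)}$. The key geometric input is the choice, made at the start of \ref{sss:EquivalentGenerators}, of $R$ close to $L_{n+1}$ and of $\beta$ traversing first the points of $\Sing\A\cap L_{n+1}$; this means that a fiber $\bar f^{-1}(\beta(\theta_1))$ of the pencil near the right end of the wiring diagram already sits inside a small tubular neighborhood of $L_{n+1}$, so the exceptional divisor $\pi_R^{-1}(R)$ meets $\partial U$ in a disk contained in $\partial U_{n+1}^*$.

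First I would fix the base point: take $Q = q_1 = \pi_R^{-1}(R)\cap \bar f^{-1}(\beta(\theta_1))$, which by the previous paragraph lies on $D_{n+1}^* = D_{n+1}\setminus\cup\Delta(P_{(n+1)m})$, and then on $h_{n+1}(l_{n+1}')$ once $l_{n+1}'$ is chosen to pass through $\psi_{n+1}(Q)$. Next I would choose $l_{n+1}\subset D_{n+1}$ to be a simple path through $Q_{n+1}:=\psi_{n+1}(Q)$ visiting the intersection points $D_{n+1}\cap D_j$ in the order induced by the wiring diagram, i.e. the order in which the corresponding wires meet $L_{n+1}$ as one reads $\mathcal{W}$ from right to left; this is exactly the ordering that appears in $\Gamma^{(1)} = \{\lambda_1^{(1)},\ldots,\lambda_{n+1}^{(1)}\}$, since the wires were labeled bottom-to-top at the far right of $\mathcal{W}$. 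With this choice the path $l_{n+1}'$ and its loops $\beta_{(n+1)m}'$ of Figure \ref{fig:meridiansMumford2} project under $\pi\circ\psi$ to loops in $\mathbb{P}^2\setminus\A$ that encircle precisely the lines meeting $L_{n+1}$, in the same cyclic order as the meridians $\lambda_r^{(1)}$. I would then lift these by a section $h_{n+1}':l_{n+1}'\to\partial U_{n+1}^*$ compatible with $h$ on the skeleton, using Remark \ref{rem:Mumford} to absorb the fiberwise twist by a power of $\gamma_{n+1}'$ so that the image of $\beta_{(n+1)m}'$ is honestly a loop of the form $\{\mathrm{pt}\}\times\partial\Delta$; after this normalization the composite $i_*(\gamma_r)$ for each $r$ with $L_r\cap L_{n+1}\neq\varnothing$ is, up to the connecting path in $h(\cup l_k)$, a meridian of $L_r$ based at a point of $\pi_R^{-1}(R)$, hence in the class of $\lambda_r^{(1)}$.

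For the remaining components $D_{n+2},\ldots,D_{s'}$ (the exceptional divisors over points of $\Sing\A\cap L_{n+1}$) and for the lines $L_r$ not meeting $L_{n+1}$, I would propagate the choice along the maximal tree $\mathcal{T}'$ of \ref{sss:MaximalTree}: since every such $D_k$ is joined to $D_{n+1}$ by an edge of $\mathcal{T}'$, the connecting segment $\lambda\subset h(\cup l_k)$ used to base $\gamma_k$ can be taken inside the part of $\partial U$ lying over a neighborhood of $L_{n+1}$, which projects into a contractible region of $\mathbb{P}^2$; conjugation by such a path is trivial in $\pi_1(\mathbb{P}^2\setminus\A)$, so again $i_*(\gamma_r)\simeq\lambda_r^{(1)}$. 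The main obstacle I anticipate is bookkeeping the fiberwise twists of Remark \ref{rem:Mumford} coherently across the gluings in Mumford's van Kampen argument (Theorem \ref{thm:Mumford}): one must verify that the normalization making $h_{n+1}'(\beta_{(n+1)m}')$ twist-free is compatible with the normalizations on the adjacent $\partial U_k^*$, i.e. that the self-intersection numbers $w_k$ and the relations $\gamma_k^{-w_k}=\prod\gamma_{j_{\mathcal{T}}(k,m)}$ do not force an unavoidable extra power of $\gamma_k$ into $i_*(\gamma_r)$. This is handled by observing that such a power of $\gamma_k$ becomes a meridian of a component (a line or an exceptional divisor lying in $D$) and is therefore killed by $i_*$, so after a further free homotopy in $\mathbb{P}^2\setminus\A$ the classes still agree. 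Everything else — transversality of $l_k$ with the $\partial\Delta(P_{km})$, existence of the sections $h_k'$ — is routine.
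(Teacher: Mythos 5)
Your overall construction is the same as the paper's: choose $R$ near $L_{n+1}$, let $\beta$ visit the points of $\Sing \A\cap L_{n+1}$ first, take $l_{n+1}$ to be the section of $\bar f$ over $\beta([0,1])$, and build the remaining data on $D_{n+2},\ldots,D_{s'}$ so that each Mumford generator is assembled from pieces of the wiring-diagram meridians (the paper does this by decomposing $\lambda_r^{(1)}\simeq\lambda_p\,\lambda_{r_1}\,\lambda_p^{-1}$, splitting $\lambda_{r_1}$ into $\partial\Delta$ and an access arc $\lambda_{r_2}$, and \emph{defining} $l_k'=\psi_k(\cup_r\lambda_{r_2})$ with $h$ the tautological section). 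However, the two justifications you give at the decisive step are not valid. First, the claim that the connecting segments of $h(\cup l_k)$ lie over ``a contractible region of $\mathbb{P}^2$'' and that therefore ``conjugation by such a path is trivial in $\pi_1(\mathbb{P}^2\setminus\A)$'' does not work: a neighborhood of $L_{n+1}$ is not contractible, and in any case simple connectivity in $\mathbb{P}^2$ says nothing about homotopy rel endpoints in the complement $\mathbb{P}^2\setminus\A$. Two access paths to the same fiber circle of $\partial U_r^*$, both lying over a neighborhood of $L_{n+1}$, can differ by loops around the other lines through $\Sing\A\cap L_{n+1}$, and then the resulting meridians of $L_r$ are different conjugates — and distinguishing the correct conjugate is exactly the content of the lemma, since by Proposition \ref{prop:meridians} ``being a meridian of $L_r$'' only pins down the conjugacy class. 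The paper closes this gap by an explicit requirement on $h|_{l_{n+1}}$: each composite $\lambda_p\cdot\lambda_{r_2}$ coming from the decomposition of $\lambda_r^{(1)}$ must be homotopic to a segment of $h|_{l_{n+1}}(l_{n+1})$ (and similarly the paths $\lambda_p$ are pushed into $\partial U_{n+1}^*$ after a slight move of $R$); this condition is what forces $i_*(\gamma_r)$ to be the \emph{specific} class $\lambda_r^{(1)}$ and it does not follow from your ``contractible region'' argument.

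Second, your way of disposing of the Mumford twist (Remark \ref{rem:Mumford}) — ``such a power of $\gamma_k$ becomes a meridian of a component of $D$ and is therefore killed by $i_*$'' — is false. Here $i_*$ is induced by the inclusion $\partial U\hookrightarrow\mathbb{P}^2\setminus\A$; it kills no meridian of a line (these map to the generators $\lambda_r$) and no meridian of an exceptional divisor (these map to products such as $\lambda_m^{(\kappa)}\cdots\lambda_j^{(\kappa)}$, cf. Lemma \ref{lem:ExcMerProduct}). Meridians only die after passing further to a (partial) compactification. So if your normalization really left an extra factor $\gamma_k^{\pm 1}$ in $i_*(\gamma_r)$, the conclusion of the lemma would fail, not be saved. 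In the paper's construction this bookkeeping never arises, because the skeleton and sections are defined directly from the decomposed meridians, so the generators $\gamma_r$ are by construction (representatives of) the classes $\lambda_r^{(1)}$; if you keep your normalization-based presentation, you need an actual argument that the twists can be chosen coherently, not an appeal to $i_*$ killing them.
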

\begin{proof}

We begin by defining those $l_k'$ for $k=n+2,\ldots, s'$. Essentially, we arrange the choices in an appropriate way to obtain the stated in the lemma.

More precisely, let $D_k$ be an exceptional divisor corresponding to a point $p=p(k)\in \Sing \A\cap L_{n+1}$ with multiplicity higher or equal to three. Suppose that $p=L_j\cap L_{j+1}\cap \ldots\cap L_{m-1}\cap L_m\cap L_{n+1}$ (which can be written in this way by the order of the lines chosen above). Consider the boundary manifold $\psi_k:\partial U_k\to D_k$ in $\bar{X}$. We will also write $\partial U_p$ for the image $\pi(\partial U_k)\subset \mathbb{P}^2$ under the map $\pi:\bar{X}\to \mathbb{P}^2$.  For $r=j,\ldots, m$, each meridian $\lambda_r^{(1)}$ (see figure \ref{fig:elgeobas}) is homotopic to a meridian ${\lambda_r'}^{(1)}$ (see figure \ref{fig:decomposing1}) that can be decomposed in the following way: ${\lambda_r'}^{(1)}=\lambda_{p}\lambda_{r_1}\lambda_{p}^{-1}$ with $\lambda_{r_1}\subset \partial U_p$ a meridian of $L_r$ based at a point $q_p\in \partial U_p$ and $\lambda_p$ a path connecting $R$ and $q_p$.

We can further decompose each $\lambda_{r_1}$ as the boundary of a disk $\Delta$ around a point in $L_r$ and a path $\lambda_{r_2}$ connecting the point $q_p$ to $\partial \Delta$. Define the path $l_k'$ in $D_k^*$ as the projection $\psi_k(\cup_{r=j}^m \lambda_{r_2})$. We define $h|_{l_k'}$ such that $h|_{l_k'}\circ \psi_k|_{\cup_{r=j}^m \lambda_{r_2}} =id|_{\cup_{r=j}^m \lambda_{r_2}}$.

\begin{figure}[h]
        \centering
\begin{tikzpicture} 
\draw (-.4,-1) node [left]{${\lambda_j'}^{(1)}$};
\draw (-.4,0) node [left]{${\lambda_{j+1}'}^{(1)}$};
\draw (-.4,1) node [left]{${\lambda_m'}^{(1)}$};

\draw (0,1.3) node [above]{\tiny $\partial \Delta$};

\draw (.7,1) node [right]{\tiny ${\lambda_{m_2}}$};
\draw (.85,-.65) node [left]{\tiny ${\lambda_{j_2}}$};

\draw (.7,-1) node [right]{\small ${\lambda_p}$};

\draw (0,-1.5) node [left]{$R$};
\draw (1,0) node [right]{$q_p$};

\draw (0,-1) circle (.3);

\draw  (1,0) to[out=-100,in=0] (.3,-1);


\draw [arc arrow=to pos 0.35 with length 2mm] (-.5,0) to[out=-90,in=-90] 
(.5,0) [arc arrow=to pos 0.85 with length 2mm] 
to[out=90,in=90] cycle;
\draw  (0,-1.5) to[out=0,in=-90] (1,0);
\draw  (1,0) to[out=90,in=0] (.5,1);

\draw [arc arrow=to pos 0.35 with length 2mm] (-.5,1) to[out=-90,in=-90] 
(.5,1) [arc arrow=to pos 0.85 with length 2mm] 
to[out=90,in=90] cycle;
\draw  (1,0) to[out=-180,in=0] (.5,0);

\foreach \Point in {(0,-1),(0,0), (0,1),(1,0)}{
    \node at \Point {\textbullet};
}


\end{tikzpicture}
\caption{Decomposing a meridian}
\label{fig:decomposing1}

\end{figure}
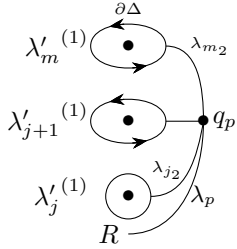

Note that, up to a slight change in $R$, the paths $\lambda_p$ are homotopic to paths $\lambda_p'$ lying in $\partial U_{n+1}^*$. 

Now, let $p=L_j\cap L_{n+1}$ be a double point in $\Sing \A\cap L_{n+1}$. The meridian $\lambda_j^{(1)}$ can be decomposed as $\lambda_j' \cdot \partial \Delta_j \cdot  \lambda_j'^{-1}$, with $\partial \Delta_j$ a fiber of $\partial U_j^*$ and $\lambda_j'\subset \partial U_{n+1}^*$ a path starting at $R$ and finishing at point $q_j\in \partial U_j^*\cap \partial U_{n+1}^*$.

Finally, for $k=n+1$ we define $l_k\in D_k$ as the image of $\beta([0,1])$ under the section of the pencil $\bar{f}:\Bl_R\mathbb{P}^2\to \mathbb{P}^1$ with range $D_k$. By construction $l_k$ passes over all the points in $\Sing \A\cap L_{n+1}$. We let $h|_{l_{n+1}}$  be a continuous function such that $\psi_{n+1}|_{h|_{l_{n+1}}(l_{n+1})}\circ h|_{l_{n+1}}=id_{l_{n+1}}$,  $h|_{l_{n+1}}(l_{n+1})$ is a simple path passing through each $q_j$ with $L_j\cap L_{n+1}$ a double point, touching each $\lambda_{(n+1)_2}(p)$ for each point $p\in \Sing \A\cap L_{n+1}$ of multiplicity greater or equal to two, and such that each $\lambda_p\cdot \lambda_{j_2}$ is homotopic to a segment of $h|_{l_{n+1}}(l_{n+1})$.


By the construction of the maximal tree $\mathcal{T}'$, these paths are sufficient to construct $\gamma_j$ for $j=1,\ldots, n+1$ and by construction, they lie in the same homotopy class as $\lambda_j^{(1)}$. 
\end{proof}

\begin{cor} The morphism $i_*:\pi_1(\partial U)\to \pi_1(\mathbb{P}^2\setminus \A)$ is surjective.
\end{cor}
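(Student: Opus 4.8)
The plan is short: it suffices to observe that the image of $i_*$ contains a generating set of $\pi_1(\mathbb{P}^2\setminus\A)$. By Theorem \ref{thm:Arvola}, the group $\pi_1(\mathbb{P}^2\setminus\A,q)$ is generated by the meridians $\lambda_1,\ldots,\lambda_{n+1}$ of the geometric generating set $\Gamma^{(1)}$, so the claim reduces to showing that each $\lambda_r$ lies in $\Ima(i_*)$.

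First I would appeal directly to Lemma \ref{lem:EqMeridians}. With the choices of the paths $l_k'$, the sections $h_k'$ of the boundary fibrations, and the global base point $Q\in h_{n+1}(l_{n+1}')$ fixed in its proof, the generator $\gamma_r$ of $\pi_1(\partial U)$ maps under $i_*$ to a loop homotopic in $\mathbb{P}^2\setminus\A$ to $\lambda_r^{(1)}=\lambda_r$, for every $r=1,\ldots,n+1$. Hence $\lambda_1,\ldots,\lambda_{n+1}\in\Ima(i_*)$, and since these elements generate $\pi_1(\mathbb{P}^2\setminus\A)$ the homomorphism $i_*$ is surjective.

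I do not expect any genuine obstacle: the corollary is an immediate consequence of Lemma \ref{lem:EqMeridians}, the only delicate point — namely the compatibility between the base points and auxiliary paths used to build $\partial U$ on the one hand, and the wiring diagram together with the geometric generating set on the other — having already been arranged in the proof of that lemma. This surjectivity is precisely the input needed to run the boundary-manifold argument, since it allows one to present $\pi_1(\mathbb{P}^2\setminus\A)$ (and, later, $\pi_1(M(\A,I,P))$) as a quotient of $\pi_1(\partial U)$, culminating in Theorem \ref{thm:Guerville}.
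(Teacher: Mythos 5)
Your argument is correct and is essentially identical to the paper's: both deduce surjectivity from Lemma \ref{lem:EqMeridians} (giving $i_*(\gamma_k)=\lambda_k^{(1)}$ for $k=1,\ldots,n+1$) together with the fact that $\Gamma^{(1)}$ generates $\pi_1(\mathbb{P}^2\setminus \A)$.
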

\begin{proof}
The group $\pi_1(\mathbb{P}^2\setminus \A)$ is generated by the elements $\Gamma^{(1)}=\{\lambda_1^{(1)},\ldots, \lambda_{n+1}^{(1)}\}$, as $i_*(\gamma_k)=\lambda_k^{(1)}$ for $k=1,\ldots, n+1$ the result follows.
\end{proof}
Suppose that $p\in M_\kappa\cap (\Sing\A\setminus L_{n+1})$ is of multiplicity higher or equal to three and that $p=L_{\sigma^{(\kappa)}(j)}\cap L_{\sigma^{(\kappa)}(j+1)}\cap \ldots\cap L_{\sigma^{(\kappa)}(m)}$. Denote by $\psi_p:\partial U_p\to E_p$ the boundary manifold of the exceptional divisor $E_p\subset \bar{X}$ obtained by blowing-up $p$. We select $l_p'\subset E_p^*$ in a similar way as in the proof of the precedent Lemma for a point of multiplicity higher than two lying in $L_{n+1}$: decompose each $\gamma_j^{(\kappa)},\ldots,\gamma_m^{(\kappa)}$ into a path $\lambda_p^{(\kappa)}$ connecting $R\subset \mathbb{P}^2$ and a point $q_p\in \partial U_p$, and ${\lambda_{r_1}}^{(\kappa)}$ with $r\in \{j,\ldots,m\}$ based at $q_p$ and generating $\pi_1(\partial U_p^*)$ as in figure \ref{fig:decomposing1}. Decompose further ${\lambda_{r_1}}^{(\kappa)}$ into a boundary of a disk $\partial \Delta_r$ around a point of the line $L_{\sigma^{(\kappa)}(r)}$ and a path ${\lambda_{r_2}}^{(\kappa)}$ connecting $q_p$ and $\partial \Delta_r$. We take $l_p'=\psi_p(\cup {\lambda_{r_2}}^{(\kappa)} )$ and define $h|_{l_p'}$ such that $h|_{l_p'}\circ \psi_p|_{\cup {\lambda_{r_2}}^{(\kappa)}}=id|_{\cup {\lambda_{r_2}}^{(\kappa)}}$.

For every $k=1,\ldots, n$, we define $l_k\subset D_k$ as the image of $\beta([0,1])$ under the section of $\bar{f}:\Bl_R\mathbb{P}^2\to \mathbb{P}^1$ that has as range $D_k$. We define $h|_{l_k}$ such that it is continuous, $\psi_k|_{h|_{l_k}(l_k)} \circ h_{l_k}=id_{l_k}$, $h_{l_k}(l_k)$ intersects $\cup \lambda_{r_2}^{(\kappa)}$ in a point if $p\in M_\kappa\cap (\Sing\A \cap L_k)$ with the notations as in the paragraph above, $h_{l_k}(l_k)\cap h_{l_{k'}}(l_{k'})\not = \varnothing$ if $L_k\cap L_{k'}\not = \varnothing$ is a double point and $h_{l_k}(l_k)$ is not homotopic to a multiple of a fiber $S^1$ of $\partial U_k$.

\subsubsection{Expressing the cycles in terms of the meridians}\label{ss:CyclesInMeridians}
Let $t_\eta\in \mathcal{W}$ be an actual vertex and suppose that $t_\eta=W_{\eta_1}\cap W_{\eta_2}\cap \ldots\cap W_{\eta_r}$  with the global order of the wires of $\mathcal{W}$ such that $\eta_1<\eta_2<\ldots<{\eta_r}< n+1$. By definition of the maximal tree $\mathcal{T}'$, to each $\eta_a$, with $a>1$,  corresponds a cycle $c_{\eta_a,t_\eta}$ which is a generator of $\pi_1(\partial U)$, see \ref{sss:MaximalTree}. This cycle is constructed by connecting $h|_{\eta
_a}(l_{\eta_a})\cdot h|_{t_\eta}(l_{t_\eta}')\cdot h|_{\eta_1}(l_{\eta_1})$ to $R$ if $r>2$ and by connecting $ h|_{\eta_1}(l_{\eta_1})\cdot h|_{\eta
_a}(l_{\eta_a})$ to $R$ if $r=2$. 

 For every $\kappa\leq \eta$, consider the geometric generating set $\Gamma^{(\kappa)}=\{\lambda_1^{(\kappa)},\ldots, \lambda_{n+1}^{(\kappa)}\}$ as in \ref{ss:AlgorithmComplement} and recall the construction of the functions $\tau^{(\kappa)}:\{1,\ldots,n+1\}\to F_{n+1}^{(\kappa)}$ as defined before Proposition \ref{prop:ConjTogether}. For $1\leq a \leq r$,  denote by \begin{equation}
 \xi_{\eta_a}^{(\kappa)}=\tau^{(1)}(\eta_a)\cdot \tau^{(2)}\left({\sigma^{(2)}}^{-1}(\eta_a)\right)\ldots \tau^{(\kappa-1)}\left({\sigma^{(\kappa-1)}}^{-1}(\eta_a)\right ). \end{equation}

\begin{prop}\label{prop:CyclestoMeridians} Let $1<a\leq r$. The image of the cycle $c_{t_\eta, \eta_a}$ under the map $i_*$ equals $\xi_{\eta_1}^{(\eta)}(\xi_{\eta_a}^{(\eta)})^{-1}$ if $r=2$ or $\xi_{\eta_a}^{(\eta)}(\xi_{\eta_1}^{(\eta)})^{-1}$  if $r>2$.
\end{prop}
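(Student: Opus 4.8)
The plan is to track the cycle $c_{t_\eta,\eta_a}$ through the sequence of fibers $\bar{f}^{-1}(\beta(\theta_\kappa))$ for $\kappa=1,\dots,\eta$, using the choices of paths $l_k$, $l_p'$ and the section $h$ fixed in \ref{sss:EquivalentGenerators}. By Lemma~\ref{lem:EqMeridians} and the construction there, the path $h|_{l_k}(l_k)$ on each $D_k$ (for $k\le n$) is the image of $\beta([0,1])$ under the section of the pencil with range $D_k$; hence a segment of $h|_{l_k}(l_k)$ from the base fiber to the fiber over $\beta(\theta_\eta)$ tracks exactly how the meridian $\lambda_k^{(1)}$ is transported into the fiber $\bar{f}^{-1}(\beta(\theta_\eta))$, and Theorem~\ref{thm:Arvola} together with Remark~\ref{rem:GammaKinGamma1} and Proposition~\ref{prop:ConjTogether} tells us this transport conjugates $\lambda_k^{(1)}$ exactly by the word $\xi_k^{(\eta)}=\tau^{(1)}(k)\cdot\tau^{(2)}({\sigma^{(2)}}^{-1}(k))\cdots\tau^{(\eta-1)}({\sigma^{(\eta-1)}}^{-1}(k))$. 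So the first step is to record: the segment of $h(l_{\eta_a})$ joining $R$ to the point $q_{t_\eta}\in\partial U_{t_\eta}$ realizes, in $\pi_1(\mathbb{P}^2\setminus\A)$, precisely the conjugating element $\xi_{\eta_a}^{(\eta)}$ (more precisely, the path $\lambda_{(\eta_a)_2}^{(\eta)}$ composed with the chosen segment of $h|_{l_{\eta_a}}(l_{\eta_a})$ is freely homotopic to a loop representing $\xi_{\eta_a}^{(\eta)}$).

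Second, I would assemble the cycle from its defining pieces. Recall $c_{t_\eta,\eta_a}$ is, by \ref{ss:CyclesInMeridians}, obtained by connecting $h|_{\eta_a}(l_{\eta_a})\cdot h|_{t_\eta}(l_{t_\eta}')\cdot h|_{\eta_1}(l_{\eta_1})$ to $R$ when $r>2$, and $h|_{\eta_1}(l_{\eta_1})\cdot h|_{\eta_a}(l_{\eta_a})$ to $R$ when $r=2$. The image $i_*(c_{t_\eta,\eta_a})$ is thus a concatenation: travel from $R$ out along the segment of $h(l_{\eta_a})$ — which by Step~1 contributes $\xi_{\eta_a}^{(\eta)}$ (or its inverse depending on orientation) — pass through $h|_{t_\eta}(l_{t_\eta}')$, which lies in $\partial U_{t_\eta}^*$ and hence projects to a \emph{nullhomotopic} loop in $E_{t_\eta}^*$ but, crucially, after applying $i_*$ and pushing into $\mathbb{P}^2\setminus\A$ contributes nothing new to the word (it only transfers the base point from $q_{t_\eta}$ along $l_{t_\eta}'$, and by the very choice of $l_{t_\eta}'=\psi_p(\cup\lambda_{r_2}^{(\eta)})$ this is a trivial excursion in the fiber $\bar{f}^{-1}(\beta(\theta_\eta))$), then return along the segment of $h(l_{\eta_1})$, contributing $(\xi_{\eta_1}^{(\eta)})^{-1}$. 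Collecting orientations — in the $r>2$ case we pass first through $l_{\eta_a}$ then (via $l_{t_\eta}'$) through $l_{\eta_1}$, giving $\xi_{\eta_a}^{(\eta)}(\xi_{\eta_1}^{(\eta)})^{-1}$; in the $r=2$ case the ordering of the two pieces is reversed, giving $\xi_{\eta_1}^{(\eta)}(\xi_{\eta_a}^{(\eta)})^{-1}$ — yields exactly the two formulas claimed.

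The main obstacle, and the step requiring the most care, is the verification that the excursion through $h|_{t_\eta}(l_{t_\eta}')$ contributes trivially — equivalently, that the paths $\lambda_{(\eta_a)_2}^{(\eta)}$ and $\lambda_{(\eta_1)_2}^{(\eta)}$ (the ``radial'' parts joining $q_{t_\eta}$ to the boundary circles $\partial\Delta_{\eta_a}$, $\partial\Delta_{\eta_1}$ inside $\partial U_p^*$) are compatible with the contractible path $l_{t_\eta}'$ in the way prescribed by Mumford's local model (Lemma~\ref{lem:RelsInBundle} and Remark~\ref{rem:Mumford}). Here one must be vigilant about the twisting phenomenon of Remark~\ref{rem:Mumford}: the image $\gamma_{im}'$ is a path freely homotopic to $\{\text{point}\}\times\partial\Delta(P_{im})$ only up to a multiple of the fiber class $\gamma_i'$. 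However, since $E_{t_\eta}$ has self-intersection $-1$ (it is a single blow-up of a point of $\A$), the relation $\gamma(E_{t_\eta})=\prod\gamma_{r_1}$ from Lemma~\ref{lem:ExcMerProduct} shows the total twist is absorbed, and after pushing forward to $\mathbb{P}^2\setminus\A$ via $i_*$ the fiber class $\gamma(E_{t_\eta})$ maps to $\lambda(D_{t_\eta})=\lambda_m^{(\eta)}\cdots\lambda_j^{(\eta)}$, so any residual twist only modifies the answer by powers of this element, which — by comparing with the explicit decomposition $\gamma_r^{(\eta)}=\lambda_{r_1}^{\lambda_{r_2}}$ used in the proof of Lemma~\ref{lem:ExcMerProduct} — is seen to cancel. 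Once this local compatibility is nailed down, the proposition follows by direct substitution of the definitions of $\xi_{\eta_a}^{(\kappa)}$ and $c_{t_\eta,\eta_a}$.
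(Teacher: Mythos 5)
Your overall decomposition of $c_{t_\eta,\eta_a}$ --- travel out along $h(l_{\eta_a})$, cross the local piece $h|_{t_\eta}(l'_{t_\eta})$, return along $h(l_{\eta_1})$, and identify the two section segments with $\xi_{\eta_a}^{(\eta)}$ and $(\xi_{\eta_1}^{(\eta)})^{-1}$ --- has exactly the shape of the paper's argument. The problem is that the pivotal claim in your first step is asserted rather than proved, and the tools you cite do not deliver it. Theorem \ref{thm:Arvola}, Remark \ref{rem:GammaKinGamma1} and Proposition \ref{prop:ConjTogether} describe how the geometric generating sets $\Gamma^{(\kappa)}$ change when the base point is moved along the exceptional section $\pi_R^{-1}(R)$; by themselves they say nothing about transporting a meridian with its base point sliding along the section $h(l_{\eta_a})\subset \partial U_{\eta_a}$, which is the transport that actually occurs inside the cycle $c_{t_\eta,\eta_a}$. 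Relating the two transports --- i.e.\ showing that the loop which follows $h(l_{\eta_a})$ out to $\bar{f}^{-1}(\beta(\theta_\eta))$, crosses that fiber to $\pi_R^{-1}(R)$ and returns to the base point represents precisely $\xi_{\eta_a}^{(\eta)}$ --- is where the paper's work lies: it is carried out by the inductive Lemma \ref{lem:CyclestoMer}, by rewriting $\xi_{\eta_a}^{(\eta)}$ as the ordered product of the conjugated factors $\tau^{(\kappa)}(\cdot)^{(\xi_{\eta_a}^{(\kappa)})^{-1}}$, and by the bookkeeping in $Y=\bar{f}^{-1}(\beta([0,1]))\setminus \pi_R^{-1}(R)\subset\mathbb{R}^3$ of which paths $h(l_k)$ pass above or below $h(l_{\eta_a})$ at each (actual or virtual) crossing; this is what guarantees each crossing contributes the correct conjugating factor. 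Without that induction and above/below analysis, your Step 1 is an assertion of the proposition's main content, not a proof of it.

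A secondary point: your resolution of the Mumford-twisting worry is not sound as stated --- a residual power of the fiber class would map under $i_*$ to a power of $\lambda_m^{(\eta)}\cdots\lambda_j^{(\eta)}$ and would not cancel from the resulting word. The issue is in fact vacuous, but for a different reason: in \ref{sss:EquivalentGenerators} the path $l'_{t_\eta}$ and its lift $h|_{l'_{t_\eta}}$ are not arbitrary sections over $E_{t_\eta}^*$; they are defined as explicit lifts of the concrete paths $\lambda_{r_2}^{(\kappa)}$ occurring in the chosen decomposition of the meridians, so the connecting excursion is literally a sub-path of the chosen representatives and contributes no extra factor. So: right skeleton, but the central identification requires the induction of Lemma \ref{lem:CyclestoMer} together with the above/below wire analysis, and the twisting concern should be discharged by the explicit choice of $h$, not by a cancellation claim.
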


We consider the points $\theta_\kappa<t_\kappa$ very close to $t_\kappa$ as before Definition \ref{def:generating set}. 

\begin{lem}\label{lem:CyclestoMer} Let $\Gamma^{(\kappa)}=(\lambda_1^{(\kappa)},\ldots,\lambda_{n+1}^{(\kappa)})$ be a generating set as above. Then, for $\iota=1,\ldots,n+1$ we have that $(\lambda_\iota^{(\kappa)})^{{\xi_{\eta_a}^{(\kappa)}}^{-1}}$ is homotopic to a meridian of $L_{\sigma^{(\kappa)}(\iota)}$ at the point $x_{\iota}^{(\kappa)}=\bar{f}^{-1}{(\beta(\theta_\kappa)})\cap L_{\sigma^{(\kappa)}(\iota)}$  constructed by:
\begin{enumerate}
\item  following $h|_{\eta_a}(l_{\eta_a})$ until $\bar{f}^{-1}{(\beta(\theta_\kappa)})$,
\item  then joining it to a circle in $\bar{f}^{-1}{(\beta(\theta_\kappa)})$ about $x_{\iota}^{(\kappa)}$ and,
\item coming back via $h|_{\eta_a}(l_{\eta_a})$.
\end{enumerate}
See figure \ref{fig:DoubleBase2}.
\end{lem}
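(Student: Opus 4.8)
The plan is to prove Lemma \ref{lem:CyclestoMer} by induction on $\kappa$, running in parallel with the local vertex computations of Lemmas \ref{lem:ActualVertex}, \ref{lem:PosVirtual} and \ref{lem:NegVirtual}. The single new feature is that the base path of every meridian is now made to travel along $h|_{\eta_a}(l_{\eta_a})$ instead of through $\pi_R^{-1}(R)$, and the conjugating factor $\xi_{\eta_a}^{(\kappa)}$ is there precisely to absorb the discrepancy between these two choices of base path.

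For $\kappa=1$ one has $\sigma^{(1)}=\mathrm{id}$ and $\xi_{\eta_a}^{(1)}=e$ (an empty product), while $\lambda_\iota^{(1)}$ is the lasso of the geometric generating set $\Gamma^{(1)}$ in the fiber $\bar{f}^{-1}(\beta(\theta_1))$. Since $\theta_1$ lies before the first vertex of $\mathcal{W}$, and the curves $l_k$, the section $h$ and the point $R$ were fixed in \ref{sss:EquivalentGenerators} so that $h|_{\eta_a}(l_{\eta_a})$ is a lift of the section of the pencil carried by the wire $W_{\eta_a}$, the loop obtained by running along $h|_{\eta_a}(l_{\eta_a})$ up to $\bar{f}^{-1}(\beta(\theta_1))$, joining a fiber segment disjoint from $\A$ to a small circle about $x_\iota^{(1)}$, and returning, is freely homotopic in $M(\A)$ to $\lambda_\iota^{(1)}$; this is essentially Lemma \ref{lem:EqMeridians} together with the observation that the connecting fiber segment may be chosen to meet no line of $\A$.

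For the inductive step, I would assume the statement for $\kappa$ and cross the vertex $t_\kappa$. Writing $j:={\sigma^{(\kappa)}}^{-1}(\sigma^{(\kappa+1)}(\iota))$, Proposition \ref{prop:ConjTogether} gives $\lambda_\iota^{(\kappa+1)}=(\lambda_j^{(\kappa)})^{\tau^{(\kappa)}(j)}$, and the recursion $\xi_{\eta_a}^{(\kappa+1)}=\xi_{\eta_a}^{(\kappa)}\cdot\tau^{(\kappa)}\big({\sigma^{(\kappa)}}^{-1}(\eta_a)\big)$ yields, by a direct manipulation of conjugates,
\[
\big(\lambda_\iota^{(\kappa+1)}\big)^{(\xi_{\eta_a}^{(\kappa+1)})^{-1}}
=\big((\lambda_j^{(\kappa)})^{\,w}\big)^{(\xi_{\eta_a}^{(\kappa)})^{-1}},\qquad
w:=\tau^{(\kappa)}(j)\,\tau^{(\kappa)}\big({\sigma^{(\kappa)}}^{-1}(\eta_a)\big)^{-1}.
\]
Here $w$ is a word in the letters of $\Gamma^{(\kappa)}$, hence so is $(\lambda_j^{(\kappa)})^{w}$; applying the induction hypothesis to each letter $\lambda_l^{(\kappa)}$ — which is then represented by a meridian $M_l$ around $x_l^{(\kappa)}$ whose base path is the initial segment of $h|_{\eta_a}(l_{\eta_a})$ up to $y_\kappa:=h|_{\eta_a}(l_{\eta_a})\cap\bar{f}^{-1}(\beta(\theta_\kappa))$ — and using that conjugation is a group endomorphism, the right-hand side is represented by the loop that runs along $h|_{\eta_a}(l_{\eta_a})$ up to $y_\kappa$ and then realizes, in $\bar{f}^{-1}(\beta(\theta_\kappa))\setminus\A$ based at $y_\kappa$, the element corresponding to $(\lambda_j^{(\kappa)})^{w}$.

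It then remains to transport this loop past $t_\kappa$ inside $M_\kappa(\A)$ and identify it with a meridian around $x_\iota^{(\kappa+1)}=\bar{f}^{-1}(\beta(\theta_{\kappa+1}))\cap L_{\sigma^{(\kappa+1)}(\iota)}$ whose base path is the initial segment of $h|_{\eta_a}(l_{\eta_a})$ up to $y_{\kappa+1}$. This is the local computation of Lemmas \ref{lem:ActualVertex}, \ref{lem:PosVirtual} and \ref{lem:NegVirtual} redone with the base point kept on the section $W_{\eta_a}$ rather than on $\pi_R^{-1}(R)$: deforming a fiber meridian across $t_\kappa$ with the base point on $\pi_R^{-1}(R)$ conjugates the meridian in position $k$ by $\tau^{(\kappa)}(k)$ (this is how the functions $\tau^{(\kappa)}$ were defined), and moving the base point onto the wire $W_{\eta_a}$, which itself acquires the twist $\tau^{(\kappa)}({\sigma^{(\kappa)}}^{-1}(\eta_a))$ across $t_\kappa$, leaves the net conjugator $\tau^{(\kappa)}(j)\,\tau^{(\kappa)}({\sigma^{(\kappa)}}^{-1}(\eta_a))^{-1}=w$, exactly the word found above; hence the transported loop is the asserted meridian and the induction closes. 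The step I expect to be most delicate is the case in which $W_{\eta_a}$ is itself one of the wires meeting at $t_\kappa$: there the base path $h|_{\eta_a}(l_{\eta_a})$ has to navigate the Hopf-link neighbourhood of the multiple point (or the crossing region of a virtual vertex), and one must check that $W_{\eta_a}$ still crosses $t_\kappa$ as a straight section of the pencil, that the fiber segment joining $y_{\kappa+1}$ to $x_\iota^{(\kappa+1)}$ can be chosen disjoint from every line, and that the whole homotopy stays inside $M_\kappa(\A)\subset M(\A)$ — all of which is guaranteed by the genericity of the planar representation of $\mathcal{W}$ and the careful choices of $l_k$, $l'_p$ and $h$ fixed in \ref{sss:EquivalentGenerators}.
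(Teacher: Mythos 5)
Your proposal is correct and follows essentially the same route as the paper: induction on $\kappa$, using the recursion $\xi_{\eta_a}^{(\kappa+1)}=\xi_{\eta_a}^{(\kappa)}\cdot\tau^{(\kappa)}\bigl({\sigma^{(\kappa)}}^{-1}(\eta_a)\bigr)$ together with Proposition \ref{prop:ConjTogether} to reduce the crossing of each vertex to the net conjugator $\tau^{(\kappa)}(j)\,\tau^{(\kappa)}\bigl({\sigma^{(\kappa)}}^{-1}(\eta_a)\bigr)^{-1}$, interpreted geometrically as re-basing the meridian along $h|_{\eta_a}(l_{\eta_a})$ as in Figure \ref{fig:DoubleBase2}. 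The only cosmetic differences are that the paper starts the induction after the first vertex and treats the case $\sigma^{(\kappa)}(\iota)=\eta_a$ separately, whereas you start at $\kappa=1$ with $\xi_{\eta_a}^{(1)}=e$ and absorb that case into the general step.
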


\begin{figure}[t]
        \centering
     
     \begin{tikzpicture} 

\draw (2.5,.5) node {\tiny $(\gamma_{\iota}^{(2)})^{{\tau^{(1)}(\eta_a)}^{-1}}$};

\draw (0,-0.2) node {\tiny $\vdots$};
\draw (0,-1.2) node {\tiny $x_{{\sigma^{(2)}}^{-1}(\eta_a)}^{(2)}$};

\draw (.1,1.6) node {\tiny $x_{\iota}^{(2)}$};

\draw (0,0.4) node {\tiny $x_{\iota-1}^{(2)}$};

\draw  (.5,-1.5) to[out=0,in=0] (0,2);
\draw  (0,2) to[out=180,in=180] (0,-.8);
\draw  (0,-.8) to[out=0,in=-20] (.5,1);

\draw [arc arrow=to pos 0.1 with length 2mm] (-.5,1) to[out=-90,in=-90] 
(.5,1) [arc arrow=to pos 0.6 with length 2mm] 
to[out=90,in=90] cycle;

\foreach \Point in {(0,-1),(0,0), (0,1)}{
    \node at \Point {\textbullet};
}


\end{tikzpicture}
\caption{A meridian follows another boundary manifold}
\label{fig:DoubleBase2}
  
\end{figure}
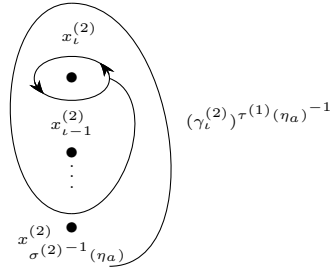
\begin{proof}
Note that if $\sigma^{(\kappa)}(\iota)=\eta_a$ and as $\lambda_{\iota}^{(\kappa)}=\lambda_{\eta_a}^{\xi_{\eta_a}^{(\kappa)}}$, by successive applications of Proposition \ref{prop:ConjTogether}, we can choose a meridian in the homotopy class of $(\lambda_{\iota}^{(\kappa)})^{{\xi_{\eta_a}^{(\kappa)}}^{-1}}=\lambda_{\eta_a}$ that  satisfies the properties stated in the Lemma (see figure \ref{fig:DoubleBase}).

Now, if $\sigma^{(\kappa)}(\iota)\not=\eta_a$, we proceed by induction. Let $t_1=W_j\cap W_{j+1}\cap \ldots\cap W_m$ and consider $$\xi_{\eta_a}^{(2)}=\tau^{(1)}(\eta_a)=\left\lbrace \begin{array}{ll}
\lambda_{\eta_a-1}^{(1)}\cdots \lambda_j^{(1)} & \text{if } n_a\in\{j+1,\ldots,m\},\\
1 & \text{if }\eta_a\not \in \{j+1,\ldots, m\}.\end{array}\right.$$
By construction, for $k\in \{j+1,\ldots,m\}$ the product $\lambda_k^{(1)}\ldots\lambda_j^{(1)}$ is freely homotopic to a circle containing the points $x_j^{(1)},\ldots,x_k^{(1)}$.

Now, note that the paths in $\Gamma^{(1)}$ are homotopic to paths in the fiber $\bar{f}^{-1}({\beta(\theta_2)})$ as in Figure \ref{fig:DoubleBase}. Such representative of the homotopy class of $\lambda_\iota^{(1)}$ can be seen as lying in the boundary manifold $\partial U_\iota^*$. 

By considering $(\lambda_\iota^{(2)})^{{\xi_{\eta_a}^{(2)}}^{-1}}$ we obtain a path as in figure \ref{fig:DoubleBase2} if $\tau^{(1)}(\eta_a)\not=1$. This meridian can be decomposed as stated.

For a general $\Gamma^{(\kappa+1)}$, note that as $\xi_{\eta_a}^{(\kappa+1)}=\xi_{\eta_a}^{(\kappa)}\cdot\tau^{(\kappa)}(\sigma^{{(\kappa)}^{-1}}(\eta_a))$ and by repeating the above procedure, we can decompose $(\lambda_\iota^{(\kappa+1)})^{{\tau^{(\kappa)}}^{-1}({\sigma^{(\kappa)}}^{-1}(\eta_a))}$ as a meridian of $L_{\sigma^{(\kappa+1)}(\iota)}$ that follows $l_{\eta_a}'$ between $\bar{f}^{-1}{(\beta(\theta_\kappa)})$ and $\bar{f}^{-1}({\beta(\theta_{\kappa+1})})$ (see figure \ref{fig:DoubleBase2}). By applying induction, we obtain that $(\lambda_\iota^{(\kappa+1)})^{{\xi_{\eta_a}^{(\kappa+1)}}^{-1}}$ can be decomposed as stated in the lemma.
\end{proof}

\begin{proof}[Proof of Proposition \ref{prop:CyclestoMeridians}]
Note that we have that \begin{align*}
\xi_{\eta_a}^{(\eta)}&=\tau^{(1)}(\eta_a)\cdots\tau^{(\eta-1)}({\sigma^{(\eta-1)}}^{-1}(\eta_a))\\
&=\tau^{(\eta-1)}\left({\sigma^{(\eta-1)}}^{-1}(\eta_a)\right)^{{\xi_{\eta_a}^{(\eta-1)}}^{-1}}\cdots\tau^{(2)}\left({\sigma^{(2)}}^{-1}(\eta_a)\right)^{{\xi_{\eta_a}^{(2)}}^{-1}}\tau^{(1)}(\eta_a).
\end{align*} 
by using $\xi_{\eta_a}^{(\kappa)}=\xi_{\eta_a}^{(\kappa-1)}\cdot \tau^{(\kappa-1)}\left({\sigma^{(\kappa-1)}}^{-1}(\eta_a))\right)$ for $\kappa=2,\ldots, \eta$.

Now, if $\tau^{(\kappa)}({\sigma^{(\kappa)}}^{-1}(\eta_a))\not =1$, it is homotopic to a path in $\bar{f}^{-1}{(\beta(\theta_\kappa)})$ encircling the points $x_j^{(\kappa)}, \ldots,$ $x_{{\sigma^{(\kappa)}}^{-1}(\eta_a)}^{(\kappa)}$ and by applying Lemma \ref{lem:CyclestoMer} to each factor of $\tau^{(\kappa)}({\sigma^{(\kappa)}}^{-1}(\eta_a))=\lambda_{{\sigma^{(\kappa)}}^{-1}(\eta_a)}^{(\kappa)}\ldots\lambda_j^{(\kappa)}$ we obtain that $\tau^{(\kappa)}({\sigma^{(\kappa)}}^{-1}(\eta_a))$ can be decomposed in three parts as in Lemma \ref{lem:CyclestoMer}.


Recall that we have constructed  the $l_{\eta_a}\subset D_{\eta_a}$ from a section of the map $\bar{f}$. By the choice of $h|_{\eta_a}$ we can suppose that $h_{\eta_a}\subset \bar{f}^{-1}(\beta[0,1])\cap \partial U_{\eta_a}$.

By considering $Y=\bar{f}^{-1}(\beta([0,1]))\setminus \pi_R^{-1}(R)\subset \Bl_R\mathbb{P}^2$, we can see the cycles $c_{t_\eta,\eta_a}\subset Y\subset \mathbb{R}^3$. Moreover, we can choose coordinates in $\mathbb{R}^3$ and define that $h|_{\eta_a}(l_{\eta_a})$ passes above $h|_{l_k}(l_k)$ (or $h|_{l_k}(l_k)$ passes below $h|_{l_{\eta_a}}(l_{\eta_a})$) in some fiber $\bar{f}^{-1}({\beta(\theta')})$ with $\theta'\in [t_\kappa-\varepsilon,t_\kappa+\varepsilon]$ with $\varepsilon>0$ sufficiently small, if the wires $W_{\eta_a}\cap W_k\not= \emptyset$ in a planar representation of the fiber $\bar{f}^{-1}({\beta(t_\kappa)})$ and ${\sigma^{(\kappa)}}^{-1}(\eta_a)<{\sigma^{(\kappa)}}^{-1}(k)$.

We can see then $(\tau^{(\kappa)}({\sigma^{(\kappa)}}(\eta_a)))^{{\xi_{\eta_a}^{(\kappa)}}^{-1}}$ as a path encircling the lines $L_k$ corresponding to those $h|_{l_k}(l_k)$ passing below $h|_{l_{\eta_a}}(l_{\eta_a})$ in some fiber $\bar{f}^{-1}({\beta'(\theta')})$ with $\theta'\in [t_\kappa-\varepsilon, t_\kappa+\varepsilon]$. By construction, $\xi_{\eta_a}^{(\eta)}$ is homotopic to a path encircling all the lines $L_k$ such that $h|_{l_k}(l_k)$ lies below $h_{l_{\eta_a}}(l_{\eta_a})$ at some point in $\beta([0,t_\eta])$.

Therefore, we can decompose $\xi_{\eta_a}^{(\eta)}$ in three parts:
\begin{enumerate}
\item The first path starting at $Q\in h(l_{n+1})$ and following $h(l_{\eta_a})$ until $\bar{f}^{-1}({\beta(\theta_\eta)})$. Then, 
\item a simple path starting at $h(l_{\eta_a})\cap \bar{f}^{-1}({\beta(\theta_\eta)})$, lying completely in $\bar{f}^{-1}({\beta(\theta_\eta)})$ and finishing at $\bar{f}^{-1}({\beta(\theta_\eta)})\cap \pi_R^{-1}(R)$, and
\item a path connecting $\pi_R^{-1}(R)\cap \bar{f}^{-1}({\beta(\theta_\eta)})$  to $Q\subset \pi_R^{-1}(R)$.
\end{enumerate}

By decomposing in a similar fashion $\xi_{\eta_1}^{(\eta)}$, it follows that the cycle $c_{t_\eta,\eta_a}$ is homotopic in $\mathbb{P}^2\setminus \A$ to $\xi_{\eta_1}^{(\eta)}(\xi_{\eta_a}^{(\eta)})^{-1}$  if $r=2$ and to $\xi_{\eta_a}^{(\eta)}(\xi_{\eta_1}^{(\eta)})^{-1}$ if $r>2$.
\end{proof}

\subsubsection{Expressing the relations in terms of the generators}\label{sss:ExpressingRelations} For every $r=n+2,\ldots,n+s_0+1$, we let $R_r'$ be the subset of the set of relations $R'$ of the presentation of $\pi_1(\partial U)$ as in Theorem \ref{thm:Westlund} such that $$R_r'=\{[\gamma_k,\gamma_r^{s_{kr}}], \gamma_r^{-w_r}=\prod_{m=1}^{k_r'}\gamma_{j_\Delta(r,m)}^{s_{rj_\Delta(r,m)}}\mid  (k,r)\in \mathcal{E}\} $$
with $$s_{kr}=\left\lbrace\begin{array}{ll}
c_\iota^{-1} & \text{ if } (k,r) \text{ equals the $\iota$-th element in } \mathcal{C},\\
c_\iota & \text{ if } (r,k) \text{ equals the $\iota$-th element in } \mathcal{C},\\
1 & \text{ if } (k,r) \text{ is an edge of } \mathcal{T}'.\\	 
\end{array} \right. $$ 
\begin{prop}\label{thm:guerville} Consider an exceptional divisor $E_\kappa=D_r\subset \bar{X}$ coming from a singular point $t_\kappa\in \Sing \A\cap M_\kappa$ of multiplicity higher or equal to $3$. The image of the set of relations $R_r'$ as above, under the map $i_*$, equals the set of relations $R_\kappa=[\lambda_m^{(\kappa)},\ldots,\lambda_j^{(\kappa)}]$ as in Lemma \ref{lem:ActualVertex}.
\end{prop}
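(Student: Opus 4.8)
The plan is to unwind the definitions so that the claimed equality of relation sets reduces to the computations already carried out in Theorem \ref{thm:Westlund} and Proposition \ref{prop:CyclestoMeridians}. First I would recall that $E_\kappa = D_r$ is the exceptional divisor obtained by blowing up the point $t_\kappa = L_{\eta_1}\cap\cdots\cap L_{\eta_l}\in\Sing\A\cap M_\kappa$ with $l = m-j+1 \ge 3$, where I write $\eta_a = \sigma^{(\kappa)}(j+a-1)$ for $a=1,\dots,l$ in the global order, and $\eta_1<\dots<\eta_l<n+1$ after possibly relabelling (using the ordering conventions of \ref{sss:MaximalTree}). By Corollary \ref{cor:ExcDiv} and equation \eqref{eq:gammaExc}, in $\pi_1(\partial U)$ the meridian $\gamma_r$ around $D_r$ is the ordered product $\prod_{a=1}^{l}\gamma_{\eta_a}^{\,c_{\eta_a,r}}$ with $c_{\eta_1,r}=1$; and the commutation relations $[\gamma_k,\gamma_r^{s_{kr}}]$ in $R_r'$ are exactly the relations saying that each $\gamma_{\eta_a}^{\,c_{\eta_a,r}}$ commutes with $\gamma_r$. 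So under $i_*$ the set $R_r'$ becomes: the defining word $i_*(\gamma_r)=\prod_{a=1}^{l}i_*(\gamma_{\eta_a})^{i_*(c_{\eta_a,r})}$, together with the statements that the $l$ factors of this product all commute with the product itself.

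Next I would substitute the values of $i_*$ on the generators and cycles. By Lemma \ref{lem:EqMeridians} we have $i_*(\gamma_{\eta_a}) = \lambda_{\eta_a}^{(1)}$, and since $p = t_\kappa \in M_\kappa$ is not on $L_{n+1}$, Proposition \ref{prop:CyclestoMeridians} gives $i_*(c_{\eta_a,r}) = \xi_{\eta_a}^{(\kappa)}(\xi_{\eta_1}^{(\kappa)})^{-1}$ for $a = 2,\dots,l$ (the case $r>2$, i.e.\ $l>2$), while $i_*(c_{\eta_1,r})=1$. Combining with Remark \ref{rem:GammaKinGamma1}, namely $\lambda_{{\sigma^{(\kappa)}}^{-1}(\eta_a)}^{(\kappa)} = (\lambda_{\eta_a}^{(1)})^{\xi_{\eta_a}^{(\kappa)}}$, we get
\[
i_*(\gamma_{\eta_a})^{i_*(c_{\eta_a,r})} = \bigl((\lambda_{\eta_a}^{(1)})^{\xi_{\eta_a}^{(\kappa)}}\bigr)^{(\xi_{\eta_1}^{(\kappa)})^{-1}} = \bigl(\lambda_{{\sigma^{(\kappa)}}^{-1}(\eta_a)}^{(\kappa)}\bigr)^{(\xi_{\eta_1}^{(\kappa)})^{-1}},
\]
valid for every $a=1,\dots,l$ (for $a=1$ this is just $(\lambda_j^{(\kappa)})^{(\xi_{\eta_1}^{(\kappa)})^{-1}}$ up to the trivial conjugation). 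Writing $g = (\xi_{\eta_1}^{(\kappa)})^{-1}$ and noting ${\sigma^{(\kappa)}}^{-1}(\eta_{a}) = j+a-1$ reverses to $m, m-1, \dots, j$ as $a$ runs over $1,\dots,l$ — no, more carefully, ${\sigma^{(\kappa)}}^{-1}(\eta_a) = j+a-1$ since I set $\eta_a=\sigma^{(\kappa)}(j+a-1)$ — the defining word becomes
\[
i_*(\gamma_r) = \prod_{a=1}^{l}\bigl(\lambda_{j+a-1}^{(\kappa)}\bigr)^{g}
\]
taken in the appropriate order, i.e.\ $i_*(\gamma_r)$ is a conjugate by $g$ of an ordered product $\lambda_m^{(\kappa)}\lambda_{m-1}^{(\kappa)}\cdots\lambda_j^{(\kappa)}$ (the order being dictated, as in \eqref{eq:gammaExc}, by the function $m\mapsto j_\Delta(r,m)$, which records the order in which the wires meet $l_r$; by the genericity of the wiring diagram this realizes \emph{some} ordering of $\{j,\dots,m\}$). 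Thus the image relations assert precisely that all the factors $(\lambda_{j+a-1}^{(\kappa)})^{g}$ of this product commute with the product — equivalently, conjugating away $g$, that all factors $\lambda_k^{(\kappa)}$, $j\le k\le m$, commute with $\lambda_m^{(\kappa)}\cdots\lambda_j^{(\kappa)}$, and that their product in any such order agrees with their product in the fixed order. This is exactly the content of $R_\kappa = [\lambda_m^{(\kappa)},\lambda_{m-1}^{(\kappa)},\dots,\lambda_j^{(\kappa)}]$ as defined before Lemma \ref{lem:ActualVertex} (the set of cyclic-permutation identities on the product of the $\lambda_k^{(\kappa)}$).

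To make the last identification rigorous I would invoke the local picture at $t_\kappa$ already used in the proof of Lemma \ref{lem:ExcMerProduct}: the local fundamental group $\pi_1(V_p\setminus\A)$ is the Hopf link group on $l$ circles, in which the product $\lambda_m^{(\kappa)}\cdots\lambda_j^{(\kappa)}$ is central, and the choices of $l_r'\subset E_p^*$ and $h|_{l_r'}$ made in \ref{sss:EquivalentGenerators} were precisely arranged so that the Mumford cycle $\gamma_{r,m}'$ around $D_r$ maps, under $i_*$, to the meridian $\lambda_{j_\Delta(r,m)}^{(\kappa)}$ decomposed as $\lambda_p^{(\kappa)}\cdot\partial\Delta\cdot(\lambda_p^{(\kappa)})^{-1}$ in figure \ref{fig:decomposing1}. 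Hence the relation $\gamma_r^{-w_r}=\gamma_r=\prod\gamma_{j_\Delta(r,m)}^{s_{rj_\Delta(r,m)}}$ with $w_r=-1$ pushes forward to the relation expressing $i_*(\gamma_r)$ as a conjugate of the central element $\lambda_m^{(\kappa)}\cdots\lambda_j^{(\kappa)}$, and the $l$ commutation relations push forward to the $l$ statements that the factors commute with it; a cyclic-permutation argument (as in Lemma \ref{lem:ActualVertex}, using centrality) then shows this set is equivalent to $R_\kappa$. The main obstacle I anticipate is bookkeeping: keeping the three orderings straight — the global order $\eta_1<\dots<\eta_l$ on the lines, the local order recorded by ${\sigma^{(\kappa)}}^{-1}$ in the fiber, and the order in which the points $P_{rm}$ meet the path $l_r$ (the function $j_\Delta(r,\cdot)$) — and verifying that the conjugating words produced by Proposition \ref{prop:CyclestoMeridians} are the \emph{same} $\xi_{\eta_a}^{(\kappa)}$ that appear when one rewrites $\Gamma^{(\kappa)}$ in terms of $\Gamma^{(1)}$ via repeated applications of Proposition \ref{prop:ConjTogether}; once the choices of \ref{sss:EquivalentGenerators} are used to pin these together, the algebra collapses to the cyclic-permutation identities defining $R_\kappa$.
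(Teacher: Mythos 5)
Your proposal is correct and follows essentially the same route as the paper's proof: express $\gamma_r$ and the commutators of $R_r'$ via Theorem \ref{thm:Westlund}/Corollary \ref{cor:ExcDiv}, push forward under $i_*$ using Lemma \ref{lem:EqMeridians} and Proposition \ref{prop:CyclestoMeridians}, conjugate away the common factor $\xi_{\eta_1}^{(\kappa)}$, and identify the result with $R_\kappa$ through $\lambda_k^{(\kappa)}=\lambda_{\sigma^{(\kappa)}(k)}^{\xi_{\sigma^{(\kappa)}(k)}^{(\kappa)}}$. The extra appeal to the local Hopf-link picture and the cyclic-permutation/centrality remark only make explicit the ordering bookkeeping the paper handles implicitly.
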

\begin{proof}Let $t_\kappa=W_{\sigma^{(\kappa)}(j)}\cap W_{\sigma^{(\kappa)}(j+1)}\cap \ldots\cap W_{\sigma^{(\kappa)}(m)}$ with the local order given by $\Pi^{(\kappa)}=\{\sigma^{(\kappa)}(1)<\ldots<\sigma^{(\kappa)}(n+1)\}$. As $w_r=-1$ and by the local order of the wires we have that  $\gamma_r=\gamma_{\sigma^{(\kappa)}(m)}^{c_{\sigma^{(\kappa)}(m),t_\kappa}}\cdots \gamma_{\sigma^{(\kappa)}(j)}^{c_{\sigma^{(\kappa)}(j),t_\kappa}}$, that $[\gamma_r,\gamma_{\sigma(\kappa)}^{c_{\sigma(\kappa),t_\kappa}}]$ and that $c_{\sigma^{(\kappa)}(k),t_\kappa}=c_{t_\kappa,\sigma^{(\kappa)}(k)}^{-1}$.

Let us omit the superscript $\lambda_{\kappa}=\lambda_\kappa^{(1)}$ for the elements in $\Gamma^{(1)}$. 

By considering the image under $i_*$ of the elements in $R_r'$, we have by Lemma \ref{lem:EqMeridians} and by Proposition \ref{prop:CyclestoMeridians} that
$$i_*(\gamma_r)=\lambda_{\sigma^{(\kappa)}(m)}^{\xi_{\sigma^{(\kappa)}(m)}^{(\kappa)}{\xi_{\iota}^{(\kappa)}}^{-1}}\cdots \lambda_{\sigma^{(\kappa)}(j+1)}^{\xi_{\sigma^{(\kappa)}(j+1)}^{(\kappa)}{\xi_{\iota}^{(\kappa)}}^{-1}}\lambda_{\sigma^{(\kappa)}(j)}^{\xi_{\sigma^{(\kappa)}(j)}^{(\kappa)}{\xi_{\iota}^{(\kappa)}}^{-1}}, \quad [i_*(\gamma_r),\lambda_{\sigma^{(\kappa)}(k)}^{\xi_{\sigma^{(\kappa)}(k)}^{(\kappa)}{\xi_{\iota}^{(\kappa)}}^{-1}}]. $$
with $\iota=\min\{\sigma^{(\kappa)}(j),\sigma^{(\kappa)}(j+1),\ldots,\sigma^{(\kappa)}(m)\}$.

The commutators can also be written as $[i_*(\gamma_r)^{\xi_{\iota}^{(\kappa)}},\lambda_{\sigma^{(\kappa)}(k)}^{\xi_{\sigma^{(\kappa)}(k)}^{(\kappa)}}] $ . But as 
$i_*(\gamma_r)^{\xi_{\iota}^{(\kappa)}}=  \lambda_{\sigma^{(\kappa)}(m)}^{\xi_{\sigma^{(\kappa)}(m)}^{(\kappa)}}\cdots \lambda_{\sigma^{(\kappa)}(j+1)}^{\xi_{\sigma^{(\kappa)}(j+1)}^{(\kappa)}}\lambda_{\sigma^{(\kappa)}(j)}^{\xi_{\sigma^{(\kappa)}(j)}^{(\kappa)}}, $
 we have that the relations $[i_*(\gamma_r)^{\xi_{\iota}^{(\kappa)}},\lambda_{\sigma^{(\kappa)}(k)}^{\xi_{\sigma^{(\kappa)}(k)}^{(\kappa)}}]$ can be condensed as $[\lambda_{\sigma^{(\kappa)}(m)}^{\xi_{\sigma^{(\kappa)}(m)}^{(\kappa)}},\cdots, \lambda_{\sigma^{(\kappa)}(j+1)}^{\xi_{\sigma^{(\kappa)}(j+1)}^{(\kappa)}},\lambda_{\sigma^{(\kappa)}(j)}^{\xi_{\sigma^{(\kappa)}(j)}^{(\kappa)}}]$. 
  Now, if $R_\kappa=[\lambda_m^{(\kappa)}, \ldots,\lambda_{j+1}^{(\kappa)},\lambda_j^{(\kappa)}]$ denotes the relation given in Theorem \ref{thm:Arvola} for the point $t_\kappa$ as in Lemma \ref{lem:ActualVertex}, recall that we have the equality $\lambda_k^{(\kappa)}=\lambda_{\sigma^{(\kappa)}(k)}^{\xi_{\sigma^{(\kappa)}(k)}}$. By replacing it in the commutators above, the result follows.
\end{proof}

\begin{prop}\label{prop:Redundant} For $r=1,\ldots, n+1$, we have the equality $$i_*(\gamma_r^{w_r}\prod_{m=1}^{k_r'}\gamma_{j_\Delta(r,m)}^{s_{r j_\Delta(r,m)}})=\gamma_{n+1}\cdots\gamma_1
$$ in $\pi_1(\mathbb{P}^2\setminus \A)$ with $s_{rj}$ as above. 
\end{prop}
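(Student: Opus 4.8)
The plan is to evaluate $i_*$ on the word $\gamma_r^{w_r}\prod_{m=1}^{k_r'}\gamma_{j_\Delta(r,m)}^{s_{rj_\Delta(r,m)}}$ directly, using the descriptions of the generators and of the cycles obtained above, and to recognise the outcome as the loop $\lambda_{n+1}^{(1)}\cdots\lambda_1^{(1)}$.

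First I would unwind the word. By Lemma \ref{lem:EqMeridians}, $i_*(\gamma_k)=\lambda_k^{(1)}$ for $k\le n+1$; by Corollary \ref{cor:ExcDiv} together with \eqref{eq:gammaExc}, each $i_*(\gamma_\iota)$ with $n+1<\iota$ is an explicit $\xi$-conjugated ordered product of the meridians of the lines through the corresponding point of $\Sing\A$; and by Proposition \ref{prop:CyclestoMeridians}, each $i_*(c_k)$ is of the form $\xi_{\eta_1}^{(\eta)}(\xi_{\eta_a}^{(\eta)})^{-1}$ (or its inverse), which is itself a word in $\lambda_1^{(1)},\dots,\lambda_{n+1}^{(1)}$. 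Substituting these turns $i_*\!\big(\gamma_r^{w_r}\prod_m\gamma_{j_\Delta(r,m)}^{s_{rj_\Delta(r,m)}}\big)$ into an explicit word in $\lambda_1^{(1)},\dots,\lambda_{n+1}^{(1)}$ whose shape is dictated by the wiring diagram $\mathcal{W}$ read along the curve $l_r$.

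The geometric heart is the interpretation of this word as a loop. The relation $\gamma_r^{-w_r}=\prod_m\gamma_{j_\Delta(r,m)}^{s_{rj_\Delta(r,m)}}$ of Theorem \ref{thm:Westlund} is, by construction (Lemma \ref{lem:RelsInBundle}, globalised), the Mumford relation in the circle bundle $\partial U_r^*\to D_r^*$: the $w_r$-framed lift of the equatorial curve $l_r\subset D_r$ equals the product of the vertical loops about its punctures. Under $\pi\colon\bar{X}\to\mathbb{P}^2$ the curve $D_r$ maps isomorphically onto $L_r\cong\mathbb{P}^1$ off the exceptional set, and $l_r$ was chosen in \ref{sss:EquivalentGenerators} to be a section of $\bar{f}$ over $\beta([0,1])$; hence $i_*\!\big(\gamma_r^{w_r}\prod_m\gamma_{j_\Delta(r,m)}^{s_{rj_\Delta(r,m)}}\big)$ is represented by the corresponding framed loop in a tubular neighbourhood of $L_r$ inside $\mathbb{P}^2\setminus(\A\setminus L_r)$. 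Since $L_r$ is a line it meets every other line of $\A$, so $L_r$ with all these points deleted is $\mathbb{P}^1$ minus finitely many points and $l_r$ bounds, on the puncture-free side, a disc; pushing this disc through the pencil $\bar{f}$ while recording the braiding of the wires — exactly the data carried by the conjugations $s_{rj_\Delta(r,m)}$ and the words $\xi^{(\kappa)}$ — produces a homotopy in $\mathbb{P}^2\setminus\A$ from this loop to the boundary of a generic fibre $\bar{f}^{-1}(\ast)$, i.e.\ to $\lambda_{n+1}^{(1)}\cdots\lambda_1^{(1)}=i_*(\gamma_{n+1}\cdots\gamma_1)$. As this is independent of $r$, all $n+1$ such relations collapse to the single relation at infinity, which is what is needed in Theorem \ref{thm:Guerville}(3).

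The main obstacle is the bookkeeping in the last step: checking that the ordering of the factors and the conjugating words emerging from the substitution really assemble into $\lambda_{n+1}^{(1)}\cdots\lambda_1^{(1)}$, and nothing else, and doing so without tacitly using the relation $\lambda_{n+1}^{(1)}\cdots\lambda_1^{(1)}=1$ that we are trying to recover. I would manage this by arguing at the level of loops — explicit isotopies built from a trivialisation of the normal bundle of $L_r$ and from the sweep of $\beta$ — rather than by word calculus. As a consistency check both sides are trivial in $\pi_1(\mathbb{P}^2\setminus\A)$: the left because $\gamma_r^{w_r}\prod_m\gamma_{j_\Delta(r,m)}^{s_{rj_\Delta(r,m)}}=1$ already holds in $\pi_1(\partial U)$ by Theorem \ref{thm:Westlund} and $i_*$ is a homomorphism, the right by the relation at infinity of Theorem \ref{thm:Arvola}; but it is the relation-free identification of the two loops that matters for the Tietze equivalence.
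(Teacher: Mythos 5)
Your overall shape is right: substitute the known images of the generators and cycles, then recognise the outcome as the circuit at infinity by sweeping through the pencil; indeed your last step is essentially what the paper itself appeals to when it identifies the resulting word with a loop around a line close to $L_r$. But there is a genuine gap exactly where the paper's proof has its content: you never explain how the factors coming from the points of multiplicity at least three on $L_r$ combine with the power $\gamma_r^{w_r}$. For such a point, $\gamma_{j_\Delta(r,m)}$ is the meridian of an exceptional divisor, and its image is a product of meridians of \emph{all} the lines through that point, including a conjugate of $\lambda_r$. The paper uses the relation $\gamma_\iota=\gamma_{\iota_1}^{s_{\iota\iota_1}}\cdots\gamma_{\iota_k}^{s_{\iota\iota_k}}$ together with the commutators $[\gamma_\iota,\gamma_{\iota_j}^{s_{\iota\iota_j}}]$ (equivalently, the cyclic permutations of that product) to rewrite $\gamma_r^{-1}\gamma_{j_\Delta(r,m)}^{s_{rj_\Delta(r,m)}}$ as a conjugated product of the meridians of the \emph{other} lines through the point, and the count $w_r=1-\abs{B}$, with $\abs{B}$ the number of such points on $L_r$, is precisely what makes the leftover copies of $\lambda_r$ cancel. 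Without this step, your claim that the substituted word ``is represented by the framed loop in a tubular neighbourhood of $L_r$'' encircling the other lines is not justified at the level of words; note also that the conjugators $s_{rj}$ are cycles that travel along other components of $\partial U$, so the loop does not literally live near $L_r$, and a generic fibre of $\bar f$ is a $\mathbb{P}^1$ with no boundary --- what you want there is a loop in the fibre encircling the $n+1$ points of $\A$.

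Moreover, your proposed remedy --- ``arguing at the level of loops'' by explicit isotopies --- cannot by itself deliver the statement you correctly identify as the real one. A homotopy in $\mathbb{P}^2\setminus\A$ only certifies equality in $\pi_1(\mathbb{P}^2\setminus\A)$, which, as you yourself note, is vacuous here because both sides are trivial. For the Tietze equivalence in Theorem \ref{thm:Guerville}(3) you need the substituted word to be converted into $\lambda_{n+1}\cdots\lambda_1$ using only relations already matched with relations of the boundary presentation (the $R_\kappa$'s and the commutators at the multiple points), and not the relation at infinity. So either you carry out the word calculus at the multiple points, as the paper does, or you must decompose your isotopy (the sweep of the nearby line across the singular fibres of the pencil) into elementary moves, each realizing one of the relations of Lemmas \ref{lem:ActualVertex}, \ref{lem:PosVirtual}, \ref{lem:NegVirtual} --- which is exactly the bookkeeping you set aside. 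As written, the proposal defers the crux of the proof rather than supplying it.
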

\begin{proof}

Fix $L_r\in \A$. 
Let $\{p_1,\ldots,p_b\}\subset \Sing \A\cap L_r$ be the singular points of the arrangement lying in $L_r$. Note that $b=k_r'$. Indeed, we can find a partition  $A\cup B=\{1,\ldots, k_r'\}$, with $A$ a set indexing the double points of $\Sing \A\cap L_r$, and $B$ indexing the points of multiplicity strictly bigger than two. Let $\pi:\bar{X}\to \mathbb{P}^2$ be the blow-up of $\mathbb{P}^2$ at the points of $\Sing \A$ of multiplicity strictly bigger than two and let $D_r$ denote the strict transform of $L_r$ in $\bar{X}$. We have that $A$ also indexes all the strict transforms of lines in $\A$ which have no empty intersection with $D_r$, and $B$ the exceptional divisors of $\bar{X}$ crossing $D_r$.	It is clear then that $b=k_r'$.

It follows that $\gamma_{j_\Delta(r,m)}$ is a meridian of an irreducible component $D_{j_\Delta(r,m)}$ of $D=\pi^* D$ for $m=1,\ldots, k_r'$. Recall that $\gamma_r$ commutes with $\gamma_{j_\Delta(r,m)}^{s_{r j_\Delta(r,m)}}$ and note that the self-intersection number $w_r$ of $D_r$ is $1-\abs{B}$. 

Let us study the geometric meaning of the product $\gamma_{r}^{-1}\gamma_{j_\Delta(r,m)}^{s_{r j_\Delta(r,m)}}$ with $m\in B$. Let us write $\iota=j_\Delta(r,m)$, denote by $D_{\iota}=D_{j_{\Delta}(r,m)}$ the exceptional divisor that $\gamma_{j_\Delta(r,m)}$ surrounds, and let $D_{\iota_1}, \ldots, D_{\iota_k}$ be the irreducibles components of $D=\pi^* \A$ that intersect $D_{\iota}$ ordered in such a way that, if we denote by $\gamma_{\iota_j}$ the meridians around $D_{\iota_j}$ used for the presentation of  $\pi_1(\partial U)$, $\gamma_{\iota}^{-w_{\iota}}=\gamma_{\iota_1}^{s_{\iota \iota_1}}\cdots\gamma_{\iota_k}^{s_{\iota \iota_k}}$ holds. As $D_{\iota}$ is an exceptional divisor, we have that $w_{\iota}=-1$.
By Theorem \ref{thm:Westlund}, we have that $[\gamma_{\iota},\gamma_{\iota_j}^{s_{\iota \iota_{j}}}]$ for $j=1,\ldots, k=k(\iota)$.

Replacing the expression $\gamma_{\iota}$ as above in $[\gamma_{\iota},\gamma_{\iota_j}^{s_{\iota \iota_j}}]$, we can show that these commutators relations are equivalent to $$\gamma_{\iota_1}^{s_{\iota \iota_1}}\gamma_{\iota_2}^{s_{\iota \iota_2}}\cdots\gamma_{\iota_k}^{s_{\iota \iota_k}}=\gamma_{\sigma(\iota_1)}^{s_{\iota\sigma(\iota_1)}}\gamma_{\sigma(\iota_2)}^{s_{\iota\sigma(\iota_2)}}\cdots\gamma_{\sigma(\iota_k)}^{s_{\iota\sigma(\iota_k)}}$$
where $\sigma$ runs over the cyclic permutations of the elements $\{\iota_1,\ldots,\iota_k\}$. Hence there exists some cyclic permutation $\sigma'$ such that $\sigma'(\iota_1)=r$ because $D_r$ intersects $D_{j_\Delta(r,m)}=D_{\iota}$. Note that $s_{r \iota}= s_{\iota r}^{-1}=s_{\iota \sigma'(\iota_1)}^{-1}$ and hence $\gamma_{r}^{-1}\gamma_{\iota}^{s_{r \iota}}=(\gamma_{\sigma'(\iota_2)}^{s_{\iota\sigma'(\iota_2)}}\cdots\gamma_{\sigma'(\iota_k)}^{s_{\iota\sigma'(\iota_k)}})^{s_{r\iota}} $ represents a loop which surrounds the lines $L_{\sigma'(\iota_2)},\ldots, L_{\sigma'(\iota_k)}$ following $l_r'$ by construction of the cycle $s_{r\iota}$.

Now, the product $\gamma_r^{w_r}\prod_{m=1}^{k_r'}\gamma_{j_\Delta(r,m)}^{s_{r j_\Delta(r,m)}}$ can be written as $\gamma_r \prod_{m=1}^{k_r'} \Upsilon_m$ with 
$$\Upsilon_m=\left\lbrace \begin{array}{ll}
\gamma_{j_\Delta(r,m)}^{s_{r j_\Delta(r,m)}} & \text{if } m\in A,\\
\gamma^{-1}\gamma_{j_\Delta(r,m)}^{s_{r j_\Delta(r,m)}} & \text{if } m\in B.
\end{array} \right. $$ 
by commuting $\gamma_r$ with $\gamma_{j_\Delta(r,m)}^{s_{r j_\Delta(r,m)}}$. Note that, for $\Upsilon_m$ with $m\in A$, the path $\Upsilon_m$ is a meridian around the other line that intersects $D_r$ in the double point corresponding to $m\in A$. Hence, by the precedent paragraph, $\gamma_r \prod_{m=1}^{k_r'} \Upsilon_m$ is a product of the meridians of all the lines in $\A$ ordered in the way they intersect $L_r$.

Now, by choosing a line $L$ sufficiently close to $L_r$ we have that the product  $\gamma_r^{w_r}\prod_{m=1}^{k_r'}\gamma_{j_\Delta(r,m)}^{s_{r j_\Delta(r,m)}}$ is a path encircling $L\setminus (L\cap \A)$ and therefore it is equivalent to $\lambda_{n+1}\cdots \lambda_1$ in $\pi_1(\mathbb{P}^2\setminus \A)$.
\end{proof}

\subsubsection{End of proof of the Theorem \ref{thm:Guerville}}

The point (1) of the Theorem is obtained by Lemma \ref{lem:EqMeridians}.

The point (2) follows from Proposition \ref{thm:guerville}.

For the point (3), recall that $R'$ denotes the set of relations for the presentation of $\pi_1(\partial U)$ as in Theorem \ref{thm:Westlund}. Using the notation of \ref{sss:ExpressingRelations} we have that $$R'\setminus \cup 
R_k'=\{\gamma_r^{w_r}\prod_{m=1}^{k_r'}\gamma_{j_\Delta(r,m)}^{s_{r j_\Delta(r,m)}}, [\gamma_k,\gamma_r^{s_{kr}}] \mid r=1,\ldots, n+1, D_k\cap D_r\in \Sing \A\setminus P_0\}$$
this is, $D_k\cap D_r$ is a double point.

By Proposition \ref{prop:Redundant}, we have that $i_*(\gamma_r^{w_r}\prod_{m=1}^{k_r'}\gamma_{j_\Delta(r,m)}^{s_{r j_\Delta(r,m)}})=\lambda_{n+1}\ldots \lambda_1$.

By proceeding as in Proposition \ref{thm:guerville}, it can be seen that for a double point $p_\kappa=D_k\cap D_r$, the relation $[\gamma_k,\gamma_r^{s_{kr}}]$ correspond to the relation $R_\kappa$ as in Theorem \ref{thm:Arvola}. 

Hence, in $\langle \gamma_1,\ldots, \gamma_{n+1}, c_1, \ldots, c_g \mid R', \cup_{p\in \Sing\A\setminus L_{n+1}} R'(p)\rangle$ the set of relations $R'$ is equivalent to the set of relations $\cup R_\kappa\cup \{\lambda_{n+1}\cdots \lambda_1\}$. 

This concludes the proof of Theorem \ref{thm:Guerville}.

\subsubsection{Independence of the maximal tree}\label{sss:Independence}
Let $D=\sum_{k=1}^{n+1+s_0}D_k$ the total transform of the arrangement $\A$ in $\bar{X}$ and denote by $\Delta$ the dual graph of $D$ as above. 

Let $\mathcal{T}\subset \Delta$ be an arbitrary maximal tree and denote by $G(\mathcal{T})=\{\gamma_1(\mathcal{T}),\ldots,$  $\gamma_{n+1+s_0}(\mathcal{T}),c_1(\mathcal{T}),\cdots, c_g(\mathcal{T})\}$, the set of generators of $\pi_1(\partial U)$ as in Theorem \ref{thm:Westlund}. Recall that these are constructed using $\mathcal{T}$. Denote by $R(\mathcal{T})$ the set of relations given in the same Theorem.

Consider also the maximal tree $\mathcal{T}'$ defined as in \ref{sss:MaximalTree} and denote by $\gamma_1,\ldots, \gamma_{n+1+s_0},$  $c_1,\ldots, c_g$ the generators of $\pi_1(\partial U)$ as in Theorem \ref{thm:Guerville} and by $R$ the set of relations.

Consider the inclusion $i:\partial U \hookrightarrow \mathbb{P}^2\setminus \A$ and fix $i_*(\gamma_1)=\lambda_1,\ldots, i_*(\gamma_{n+1+s_0})=\lambda_{n+1+s_0}$ as a set of generators for $\pi_1(\mathbb{P}^2\setminus \A)$ with $\Gamma^{(1)}=\{\lambda_{1}, \ldots, \lambda_{n+1}\}$ as in Theorem \ref{thm:Guerville}. For $\iota=1,\ldots, n+1+s_0$, we have that $i_*(\gamma_\iota(\mathcal{T}))$ and $\lambda_\iota$ are meridians of the same smooth curve $D_\iota$, therefore, we can express $i_*(\gamma_\iota(\mathcal{T}))$ as a conjugate of $\lambda_\iota$ by elements in $\lambda_1,\ldots, \lambda_{n+1}$.	We let $\delta_\iota$ denote the word in $\pi_1(\mathbb{P}^2\setminus \A)$ representing $i_*(c_\iota)$ in the letters $\lambda_1,\ldots,\lambda_{n+1+s_0}$ and by $\delta_\iota'$ the same word in the letters $\gamma_1,\ldots, \gamma_{n+1+s_0}$ as in Theorem \ref{thm:Guerville}.

Reciprocally, by fixing $i_*(\gamma_1(\mathcal{T})), \ldots, i_*(\gamma_{n+1+s_0}(\mathcal{T}))$ as generators of $\pi_1(\mathbb{P}^2\setminus \A)$, we can express $\lambda_\iota$ as a conjugate of $i_*(\gamma_\iota(\mathcal{T}))$ by elements in $i_*(\gamma_1(\mathcal{T})), \ldots,$  $i_*(\gamma_{n+1+s_0}(\mathcal{T}))$ for $\iota=1,\ldots, n+1$. The image $i_*(c_\iota(\mathcal{T}))$ of the cycle $c_\iota(\mathcal{T})$ can be expressed in terms of $i_*(\gamma_1(\mathcal{T})), \ldots, i_*(\gamma_{n+1+s_0}(\mathcal{T}))$ for $\iota=1,\ldots,g$. We let $\delta_\iota(\mathcal{T})$ be this expression when it is written in terms of $\gamma_1(\mathcal{T}), \ldots, \gamma_{n+1+s_0}(\mathcal{T})$ such that $\delta_\iota(\mathcal{T})\in \langle G(\mathcal{T})\mid R(\mathcal{T})\rangle$.

\begin{prop}\label{prop:EqPresentations} A presentation   of $\pi_1(\mathbb{P}^2\setminus \A)$ can be obtained as follows
$$\pi_1(\mathbb{P}^2\setminus \A)\cong \langle G(\mathcal{T})\mid R(\mathcal{T}),  c_1(\mathcal{T}) \cdot \delta_1(\mathcal{T})^{-1}, \cdots, c_g(\mathcal{T}) \cdot \delta_g(\mathcal{T}))^{-1}\rangle, $$
\end{prop}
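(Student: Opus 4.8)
The plan is to realise $\pi_1(\mathbb{P}^2\setminus\A)$ as the quotient of $\pi_1(\partial U)$ by $\ker i_*$, where $i_*\colon\pi_1(\partial U)\to\pi_1(\mathbb{P}^2\setminus\A)$ is the map induced by the inclusion (surjective, by the Corollary to Lemma~\ref{lem:EqMeridians}), and to identify this kernel with the normal closure $N_{\mathcal{T}}$ of $\{c_1(\mathcal{T})\delta_1(\mathcal{T})^{-1},\dots,c_g(\mathcal{T})\delta_g(\mathcal{T})^{-1}\}$. By Theorem~\ref{thm:Westlund} one has $\pi_1(\partial U)=\langle G(\mathcal{T})\mid R(\mathcal{T})\rangle$, so the asserted presentation is equivalent to the equality $\ker i_*=N_{\mathcal{T}}$. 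One inclusion is immediate from the definition of the words $\delta_\iota(\mathcal{T})$: each generator $c_\iota(\mathcal{T})\delta_\iota(\mathcal{T})^{-1}$ of $N_{\mathcal{T}}$ is, by construction, sent to $1$ by $i_*$, so $N_{\mathcal{T}}\subseteq\ker i_*$; this already yields a surjection $q\colon\pi_1(\partial U)/N_{\mathcal{T}}\twoheadrightarrow\pi_1(\mathbb{P}^2\setminus\A)$. Moreover the source of $q$ is generated by the images of $\gamma_1(\mathcal{T}),\dots,\gamma_{n+1}(\mathcal{T})$ alone: the relations $c_\iota(\mathcal{T})=\delta_\iota(\mathcal{T})$ eliminate the cycle generators, and Corollary~\ref{cor:ExcDiv} (valid for an arbitrary maximal tree) eliminates $\gamma_{n+2}(\mathcal{T}),\dots,\gamma_{n+1+s_0}(\mathcal{T})$ in terms of the first $n+1$. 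What remains is the reverse inclusion $\ker i_*\subseteq N_{\mathcal{T}}$, i.e. the injectivity of $q$.

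For this I would exploit that the answer is already known for the particular tree $\mathcal{T}'$ of \ref{sss:MaximalTree}: comparing Theorem~\ref{thm:Guerville} with the definition of $\delta_\iota(\mathcal{T}')$, the relations $R'(p)$ appearing there are precisely the words $c_\iota(\mathcal{T}')\delta_\iota(\mathcal{T}')^{-1}$, so (after the routine Tietze step reinstating the exceptional generators through \eqref{eq:gammaExc}) Theorem~\ref{thm:Guerville}(3) says exactly that $\ker i_*=N_{\mathcal{T}'}$. Since $\ker i_*$ is intrinsic, it suffices to prove $N_{\mathcal{T}}=N_{\mathcal{T}'}$. I would do this by joining $\mathcal{T}$ to $\mathcal{T}'$ through a sequence of elementary edge-exchanges $\mathcal{T}_1\rightsquigarrow\mathcal{T}_2=(\mathcal{T}_1\setminus\{e\})\cup\{e'\}$ (with $e'\notin\mathcal{T}_1$ and $e$ on the fundamental cycle of $e'$ with respect to $\mathcal{T}_1$; any two maximal trees of $\Delta$ are joined by such a chain) and checking that an elementary exchange leaves the normal closure unchanged. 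Under one exchange, Westlund's generators change by an explicit Tietze transformation: the system of meridians is replaced by a conjugate one, the edge $e$ acquires a new cycle generator while $e'$ loses its cycle, the conjugators $s_{ij}$ attached to $e$ and $e'$ are updated, and the remaining cycles $c_\iota$ are rewritten by the corresponding free-basis change of $\pi_1(\Delta)\cong F_g$; the words $\delta_\iota$ change by conjugation and rewriting in the same way, since they were extracted from the fixed homomorphism $i_*$. One then verifies, using the commutator and vertex relations of $R(\mathcal{T}_1)$ — by the same manipulations of pushing conjugating letters across commuting meridians as in the HNN-extension steps in the proof of Theorem~\ref{thm:Westlund} — that $\{c_\iota(\mathcal{T}_2)\delta_\iota(\mathcal{T}_2)^{-1}\}$ and $\{c_\iota(\mathcal{T}_1)\delta_\iota(\mathcal{T}_1)^{-1}\}$ generate the same normal subgroup. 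Iterating along the chain gives $N_{\mathcal{T}}=N_{\mathcal{T}'}=\ker i_*$, so $q$ is an isomorphism.

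The hard part will be exactly this bookkeeping for a single edge-exchange: keeping precise track of the conjugating elements relating the two systems of meridians, of how the free-basis change of the cycle group lifts to $\pi_1(\partial U)$ and not merely to $\pi_1(\Delta)$, and then checking that the $\delta_\iota(\mathcal{T}_2)$ produced by this change agree, modulo $R(\mathcal{T}_2)$, with the words abstractly defined from $i_*$. An alternative that avoids the tree comparison is to re-run the proof of Theorem~\ref{thm:Guerville} verbatim with $\mathcal{T}$ in place of $\mathcal{T}'$: one chooses the paths $l_k$ and the section $h$ as in \ref{sss:EquivalentGenerators} but now only obtains that $\gamma_k(\mathcal{T})$ is conjugate to $\lambda_k$ (the conjugators being recorded by the $\delta_\iota(\mathcal{T})$), and then proves the analogues of Propositions~\ref{prop:CyclestoMeridians}, \ref{thm:guerville} and \ref{prop:Redundant} with these conjugators inserted, concluding that modulo $N_{\mathcal{T}}$ the relations $R(\mathcal{T})$ collapse to $\bigcup R_k\cup\{\lambda_{n+1}\cdots\lambda_1\}$, which is the presentation of Theorem~\ref{thm:Arvola}. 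Either way the essential difficulty is the same — carrying the conjugators correctly through the reduction — and I expect the second route, run in parallel with the existing proof of Theorem~\ref{thm:Guerville}, to be the more economical one to write down.
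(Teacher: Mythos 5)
Your reduction is the right one, and it is the same reduction the paper makes: granting Theorem \ref{thm:Guerville} (plus the elimination of the exceptional meridians via Corollary \ref{cor:ExcDiv}), the statement amounts to the equality of normal closures $N_{\mathcal{T}}=N_{\mathcal{T}'}=\ker i_*$ inside $\pi_1(\partial U)$. The genuine gap is that you never prove this equality: both of your routes park the entire difficulty in a step you explicitly leave undone (the conjugator bookkeeping for a single edge exchange in route 1, or a complete re-run of the proof of Theorem \ref{thm:Guerville} for an arbitrary tree in route 2), and that step is essentially as hard as the proposition itself. The paper needs neither an induction over edge exchanges nor a new geometric argument. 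It uses the graph-of-groups description of $\pi_1(\partial U)$ over $\Delta$ (vertex groups as in Lemma \ref{lem:RelsInBundle}, edge groups $\mathbb{Z}^2$) to regard the $\mathcal{T}$- and $\mathcal{T}'$-presentations as Tietze-equivalent presentations of one group, fixes $v_{n+1}$ as base point of $\pi_1(\Delta)$ and writes each new cycle in the fixed cycle basis, $c_\iota(\mathcal{T})=c_{\iota 1}\cdots c_{\iota r_\iota}$, so that $i_*(c_\iota(\mathcal{T}))=\delta_{\iota 1}\cdots\delta_{\iota r_\iota}$; then the telescoping identity
$$(c_{\iota 1}{\delta_{\iota 1}'}^{-1})\,\bigl(c_{\iota 2}{\delta_{\iota 2}'}^{-1}\bigr)^{{\delta_{\iota 1}'}^{-1}}\cdots \bigl(c_{\iota r_\iota}{\delta_{\iota r_\iota}'}^{-1}\bigr)^{{\delta_{\iota r_\iota}'}^{-1}\cdots{\delta_{\iota 1}'}^{-1}}=c_{\iota 1}\cdots c_{\iota r_\iota}{\delta_{\iota r_\iota}'}^{-1}\cdots{\delta_{\iota 1}'}^{-1}$$
shows $c_\iota(\mathcal{T})\delta_\iota(\mathcal{T})^{-1}\in\langle\langle c_1{\delta_1'}^{-1},\ldots,c_g{\delta_g'}^{-1}\rangle\rangle$, and symmetrically for the reverse inclusion. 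That one observation, expressing the new cycles in the fixed basis of $\pi_1(\Delta,v_{n+1})$ and telescoping conjugates, is exactly what is missing from your plan.

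If you insist on one of your routes, note the following. In route 1 an elementary exchange conjugates the meridian generators as well as replacing a cycle, so the single-exchange verification forces you to write down precisely the identity above, at which point the induction is superfluous; moreover you would also need to argue that $\delta_\iota$ changes covariantly with these rewritings, which is where the imprecision about the non-uniqueness of the words $\delta_\iota$ bites. Route 2 is not a verbatim re-run: the constructions of \ref{sss:EquivalentGenerators} use special features of $\mathcal{T}'$ (the whole star of $v_{n+1}$ lies in the tree, $R$ is chosen near $L_{n+1}$, $\beta$ passes first through $\Sing \A\cap L_{n+1}$), and for a general $\mathcal{T}$ one only gets $\gamma_k(\mathcal{T})$ conjugate to $\lambda_k$, so the analogues of Lemma \ref{lem:EqMeridians} and of Propositions \ref{prop:CyclestoMeridians}, \ref{thm:guerville} and \ref{prop:Redundant} all acquire conjugators that must be shown to assemble into the $\delta_\iota(\mathcal{T})$; this is a substantial rewrite, not an insertion of conjugators.
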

\begin{proof}
The presentations $\langle G(\mathcal{T})\mid R(\mathcal{T})\rangle$ and $\langle \gamma_1,\ldots, \gamma_{n+1+s_0}, c_1,\ldots, c_g\mid R\rangle$ of $\pi_1(\partial U)$ as in Theorem \ref{thm:Westlund} can also be obtained as graphs of groups (see \cite{hironaka2000}). These graphs of groups are constructed over $\Delta$ as follows: the vertices groups are given as in Lemma \ref{lem:RelsInBundle}, the edges groups are $\mathbb{Z}^2$. To each tree of $\Delta$ there correspond a presentation and the presentations are Tietze-equivalent.

Let us fix $v_{n+1}$, the vertex corresponding to $D_{n+1}$ as a base point for $\pi_1(\Delta)$ and $c_1, \ldots, c_g$ a generating set. Every cycle $c_\iota(\mathcal{T})\in \pi_1(\Delta,v_{n+1})$ can be expressed as 
$c_\iota(\mathcal{T})=c_{\iota 1}\cdots c_{\iota r_\iota}$ where $c_{\iota m}\in \{c_1,\ldots,c_g\}$ with $m=1,\ldots, r_\iota$ and $\iota=1, \ldots,g$. Therefore $i_*(c_\iota(\mathcal{T}))=i_*(c_{\iota 1})\cdots i_*(c_{\iota r_\iota})=\delta_{\iota 1}\cdots \delta_{\iota r_\iota}$. Let us show that 
$$c_\iota(\mathcal{T})\cdot \delta_\iota(\mathcal{T})^{-1}=c_{\iota 1}\cdots c_{\iota r_\iota} {\delta_{\iota r_\iota}'}^{-1}\cdots {\delta_{\iota 1}'}^{-1}\in \langle\langle c_1\cdot {\delta_{\iota 1}'}^{-1},\ldots,  c_g\cdot {\delta_{g}'}^{-1}\rangle\rangle.$$
Note that 
\begin{align*}
(c_{\iota1} {\delta_{\iota 1}'}^{-1})(c_{\iota 2}{\delta_{\iota 2}'}^{-1})^{\delta_{\iota 1}^{-1}}&=c_{\iota 1} c_{\iota 2}{\delta_{\iota 2}'}^{-1}{\delta_{\iota 1}'}^{-1}\\
& \ \ \! \vdots \\
(c_{\iota 1} {\delta_{\iota 1}'}^{-1})\cdots (c_{\iota r_\iota }{\delta_{\iota r_\iota}'}^{-1})^{{\delta_{\iota r_\iota}'}^{-1}\cdots {\delta_{\iota 1}'}^{-1}}&=c_{\iota 1}\cdots c_{\iota \iota_r}{\delta_{\iota r_\iota}'}^{-1}\cdots {\delta_{\iota 1}'}^{-1}
\end{align*}
In a similar way we can prove that $c_\iota \cdot {\delta_{\iota}'}^{-1}\in \langle\langle c_1(\mathcal{T})\cdot \delta_1(\mathcal{T})^{-1}, \ldots, c_g(\mathcal{T})\cdot \delta_g(\mathcal{T})^{-1}\rangle\rangle$. This proves that the presentations $\langle G(\mathcal{T})\mid R(\mathcal{T}),  c_1(\mathcal{T}) \delta_1(\mathcal{T})^{-1}, \cdots, c_g(\mathcal{T})\delta_g(\mathcal{T}))^{-1}\rangle,$ and $\langle \gamma_1,\ldots, \gamma_{n+1+s_0}, c_1,\ldots, c_g\mid R, c_1{\delta_1'}^{-1}, \ldots, c_g{\delta_g'}^{-1}\rangle$ are equivalent. We conclude by Theorem \ref{thm:Guerville}.
\end{proof}

\subsection{Boundary manifold of a partial compactification}
Here we will present another presentation for the fundamental group of certain partial compactifications $M(\A,I,P)$, where $M(\A,I,P)$ is as in \ref{ss:partialCArrangements}, but the lines of $D$ indexed by $I$ correspond only to exceptional divisors, this is, $I\subset \{n+2,\ldots,n+1+s_0\}$ 

\subsubsection{Inclusion of the boundary of a partial compactification} 
Let us recall the notation of section \ref{ss:partialCArrangements}. 

Let $\A\subset \mathbb{P}^2$ be an arrangement of lines and $\bar{X}$ the blow-up at the points $P_0=\{p_1,\ldots,p_{s_0}\}$ of $\Sing \A$ with multiplicity strictly higher than two and let $D=\sum_{k=1}^{n+1+s_0} D_k$ be the reduced total transform of $\A$ in $\bar{X}$.  

Here, \emph{we suppose that} $I\subset \{n+2,\ldots, n+1+s_0\}$ and let $P=\{p_1',\ldots, p_{s_1}'\}\subset \Sing \sum_{k\not \in I} D_k$. Denote by $\pi':\Bl_P\bar{X}\to \bar{X}$ the blow-up map and the dual graph of $\abs{\pi'^* D}$ by $\Delta$. Note that in the previous section $\Delta$ denoted instead the dual graph of $D$. Consider the divisor $D'\subset \Bl_P\bar{X}$ as in \ref{ss:partialCArrangements} and denote by $\Delta'$ the dual graph of $D'$. Recall that $\Delta'$ is obtained from $\Delta$ by removing some vertices and the corresponding adjacent edges. 

In \ref{ss:partialCArrangements} we defined the partial compactification $M(\A,I,P)$ of $M(\A)$ as $\Bl_P\bar{X}\setminus D'$. 

Let us assume that $D'$ \emph{is connected}, which is equivalent to $\Delta'$ being connected. Therefore, there exists a maximal tree $\mathcal{T}_{\Delta'}\subset \Delta'$. Note that every cycle in $\Delta'$ can be seen as a cycle in $\Delta$.

\begin{lem} Any maximal tree $\mathcal{T}_{\Delta'}$ can be completed to a maximal tree $\mathcal{T}_{\Delta',\Delta}$ in $\Delta$.
\end{lem}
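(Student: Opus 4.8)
The plan is to reduce the statement to the elementary fact that, in a connected graph, any acyclic subgraph extends to a spanning tree (the exchange property of the graphic matroid). First I would check that $\Delta$ is connected. Recall that $\Delta$ is the dual graph of the reduced total transform $\abs{\pi'^*D}$ of the arrangement $\A$ as in \ref{ss:partialCArrangements}; since any two lines of $\A$ meet in $\mathbb{P}^2$, the divisor $\A$ is connected, and blowing up a connected simple normal crossing divisor preserves connectedness (each exceptional component meets the strict transform of the curve it was extracted from). Hence $D$, and therefore $\Delta$, is connected.

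Next I would record the shape of $\Delta'$ inside $\Delta$: by the construction in \ref{ss:partialCArrangements}, $\Delta'$ is obtained from $\Delta$ by deleting the vertices $v_i$ with $i\in I$ together with all edges incident to them. In particular $\Delta'$ is the \emph{full} subgraph of $\Delta$ on the vertex set $V(\Delta')=V(\Delta)\setminus\{v_i\mid i\in I\}$, so every edge of $\Delta$ joining two vertices of $V(\Delta')$ already lies in $\Delta'$.

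Then, given a maximal (spanning) tree $\mathcal{T}_{\Delta'}\subset\Delta'$, I would regard it as an acyclic subgraph of the connected graph $\Delta$ and enlarge it greedily: as long as the current subgraph does not meet every connected component, add an edge of $\Delta$ joining two distinct components; this never creates a cycle, and since $\Delta$ is connected the process terminates in a spanning tree $\mathcal{T}_{\Delta',\Delta}$ of $\Delta$ containing $\mathcal{T}_{\Delta'}$. Finally I would observe that no edge added during this process lies in $\Delta'$: such an edge would join two vertices of $V(\Delta')$, which are already joined by a path in the spanning tree $\mathcal{T}_{\Delta'}$, so adding it would create a cycle, contradicting the construction. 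Hence the edges of $\mathcal{T}_{\Delta',\Delta}$ lying in $\Delta'$ are exactly those of $\mathcal{T}_{\Delta'}$, i.e.\ $\mathcal{T}_{\Delta',\Delta}$ genuinely completes $\mathcal{T}_{\Delta'}$.

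There is no real obstacle here: the only point that needs a word of justification is the connectedness of $\Delta$ (immediate from the geometry of line arrangements), and everything else is the standard matroid-exchange argument for spanning trees.
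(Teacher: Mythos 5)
Your argument is correct, but it is not the argument of the paper, so a brief comparison is in order. You run the general matroid/spanning-tree exchange argument: $\Delta$ is connected (which you justify geometrically, and which indeed follows since the total transform of the connected curve $\A$ under a proper birational morphism is connected), $\mathcal{T}_{\Delta'}$ is an acyclic subgraph of $\Delta$, and any forest in a connected graph extends greedily to a spanning tree; your final observation that $\Delta'$ is the induced subgraph on its vertex set then guarantees that no added edge lies in $\Delta'$. The paper instead exploits the specific structure of the situation: because $I\subset\{n+2,\ldots,n+1+s_0\}$ and $P\subset\Sing\sum_{i\notin I}D_i$, every vertex deleted from $\Delta$ corresponds to an exceptional divisor, and no two deleted vertices are adjacent; hence one may simply attach each deleted vertex to $\mathcal{T}_{\Delta'}$ by an arbitrary edge, and no cycle can be created. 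Your route is more elementary and more general --- it needs none of the hypotheses on $I$ and $P$, only the connectedness of $\Delta$ --- whereas the paper's route buys explicit quantitative control that is used immediately afterwards: the completion adds exactly one edge per deleted vertex, so the edges of $\Delta\setminus\mathcal{T}_{\Delta',\Delta}$ not already in $\Delta'\setminus\mathcal{T}_{\Delta'}$ are easy to enumerate, which feeds into the description of the extra cycles $c_{g'+1},\ldots,c_g$ in the corollary that follows and into the relations (\ref{eq:FirstWord}) and (\ref{eq:SecondWord}). Either proof establishes the lemma as stated.
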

\begin{proof}
Let $\{v_1,\ldots,v_k\}$ be the vertices of $\Delta$ which are to be removed along with its adjacent edges in order to obtain $\Delta'$. 

 As $I\subset \{n+2,\ldots, n+1+s_0\}$ and $P\subset \Sing \sum_{\iota\not \in I} D_\iota$, we have that all the vertices in $\{v_1,\ldots, v_k\}$ correspond to exceptional divisors in $\Bl_P\bar{X}$, therefore there is no edge connecting $v_\iota$ and $v_j$ for $\iota\not = j$ and to complete $\mathcal{T}_{\Delta'}$ to a maximal tree of $\Delta$ it suffices to take no matter what edge connecting a vertex in $\mathcal{T}_{\Delta'}$ and $v_\iota$ for $\iota=1,\ldots,k$ because no cycle will be created in this way.
\end{proof}

\begin{cor} Let $g$ denote the number of independent cycles in $\Delta$. Let $c_1(\mathcal{T}_{\Delta'}),$ $\ldots,c_{g'}(\mathcal{T}_{\Delta'})$ be independent cycles in $\Delta'$ each one formed by adjoining one edge in $\Delta'$ to the maximal tree $\mathcal{T}_{\Delta'}$. There exists $c_{g'+1}(\mathcal{T}_{\Delta',\Delta}),\ldots, c_g(\mathcal{T}_{\Delta',\Delta})$ cycles in $\Delta$ that together with $c_1(\mathcal{T}_{\Delta',\Delta})=c_1(\mathcal{T}_{\Delta'}),\ldots,c_{g'}(\mathcal{T}_{\Delta',\Delta})=c_{g'}(\mathcal{T}_{\Delta'})$ complete a generating set of $\pi_1(\Delta,v_{n+1})$.
\end{cor}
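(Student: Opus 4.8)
The plan is to invoke the standard description of the fundamental group of a connected graph as a free group on the edges outside a chosen maximal tree. Recall that for a connected graph $G$ with maximal tree $T$ and base vertex $v$, the group $\pi_1(G,v)$ is free on the loops $\{e_\ast : e\in\mathcal{E}(G)\setminus\mathcal{E}(T)\}$, where (after choosing an orientation of $e$) the loop $e_\ast$ runs along the unique $T$-path from $v$ to the initial endpoint of $e$, crosses $e$, and returns to $v$ along the unique $T$-path; in particular the rank is $b_1(G)$. I would apply this to $G=\Delta$, $T=\mathcal{T}_{\Delta',\Delta}$, and $v=v_{n+1}$, which is legitimate because $v_{n+1}\in V(\Delta')$: since $I\subset\{n+2,\dots,n+1+s_0\}$ we have $n+1\notin I$, so $v_{n+1}$ is not among the removed vertices. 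This yields a free basis of $\pi_1(\Delta,v_{n+1})$ of cardinality $g=b_1(\Delta)$.

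Next I would split $\mathcal{E}(\Delta)\setminus\mathcal{E}(\mathcal{T}_{\Delta',\Delta})$ according to incidence with the removed vertices $v_1,\dots,v_k$. An edge is not incident to any $v_\iota$ precisely when both its endpoints survive in $\Delta'$, i.e.\ precisely when it is an edge of $\Delta'$; moreover $\mathcal{T}_{\Delta',\Delta}$ was obtained from $\mathcal{T}_{\Delta'}$ by adding one edge at each $v_\iota$, so $\mathcal{T}_{\Delta',\Delta}\cap\Delta'=\mathcal{T}_{\Delta'}$ and such an edge lies outside $\mathcal{T}_{\Delta',\Delta}$ exactly when it lies outside $\mathcal{T}_{\Delta'}$. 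Hence these edges number $g'=b_1(\Delta')$ and are in bijection with the given cycles $c_1(\mathcal{T}_{\Delta'}),\dots,c_{g'}(\mathcal{T}_{\Delta'})$. Since (by the preceding lemma) exactly one edge at each $v_\iota$ was put into $\mathcal{T}_{\Delta',\Delta}$ and no edge joins two removed vertices, the remaining $g-g'$ edges of $\mathcal{E}(\Delta)\setminus\mathcal{E}(\mathcal{T}_{\Delta',\Delta})$ are edges incident to the $v_\iota$, and I would simply \emph{define} $c_{g'+1}(\mathcal{T}_{\Delta',\Delta}),\dots,c_g(\mathcal{T}_{\Delta',\Delta})$ to be the loops $e_\ast$ attached to them.

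The only point requiring a genuine (though short) argument is the compatibility $c_i(\mathcal{T}_{\Delta',\Delta})=c_i(\mathcal{T}_{\Delta'})$ for $i\le g'$: for an edge $e\in\mathcal{E}(\Delta')\setminus\mathcal{E}(\mathcal{T}_{\Delta'})$ the loop $e_\ast$ built in $\Delta$ from $\mathcal{T}_{\Delta',\Delta}$ must agree with the cycle built in $\Delta'$ from $\mathcal{T}_{\Delta'}$. This holds because $\mathcal{T}_{\Delta'}$ is a subtree of the tree $\mathcal{T}_{\Delta',\Delta}$ containing $v_{n+1}$ and both endpoints of $e$, and the unique path in a tree between two vertices of a subtree remains inside that subtree; so the two tree-paths defining $e_\ast$ are literally identical. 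With this identification in hand, $\{c_1(\mathcal{T}_{\Delta'}),\dots,c_{g'}(\mathcal{T}_{\Delta'}),c_{g'+1}(\mathcal{T}_{\Delta',\Delta}),\dots,c_g(\mathcal{T}_{\Delta',\Delta})\}$ is exactly the free basis produced in the first step, hence a generating set of $\pi_1(\Delta,v_{n+1})$. I expect the main obstacle to be nothing more than bookkeeping: keeping the count $g-g'$ consistent, which follows from $b_1(\Delta)-b_1(\Delta')=(|\mathcal{E}(\Delta)|-|\mathcal{E}(\Delta')|)-(|V(\Delta)|-|V(\Delta')|)$ together with the fact that no edge of $\Delta$ joins two of the removed exceptional vertices.
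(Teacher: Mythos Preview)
Your argument is correct and is exactly the natural way to make the corollary explicit; the paper itself states this result without proof, treating it as an immediate consequence of the preceding lemma (that $\mathcal{T}_{\Delta'}$ extends to a maximal tree $\mathcal{T}_{\Delta',\Delta}$ of $\Delta$). Your careful verification that the tree-paths for $i\le g'$ stay inside $\mathcal{T}_{\Delta'}$, and your bookkeeping on $g-g'$, simply spell out what the paper takes for granted.
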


Let us denote by $\partial U$ the boundary manifold of the total transform of $D$ in $\Bl_P\bar{X}$. By proceeding as in the proof of Theorem \ref{thm:Westlund}, we have that a presentation for $\pi_1(\partial U)$, by using the maximal tree $\mathcal{T}_{\Delta',\Delta}$, has generators $\gamma_1=\gamma_1(\mathcal{T}_{\Delta',\Delta}),\ldots,$  $\gamma_{n+1+s_0+s_1}=\gamma_{n+1+s_0+s_1}(\mathcal{T}_{\Delta',\Delta}), c_1=c_1(\mathcal{T}_{\Delta',\Delta}),\ldots, c_g=c_g(\mathcal{T}_{\Delta',\Delta})$ and a set of relations 
\begin{equation}\label{eq:RelsBoundaryDouble}
R=\left\lbrace 
 \begin{array}{ll}
 [\gamma_r,\gamma_j^{s_{rj}}], & (r,j)\in \mathcal{E}(\Delta) \\
 \gamma_r^{-w_r'}=\prod_{m=1}^{k_r'} \gamma_{j_{\Delta}(r,m)}^{s_{rj_{\Delta}(i,m)}} & 1\leq r \leq n+1+s_0+s_1
\end{array} \right\rbrace
\end{equation}
where $w_r'=D_r'\cdot D_r'$, for an irreducible component $D_r'$ of  $\pi'^*D$, we denoted by $k_r'$ the number of points in $\Sing \pi'^*D\cap D_r'$ (see the proof of Proposition \ref{prop:Redundant}), and 
$$s_{rj}=\left\lbrace\begin{array}{ll}
c_k^{-1} & \text{ if } (r,j) \text{ equals the $k$-th element in } \Delta\setminus \mathcal{T}_{\Delta',\Delta},\\
c_k & \text{ if } (j,r) \text{ equals the $k$-th element in } \Delta\setminus \mathcal{T}_{\Delta',\Delta},\\
1 & \text{ if } (r,j) \text{ is an edge of } \mathcal{T}_{\Delta',\Delta}.\\	 
\end{array} \right. $$ 

Moreover, let $\partial U'$ denote the boundary manifold of $D'\subset \Bl_P \bar{X}$. Here, if $r\not \in I$ let us denote by $k_r''$ the number of points in $(D_k\cap \sum_{\iota\not \in I} D_\iota)\setminus P$ or equivalently, in $D_k'\cap D'$. By using the maximal tree $\mathcal{T}_{\Delta'}$ of $\Delta'$ and proceeding as in the proof of Theorem \ref{thm:Westlund}, we obtain the following Proposition.

\begin{prop}\label{prop:PresBounPart}  A presentation for $\pi_1(\partial U')$ is given by 
$$\left\langle 
 \begin{array}{llll}
\gamma_\iota, \quad \iota \in J & \big\lvert [\gamma_r,\gamma_j^{s_{rj}}], & (r,j)\in \mathcal{E}(\Delta') \\
c_1, \ldots, c_{g'} & \bigg\lvert \gamma_r^{-w_r'}=\prod_{m=1}^{k_r''} \gamma_{j_{\Delta'}(r,m)}^{s_{rj_{\Delta'}(r,m)}} & r\in J
\end{array} \right\rangle $$
where $J= \{1,\ldots, n+1+s_0\}\setminus I$,  $\mathcal{E}(\Delta')$ denotes the set of edges of $\Delta'$, $w_r'$ the self-intersection number of the strict transform $D_r'$ of $D_r$ in $\Bl_P\bar{X}$ and 
$$s_{rj}=\left\lbrace\begin{array}{ll}
c_k^{-1} & \text{ if } (r,j) \text{ equals the $k$-th element in } \Delta'\setminus \mathcal{T}_{\Delta'},\\
c_k & \text{ if } (j,r) \text{ equals the $k$-th element in } \Delta'\setminus  \mathcal{T}_{\Delta'},\\
1 & \text{ if } (r,j) \text{ is an edge of } \mathcal{T}_{\Delta'}.\\	 
\end{array} \right. $$ 
\end{prop}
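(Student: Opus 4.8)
The plan is to mimic the proof of Theorem \ref{thm:Westlund} almost verbatim, but restricting attention to the subgraph $\Delta'$ rather than $\Delta$. First I would observe that since $\Delta'$ is connected with maximal tree $\mathcal{T}_{\Delta'}$, the divisor $D'$ is connected and its dual graph is a tree after blowing up the $g'=b_1(\Delta')$ points of $\Sing D'$ corresponding to the edges in $\mathcal{C}' := \Delta'\setminus \mathcal{T}_{\Delta'}$. Call $\pi^{(1)}:\Bl_{p_1',\ldots,p_{g'}'}(\Bl_P\bar X)\to \Bl_P\bar X$ this blow-up and $D''$ the strict transform of $D'$, so that $\partial U'$ is homeomorphic to the boundary manifold of the total transform $\pi^{(1)*}D' = D''+\sum_{k=1}^{g'}E_k$. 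Applying Theorem \ref{thm:Mumford} to the tree $\mathcal{T}_{\Delta'}$ gives a presentation of $\pi_1(\partial U'')$ with generators $\gamma_r$ for $r\in J$ (one meridian per irreducible component of $D''$, indexed by $J=\{1,\ldots,n+1+s_0\}\setminus I$), commutators $[\gamma_r,\gamma_{j_{\mathcal{T}_{\Delta'}}(r,m)}]$, and the Mumford relations $\gamma_r^{-w_r''}=\prod_{m}\gamma_{j_{\mathcal{T}_{\Delta'}}(r,m)}$ where $w_r''$ is the self-intersection of the strict transform $D_r''$ in the blown-up surface.

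Next I would reintroduce the $g'$ cycles one at a time, exactly as in the proof of Theorem \ref{thm:Westlund}. For each exceptional divisor $E_k$ arising from an edge $(r,j)\in\mathcal{C}'$, one removes small disks about the two intersection points $E_k\cap D_r''$ and $E_k\cap D_j''$, obtains torus boundary components, and glues $\partial U_{E_k}^*$ (which is $\cong \mathbb{Z}^2$ since $E_k\cdot E_k=-1$) back by a longitude-to-meridian map; van Kampen handles one gluing and the second gluing produces an HNN extension introducing the stable letter $c_k$, with $\gamma_j$ replaced by $\gamma_j^{c_k^{-1}}$ in the appropriate relations. The bookkeeping on the self-intersection numbers works the same way: after reinserting all $g'$ exceptional divisors, the exponent in the Mumford relation for $\gamma_r$ changes from $-w_r''$ to $-(w_r''+(k_r''-(\text{number of tree-edges at }r))) = -w_r'$, where $w_r'=D_r'\cdot D_r'$ is the self-intersection of the strict transform $D_r'$ in $\Bl_P\bar X$, because $k_r''-k_r^{\mathcal{T}}$ is precisely the number of points among $p_1',\ldots,p_{g'}'$ lying on $D_r'$. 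Here $k_r''$ counts the points in $D_r'\cap D'$, i.e.\ in $(D_r\cap\sum_{\iota\notin I}D_\iota)\setminus P$, which is exactly the index set over which the product $\prod_{m=1}^{k_r''}$ runs after the cycles are restored, with the conjugating factors $s_{rj}$ defined via $\mathcal{C}'$ as stated. This yields the claimed presentation for $\pi_1(\partial U')$.

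The only point that genuinely needs care—and which I expect to be the main obstacle—is checking that nothing in the edges or relations of $\Delta'$ refers to a removed vertex. This is where the hypothesis $I\subset\{n+2,\ldots,n+1+s_0\}$ and $P\subset\Sing\sum_{\iota\notin I}D_\iota$ is used: the removed vertices $v_1,\ldots,v_k$ all correspond to $(-1)$-exceptional divisors, no two of them are adjacent (by the preceding Lemma), and each is adjacent only to strict transforms of lines; hence deleting them from $\Delta$ and from each Mumford relation simply drops the corresponding factors without disturbing connectivity of the remainder or creating any new relation among the surviving generators. One must also verify that the cycles $c_1(\mathcal{T}_{\Delta'}),\ldots,c_{g'}(\mathcal{T}_{\Delta'})$ of $\Delta'$ coincide, as elements built from the boundary-manifold gluings, with $g'$ of the cycles used for $\partial U$—this is immediate since every cycle of $\Delta'$ is a cycle of $\Delta$ not passing through any removed vertex, and the extended tree $\mathcal{T}_{\Delta',\Delta}$ restricts to $\mathcal{T}_{\Delta'}$ on $\Delta'$. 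With these verifications the proof is a direct transcription of the argument for Theorem \ref{thm:Westlund}, so I would simply write ``proceeding exactly as in the proof of Theorem \ref{thm:Westlund}, applied to $\Delta'$ and $\mathcal{T}_{\Delta'}$ in place of $\Delta$ and $\mathcal{T}$'' and point out the self-intersection bookkeeping and the role of the hypothesis on $I$.
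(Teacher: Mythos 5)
Your proposal is correct and follows exactly the route the paper takes: the paper proves Proposition \ref{prop:PresBounPart} simply by invoking the maximal tree $\mathcal{T}_{\Delta'}$ of $\Delta'$ and ``proceeding as in the proof of Theorem \ref{thm:Westlund}'', which is precisely the transcription (blow up the $g'$ edges of $\Delta'\setminus\mathcal{T}_{\Delta'}$, apply Mumford to the resulting tree, reinsert the exceptional pieces via van Kampen/HNN, and track the self-intersection bookkeeping $w_r''\mapsto w_r'$) that you carry out. Your added remarks on the role of the hypothesis $I\subset\{n+2,\ldots,n+1+s_0\}$ and the connectedness of $D'$ are consistent with the paper's standing assumptions and do not change the argument.
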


For every $\iota\in I$, we have that, as $D_\iota'$ is an exceptional divisor, the following relation is in $R$: 
\begin{equation}\label{eq:FirstWord}
\gamma_\iota=\prod_{m=1}^{k_\iota'}\gamma_{j_{\Delta',\Delta}(\iota,m)}^{s_{\iota j_{\Delta',\Delta}(\iota,m)}}.
\end{equation}

Analogously, if $p=p_\iota\in P$, by abuse of notation we will write $p={n+1+s_0+\iota}$. We have that if $p=D_r\cap D_j$:
\begin{equation}\label{eq:SecondWord}
\gamma_p=\gamma_r^{s_{pr}}\gamma_j^{s_{pj}}
\end{equation}


By using the map $i:\partial U\to M(\A)=\mathbb{P}^2\setminus \A$ as in \ref{sss:Independence},  we can express the image of the cycles $i_*(c_r)$ as a word in the letters $i_*(\gamma_1), \ldots, i_*(\gamma_{n+1})$, for $r=1,\ldots, g$. Let us denote by $\delta_r$ the word obtained by replacing the letters $i_*(\gamma_1),\ldots, i_*(\gamma_{n+1})$ by $\gamma_1,\ldots,\gamma_{n+1}$ in this precedent word associated to $i_*(c_r)$.

By using $\delta_1,\ldots, \delta_g$ and replacing $i_*(\gamma_1),\ldots,i_*(\gamma_{n+1})$ by $\gamma_1,\ldots, \gamma_{n+1}$, we can express the words $i_*(\prod_{m=1}^{k_\iota'}\gamma_{j_{\Delta',\Delta}(\iota,m)}^{s_{\iota j_{\Delta',\Delta}(\iota,m)}})$ and $i_*(\gamma_r^{s_{pr}}\gamma_j^{s_{pj}})$ with $\iota\in I$ and $p\in P$ as words $\gamma(\iota),\gamma(p)\in \pi_1(\partial U')$ respectively.

Let us denote by $R'$ the set of relations in the presentation given by Proposition \ref{prop:PresBounPart}.

\begin{thm}\label{thm:BoundaryLAC} A presentation of $\pi_1(M(\A,I,P))$ is given by  $$\langle c_1,\ldots, c_{g'}, \gamma_\iota \quad \iota\in J\mid R',c_1 \delta_1^{-1}, \ldots, c_{g'}\delta_{g'}^{-1}, \cup_{\iota \in I}\gamma(\iota),\cup_{p\in P}\gamma(p)\rangle$$
with $J=\{1,\ldots,n+1+s_0\}\setminus I$.
\end{thm}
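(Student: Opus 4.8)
The statement to prove is Theorem~\ref{thm:BoundaryLAC}, which gives a presentation of $\pi_1(M(\A,I,P))$ built from the boundary-manifold presentation of Proposition~\ref{prop:PresBounPart} together with the cycle-killing relations $c_r\delta_r^{-1}$ and the meridian-killing relations $\gamma(\iota)$, $\gamma(p)$. The strategy is to proceed exactly as in the passage from Theorem~\ref{thm:Westlund} to Proposition~\ref{prop:EqPresentations}, but now carried out in the birational model $\Bl_P\bar X$ and for the divisor $D'$ rather than $D$. First I would invoke Proposition~\ref{prop:meridians}: since $\pi_1(\Bl_P\bar X\setminus \pi'^*D)\cong\pi_1(M(\A))$ and $M(\A,I,P)=\Bl_P\bar X\setminus D'$ is obtained from $\Bl_P\bar X\setminus \pi'^*D$ by filling back in the components $D_\iota'$ with $\iota\in I$ and the exceptional divisors $D_p'$ with $p\in P$, the group $\pi_1(M(\A,I,P))$ is the quotient of $\pi_1(M(\A))$ by the normal subgroup generated by meridians of those components.

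**Key steps, in order.** (1) Show that $\pi_1(\partial U')\to\pi_1(M(\A,I,P))$ is surjective and identify the kernel. Because $\partial U'$ is the boundary of a regular neighbourhood of the \emph{connected} divisor $D'$ inside $\Bl_P\bar X$, the inclusion $\partial U'\hookrightarrow M(\A,I,P)$ and the standard Mumford/van Kampen argument (as in the proof of Theorem~\ref{thm:Guerville}, Corollary after Lemma~\ref{lem:EqMeridians}) give surjectivity, and Proposition~\ref{prop:meridians} identifies the kernel with the normal closure of the meridians of the components of $D'$ that are contracted by $\Bl_P\bar X\to M(\A,I,P)$; but here \emph{no} component of $D'$ is contracted, so the only relations to add to the presentation of Proposition~\ref{prop:PresBounPart} are those coming from the cycles. (2) Handle the cycles $c_1,\dots,c_{g'}$ of $\Delta'$: precisely as in Proposition~\ref{prop:EqPresentations}, each such cycle, once pushed into $M(\A,I,P)$, becomes equal to the word $\delta_r$ in the meridians $\gamma_1,\dots,\gamma_{n+1}$ (the expression is obtained by viewing $c_r$ as a cycle in $\Delta$ via $\Delta'\subset\Delta$ and using Theorem~\ref{thm:Guerville}/Proposition~\ref{prop:CyclestoMeridians}); adding $c_r\delta_r^{-1}$ kills exactly the difference between $\pi_1(\partial U')$ and $\pi_1(M(\A,I,P))$, up to the meridians of the filled-in divisors. (3) Handle the filled-in divisors: by Corollary~\ref{cor:ExcDiv} and equations \eqref{eq:FirstWord}, \eqref{eq:SecondWord}, the meridian $\gamma_\iota$ of an exceptional divisor $D_\iota'$ with $\iota\in I$ equals $\prod_m \gamma_{j_{\Delta',\Delta}(\iota,m)}^{s_{\iota j_{\Delta',\Delta}(\iota,m)}}$, and the meridian $\gamma_p$ of $D_p'$ with $p\in P$ equals $\gamma_r^{s_{pr}}\gamma_j^{s_{pj}}$; pushing these into $M(\A,I,P)$ via $i_*$ and rewriting the $c_k$ appearing in the $s_{\cdot\cdot}$ in terms of $\delta$'s produces exactly the words $\gamma(\iota)$, $\gamma(p)\in\pi_1(\partial U')$, and killing the corresponding meridians is what distinguishes $\pi_1(M(\A,I,P))$ from $\pi_1(M(\A))$. (4) Assemble: the presentation $\langle c_1,\dots,c_{g'},\gamma_\iota\ (\iota\in J)\mid R', c_r\delta_r^{-1}, \gamma(\iota), \gamma(p)\rangle$ is then Tietze-equivalent to the quotient of the presentation of $\pi_1(M(\A))$ by the normal closure of the meridians $\lambda(D_\iota')$, $\lambda(D_p')$, which by Theorem~\ref{thm:KnotPresentation} and Proposition~\ref{prop:meridians} is $\pi_1(M(\A,I,P))$.

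**Main obstacle.** The delicate point is bookkeeping with the two maximal trees $\mathcal T_{\Delta'}$ and $\mathcal T_{\Delta',\Delta}$ and making sure that the cycle generators $c_1,\dots,c_{g'}$ of $\Delta'$ can be taken as the first $g'$ members of a generating set of $\pi_1(\Delta,v_{n+1})$ (this is what the Lemma and Corollary preceding Proposition~\ref{prop:PresBounPart} provide), so that the words $\delta_r$ computed in the model for $\pi_1(M(\A))$ via Theorem~\ref{thm:Guerville} agree on the nose with the cycles appearing in Proposition~\ref{prop:PresBounPart}. Equivalently: one must check that restricting the Westlund-type presentation from $\partial U$ (for $D$ in $\Bl_P\bar X$) to the subgraph $\Delta'$ and then quotienting by the meridians of the deleted exceptional vertices yields precisely the presentation of $\pi_1(\partial U')$ in Proposition~\ref{prop:PresBounPart}; since the deleted vertices are pairwise non-adjacent exceptional divisors, no cycle of $\Delta$ is lost that is not already a cycle of $\Delta'$ together with a trivial contribution, so the quotient is clean. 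Once this compatibility of trees and cycles is in place, steps (2)–(4) are formal consequences of Propositions~\ref{prop:EqPresentations}, \ref{prop:PresBounPart}, Theorem~\ref{thm:KnotPresentation} and Proposition~\ref{prop:meridians}.
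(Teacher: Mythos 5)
Your overall route is the same as the paper's: identify $\pi_1(M(\A,I,P))$ with $\pi_1(M(\A))$ modulo the meridians of the filled-in components (Proposition \ref{prop:meridians}, Theorem \ref{thm:KnotPresentation}), present $\pi_1(M(\A))$ as a quotient of a boundary-manifold group via Theorem \ref{thm:Guerville} and Proposition \ref{prop:EqPresentations}, and then pass by Tietze moves to the presentation built on $\Delta'$, $\mathcal{T}_{\Delta'}$ and the $g'$ cycles. However, your step (1) misapplies Proposition \ref{prop:meridians}: that proposition describes the kernel of $\pi_1(X\setminus D)\to \pi_1(X)$ for a divisor in a complex surface, and says nothing about the kernel of the boundary-manifold inclusion $\pi_1(\partial U')\to \pi_1(M(\A,I,P))$. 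Read literally, your claim (no component of $D'$ is contracted, hence no meridian relations) would make that map injective, which is false; and the ensuing assertion that ``the only relations to add to Proposition \ref{prop:PresBounPart} are those coming from the cycles'' is precisely the content of the theorem you are proving, so at that point the argument is circular. An analogue of Theorem \ref{thm:Guerville}(3) is only available for $\partial U\to M(\A)$, not directly for $\partial U'\to M(\A,I,P)$; this is why the paper never argues on $\partial U'$ alone but instead writes $\pi_1(M(\A,I,P))\cong \pi_1(\partial U)/\langle\langle c_1\delta_1^{-1},\ldots,c_g\delta_g^{-1},\gamma(\iota),\gamma(p)\rangle\rangle$ (its equation (\ref{eq:FirstPresQuo})), where $\partial U$ is the boundary of the full total transform in $\Bl_P\bar X$ presented with the extended tree $\mathcal{T}_{\Delta',\Delta}$ and all $g$ cycles.

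The second issue is that the decisive step — showing (\ref{eq:FirstPresQuo}) is Tietze-equivalent to (\ref{eq:PresQuo}), i.e.\ eliminating the generators $c_{g'+1},\ldots,c_g$ and $\gamma_\iota$ ($\iota\in I$), $\gamma_p$ ($p\in P$) — is only gestured at. One must check, relation by relation, that every relation of the Westlund presentation of $\pi_1(\partial U)$ involving a discarded cycle or a discarded meridian (the commutators $[\gamma_r,\gamma_{r'}^{s_{rr'}}]$ with $r'\in I\cup P$, the relations (\ref{eq:FirstWord}) and (\ref{eq:SecondWord}), and the vertex relations for $r$ adjacent to a deleted vertex) becomes trivial after imposing $c_k=\delta_k$ and killing $\gamma(\iota),\gamma(p)$, or is equivalent to one of the words $\gamma(\iota),\gamma(p)$, or descends to the corresponding relation of $R'$, using $k_r''=k_r'-\abs{P\cap D_r}-\#\{\iota\in I\mid D_\iota\cap D_r\not=\varnothing\}$. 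Your one-line justification — ``no cycle of $\Delta$ is lost that is not already a cycle of $\Delta'$ together with a trivial contribution'' — is not accurate as stated, since $b_1(\Delta)>b_1(\Delta')$ in general and each extra cycle genuinely passes through a deleted vertex; what makes the quotient ``clean'' is exactly the relation-by-relation analysis above, which your ``main obstacle'' paragraph names but does not carry out. With the misuse of Proposition \ref{prop:meridians} removed and this elimination made explicit, your proposal coincides with the paper's proof.
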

\begin{proof}
Consider the following diagram: 
 \[
\begin{tikzcd}
\pi_1(\partial U)  \arrow[r, twoheadrightarrow]& \pi_1(M(\A))\arrow[d,twoheadrightarrow] & \pi_1(\partial U)/{\langle\langle c_1 \delta_l^{-1},\ldots, c_g \delta_g^{-1}\rangle\rangle} \arrow[l, "\cong "] \arrow[d]  \\
\pi_1(\partial U') \arrow[r, twoheadrightarrow] & \pi_1(M(\A,I,P)) &  {\pi_1(M(\A))}/{\langle\langle i_*(\gamma_\iota),i_*(\gamma_p)\rangle\rangle} \arrow[l, "\cong "]\end{tikzcd}
\]

Where the isomorphism in the right of the first row comes from Theorem \ref{thm:Guerville} and Proposition \ref{prop:EqPresentations}.

From the rightest column we obtain that 
\begin{equation}\label{eq:FirstPresQuo}
\pi_1(\partial U)/{\langle\langle c_1 \delta_1^{-1},\ldots, c_g \delta_g^{-1},\gamma(\iota),\gamma(p)\rangle\rangle} \cong \pi_1(M(\A,I,P)).
\end{equation} We will see that this presentation is equivalent to
\begin{equation}\label{eq:PresQuo}
\pi_1(\partial U')/\langle\langle \gamma(\iota),\gamma(p), c_1\cdot \delta_1^{-1},\ldots, c_{g'}\delta_{g'}^{-1}) \rangle\rangle 
\end{equation}

Indeed, by the choice of the maximal tree $\mathcal{T}_{\Delta',\Delta}$, the are only four types of relations in $R$ of the presentation of $\pi_1(\partial U)$ involving the cycles $c_{g'},\ldots, c_{g}$:
\begin{itemize}
\item commutators ${[\gamma_r,\gamma_{r'}^{s_{rr'}}]}$ with $r'=\iota,p$,
\item those relations as in (\ref{eq:FirstWord}), 
\item those relations as in (\ref{eq:SecondWord}), and
\item relations $ \gamma_r^{-w_r'}=\prod_{m=1}^{k_r'}\gamma_{j_{\Delta',\Delta}(r,m)}^{s_{r j_{\Delta',\Delta}(r,m)}}$
 with $(r,\iota)$ or $(r,p)$ an edge in $\Delta$.
\end{itemize}
By adding the relations $c_1=\delta_1,\ldots, c_g=\delta_g$, we can see these relations as expressed in terms of $\gamma_1,\ldots, \gamma_{n+1+s_0}$.

Note that the commutator-relation as in the first point above becomes trivial in $\pi_1(\partial U)/{\langle\langle c_1 \delta_1^{-1},\ldots, c_g \delta_g^{-1},\gamma(\iota),\gamma(p)\rangle\rangle}$.

The relations in the points two and three above, are by construction, equivalent to the words $\gamma(\iota), \gamma(p)$.

For the relations as in the fourth point, note that $k_r''=k_r'-\abs{P\cap D_r} -\{\iota\in I\mid D_\iota\cap D_r\not = \varnothing\}$.

\end{proof}

\section{Applications and examples}
\subsection{Preliminary results in homology planes}
As shown in section \ref{sec:Wirtinger}, the wiring diagram $\mathcal{W}$ of an arrangement $\A$ can be used to determine the meridians around the exceptional divisors $E_p$, corresponding to a point $p\in \Sing \A$, in terms of the meridians of the lines in $\A$.

Here, we apply Theorem \ref{thm:Arvola} to obtain presentations for the fundamental group of a very special type of partial compactifications, which we proceed to describe.

\begin{defn} A ($\mathbb{Q}$-)homology plane $X$ is an affine smooth complex surface such that the $i$-th group of (rational) integer homology ($H_i(X,\mathbb{Q})$) $H_i(X,\mathbb{Z})$ vanishes for $i=1,\ldots,4$.
\end{defn}

It was proved in \cite{gurjarshastri} that homology planes are rational. This result also holds for $\mathbb{Q}$-homology planes, see \cite{gurjarpradeep}.

Let $X$ be a homology plane. There exists a projective smooth rational surface $\bar{X}$, a birational morphism $\pi:\bar{X} \to \mathbb{P}^2$, and a divisor $D\subset \bar{X}$ such that $\bar{X}\setminus D=X$.

A classification of the arrangements of lines $\A\subset \mathbb{P}^2$ such that there exists a pair $(\bar{X},\pi)$ as above and $\pi(D)=\A$ is given in \cite{Dieck1989}. We call such an arrangement $\A$ a \emph{linear plane divisor} of the homology plane $X$. There exists six arrangements of lines $L(2),\ldots, L(7)$ and an infinite family of arrangements $L(1,n+1)$ which are linear plane divisors, each one for an infinite family of homology planes \cite[Theorem D]{Dieck1989}.

In \cite{tomdieck1993}, an algorithm for constructing homology planes out of these arrangements is given. We describe it briefly. 

\subsubsection{}\label{sss:algorithm} Let $\A=\{L_1,\ldots,L_{n+1}\}$ be an arrangement of lines. We denoted by $P_0=\{p_1,\ldots, p_{s_0}\}\subset \Sing \A$ the set of points of multiplicity strictly higher than two, by $\pi^{(0)}:\bar{X}=\Bl_{P_0}\mathbb{P}^2\to \mathbb{P}^2$ the blow-up map, by $D_i$ the strict transform of $L_i$ for $i=1,\ldots, n+1$ and by $D_{n+1+j}$ the exceptional divisor associated to $p_j$ for $j=1,\ldots,s_0$. Let $I\subset \{n+2,\ldots,n+1+s_0\}$ and consider the divisor $D'=\sum_{i=1}^{n+1+s_0}D_i-\sum_{i\in I}D_i$.

Denote by $\Delta'$ the dual graph of $D'$ and suppose that the number of independent cycles in $\Delta'$ equals $m$, this is $b_1(\Delta')=m$. Consider $P_1\subset \Sing D'$ a subset of $m$-points such that when we remove from $\Delta'$ the edges corresponding to $P_1$ we obtain a maximal tree. Let $\pi^{(1)}:\Bl_{P_1}\bar{X}\to \bar{X}$ be the blow-up at $P_1$ and denote by $D''$ the strict transform of $D'$ in $\Bl_{P_1}\bar{X}$. 

\begin{prop}{\cite[Proposition 2.1]{tomdieck1993}}\label{prop:countthelines}  The surface $\Bl_{P_1}\bar{X}\setminus D''$ is a $\mathbb{Q}$-homology plane if and only if the inclusion $D''\hookrightarrow \Bl_{P_1}\bar{X}$ induces an isomorphism $H_2(D'',\mathbb{Q})\to H_2(\Bl_{P_1}\bar{X},\mathbb{Q})$.
\end{prop}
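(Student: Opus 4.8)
The statement is essentially a homological bookkeeping argument, so the plan is to translate the condition ``$\Bl_{P_1}\bar X\setminus D''$ is a $\mathbb{Q}$-homology plane'' into linear-algebraic data on the intersection form of $D''$. First I would recall that $\tilde X := \Bl_{P_1}\bar X$ is a smooth projective rational surface, hence $H_1(\tilde X,\mathbb{Q})=H_3(\tilde X,\mathbb{Q})=0$ and $H_0,H_2,H_4$ are the only nonvanishing rational homology groups, with $H_2(\tilde X,\mathbb{Q})$ carrying the nondegenerate intersection pairing. Set $V:=\tilde X\setminus D''$; it is a smooth affine surface, so $H_i(V,\mathbb{Q})=0$ for $i>2$ and $V$ being affine of complex dimension $2$ means it has the homotopy type of a $2$-complex, so $H_0(V,\mathbb{Q})=\mathbb{Q}$ automatically and what must be checked is $H_1(V,\mathbb{Q})=H_2(V,\mathbb{Q})=0$.

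The core of the argument is the long exact sequence of the pair $(\tilde X, \tilde X\setminus D'')$ together with Lefschetz/Thom duality identifying $H_k(\tilde X,\tilde X\setminus D'';\mathbb{Q})\cong H^{4-k}(D'';\mathbb{Q})\cong H_{4-k}^{\mathrm{BM}}$-type groups; more concretely, since $D''$ is a connected simple normal crossing curve whose components are smooth rational curves with a tree-or-graph dual graph, one has $H_0(D'',\mathbb{Q})=\mathbb{Q}$, $H_1(D'',\mathbb{Q})=\mathbb{Q}^{b_1(\Delta'')}$, and $H_2(D'',\mathbb{Q})=\mathbb{Q}^{\#\{\text{components}\}}$. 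Feeding these into the excision/duality sequence
\[
\cdots \to H_{k}(D'';\mathbb{Q}) \xrightarrow{\;j\;} H_k(\tilde X;\mathbb{Q}) \to H_k(V;\mathbb{Q}) \to H_{k-1}(D'';\mathbb{Q}) \to \cdots
\]
(where the map $j$ on $H_2$ is exactly the composite $H_2(D'',\mathbb{Q})\to H_2(\tilde X,\mathbb{Q})$ of the statement, i.e.\ the class map sending a component to its fundamental class), a diagram chase shows: $H_2(V,\mathbb{Q})=0$ iff $j$ is injective on $H_2$, and then $H_1(V,\mathbb{Q})=\operatorname{coker}(j\colon H_2(D'')\to H_2(\tilde X))\big/(\text{image of }H_1(D''))$ adjusted by the rank count. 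The key numerical input is that, because $\Delta''$ was arranged (by the choice of $P_1$ killing exactly $b_1(\Delta')$ edges) to be a tree, $b_1(\Delta'')=0$, hence $H_1(D'',\mathbb{Q})=0$; this forces the sequence to collapse so that $H_1(V,\mathbb{Q})=0$ and $H_2(V,\mathbb{Q})=\ker(j)$, while surjectivity of $j$ (equivalently, that $\#$components of $D''$ equals $\operatorname{rk}H_2(\tilde X)=b_2(\tilde X)$, which is where ``count the lines'' enters) is automatic once injectivity holds, since after blowing up, $b_2(\tilde X)=1+s_0+|P_1|$ matches the number of components of $D''$. Thus $V$ is a $\mathbb{Q}$-homology plane iff $j\colon H_2(D'',\mathbb{Q})\to H_2(\tilde X,\mathbb{Q})$ is an isomorphism, and since both sides have the same rank, iff it is injective, i.e.\ iff it is an isomorphism — matching the statement.

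I would then close by noting the needed bookkeeping: each blow-up increases $b_2$ by one and adds one component (the exceptional curve) while leaving the others' classes independent, and deleting the components indexed by $I$ is compensated in the hypothesis by working with $D''$ directly; one verifies $\#\{\text{components of }D''\} = b_2(\tilde X)$ is forced precisely by the tree condition on $\Delta''$ via Euler-characteristic counting ($\chi(D'') = \#\text{components} - \#\text{edges} + \cdots$ against $\chi(\tilde X)$). The main obstacle is getting the duality isomorphism $H_k(\tilde X,V;\mathbb{Q})\cong H^{4-k}(D'';\mathbb{Q})$ stated correctly for the \emph{singular} curve $D''$ (normal crossings, not smooth) — this requires either a tubular-neighbourhood/Thom-class argument component by component with a Mayer–Vietoris patching over the dual graph, or citing Alexander–Lefschetz duality for the regular neighbourhood $U$ of $D''$ (so $\tilde X\setminus D''\simeq \tilde X\setminus U$ and $H_k(\tilde X,\tilde X\setminus U)\cong H^{4-k}(U)\cong H^{4-k}(D'')$); once that identification is in place the rest is a direct rank count and diagram chase. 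This is exactly the content of \cite[Proposition~2.1]{tomdieck1993}, whose proof I would follow.
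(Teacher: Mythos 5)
The paper itself does not prove this Proposition (it is quoted from Tom Dieck--Petri), so your plan can only be compared with the standard argument, which is indeed the one you outline: the long exact sequence of the pair $(\tilde X,V)$, where $\tilde X=\Bl_{P_1}\bar X$ and $V=\tilde X\setminus D''$, combined with Alexander--Lefschetz duality through a regular neighbourhood of $D''$ and a rank count. The strategy is right, but several steps as written would fail. The displayed sequence $\cdots\to H_k(D'')\xrightarrow{j}H_k(\tilde X)\to H_k(V)\to H_{k-1}(D'')\to\cdots$ is not exact with ordinary homology of $V$: it is the Borel--Moore sequence of the closed pair, so the third term must be $H_k^{BM}(V)\cong H^{4-k}(V)\cong H_{4-k}(V)^{*}$ over $\mathbb{Q}$ (equivalently, use the pair sequence with $H_k(\tilde X,V)\cong H^{4-k}(D'')$, as you indicate at the end). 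Once corrected, the chase (using $H_1(D'';\mathbb{Q})=0$ because the dual graph of $D''$ is a tree, and $H_1=H_3=0$ for the rational surface $\tilde X$) gives $H_1(V;\mathbb{Q})^{*}\cong\ker j$ and $H_2(V;\mathbb{Q})^{*}\cong\operatorname{coker} j$: injectivity of $j$ controls $H_1(V)$ and surjectivity controls $H_2(V)$, the opposite of your assignment. In particular the tree condition does not ``force $H_1(V;\mathbb{Q})=0$'': two fibres and a section in $\mathbb{P}^1\times\mathbb{P}^1$ form a connected SNC divisor with tree dual graph whose complement $\mathbb{C}^{*}\times\mathbb{C}$ has $H_1\neq 0$, and correspondingly $j$ is not injective there. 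Likewise, the claim that $\#\{\text{components of }D''\}=b_2(\tilde X)$ is automatic is false; it is precisely the numerical condition $n=m+\abs{I}$ that the paper records immediately after the Proposition (harmless for the ``iff'', since when the counts disagree both sides fail, but it should not be asserted as automatic).

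The remaining gap is affineness. The paper's definition of a $\mathbb{Q}$-homology plane requires the surface to be affine, and in the direction ``$j$ isomorphism $\Rightarrow$ $\mathbb{Q}$-homology plane'' you simply assert that $V$ is affine; removing an arbitrary SNC divisor from a projective surface does not yield an affine surface, so you must either invoke the known result (Fujita) that a smooth $\mathbb{Q}$-acyclic algebraic surface is automatically affine, or exhibit an effective ample divisor supported on $D''$. With the duality statement fixed, the kernel/cokernel correspondence corrected, and affineness addressed, your outline does yield the Proposition along the standard Tom Dieck--Petri lines.
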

We obtain that a necessary condition for $\Bl_{P_1}\bar{X}\setminus D''$ as above, to be a $\mathbb{Q}$-homology plane is that the number $n+1+s_0-\abs{I}$ of irreducible components in $D''$ must be equal to $b_2(\Bl_{P_1}\bar{X})=\dim(H_2(\Bl_{P_1}\bar{X},\mathbb{Q}))=m+\abs{P_0}+1$. It follows that in this case, $n=m+\abs{I}$. We will describe when this condition is as well sufficient.

Let $\lambda_1,\ldots,\lambda_{n+1}$ be meridians of the lines $L_1,\ldots, L_{n+1}\in \A$ respectively. A basis for $H_1(\mathbb{P}^2\setminus \A)$ is given by the homology clases $[\lambda_1],\ldots,[\lambda_{n}]$ and they satisfy that
$\sum_{i=1}^{n+1}[\lambda_i]=0$. Given any exceptional divisor $D_{n+1+j}$ in $\bar{X}$ such that the corresponding point $p_j\in P_0$ satisfies $p_j=L_{j_1}\cap L_{j_2}\cap \ldots\cap L_{j_r}$, we have that the homology class of a meridian $\lambda_{n+1+j}$ of $D_{n+1+j}$ is given by $[\lambda_{n+1+j}]=[\lambda_{j_1}]+\ldots+[\lambda_{j_r}]$. Similarly, for $p\in P_1$ such that $p	=D_i\cap D_j$ we have that the homology class of a meridian $\lambda_p$ of the exceptional divisor $E_{p}$ in $\Bl_{P_1}\bar{X}$ is given by $[\lambda_p]=[\lambda_i]+[\lambda_j]$. It follows that for every $k\in I$ we can express $[\lambda_{n+1+k}]=\sum_{r=1}^n a_{k,r}[\lambda_r]$ and for $p\in P_1$ we have $[\lambda_p]=\sum_{r=1}^n a_{p,r} [\lambda_r]$. Define a matrix $M=(a_{q,r})$ with $q\in I\cup P_1$.

Now, suppose that a surface $X$ is constructed as in \ref{sss:algorithm} by choosing $I$ and $P_1\subset \Sing D'$ such that $X=\Bl_{P_1}\bar{X}\setminus D''$ and that the number of irreducible components in $D''$ equals $b_2(\Bl_{P_1}\bar{X})$. As $n=m+\abs{I}$, the matrix $M$ is a square matrix.

\begin{prop}{\cite[Theorem B]{tomdieck1993}}\label{prop:MatrixCrit} The surface $X$ is a $\mathbb{Q}$-homology plane if and only if $\det M\not =0$. Moreover, if $\det M\not =0$, $H_1(X,\mathbb{Z})=\abs{\det M}$ and hence it is a homology plane if and only if $\abs{\det M}=1$.
\end{prop}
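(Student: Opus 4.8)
The plan is to compute the integral homology of $X=\Bl_{P_1}\bar{X}\setminus D''$ in two complementary ways. On the one hand, the long exact sequence of the pair $(\bar{Y},X)$, where $\bar{Y}:=\Bl_{P_1}\bar{X}$, will reduce the $\mathbb{Q}$-homology-plane condition to the single equation $H_1(X;\mathbb{Q})=0$ and will show $H_2(X;\mathbb{Z})$, $H_3(X;\mathbb{Z})$, $H_4(X;\mathbb{Z})$ all vanish once $H_1(X;\mathbb{Q})=0$. On the other hand, Proposition \ref{prop:meridians} will identify $H_1(X;\mathbb{Z})$ with the cokernel of $M$. First I record the relevant data. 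The surface $\bar{Y}$ is a smooth rational surface obtained from $\mathbb{P}^2$ by $\abs{P_0}+\abs{P_1}=s_0+m$ blow-ups, so $H_\ast(\bar{Y};\mathbb{Z})$ is torsion free with $b_1(\bar{Y})=b_3(\bar{Y})=0$ and $b_2(\bar{Y})=N:=s_0+m+1$; by hypothesis $N$ equals the number $n+1+s_0-\abs{I}$ of irreducible components of $D''$, whence $n=m+\abs{I}$ and $M$ is a square $n\times n$ matrix. Moreover $P_1$ was chosen so that deleting the corresponding edges of $\Delta'$ yields a maximal tree, so the dual graph of $D''$ is a tree; thus $D''$ is a connected union of $N$ copies of $\mathbb{P}^1$ meeting along a tree, hence $D''\simeq\bigvee_N S^2$ up to homotopy and $H^0(D'';\mathbb{Z})=\mathbb{Z}$, $H^1(D'';\mathbb{Z})=0$, $H^2(D'';\mathbb{Z})=\mathbb{Z}^N$.

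For the first computation, choose a regular neighborhood $T$ of $D''$ in $\bar{Y}$; then $H_k(\bar{Y},X;\mathbb{Z})\cong H_k(T,\partial T;\mathbb{Z})\cong H^{4-k}(T;\mathbb{Z})\cong H^{4-k}(D'';\mathbb{Z})$ by excision and Lefschetz duality, and this identification is compatible (via Poincaré duality on $\bar{Y}$) with restriction maps. Hence $H_k(\bar{Y},X)=0$ for $k\le 1$ and $k=3$, $H_4(\bar{Y},X)\cong\mathbb{Z}$, and $H_2(\bar{Y},X)\cong\mathbb{Z}^N$; the map $H_4(\bar{Y})\to H_4(\bar{Y},X)$ is identified with the restriction $H^0(\bar{Y})\to H^0(D'')$, an isomorphism. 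The long exact sequence of $(\bar{Y},X)$ therefore gives $H_3(X;\mathbb{Z})=H_4(X;\mathbb{Z})=0$ together with the exact portion
\[
0\to H_2(X;\mathbb{Z})\to H_2(\bar{Y};\mathbb{Z})\xrightarrow{\ j\ }H_2(\bar{Y},X;\mathbb{Z})\to H_1(X;\mathbb{Z})\to 0 ,
\]
in which, under the above identifications, the composite $H_2(D'';\mathbb{Z})\to H_2(\bar{Y};\mathbb{Z})\xrightarrow{\ j\ }H_2(\bar{Y},X;\mathbb{Z})$ is the intersection matrix $(D_i''\cdot D_j'')$. From this sequence $H_2(X;\mathbb{Z})=\ker j$ is a subgroup of the free group $\mathbb{Z}^N$, hence free, and a dimension count over $\mathbb{Q}$ (both $H_2(\bar{Y};\mathbb{Q})$ and $H_2(\bar{Y},X;\mathbb{Q})$ having dimension $N$) gives $\dim_\mathbb{Q}H_1(X;\mathbb{Q})=\dim_\mathbb{Q}H_2(X;\mathbb{Q})$. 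Consequently $H_1(X;\mathbb{Q})=0$ if and only if $H_2(X;\mathbb{Q})=0$ if and only if $H_i(X;\mathbb{Q})=0$ for all $i\ge 1$; by definition of a $\mathbb{Q}$-homology plane (equivalently, by Proposition \ref{prop:countthelines}, whose criterion $H_2(D'';\mathbb{Q})\to H_2(\bar{Y};\mathbb{Q})$ an isomorphism is recovered here), this means $X$ is a $\mathbb{Q}$-homology plane. In addition, if $H_1(X;\mathbb{Q})=0$ then $H_2(X;\mathbb{Z})$, being free of rank $0$, vanishes, and $H_1(X;\mathbb{Z})$ is finite.

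For the second computation, recall $\bar{X}\setminus D\cong\bar{Y}\setminus(\pi^{(1)})^\ast D$ by Lemma \ref{lem:IsoFunda}, and observe that $X=\bar{Y}\setminus D''$ is obtained from $\bar{Y}\setminus(\pi^{(1)})^\ast D$ by re-adjoining precisely the strict transforms of the exceptional divisors $D_{n+1+k}$ with $k\in I$ and the exceptional curves over the points of $P_1$. By Proposition \ref{prop:meridians}, $\pi_1(X)$ is the quotient of $\pi_1(M(\A))=\pi_1(\mathbb{P}^2\setminus\A)$ by the normal subgroup generated by the meridians of these re-adjoined curves; as in the meridian computations of Section \ref{sec:Wirtinger} (and of Theorem \ref{thm:KnotPresentation}), the homology classes of these meridians in $H_1(M(\A);\mathbb{Z})\cong\mathbb{Z}^n$, with basis $[\lambda_1],\dots,[\lambda_n]$, are exactly $[\lambda_{n+1+k}]=\sum_{r=1}^n a_{k,r}[\lambda_r]$ for $k\in I$ and $[\lambda_p]=\sum_{r=1}^n a_{p,r}[\lambda_r]$ for $p\in P_1$. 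Abelianizing, $H_1(X;\mathbb{Z})\cong\mathbb{Z}^n/\langle\text{rows of }M\rangle=\operatorname{coker}M$. Since $M$ is $n\times n$, this group is finite of order $\abs{\det M}$ when $\det M\neq 0$ and infinite when $\det M=0$; in particular $H_1(X;\mathbb{Q})=0$ precisely when $\det M\neq 0$.

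Combining the two computations proves the statement: $X$ is a $\mathbb{Q}$-homology plane $\iff H_1(X;\mathbb{Q})=0\iff\det M\neq 0$; and when $\det M\neq 0$ we get from the first computation $H_2(X;\mathbb{Z})=H_3(X;\mathbb{Z})=H_4(X;\mathbb{Z})=0$ while $\abs{H_1(X;\mathbb{Z})}=\abs{\det M}$, so $X$ is an integral homology plane $\iff H_1(X;\mathbb{Z})=0\iff\abs{\det M}=1$. The step I expect to require the most care is the bookkeeping in the second computation: verifying that the divisor re-adjoined in passing from $M(\A)$ to $X$ is exactly the list of curves above, and that the classes of their meridians are the rows of $M$ — this rests on the compatibility of meridians with blow-ups already exploited for Theorem \ref{thm:KnotPresentation}. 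A secondary point needing care is the precise form of the duality identifications in the first computation, namely that the composite $H_2(D'')\to H_2(\bar{Y})\to H_2(\bar{Y},X)$ is the intersection form and that $H_4(\bar{Y})\to H_4(\bar{Y},X)$ is an isomorphism; everything else is standard homological algebra together with the basic facts on blow-ups of $\mathbb{P}^2$.
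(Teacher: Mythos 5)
The paper itself gives no proof of this statement (it is quoted from tom Dieck's Theorem B), so there is no internal argument to compare with; judged on its own, your two-pronged computation is essentially the standard argument behind that theorem, and its homological content is correct. The Lefschetz-duality/long-exact-sequence step does give $H_3(X;\mathbb{Z})=H_4(X;\mathbb{Z})=0$, $H_2(X;\mathbb{Z})$ free, and $\dim_{\mathbb{Q}}H_1(X;\mathbb{Q})=\dim_{\mathbb{Q}}H_2(X;\mathbb{Q})$ under the component-count hypothesis, and the meridian/abelianization step (Proposition \ref{prop:meridians} together with the homology classes of the exceptional meridians recorded in the paper) correctly identifies $H_1(X;\mathbb{Z})$ with $\mathbb{Z}^n$ modulo the rows of $M$, which has order $\abs{\det M}$ when $\det M\neq 0$ and is infinite otherwise.

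The one point you must tighten is affineness. In this paper a ($\mathbb{Q}$-)homology plane is by definition an \emph{affine} surface, and neither of your computations says anything about affineness of $X=\Bl_{P_1}\bar{X}\setminus D''$; vanishing of rational homology alone does not yield it. Your parenthetical appeal to Proposition \ref{prop:countthelines} is what actually carries this point (its conclusion is ``$X$ is a $\mathbb{Q}$-homology plane'' in the affine-inclusive sense), but to invoke it you must check that $\det M\neq 0$ implies its hypothesis, namely that $H_2(D'';\mathbb{Q})\to H_2(\Bl_{P_1}\bar{X};\mathbb{Q})$ is an isomorphism. This does follow, but only via the duality statement you assert in passing: under Poincar\'e--Lefschetz duality the map $j\colon H_2(\bar{Y};\mathbb{Q})\to H_2(\bar{Y},X;\mathbb{Q})\cong H_2(D'';\mathbb{Q})^{*}$ is the transpose of the inclusion-induced map $\alpha\colon H_2(D'';\mathbb{Q})\to H_2(\bar{Y};\mathbb{Q})$, so surjectivity of $j$ (which is equivalent to $H_1(X;\mathbb{Q})=0$, i.e.\ to $\det M\neq 0$) is equivalent to injectivity, hence bijectivity, of $\alpha$, since source and target both have dimension $N$. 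With that step made explicit, your argument reproves the homological half of the cited theorem and delegates affineness to Proposition \ref{prop:countthelines}, exactly as the paper implicitly does; without it, the ``if'' direction of the first assertion is incomplete relative to the paper's definition.
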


The surfaces obtained by the construction given in \ref{sss:algorithm} are usually only $\mathbb{Q}$-homology planes. In order to obtain homology planes, further blow-ups are required.

\subsubsection{}\label{sss:expansions} Consider $D_i, D_j$ in $\bar{X}$ and let $p=D_i\cap D_j$. Let $a_i,a_j$ be two coprime positive integers. There exists a sequence of blow-ups
$$\Bl_{p_{k-1},\ldots,p}\bar{X}\overset{\pi_{ij}^{(k)}}\rightarrow  \Bl_{p_{k-2},\ldots,p}\bar{X} \overset{\pi_{ij}^{(k-1)}}\rightarrow  \ldots\overset{\pi_{ij}^{(2)}}\rightarrow\Bl_{p}\bar{X}\overset{\pi_{ij}^{(1)}}\rightarrow \bar{X} $$
where each $\pi_{ij}^{(r)}$ is the blow-up of $\Bl_{p_{r-1},\ldots,p}\bar{X}$ at a point $p_r\in \Bl_{p_{r-1},\ldots,p}\bar{X}$, with $p_r$ lying in the exceptional divisor $E_r$ corresponding to $\pi_{ij}^{(r-1)}$ and in the singular locus of the total transform of $D_i+D_j$ in $\Bl_{p_{r-1},\ldots,p}\bar{X}$, such that the multiplicity of the exceptional divisor $E_k\in \Bl_{p_{k-1},\ldots,p}\bar{X}$ in $(\pi_{ij}^{(k)}\circ\cdots\circ\pi_{ij}^{(1)})^*(D_i)$ and in $(\pi_{ij}^{(k)}\circ\cdots\circ\pi_{ij}^{(1)})^* (D_j)$ is $a_i,a_j$ respectively, see \cite[Theorem 4.11]{tomdieck1993}. By \cite[Lemma 7.18]{fujita} the meridian around $E_k$ is given by $\lambda_i^{a_i}\lambda_j^{a_j}$. See also \cite[Appendix]{AST_1993__217__251_0}.   Moreover, if $p=D_i\cap D_j\in P_1$, using this construction we can replace the line in the matrix $M$ corresponding to $[\lambda_i]+[\lambda_j]$ by $a_i[\lambda_i]+b_i[\lambda_j]$. 

Let $e_p$ be the edge in the dual graph $\Delta'$ of the divisor $D'$ corresponding to $p$. We call this construction \emph{expanding the edge} $v_p$. In order to describe the change in the dual graph, let us denote a continuous fraction by $[c_1,\ldots,c_r]$ defined by $[c_1]=c_1$ and $[c_1,\ldots,c_r]=c_1-[c_2,\ldots,c_r]^{-1}$. We have that $c_i\geq 2$. Let
\begin{equation*}
\frac{a_i+a_j}{a_j}=[c_{-r},\ldots,c_{-1}], \quad
\frac{a_i+a_j}{a_i}=[c_s,\ldots,c_1]
\end{equation*}
Suppose that $D_i^2=b_i, D_j^2=b_j$. The edge connecting $v_i$ and $v_j$ changes as in Figure \ref{fig:ExpandingEdge}.
\begin{figure}[h]
       \centering
      
\includegraphics[scale=1]{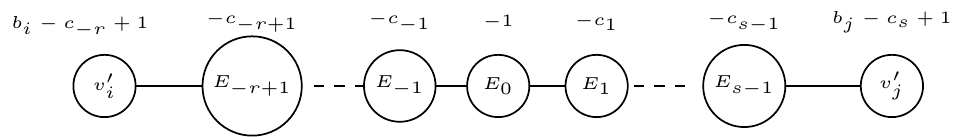}
\caption{Expanding an edge}
        \label{fig:ExpandingEdge}
\end{figure}

This will be abbreviated as in Figure \ref{fig:ExpandingEdgeReduced}. 
\begin{figure}[h]
       \centering
\includegraphics[scale=1]{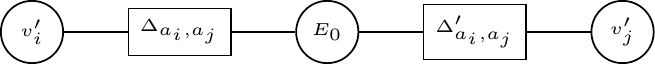}
\caption{Notation for expanding an edge}
        \label{fig:ExpandingEdgeReduced}
\end{figure}

\subsubsection{Absolutely minimal graphs}
\begin{defn} Let $\Delta$ be a weighted graph. We say that $\Delta$ is absolutely minimal if the weight of every linear or terminal vertex does not exceed $-2$. See \cite{AST_1993__217__251_0,Zai94}.
\end{defn}

\begin{defn} Let $(\bar{X},D)$ be a pair with $\bar{X}$ a smooth projective surface and $D$ a simple normal crossing divisor such that all its irreducible components are rational curves. We say that $(\bar{X},D)$ is absolutely minimal if the dual graph $\Delta$ of $D$ is absolutely minimal.
\end{defn}

\begin{prop}[\cite{Zai94}] \label{prop:ZaidUniqueness} Let $X$ be a quasi-projective smooth surface and $(\bar{X},D)$ an absolutely minimal pair such that $\bar{X}\setminus D=X$. Any other absolutely minimal completion of $X$ is isomorphic to $(\bar{X},D)$.
\end{prop}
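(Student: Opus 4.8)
The plan is to follow Zaidenberg's argument \cite{Zai94} and reduce everything to a comparison on a common resolution. Let $(\bar{X},D)$ and $(\bar{X}',D')$ be two absolutely minimal completions of $X$. They are related by a birational map $\phi\colon \bar{X}\dashrightarrow \bar{X}'$ restricting to the identity on $X=\bar{X}\setminus D=\bar{X}'\setminus D'$; in particular the indeterminacy loci of $\phi$ and of $\phi^{-1}$ are contained in $D$ and $D'$ respectively. First I would take a common resolution of minimal Picard number: a smooth projective surface $\bar{Y}$ together with birational morphisms $p\colon \bar{Y}\to \bar{X}$ and $p'\colon \bar{Y}\to \bar{X}'$ with $p'=\phi\circ p$, minimal in the sense that no $(-1)$-curve of $\bar{Y}$ is contracted by both $p$ and $p'$. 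Since $\phi$ is biregular over $X$, every blow-up occurring in $p$ is centred over $D$ and every blow-up in $p'$ over $D'$; hence $p^{-1}(D)=p'^{-1}(D')=:D_Y$, which is an SNC divisor, and it is connected because its image $D$ is connected and the fibres of $p$ over $D$ are connected trees of rational curves. The proposition follows once we prove that $p$ and $p'$ are both isomorphisms.

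Suppose, for contradiction, that $p$ is not an isomorphism. Then $p$ contracts a curve, so $D_Y$ contains a $(-1)$-curve $F$ that is $p$-exceptional. By minimality of $\bar{Y}$, $F$ is not $p'$-exceptional: otherwise $F$ could be blown down and, by Castelnuovo's criterion, both $p$ and $p'$ would factor through the resulting surface, contradicting minimality. Hence $C':=p'(F)$ is an irreducible curve lying in $p'^{-1}(D')=D_Y$ and not contracted by $p'$, i.e. a component of $D'$, and $F$ is the strict transform of $C'$ under $p'$. Writing $p'$ as a sequence of point blow-ups, one obtains $C'\cdot C'=F\cdot F+m=-1+m$, where $m\ge 0$ counts those blow-ups of $p'$ whose centre lies on the successive strict transforms of $C'$; in particular $(C')^2\ge -1$.

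Next I would run the trichotomy for $C'$ as a vertex of the dual graph of $D'$. If $C'$ is a terminal or a linear vertex, absolute minimality forces $(C')^2\le -2$, contradicting $(C')^2\ge -1$. If $C'$ is a branch vertex, then $C'$ meets at least three other components of $D'$ at three distinct points of $C'$; following these three branches through the blow-ups of $p'$ shows that $F$ meets at least three pairwise distinct components of $D_Y$ (for each branch, either the strict transform of that component or the exceptional curve separating it from $F$). On the other hand, since $F$ is a $(-1)$-curve contracted by $p$, it must arise as the exceptional divisor of a single blow-up in the tower of $p$ over the point $p(F)\in D$, with no later blow-up of $p$ centred on it; at the moment of its creation the centre of that blow-up lies on at most two components of the SNC total transform of $D$ (each met transversally in one point), and $F$ acquires no further intersections afterwards, so $F$ meets at most two components of $D_Y$ — a contradiction. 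Therefore $p$ is an isomorphism, and symmetrically $p'$ is an isomorphism, so $(\bar{X},D)\cong(\bar{X}',D')$ via $p'\circ p^{-1}$.

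The main obstacle is the combinatorial bookkeeping concentrated in the last paragraph: one needs precise control of how the self-intersection and the incidence pattern of a $(-1)$-curve behave under a tower of point blow-ups over a point of an SNC divisor, both from the side of $p$ (to bound the number of boundary components through $F$, and to ensure each is met at a single point, which also disposes of possible multiple edges in the dual graph) and from the side of $p'$ (to compute $(C')^2$). In \cite{Zai94} this is organised via the theory of linear chains and rational trees of curves and their contractions; I would either invoke those lemmas directly, or reprove the two facts I actually use — namely that a $p$-exceptional $(-1)$-curve contained in $D_Y$ meets at most two components of $D_Y$, and that the strict transform of a component under a composition of point blow-ups has self-intersection at least that of the original minus the number of blow-ups centred on it — by a straightforward induction on the number of blow-ups.
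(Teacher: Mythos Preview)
The paper does not supply a proof of this proposition; it is simply quoted from \cite{Zai94} and then used as a black box in the later examples. So there is no ``paper's own proof'' to compare against beyond the citation.

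Your reconstruction is essentially the standard argument and is correct. A couple of minor remarks. First, your assertion that $D_Y$ is connected is not needed anywhere in the argument and is not guaranteed by the hypotheses as stated (the definition of an absolutely minimal pair in the paper does not require $D$ to be connected); you can simply drop that clause. Second, in the branch-vertex case it is worth making explicit that the three components of $D_Y$ you exhibit meeting $F$ are pairwise distinct: they are either strict transforms of three distinct components of $D'$ or exceptional curves lying over three distinct points of $C'$, so no two can coincide. With these cosmetic fixes the proof goes through exactly as you wrote it, and it is the same strategy Zaidenberg uses: pass to a minimal common resolution, locate a $(-1)$-curve exceptional for one side but not the other, and derive a contradiction from the weight/valence constraints imposed by absolute minimality.
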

\subsection{Examples}
\subsubsection{The arrangement $L(1,n+1)$} For $n \in \mathbb{N}$ such that $n\geq 3$ consider the arrangement $L(1,n+1)$ of $n+1$ lines where $n$ of them intersect in a point and the other one is in general position. See figure \ref{fig:LinesL1} for the representation in the projective plane and figure \ref{fig:WiringL1} for its wiring diagram.

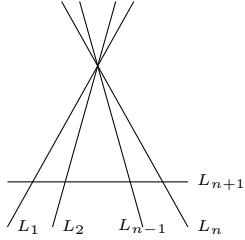
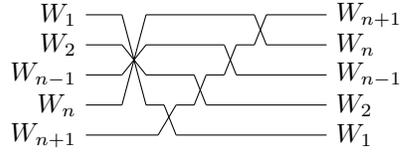
\begin{figure}[h]
\begin{subfigure}[b]{0.4\textwidth}
       \centering
     \begin{tikzpicture}[scale=1.2]

\draw (-.4,2)  -- (1,-.5)node[right]{\tiny $L_n$};
\draw (-.2,2) -- (.5,-.5)node{\tiny $L_{n-1}$};
\draw (.2,2) -- (-.5,-.5)node[above,right]{\tiny $L_2$};
\draw (.4,2) -- (-1,-.5)node[above,right]{\tiny $L_1$};

\draw (-1,0) -- (1,0) node[right]{\tiny $L_{n+1}$};



\end{tikzpicture}
\caption{$L(1,5)$ in the projective plane}
\label{fig:LinesL1}
    \end{subfigure}
~ ~ ~~~  
\quad         
\begin{subfigure}[b]{0.45\textwidth}
        \centering
        \begin{tikzpicture}[scale=.8]
\draw(3.5,0)--(6,0)node [anchor=west] {$W_1$};
\draw(3.3,.5)-- (3.5,0);
\draw(3.3,.5)-- (3,.5);
\draw(2.6,2)-- (3,.5);
\draw(2,2) node[anchor=east]{$W_1$} --(2.6,2);

\draw (4,.5)--(6,.5)node [anchor=west]{$W_2$} ;
\draw (3.8,1)  -- (4,.5);
\draw (3.8,1)  -- (3,1);
\draw (2.6,1.5)  -- (3,1);
\draw(2,1.5)node[anchor=east]{$W_2$}--(2.6,1.5);

\draw (4.5,1)--(6,1)node [anchor=west]{$W_{n-1}$};
\draw (4.3,1.5)  -- (4.5,1);
\draw (3,1.5)  -- (4.3,1.5);
\draw (3,1.5)  -- (2.6,1);
\draw(2,1) node[anchor=east]{$W_{n-1}$} --(2.6,1);

\draw(5,1.5)--(6,1.5)node[anchor=west]{$W_n$} ;
\draw(5,1.5)  -- (4.8,2);
\draw(3,2)  -- (4.8,2);
\draw(3,2)  -- (2.6,.5);
\draw (2,.5)node[anchor=east]{$W_{n}$}--(2.6,.5);

\draw(5,2)--(6,2)node[anchor=west]{$W_{n+1}$};
\draw(5,2)--(4.8,1.5);
\draw(4.5,1.5)--(4.8,1.5);
\draw(4.5,1.5)--(4.3,1);
\draw(4,1)--(4.3,1);
\draw(4,1)--(3.8,.5);
\draw(3.5,.5)--(3.8,.5);
\draw(3.5,.5)--(3.2,0);
\draw(2,0) node[anchor=east]{$W_{n+1}$} --(3.2,0);

\end{tikzpicture}
        \caption{Wiring Diagram L(1,5)}
        \label{fig:WiringL1}
    \end{subfigure}

    \caption{The arrangement L(1,5)}\label{fig:L1}
\end{figure}

A presentation for $\pi_1(\mathbb{P}^2\setminus L(1,n+1))$ is given by: 
\begin{equation}\label{eq:PresentationL(1,n+1)}
\langle \lambda_1, \ldots, \lambda_{n+1}\mid \lambda_{n+1}\cdots \lambda_1, {[\lambda_n,\ldots, \lambda_1]},  [\lambda_r,\lambda_{n+1}] \  r=1,\ldots, n \rangle 
\end{equation}

An expression for the meridian $\lambda_{n+2}$ around the exceptional divisor $D_{n+2}$, obtained by blowing-up the unique point $p$ of multiplicity $n$ in $\A$, is given by $\lambda_{n+2}=\lambda_n\cdots \lambda_1$. Note that $\lambda_{n+2}=\lambda_{n+1}^{-1}$.

Let $\pi:\bar{X}=\Bl_{p}\mathbb{P}^2\to \mathbb{P}^2$, $D=\abs{\pi^*\A}=\sum_{i=1}^{n+2} D_i$ and $\Delta$ the dual graph of $D$. In order to obtain a maximal tree of $\Delta$, that give rise to homology planes, we need to remove $n-1$ edges from $\Delta$: each one corresponding either to $D_i\cap D_{n+1}$ or to $D_i\cap D_{n+2}$ for $i=2,\ldots, n$. We can expand these edges as in \ref{sss:expansions}. In doing so, for every edge that we expand and every pair of coprime positive integers $a_{i1},a_{i_2}$ we need to add the relations $\lambda_i^{a_{i1}}\lambda_{n+1}^{\pm a_{i2}}$ to the presentation of $\pi_1(\mathbb{P}^2\setminus L(1,n+1))$.

Using the notation of \ref{sss:algorithm}, as $\abs{I}=0$, we have that $b_1(\Delta)=n-1$. In order to obtain that $n=m$, we have to blow-up a smooth point in the line $D_1$ (and possibly several times in a point infinitely near). We have to add the relation $\lambda_1^{a_{11}}$. 

\begin{prop} Let $\Gamma$ be a group presented by $<\lambda_1, \ldots, \lambda_n\mid \lambda_n\cdots\lambda_1, \lambda_i^{a_{i1}},i=1,\ldots, n >$. The fundamental group of any homology plane $X$ arising from the arrangement $L(1,n+1)$ as above, admits an exact sequence $$1 \to N \to \pi_1(X)\to \Gamma\to 1 $$ with $N$ a cyclic group.
\end{prop}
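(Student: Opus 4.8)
The plan is as follows. First I would record the presentation of $\pi_1(X)$ produced by the construction. By (the iterated--partial--compactification version of) Theorem~\ref{thm:KnotPresentation} together with Proposition~\ref{prop:meridians}, $\pi_1(X)$ is the quotient of $\pi_1(\mathbb{P}^2\setminus L(1,n+1))$ by the normal subgroup generated by the meridians of the exceptional divisors that are removed to form $X$. These are: one exceptional divisor per expanded edge at $D_i$ for $i=2,\dots,n$, whose meridian, by \cite[Lemma~7.18]{fujita} and the identity $\lambda_{n+2}=\lambda_{n+1}^{-1}$, is $\lambda_i^{a_{i1}}\lambda_{n+1}^{\varepsilon_i a_{i2}}$ with $\varepsilon_i=+1$ if the expanded edge joins $D_i$ to $D_{n+1}$ and $\varepsilon_i=-1$ if it joins $D_i$ to $D_{n+2}$ (see \ref{sss:expansions}); and one exceptional divisor over the smooth point blown up on $D_1$, whose meridian is $\lambda_1^{a_{11}}$. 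Hence $\pi_1(X)$ is presented by $(\ref{eq:PresentationL(1,n+1)})$ with the extra relations $\lambda_1^{a_{11}}$ and $\lambda_i^{a_{i1}}\lambda_{n+1}^{\varepsilon_i a_{i2}}$, $i=2,\dots,n$, adjoined.

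Next I would observe that $\lambda_{n+1}$ is central in $\pi_1(X)$: the relations $[\lambda_r,\lambda_{n+1}]$ for $r=1,\dots,n$, inherited from $(\ref{eq:PresentationL(1,n+1)})$, together with $[\lambda_{n+1},\lambda_{n+1}]$, say exactly that $\lambda_{n+1}$ commutes with the generating set $\{\lambda_1,\dots,\lambda_{n+1}\}$. Therefore $N:=\langle\lambda_{n+1}\rangle\leq\pi_1(X)$ is a central, hence normal, subgroup, and it is cyclic, being generated by a single element. It then remains to identify $\pi_1(X)/N$. Imposing $\lambda_{n+1}=1$ on the presentation above and eliminating the generator $\lambda_{n+1}$ turns $\lambda_{n+1}\cdots\lambda_1$ into $\lambda_n\cdots\lambda_1$, kills the commutators $[\lambda_r,\lambda_{n+1}]$, and turns $\lambda_i^{a_{i1}}\lambda_{n+1}^{\varepsilon_i a_{i2}}$ into $\lambda_i^{a_{i1}}$; moreover the relations $[\lambda_n,\dots,\lambda_1]$ of Lemma~\ref{lem:ActualVertex} --- which state that all cyclic permutations of the word $\lambda_n\cdots\lambda_1$ agree --- become redundant once $\lambda_n\cdots\lambda_1=1$, since each such cyclic permutation is a conjugate of $\lambda_n\cdots\lambda_1$ and hence trivial. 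What is left is precisely the presentation $\langle\lambda_1,\dots,\lambda_n\mid\lambda_n\cdots\lambda_1,\ \lambda_i^{a_{i1}}\ (1\leq i\leq n)\rangle$ of $\Gamma$. Thus $\pi_1(X)/N\cong\Gamma$, and the canonical projection yields $1\to N\to\pi_1(X)\to\Gamma\to 1$ with $N$ cyclic, as claimed.

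The main obstacle is the input in the first step: verifying that the geometry really contributes relations of the stated shape --- a single relation $\lambda_i^{a_{i1}}\lambda_{n+1}^{\pm a_{i2}}$ for each expanded edge and the single relation $\lambda_1^{a_{11}}$ from $D_1$ --- which relies on the meridian computations behind Theorem~\ref{thm:KnotPresentation}, on the description of edge expansion in \ref{sss:expansions}, and on \cite[Lemma~7.18]{fujita}, as well as on keeping track of the orientation convention in $\lambda_{n+1}\cdots\lambda_1=1$ and of whether an expanded edge points towards $D_{n+1}$ or $D_{n+2}$ (the two cases being reconciled by $\lambda_{n+2}=\lambda_{n+1}^{-1}$ and absorbed into the sign $\varepsilon_i$). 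Once this presentation is established, the remainder is elementary group theory: centrality of $\lambda_{n+1}$, passage to the quotient, and simplification of the resulting presentation.
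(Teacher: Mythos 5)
Your proposal is correct and follows essentially the same route as the paper: the paper's proof likewise notes that the relations $[\lambda_r,\lambda_{n+1}]$ in (\ref{eq:PresentationL(1,n+1)}) make $\lambda_{n+1}$ central, takes $N$ to be the image of the cyclic group it generates, and identifies the quotient with $\Gamma$. The only difference is that you spell out the presentation of $\pi_1(X)$ (the relations $\lambda_i^{a_{i1}}\lambda_{n+1}^{\pm a_{i2}}$ and $\lambda_1^{a_{11}}$) and the elimination of $\lambda_{n+1}$ explicitly, which in the paper is contained in the discussion preceding the Proposition rather than in its proof.
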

\begin{proof}
Note that from the presentation in (\ref{eq:PresentationL(1,n+1)}), it follows that $\lambda_{n+1}$ generates a central group. Denote by $N$ the image of the cyclic group generated by $\lambda_{n+1}$. By taking the quotient of $\pi_1(X)$ by this group we obtain the presentation given by $\Gamma$.
\end{proof}

In fact, all the homology planes of logarithmic Kodaira dimension one arise in this way, see \cite{GURJAR198899,weko_39408_1} and  \cite[Chapter 3.4]{miyanishiopen}.
\begin{lem}\label{lem:DualGraphL1} The dual graph of the divisor at infinity for a homology plane $X$ arising from $L(1,n+1)$ as above, is absolutely minimal and has the first form if $k>1$ and the second if $k=1$:

\begin{figure}[h]
       \centering
\begin{subfigure}[b]{0.45\textwidth}
        \centering     
        \scalebox{0.85}{ 
\includegraphics[scale=1]{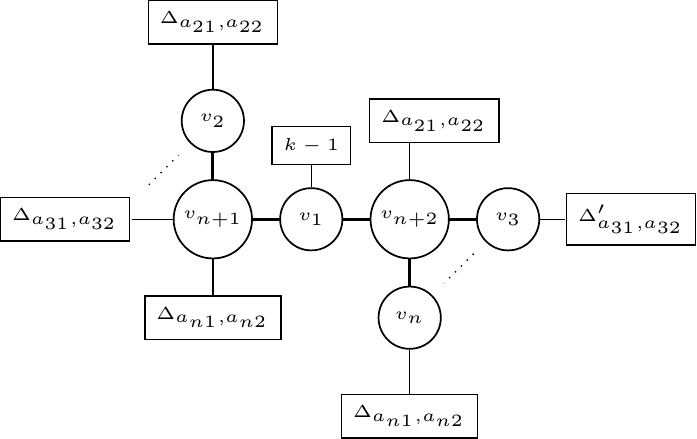}}
\caption{Dual graph of homology planes from L(1)}
        \label{fig:Dual Graph L1}
\end{subfigure}
\quad         
\begin{subfigure}[b]{0.45\textwidth}
        \centering
        \scalebox{0.85}{\includegraphics[scale=1]{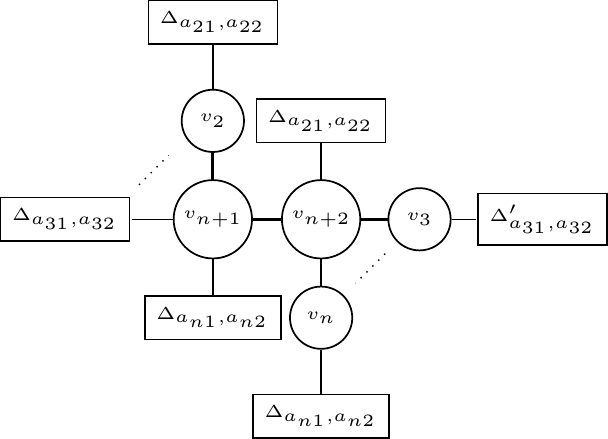}}
\caption{Second type of dual graph of homology planes from L(1)}
        \label{fig:Second Dual Graph L1}     
    \end{subfigure}
             
\end{figure}
with the square above $v_1$ representing a linear chain of $k-1$-vertices with weight $-2$.
\end{lem}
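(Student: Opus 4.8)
The goal is to determine the dual graph of the divisor at infinity $D''$ for a homology plane $X$ arising from $L(1,n+1)$ by the construction of \ref{sss:algorithm}, \ref{sss:expansions}. I would proceed by tracking the dual graph $\Delta$ of $D = \abs{\pi^*\A}$ through the sequence of blow-ups and edge-expansions, then imposing absolute minimality. First I would record the starting data: in $\bar X = \Bl_p \mathbb{P}^2$, the divisor $D = \sum_{i=1}^{n+2} D_i$ has dual graph a ``double star'': the vertices $v_{n+1}$ (the line at infinity $L_{n+1}$) and $v_{n+2}$ (the exceptional curve over the multiple point $p$) are each joined to all of $v_1,\dots,v_n$, and there are no other edges; the self-intersections are $w_i = 1$ for $i \le n$, $w_{n+1}=1$ (a line in $\mathbb{P}^2$ not blown up) and $w_{n+2}=-1$. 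The first Betti number is $b_1(\Delta)= n-1$.

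\textbf{Key steps.} Second, to make the dual graph of the divisor $D'$ (after removing $I$, here $I=\varnothing$) into a tree we delete $n-1$ independent cycles; each cycle passes through one of $v_2,\dots,v_n$ together with $v_{n+1}$ and $v_{n+2}$, so choosing $P_1$ amounts to choosing, for each $i=2,\dots,n$, whether to blow up $D_i\cap D_{n+1}$ or $D_i\cap D_{n+2}$; the corresponding edge is then \emph{expanded} as in \ref{sss:expansions} by a pair of coprime integers $(a_{i1},a_{i2})$, inserting a chain of $(-2)$-or-lower vertices whose continued-fraction data is governed by $\tfrac{a_{i1}+a_{i2}}{a_{i2}}$ and $\tfrac{a_{i1}+a_{i2}}{a_{i1}}$. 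Third, I would check the homology-plane condition: by Proposition \ref{prop:countthelines} and \ref{prop:MatrixCrit}, one additionally blows up on $D_1$ (repeatedly in an infinitely near point) a chain realizing the constraint $\abs{\det M}=1$, producing a chain of $k-1$ vertices of weight $-2$ attached at $v_1$ (this is the ``square above $v_1$''). Fourth — the main computational content — I would run Castelnuovo contractions / the absolute-minimalization algorithm on the resulting tree: repeatedly contract $(-1)$-curves and absorb $0$- and $(+1)$-vertices, using that $v_{n+2}$ started at $-1$ and $v_{n+1}$ at $+1$, and that each expansion and each $(-2)$-chain interacts with these in a controlled way. The outcome is that after contracting, one is left with exactly the two graphs displayed: the ``first form'' (the generic case $k>1$, where the $(-2)$-chain above $v_1$ survives) and the ``second form'' ($k=1$, where that chain is empty and $v_1$ gets absorbed or its weight shifts). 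Fifth, I would invoke Proposition \ref{prop:ZaidUniqueness} (Zaidenberg): since the graph we arrive at is absolutely minimal, it is the unique such completion, so the answer does not depend on the auxiliary choices made along the way.

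\textbf{The main obstacle.} The delicate part is the fourth step: carefully carrying out the sequence of contractions on a graph that depends on the several unbounded parameters $a_{i1},a_{i2},k$, and verifying that \emph{every} admissible choice (subject to $\abs{\det M}=1$) minimalizes to one of just two shapes. One must show that the continued-fraction chains introduced by the expansions, once the $(-1)$-curve $v_{n+2}$ and the $(+1)$-curve $v_{n+1}$ are contracted, collapse in such a way that all the $a_{ij}$-dependence disappears except for its effect on the single linear chain above $v_1$, whose length is encoded by $k$. This is essentially a bookkeeping lemma about plumbing graphs; I would organize it by treating one expanded edge at a time, showing a local contraction identity, and then arguing the global contraction is the composite of these local moves plus the standard simplification of the central double-star. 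The homology computation ($\abs{\det M}=1$) is routine given the homology-class formulas $[\lambda_{n+2}]=\sum[\lambda_i]$, $[\lambda_p]=[\lambda_i]+[\lambda_j]$ recalled just before Proposition \ref{prop:MatrixCrit}, so I do not expect difficulty there; the bulk of the work, and the place where a clean inductive formulation pays off, is the graph-contraction analysis.
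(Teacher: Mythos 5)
The paper does not actually prove this lemma (it is stated and then used), so there is no argument of the author's to compare yours with; judged on its own, your plan gets the combinatorial set-up right (the double-star dual graph, $b_1(\Delta)=n-1$, the choice for each $i=2,\dots,n$ between $D_i\cap D_{n+1}$ and $D_i\cap D_{n+2}$, the extra chain over a smooth point of $D_1$), but its central step rests on a misconception. The chains introduced by expanding an edge consist of vertices of weight at most $-2$ (the paper stipulates $c_i\ge 2$ in \ref{sss:expansions}), and they are components of the divisor at infinity; no sequence of Castelnuovo contractions can remove them, and ``absorbing'' $0$- or $(+1)$-vertices is not a legal move at all (only $(-1)$-vertices that are linear or terminal may be blown down). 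Hence the expected outcome of your Step 4 --- that after contracting, ``all the $a_{ij}$-dependence disappears'' --- is false: the dependence persists as linear chains in the graph, exactly as in the analogous lemma for $L(3)$, where the boxes $\Delta_{i,j}$ of Figure \ref{fig:DualGraphL3} stand for such chains; the figures of the present lemma must be read the same way (they are said to include the graphs of the contractible surfaces of \cite{AST_1993__217__251_0}, whose boundary chains vary with the parameters). What actually has to be proved is much more modest: write down the graph produced by the construction and check directly that every linear or terminal vertex has weight $\le -2$. For that you also need the correct starting weights --- $D_1,\dots,D_n$ have self-intersection $0$ in $\Bl_p\mathbb{P}^2$, not $+1$ as you state, and these drop further under the expansions --- and you need to specify how the infinitely near points over the smooth point of $D_1$ are chosen (repeatedly blowing up the intersection with $D_1$ would disconnect the boundary once the last curve is removed; the chain of $k-1$ curves of weight $-2$ attached at $v_1$ comes from blowing up free points of the successive exceptional curves). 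The only borderline vertex is then $v_1$, of weight $-1$: it is a branch vertex, hence harmless, when $k>1$, and a linear vertex, which must be contracted, when $k=1$ --- which is exactly why the lemma distinguishes two forms.

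Finally, Proposition \ref{prop:ZaidUniqueness} cannot play the role you assign to it in Step 5. Different choices of the expansion data $(a_{i1},a_{i2})$, and of which of the two edges through $v_i$ is expanded, produce different homology planes; Zaidenberg's uniqueness compares absolutely minimal completions of one and the same open surface, so it can never remove the dependence of the graph on those choices, and for a fixed $X$ it is not needed, since the lemma concerns precisely the completion that the construction hands you.
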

These graphs include those corresponding to contractible homology planes arising from $L(1,n+1)$ studied in \cite{AST_1993__217__251_0}.
\subsubsection{The arrangement $L(2)$}
The arrangement $L(2)$ is constructed by four lines in general position. 

\begin{figure}[h]
\begin{subfigure}[b]{0.4\textwidth}
       \centering
     \begin{tikzpicture}[scale=.8]

\draw (-1.2,-1.1017) node{\tiny $L_1$} -- (1,.5) ;
\draw (0.5,1.5) -- (-1,-1.3)node[above,right]{\tiny $L_2$};
\draw (-.5,1.5) -- (1,-1.3)node[above,right]{\tiny $L_3$};

\draw (-1.666,.6192) -- (1.564,-.8187)node[above,right]{\tiny $L_4$};
\end{tikzpicture}
\caption{$L(2)$ in the projective plane}
\label{fig:LinesL2}
    \end{subfigure}
~ ~ ~~~  
\quad         
\begin{subfigure}[b]{0.45\textwidth}
        \centering
        \begin{tikzpicture}[scale=.8]
\draw(4.5,0)--(5.5,0)node [anchor=west] {$W_1$};
\draw(4.5,0)--(4.3,.5);
\draw(4,.5)--(4.3,.5);
\draw(4,.5)--(3.8,1);
\draw(3.5,1)--(3.8,1);
\draw(3.5,1)--(3.2,1.5);
\draw(2,1.5) node[anchor=east]{$W_1$} --(3.2,1.5);

\draw (4.5,.5)--(5.5,.5)node [anchor=west]{$W_2$} ;
\draw (4.5,.5)--(4.3,0) ;
\draw (3,0)--(4.3,0) ;
\draw(3,0)--(2.8,.5);
\draw(2.5,.5)--(2.8,.5);
\draw(2.3,1)--(2.5,.5);
\draw(2,1)node[anchor=east]{$W_2$}--(2.3,1);

\draw(5,1)--(5.5,1)node[anchor=west]{$W_3$} ;
\draw(5,1)  -- (4.8,1.5);
\draw(3.5,1.5)  -- (4.8,1.5);
\draw(3.5,1.5)  -- (3.3,1);
\draw(3.3,1)  -- (3,1);
\draw(3,1)  -- (2.5,1);
\draw(2.5,1)  -- (2.3,.5);
\draw (2,.5)node[anchor=east]{$W_3$}--(2.3,.5);

\draw(5,1.5)--(5.5,1.5)node[anchor=west]{$W_4$};
\draw(5,1.5)--(4.8,1);
\draw(4,1)--(4.8,1);
\draw(4,1)--(3.8,.5);
\draw(3,.5)--(3.8,.5);
\draw(3,.5)--(2.8,0);
\draw(2,0) node[anchor=east]{$W_4$} --(2.8,0);

\end{tikzpicture}
        \caption{Wiring Diagram L(2)}
        \label{fig:WiringL2}
    \end{subfigure}
    
    \caption{The arrangement L(2)}\label{fig:L2}
\end{figure}
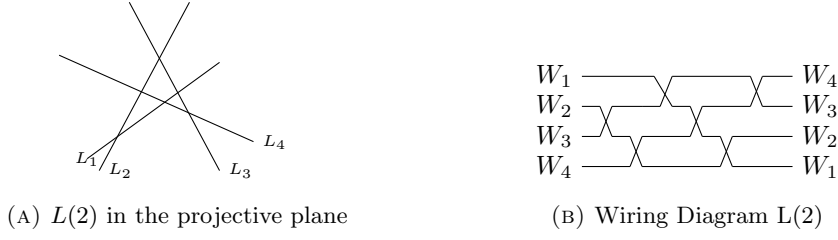

The fundamental group $\pi_1(\mathbb{P}^2\setminus L(2))$ is isomorphic to $\mathbb{Z}^3$ and admits the following presentation 
$$\left\langle\lambda_1,\ldots,\lambda_4\mid \lambda_4\cdots \lambda_1=1, [\lambda_i,\lambda_j] \ i<j, i=1,\ldots,3, j=2,\ldots, 4  \right\rangle $$

The homology planes arising from this arrangement are treated in \cite[3.16]{tomdieck1993}.

\subsubsection{The arrangement $L(3)$} This arrangement is constructed from $L(2)$ by adding a line $L_5$ passing through two double points of $L(2)$. See figure \ref{fig:LinesL3}.

The fundamental group $\pi_1(\mathbb{P}^2\setminus L(3))$ is isomorphic to $\mathbb{F}_2\times \mathbb{F}_2$ and admits the following presentation that can be read from the wiring diagram \ref{fig:WiringL3}:
\begin{equation}\label{eq:presL3}
\left\langle \lambda_1,\ldots, \lambda_5 \mid {[\lambda_5,\lambda_4,\lambda_3]}, [\lambda_5,\lambda_2,\lambda_1],  [\lambda_4,\lambda_1], {[\lambda_4,\lambda_2]}, 
{[\lambda_3,\lambda_1], [\lambda_3, \lambda_2]}, \lambda_5\cdots\lambda_1 
\right\rangle 
\end{equation}
\begin{figure}[h]

\begin{subfigure}[b]{0.4\textwidth}
       \centering
     \begin{tikzpicture}[scale=.8]

\draw (-1.2,-1.1017) node{\tiny $L_1$} -- (1,.5) ;
\draw (0.5,1.5) -- (-1,-1.3)node[above,right]{\tiny $L_2$};
\draw (-.5,1.5) -- (1,-1.3)node[above,right]{\tiny $L_3$};
\draw (-1.666,.6192) -- (1.564,-.8187)node[above,right]{\tiny $L_4$};
\draw (-1.5,-1) -- (1.95,.15)node[above,right]{\tiny $L_5$};



\end{tikzpicture}
\caption{$L(3)$ in the projective plane}
\label{fig:LinesL3}
    \end{subfigure}
~ ~ ~~~  
\quad         
\begin{subfigure}[b]{0.45\textwidth}
        \centering
        \begin{tikzpicture}[scale=.8]
\draw(4.7,0)--(6,0)node [anchor=west] {$W_1$};
\draw (4.4,1)  -- (4.7,0);
\draw(4.1,1)--(4.4,1);
\draw (3.9,1.5)  -- (4.1,1);
\draw (3.1,1.5)--(3.9,1.5);
\draw(2.9,2)  -- (3.1,1.5);
\draw(0.4,2) node[anchor=east]{$W_1$} --(2.9,2);

\draw (2.1,.5)--(6,.5)node [anchor=west]{$W_2$} ;
\draw (1.9,1)  -- (2.1,.5);
\draw (1.1,1)--(1.9,1);
\draw(0.9,1.5)  -- (1.1,1);
\draw(0.4,1.5)node[anchor=east]{$W_2$}--(0.9,1.5);

\draw (5.5,1)--(6,1)node [anchor=west]{$W_3$};
\draw (5.2,2)  -- (5.5,1);
\draw (3.1,2)  -- (5.2,2);
\draw (2.9,1.5)  -- (3.1,2);
\draw (1.1,1.5)--(2.9,1.5);
\draw (.9,1)  -- (1.1,1.5);
\draw(0.4,1) node[anchor=east]{$W_3$} --(.9,1);

\draw(4.1,1.5)--(6,1.5)node[anchor=west]{$W_4$} ;
\draw(3.9,1)  -- (4.1,1.5);
\draw (2.1,1)--(3.9,1);
\draw(1.9,.5)  -- (2.1,1);
\draw (0.4,.5)node[anchor=east]{$W_4$}--(1.9,.5);

\draw(5.5,2)--(6,2)node[anchor=west]{$W_5$};
\draw(5.2,1)--(5.5,2);
\draw(4.7,1)--(5.2,1);
\draw(4.4,0)--(4.7,1);
\draw(0.4,0)node[anchor=east]{$W_5$}--(4.4,0);
\end{tikzpicture}
        \caption{Wiring Diagram L(3)}
        \label{fig:WiringL3}
    \end{subfigure}

    \caption{The arrangement L(3)}\label{fig:L3}
\end{figure}
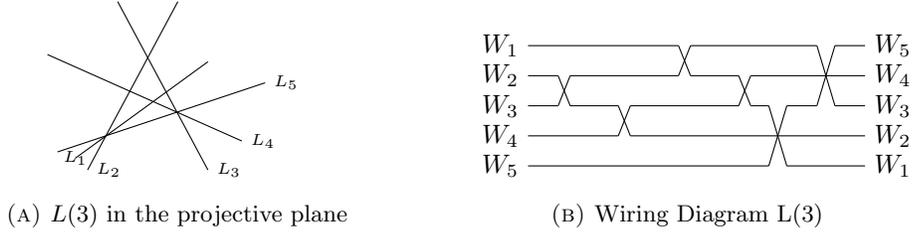

Let $\gamma_6$ and $\gamma_7$ be meridians around the exceptional divisors $D_6$ and $D_7$, obtained by blowing up $L_5\cap L_4\cap L_3$ and $L_5\cap L_2\cap L_1$ respectively. Then we have that $\gamma_6=\gamma_5\gamma_4\gamma_3$ and  $\gamma_7=\gamma_5\gamma_2\gamma_1$.

Denote by $f_1,f_2$ the fibrations given by the pencils of lines obtained by blowing up the points $L_5\cap L_4\cap L_3$ and $L_5\cap L_2\cap L_1$ respectively. Let $X$ be a homology plane arising from $L(3)$ as in \ref{sss:algorithm}. Denote by $f_1',f_2'$ the respective fibrations induced in $X$. The multiple fibers of $f_1',f_2'$ induce orbifold structures over $\mathbb{P}^1$ that we denote by $\mathcal{X}_1=\mathcal{X}_1(\mathbb{P}^1,D,r_1)$ and $\mathcal{X}_2=\mathcal{X}_2(\mathbb{P}^1,D,r_2)$ with $D=0+1+\infty$ and $r_1=(a_1,b_1,c_1), r_2=(a_2,b_2,c_2)$ depending on $f_1',f_2'$.

\begin{prop}\label{prop:L4Structure}
If $X$ is a homology plane arising from $L(3)$ as in \ref{sss:algorithm} such that both $\pi_1(\mathcal{X}_1),\pi_1(\mathcal{X}_2)$ are finite, then $\pi_1(X)$ is also finite. Moreover, if $\pi_1(\mathcal{X}_1)=\pi_1(\mathcal{X}_2)=\langle 1\rangle$ then $\pi_1(X)=\langle 1\rangle$.

\end{prop}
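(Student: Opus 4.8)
The plan is to exploit the two pencils $f_1,f_2$ on $\mathbb{P}^2$ induced by the triple points $L_5\cap L_4\cap L_3$ and $L_5\cap L_2\cap L_1$, pass to the homology plane $X$ built as in \ref{sss:algorithm}, and use the resulting maps $f_1',f_2':X\to\mathbb{P}^1$ together with the orbifold structures $\mathcal{X}_1,\mathcal{X}_2$ coming from the multiple fibers. The key input is a version of the Nori/Catanese--Dethloff--Frapporti type result that a fibration $X\to B$ with no multiple fibers in codimension one, after recording multiple fibers as orbifold points, induces a surjection $\pi_1(X)\to\pi_1^{orb}(B)$ with finitely generated kernel generated by the image of $\pi_1$ of a general fiber; here both general fibers are $\mathbb{P}^1\setminus\{3\text{ points}\}$ or $\mathbb{C}$, so their fundamental groups are free and, crucially, \emph{normally generated in $\pi_1(X)$ by the meridians we computed}. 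Concretely, from the presentation \eqref{eq:presL3} and the relations $\gamma_6=\gamma_5\gamma_4\gamma_3$, $\gamma_7=\gamma_5\gamma_2\gamma_1$ of Theorem~\ref{thm:KnotPresentation}, the group $\pi_1(X)$ is a quotient of $\pi_1(\mathbb{P}^2\setminus L(3))=\mathbb{F}_2\times\mathbb{F}_2$ by the extra relations forced by the expanded edges and the blow-ups in \ref{sss:algorithm}; the two $\mathbb{F}_2$-factors are precisely $\langle\gamma_3,\gamma_4\rangle$ and $\langle\gamma_1,\gamma_2\rangle$ (the vertical monodromy groups of $f_1$ and $f_2$), and each maps onto the corresponding orbifold fundamental group after imposing the multiple-fiber relations.

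First I would set up, for $i=1,2$, the exact sequence
$$
\pi_1(F_i)\longrightarrow \pi_1(X)\longrightarrow \pi_1(\mathcal{X}_i)\longrightarrow 1,
$$
where $F_i$ is a general fiber of $f_i'$; this follows from the orbifold fibration formalism (the same one invoked in \ref{miyanishiopen} and used in the examples \ref{exmp:L3}, \ref{exmp:L4.1} etc.), once one checks that the fibrations $f_i'$ are relatively minimal with the stated orbicurve base — this verification is a local computation on the dual graph of the divisor at infinity of $X$. Next I would identify the image of $\pi_1(F_i)$ in $\pi_1(X)$: since $F_1$ (a general fiber of the pencil through $L_5\cap L_4\cap L_3$) meets the arrangement in the points corresponding to $L_1,L_2$ and the line at infinity, its fundamental group maps onto the subgroup normally generated by $\gamma_1,\gamma_2$, i.e. by one of the $\mathbb{F}_2$-factors; symmetrically for $F_2$ and $\gamma_3,\gamma_4$. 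Then $\pi_1(X)$ is generated by (conjugates of) $\gamma_1,\dots,\gamma_4$, and the first sequence shows the $\langle\gamma_1,\gamma_2\rangle$-part becomes finite once $\pi_1(\mathcal{X}_1)$ is finite (the kernel being a quotient of $\pi_1(F_1)$, which maps into that part), while the second shows the $\langle\gamma_3,\gamma_4\rangle$-part becomes finite once $\pi_1(\mathcal{X}_2)$ is finite.

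The heart of the argument is then a combination: let $N_1=\ker(\pi_1(X)\to\pi_1(\mathcal{X}_1))$ and $N_2=\ker(\pi_1(X)\to\pi_1(\mathcal{X}_2))$. Each $N_j$ is a quotient of a free group $\pi_1(F_j)$ and is normally generated by finitely many meridians; moreover $N_1$ is generated (as a normal subgroup, hence as a subgroup after we observe $\pi_1(X)/N_1$ is finite) by the images of $\gamma_1,\gamma_2$ and their conjugates, and $N_2$ by those of $\gamma_3,\gamma_4$. Since $\gamma_1,\gamma_2,\gamma_3,\gamma_4$ generate $\pi_1(X)$, the product set $N_1\cdot N_2$ (or the subgroup they generate) contains all generators; the point is that $N_1$ and $N_2$ \emph{commute}, because in $\mathbb{F}_2\times\mathbb{F}_2$ the factor $\langle\lambda_1,\lambda_2\rangle$ commutes with $\langle\lambda_3,\lambda_4\rangle$ by the relations $[\lambda_i,\lambda_j]$ with $i\in\{3,4\}$, $j\in\{1,2\}$ in \eqref{eq:presL3}, and this is preserved in the quotient $\pi_1(X)$. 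Hence $\pi_1(X)=N_1N_2$ with $N_1,N_2$ commuting, so $\pi_1(X)$ is a quotient of $N_1\times N_2$; if both $\pi_1(\mathcal{X}_1),\pi_1(\mathcal{X}_2)$ are finite then, again using $|\pi_1(X)|\le |N_1|\cdot|\pi_1(\mathcal{X}_1)|$ and the analogous bound for $N_2$, one deduces $|N_1|,|N_2|<\infty$ and therefore $|\pi_1(X)|<\infty$. For the final assertion, if $\pi_1(\mathcal{X}_1)=\pi_1(\mathcal{X}_2)=1$ then $\pi_1(X)=N_1=N_2$ is a free quotient that maps isomorphically to both factors, forcing it to be simultaneously a quotient of $\langle\gamma_1,\gamma_2\rangle$-type and $\langle\gamma_3,\gamma_4\rangle$-type with these generating sets commuting, whence trivial — here one uses that $X$ is a homology plane, so $H_1(X)=0$, and a nontrivial free-ish group would survive in abelianization.

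The step I expect to be the main obstacle is making precise that $N_1$ and $N_2$ \emph{together generate} $\pi_1(X)$ and that their product is all of $\pi_1(X)$ rather than a proper subgroup: one must ensure no conjugating element is needed that lies outside $N_1N_2$, which really amounts to checking that $\pi_1(\mathcal{X}_1)$ and $\pi_1(\mathcal{X}_2)$ have trivial \emph{common} quotient controlling the gluing — equivalently, that the map $\pi_1(X)\to\pi_1(\mathcal{X}_1)\times\pi_1(\mathcal{X}_2)$ has image whose two projections are jointly surjective with the fiber product structure compatible with the $\mathbb{F}_2\times\mathbb{F}_2$ commuting decomposition. I would handle this by tracing through the explicit presentation of $\pi_1(X)$ obtained from Theorem~\ref{thm:KnotPresentation} for the specific $I$ and $P_1$ of \ref{sss:algorithm}, showing every added relation (from expanding edges and from the blow-ups making $X$ a homology plane) is either a relation among $\gamma_1,\gamma_2$ alone or among $\gamma_3,\gamma_4$ alone, plus the shared relation $\gamma_5\cdots\gamma_1=1$ which expresses $\gamma_5$ in terms of the other four; this is the analogue for $L(3)$ of the structural Proposition~\ref{prop:L4Structure} being asserted, and the adaptation of the $\bar k(X)=1$ arguments of \ref{miyanishiopen} referred to just after the Question in the introduction.
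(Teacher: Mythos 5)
Your setup (the two orbifold fibrations, the exact sequences with kernels $N_1,N_2$ coming from the two $\mathbb{F}_2$-factors of $\pi_1(\mathbb{P}^2\setminus L(3))$, and the elementwise commutation of these factors surviving in the quotient $\pi_1(X)$) matches the paper. But the finiteness deduction is where your argument breaks: from $[\pi_1(X):N_1]=|\pi_1(\mathcal{X}_1)|<\infty$ and $[\pi_1(X):N_2]=|\pi_1(\mathcal{X}_2)|<\infty$ you ``deduce $|N_1|,|N_2|<\infty$'' via the bound $|\pi_1(X)|\le |N_1|\,|\pi_1(\mathcal{X}_1)|$ — this is circular, since that bound only helps once you already know $N_1$ is finite. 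In general, two commuting normal subgroups of finite index whose product is the whole group do not force finiteness (take $G=\mathbb{Z}$, $N_1=N_2=2\mathbb{Z}$); the hypothesis you never use in the finiteness step is that $X$ is a homology plane, so $\pi_1(X)$ is \emph{perfect}. The paper's proof hinges exactly on this: set $A=N_1\cap N_2$; since every element of $A$ can be written both as a word in $i_*(\lambda_1),i_*(\lambda_2)$ and as a word in $i_*(\lambda_3),i_*(\lambda_4)$, and since these two families commute and generate $\pi_1(X)$, the subgroup $A$ is \emph{central} and of finite index. Then one needs a Schur-type argument: the paper compares the central extension $1\to A\to\pi_1(X)\to\Gamma\to 1$ (with $\Gamma$ finite and perfect) with the universal central extension of $\Gamma$, shows $H_2(\Gamma,\mathbb{Z})$ is torsion, and uses perfectness of $\pi_1(X)$ to kill the free part $\mathbb{Z}^n$ of $A$; hence $A$ is a finitely generated torsion abelian group, so finite, so $\pi_1(X)$ is finite. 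This central-by-finite-plus-perfect step is the missing idea, not the issue you flag at the end (that $N_1N_2=\pi_1(X)$), which is immediate from the generators.

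Two smaller points. First, in the triviality case your phrase ``free quotient'' is off: when $\pi_1(\mathcal{X}_1)=\pi_1(\mathcal{X}_2)=\langle 1\rangle$ one gets $\pi_1(X)=N_1=N_2=A$, which by the commutation argument is \emph{abelian}, and an abelian perfect group is trivial — that is the clean way to finish, rather than arguing about free groups surviving in abelianization. Second, your long final paragraph about tracing every added relation through Theorem \ref{thm:KnotPresentation} and about ``common quotients controlling the gluing'' is unnecessary for this proposition: the proof never needs the explicit relations coming from the expansions, only that $\pi_1(X)$ is a quotient of $\mathbb{F}_2\times\mathbb{F}_2$ (Proposition \ref{prop:meridians}), that the two fibration sequences have kernels $H_1,H_2$, and that $H_1(X,\mathbb{Z})=0$.
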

\begin{proof}

Suppose that $\pi_1(\mathcal{X}_1)$ and $\pi_1(\mathcal{X}_2)$ are finite. Recall that $\pi_1(\mathbb{P}^2\setminus L(3))=\mathbb{F}_2\langle\lambda_1,\lambda_2\rangle\times\mathbb{F}_2\langle\lambda_3,\lambda_4\rangle$. Denote by $i:\mathbb{P}^2\setminus L(3)\hookrightarrow X$ the inclusion and consider $H_1:=i_*(\langle \lambda_1,\lambda_2\rangle )$ and $H_2:=i_*(\langle\lambda_3,\lambda_4\rangle)$. We have the following exact sequences induced by $f_1',f_2'$: 
\begin{align*}
1\to H_1\to\pi_1(X)\to \pi_1(\mathcal{X}_1)\to 1, \quad
1\to H_2\to\pi_1(X)\to \pi_1(\mathcal{X}_2)\to 1.
\end{align*}
Consider the subgroup $A=H_1\cap H_2$, it is normal and has finite index. Moreover, $A$ is abelian: let $x,y\in A$ and write $x=w_1(i_*(\lambda_1),i_*(\lambda_2))$ and $y=w_2(i_*(\lambda_3),i_*(\lambda_4))$ with $w_1$ and $w_2$ words in the letters $i_*(\lambda_1),i_*(\lambda_2)$ and $i_*(\lambda_3),i_*(\lambda_4)$ respectively. As $\lambda_j$ commutes with $\lambda_k$ for $j=1,2$ and $k=3,4$, we obtain that $x$ commutes with $y$. The same reasoning shows that $A$ is central.

Now, as $\Gamma$ is a perfect group, we have from the universal coefficient Theorem that $H^2(\Gamma,\mathbb{Z})\cong \Hom(H_2 \Gamma, \mathbb{Z})$. Then, as $\Gamma$ is finite, we have that $H^2(\Gamma,\mathbb{Z})=H^1(\Gamma,\mathbb{Q}/\mathbb{Z})$. As $\Gamma$ is perfect, it operates trivially on $\mathbb{Q}/\mathbb{Z}$ and therefore $H^1(\Gamma,\mathbb{Q}/\mathbb{Z})=\Hom(\Gamma,\mathbb{Q}/\mathbb{Z})=0$, which shows that $H_2(\Gamma,\mathbb{Z})$ is a torsion group. 

Consider the central extension $1\to A\to \pi_1(X)\to \Gamma\to 1$. Write $A\cong \mathbb{Z}^n\oplus A_{\text{tors}}$ and consider the universal central extension $1\to H_2(\Gamma,\mathbb{Z})\to E \to \Gamma\to 1$. This extension comes with a map $H_2(\Gamma,\mathbb{Z})\to A_{\text{tors}}$ that factors through $A$. Now, we have that $$\pi_1(X)= [E\times A]/\Delta H_2(\Gamma,\mathbb{Z})\cong \mathbb{Z}^n\times [E\times A_{\text{tors}}]/\Delta H_2(\Gamma,\mathbb{Z})$$
but as $\pi_1(X)$ is perfect, we have that $n=0$ and therefore $A$ is a torsion group.

Now, if $\pi_1(\mathcal{X}_1)=\pi_1(\mathcal{X}_2)=\langle 1\rangle$ then $H_1=\pi_1(X)=H_2$ and therefore $A=\pi_1(X)$ is an abelian group and perfect, hence $\pi_1(X)=\langle 1 \rangle$.
\end{proof}

Let $P_0=\{L_4\cap L_4\cap L_3, L_5\cap L_2\cap L_1\} $ and consider $\pi:\bar{X}=\Bl_{P_0}\mathbb{P}^2\to \mathbb{P}^2$.  Denote by $D$ the reduced divisor $\pi^*(L(3))$ and its dual graph by $\Delta(3)$. Note that $b_1(\Delta(3))=\abs{E}-\abs{V}+1=4$. The maximal trees contained in $\Delta(3)$ are encoded by subsets $P_1\subset \Sing D$ of four elements, corresponding to independent edges.

\begin{exmp} We can consider the homology planes $X$ arising from expandig the edges corresponding to the double points $P_1=\{D_1\cap D_3, D_1\cap D_4, D_2\cap D_3, D_2\cap D_4\}$ in $L(3)$. All the homology planes arising in this way will satisfy that $\pi_1(\mathcal{X}_1)=\pi_1(\mathcal{X}_2)=\langle 1\rangle$ and by Proposition \ref{prop:L4Structure}, we have that $\pi_1(X)=\langle 1\rangle$.

 There are indeed an infinite collection of homology planes arising from the maximal tree associated to these double points $P_1$. They were studied by Zaidenberg in \cite{AST_1993__217__251_0}. He also obtained that an infinite number of these homology planes have logarithmic Kodaira dimension equal to two.
\end{exmp}

Now we present examples of homology planes arising from $L(3)$ with infinite fundamental group 

\begin{exmp}\label{exmp:L3}
Consider $P_1=\{D_1\cap D_3,D_1\cap D_4, D_2\cap D_4, D_5\cap D_6\}$, a presentation for the fundamental group of any homology plane $X$ arising from this configuration has the following form: 

\begin{equation}\label{eq:expL3.1}
\langle \lambda_1,\ldots,\lambda_5 \mid\lambda_1^a\lambda_3^b, \lambda_1^c\lambda_4^d, \lambda_2^e\lambda_4^f,\lambda_5^g\lambda_6^h,[\lambda_i,\lambda_j] \text{ for } i=1,2, \ j=3,4  \rangle. \end{equation}
Note that $\lambda_5\cdot \lambda_6=(\lambda_1^{-1}\cdots\lambda_4^{-1})(\lambda_1^{-1}\lambda_2^{-1})$.

The fundamental group of the orbifolds $\mathcal{X}_1, \mathcal{X}_2$ over $\mathbb{P}^1$ induced by $f_1'$ and $f_2'$ have presentations $\pi_1(\mathcal{X}_1)=\langle\lambda_3,\lambda_4 \mid \lambda_3^b,\lambda_4^{\gcd(d,f)},(\lambda_4\lambda_3)^{g}\rangle$ and $\pi_1(\mathcal{X}_2)=\langle\lambda_1,\lambda_2 \mid \lambda_1^{\gcd(a,c)},\lambda_2^{e},(\lambda_2\lambda_1)^{g+h} \rangle$ respectively.

By Proposition \ref{prop:MatrixCrit}, we have that $X$ is a homology plane is the following determinant equals one
$$\det \left(\begin{array}{rrrr}
a & 0 & b & 0 \\
c & 0 & 0 &d \\
0 & e & 0 & f \\
-g-h & -g-h & -g & -g 
\end{array}\right) = -a d e g - b c e g + b c f g + b c f h + b d e g + b d e h $$

\begin{enumerate}
\item 
We can choose the values $a,\ldots, h$ such that $\pi_1(\mathcal{X}_1)$ and $\pi_1(\mathcal{X}_2)$ are both infinite hyperbolic triangle groups: 
Consider $b=3, d=f=5$ and $g=7$. As above, $\pi_1(\mathcal{X}_1)=\langle \lambda_3,\lambda_4 \mid \lambda_3^3, \lambda_4^5, (\lambda_3^{-1}\lambda_4^{-1})^7 \rangle$.

There exists indeed solutions for $\det M=1$ and the above values of $b,d,f,g$. For example $a=c=11, e=14, h=16$. We have that $\pi_1(\mathcal{X}_2)=\langle \lambda_1, \lambda_2 \mid \lambda_1^{11}, \lambda_2^{14}, (\lambda_1^{-1}\lambda_2^{-1})^{23} \rangle$.

A presentation for $\pi_1(X)$ is obtained readily from (\ref{eq:expL3.1}) by replacing the values of the exponents.
\item We can choose as well the values $a,\ldots, h$ in such a way that $\pi_1(\mathcal{X}_1)$ is finite non-trivial but $\pi_1(\mathcal{X}_2)$ is infinite: for $b=2,d=f=3,g=5$ we have that $\pi_1(\mathcal{X}_1)=\langle\lambda_3,\lambda_4\mid \lambda_3^2,\lambda_4^3, (\lambda_3\lambda_4)^5 \rangle$.

In order to obtain a solution for $\det M=1$ we can choose: $a=c=25, e=47, h=63$ and therefore $\pi_1(\mathcal{X}_2)=\langle\lambda_1,\lambda_2\mid \lambda_1^{25},\lambda_2^{47},(\lambda_1\lambda_2)^{68} \rangle$

\end{enumerate}

For the proof of the following Lemma, we follow closely the arguments of \cite{AST_1993__217__251_0}.
\begin{lem} The homology planes constructed in this example are of log-general type and the divisor at infinity has dual graph that are absolutely minimal as in Figure \ref{fig:DualGraphL3}.
\begin{figure}[h]
       \centering
\includegraphics[scale=1]{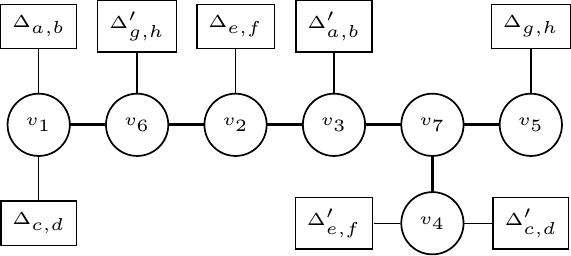}
\caption{Dual graph}
        \label{fig:DualGraphL3}
\end{figure}
\end{lem}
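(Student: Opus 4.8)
The plan is to mimic Zaidenberg's strategy from \cite{AST_1993__217__251_0}, which is referenced explicitly in the statement, carrying out the two assertions --- log-general type and absolute minimality of the boundary graph --- in that order. First I would make the dual graph of the divisor at infinity completely explicit: starting from $\Delta(3)$, the dual graph of $\pi^*(L(3))$ on $\bar X = \Bl_{P_0}\mathbb{P}^2$ described just above the example, I would perform the edge expansions at $P_1 = \{D_1\cap D_3, D_1\cap D_4, D_2\cap D_4, D_5\cap D_6\}$ using the recipe of \ref{sss:expansions} (the continued-fraction chains $[c_{-r},\dots,c_{-1}]$ and $[c_s,\dots,c_1]$ inserted into each expanded edge, with the self-intersections of the two endpoints dropping accordingly). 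Tracking self-intersection numbers through these blow-ups is a bookkeeping task; the output should be exactly the weighted graph drawn in Figure \ref{fig:DualGraphL3}, so this step is really a verification that the claimed picture is the correct completion.

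Next I would establish absolute minimality. By definition this requires every linear or terminal vertex of the resulting graph to have weight $\le -2$. The strict transforms of the five lines $L_1,\dots,L_5$ and of the two exceptional divisors $D_6,D_7$ acquire very negative self-intersections after the chain of blow-ups (each blow-up at a point of a given curve drops that curve's self-intersection by one), and the chains $[c_j]$ introduced by the expansions consist of vertices of weight $\le -2$ by the elementary fact that a continued fraction $p/q>1$ expands with all $c_i\ge 2$. The only thing to check by hand is that the ``hub'' vertices (the $L_i$ and the two exceptional curves) end up with weight $\le -2$ and that no terminal vertex is a $(-1)$-curve; since the construction of \ref{sss:algorithm} blows up enough double points and at least one further point on $D_1$ to make the graph a tree of the right size, this forces enough negativity. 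Here I would also invoke Proposition \ref{prop:ZaidUniqueness}: once I exhibit one absolutely minimal completion, it is unique, so Figure \ref{fig:DualGraphL3} is \emph{the} boundary graph.

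Then comes log-general type, which I expect to be the main obstacle. The cleanest route is the one Zaidenberg uses: an absolutely minimal SNC completion $(\bar X, D)$ with $D$ a tree of rational curves has $\bar\kappa(\bar X\setminus D)=2$ provided the graph is not one of the finitely many ``exceptional'' shapes associated to $\bar\kappa\le 1$ (chains and a few forks, which are precisely the graphs classified in \cite{miyanishiopen} and appearing for $L(1,n+1)$ as in Lemma \ref{lem:DualGraphL1}). So I would argue that the graph of Figure \ref{fig:DualGraphL3} is a genuine branched tree with branch points of high valency and long very-negative chains emanating from them, hence cannot coincide with any of the chain-or-small-fork graphs giving $\bar\kappa\le 1$; alternatively, one computes that $K_{\bar X}+D$ is nef and $(K_{\bar X}+D)^2>0$ directly from the weighted graph, which by the logarithmic analogue of the enriques criterion gives $\bar\kappa=2$. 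The subtlety is that $\bar\kappa$ is not a function of the weighted dual graph alone in general; what rescues the argument is that for these particular completions the graph is ``minimal'' in Zaidenberg's sense and then $\bar\kappa$ \emph{is} determined by it, so I would quote \cite[Appendix]{AST_1993__217__251_0} and \cite{Zai94} for exactly this implication rather than reprove it.

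Finally I would assemble the pieces: the explicit graph computation gives Figure \ref{fig:DualGraphL3}, the continued-fraction positivity plus the count of blow-ups on the line components gives absolute minimality, Proposition \ref{prop:ZaidUniqueness} gives uniqueness, and the shape of the graph (a true fork, not one of the $\bar\kappa\le 1$ templates) gives $\bar\kappa=2$, i.e. log-general type. The one genuinely delicate point to get right is the self-intersection bookkeeping for the hub vertices after all the expansions at $P_1$ together with the auxiliary blow-up on $D_1$ needed to make $n=m$ in \ref{sss:algorithm}; everything else is either a quotation of Zaidenberg's results or a routine continued-fraction estimate.
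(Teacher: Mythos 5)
Your proposal follows essentially the same route as the paper: absolute minimality comes from the expansion chains having all weights at most $-2$, and log-general type follows from the classification of homology planes with $\bar{k}\le 1$ (only $\mathbb{C}^2$ for $\bar{k}=-\infty$, none with $\bar{k}=0$, and those arising from $L(1,n+1)$ with boundary graphs as in Lemma \ref{lem:DualGraphL1}) combined with Proposition \ref{prop:ZaidUniqueness} and the observation that the graph of Figure \ref{fig:DualGraphL3} is not of that form (the paper distinguishes them simply by counting valence-$3$ vertices). Your side remarks — the claim of ``finitely many exceptional shapes'' for $\bar{\kappa}\le 1$ and the alternative $(K_{\bar X}+D)$-positivity computation — are imprecise as general statements and unnecessary here, but the core argument coincides with the paper's.
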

\begin{proof}
The squares with $\Delta_{i,j}$ inside represents a linear chain with vertices of weights at most $-2$, therefore the graph are absolutely minimal. By counting the vertices of valence $3$, we can see that this graphs are not equivalent to that of Lemma \ref{lem:DualGraphL1}.

There are not homology planes $X$ with $\bar{k}(X)=-\infty$ other that $\mathbb{C}^2$ \cite[Theorem 3.2]{fujita1979zariski} and no homology planes at all with $\bar{k}(X)=0$ \cite{miyanishiopen}. As all the homology planes of log-kodaira dimension equal one can be obtained from $L(1,n+1)$, we conclude using Proposition \ref{prop:ZaidUniqueness}.
\end{proof}

\end{exmp}



\subsubsection{The arrangement $L(4)$}
We consider the real arrangement $L(4)$ defined by adding a line $L_6$ to $\Ceva(2)=\{(x:y:z)\in \mathbb{P}^2\mid (x^2-y^2)(x^2-z^2)(y^2-z^2)=0\}$ passing through $2$ double points, see Figure \ref{fig:L(4)}.

The fundamental group $\pi_1(\mathbb{P}^2\setminus L(4))$ admits the following presentation that can be obtained from the wiring diagram in Figure \ref{fig:WiringL4}
$$\left\langle \lambda_1,\ldots, \lambda_7 \mid \begin{array}{l}
[\lambda_7,\lambda_6], [\lambda_7,\lambda_5,\lambda_4], [\lambda_7,\lambda_3], [\lambda_7,\lambda_2,\lambda_1], [\lambda_5^{\lambda_4},\lambda_3,\lambda_1],[\lambda_6,\lambda_4,\lambda_1] \\
{[\lambda_6,\lambda_3^{\lambda_1}]}, [\lambda_6,\lambda_5^{\lambda_4},\lambda_2^{\lambda_1}], [\lambda_4,\lambda_3,\lambda_2], \lambda_7\lambda_6\ldots \lambda_2\lambda_1=1
\end{array}  \right \rangle $$

\begin{figure}[h]
\begin{subfigure}[b]{0.4\textwidth}
       \centering
     \begin{tikzpicture}[scale=1]

\draw(-1.2,-1.1017) node{\tiny $L_1$} -- (1,.5) ;
\draw(0.5,1.5) -- (-1,-1.3)node[above,right]{\tiny $L_2$};
\draw (-0.115,1.51) -- (.254,-1.52)node[above,right]{\tiny $L_3$};
\draw (-.5,1.5) -- (1,-1.3)node[above,right]{\tiny $L_4$};

\draw (-1.666,.6192) -- (1.564,-.8187)node[above,right]{\tiny $L_5$};
\draw (-1.7,0) -- (2,0) node[right,below]{\tiny $L_6$};
\draw (-1.5,-1) -- (1.95,.15)node[above,right]{\tiny $L_7$};



\end{tikzpicture}
\caption{$L(4)$ in the projective plane}
\label{fig:LinesL4}
    \end{subfigure}
~ ~ ~~~  
\quad         
\begin{subfigure}[b]{0.45\textwidth}
        \centering
        \begin{tikzpicture}[scale=.8]
\draw(4,0)--(6,0)node [anchor=west] {$W_1$};
\draw(3.7,1)-- (4,0);
\draw(3.5,1)-- (3.7,1);
\draw(3.2,2)-- (3.5,1);
\draw(3,2)-- (3.2,2);
\draw(3,2)-- (2.7,3);
\draw(0.4,3) node[anchor=east]{$W_1$} --(2.7,3);

\draw (2,.5)--(6,.5)node [anchor=west]{$W_2$} ;
\draw (1.7,1.5)  -- (2,.5);
\draw (1.7,1.5)  -- (1.5,1.5);
\draw (1.2,2.5)  -- (1.5,1.5);
\draw(0.4,2.5)node[anchor=east]{$W_2$}--(1.2,2.5);

\draw (4.5,1)--(6,1)node [anchor=west]{$W_3$};
\draw (4.3,1.5)  -- (4.5,1);
\draw (2.5,1.5)  -- (4.3,1.5);
\draw (2.5,1.5)  -- (2.3,2);
\draw(0.4,2) node[anchor=east]{$W_3$} --(2.3,2);

\draw(5,1.5)--(6,1.5)node[anchor=west]{$W_4$} ;
\draw(5,1.5)  -- (4.7,2.5);
\draw(1.5,2.5)  -- (4.7,2.5);
\draw(1.5,2.5)  -- (1.2,1.5);
\draw (0.4,1.5)node[anchor=east]{$W_4$}--(1.2,1.5);

\draw(3.5,2)--(6,2)node[anchor=west]{$W_5$};
\draw(3.5,2)--(3.2,1);
\draw(0.4,1) node[anchor=east]{$W_5$} --(3.2,1);

\draw(5.5,2.5)--(6,2.5)node[anchor=west]{$W_6$};
\draw(5.5,2.5)--(5.3,3);
\draw(3,3)--(5.3,3);
\draw(3,3)--(2.7,2);
\draw(2.5,2)--(2.7,2);
\draw(2.5,2)--(2.3,1.5);
\draw(2,1.5)--(2.3,1.5);
\draw(2,1.5)--(1.7,.5);
\draw (0.4,.5)node[anchor=east]{$W_6$}--(1.7,.5);

\draw(5.5,3)--(6,3)node[anchor=west]{$W_7$};
\draw(5.5,3)--(5.3,2.5);
\draw(5.3,2.5)-- (5,2.5);
\draw(5,2.5)-- (4.7,1.5);
\draw(4.7,1.5)-- (4.5,1.5);
\draw(4.5,1.5)-- (4.3,1);
\draw(4.3,1)-- (4,1);
\draw(3.7,0)-- (4,1);
\draw(0.4,0)node[anchor=east]{$W_7$}--(3.7,0);

\end{tikzpicture}
        \caption{Wiring Diagram L(4)}
        \label{fig:WiringL4}
    \end{subfigure}

    \caption{The arrangement L(4)}\label{fig:L(4)}
\end{figure}
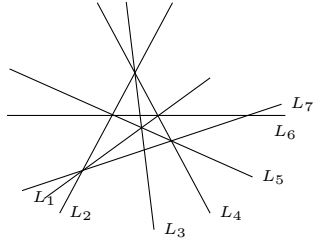
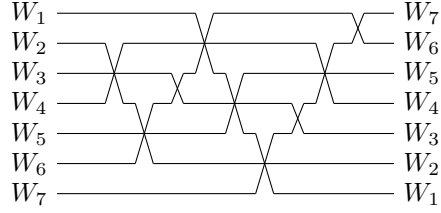

The expressions for the meridians $\lambda_8,\ldots, \lambda_{13}$ around the exceptional divisors $D_8,\ldots, D_{13}$ respectively are: 
\begin{equation}\label{eq:ExceptionalL(4)}
\begin{array}{lll}
\lambda_8=\lambda_7\lambda_5\lambda_4 & \lambda_9=\lambda_7\lambda_2\lambda_1 & \lambda_{10}=\lambda_5^{\lambda_4}\lambda_3 \lambda_1 \\
\lambda_{11}=\lambda_6\lambda_4\lambda_1 & \lambda_{12}=\lambda_6\lambda_5^{\lambda_4}\lambda_2^{\lambda_1} & \lambda_{13}=\lambda_4^{\lambda_1}\lambda_3^{\lambda_1}\lambda_2^{\lambda_1}
\end{array}
\end{equation}

For the following examples of homology planes we will use the computations appearing in \cite{inproceedings} of the first characteristic variety of the arrangements and its associated pencils of lines. We will use these pencils to construct maps to orbicurves with infinite perfect fundamental group.

The arrangement $L(4)$ is called the non-Fano plane in \cite[Example 10.5]{inproceedings}. It has six local pencils corresponding to the six triple points and three pencils corresponding to (braid) $\Ceva(2)$  subarrangements.

\begin{exmp}\label{exmp:L4.1} Consider the pencil $\Pi=C_{(12\mid 45 \mid 36)}$ obtained when we blow-up the singular points corresponding to $(L_1\cup L_2)\cap (L_4\cup L_5)$. Recall that the ordering of the singular points is given by reading the triple points in the wiring diagram in Figure \ref{fig:WiringL4} from right to left. Therefore the exceptional divisors corresponding to $(L_1\cup L_2)\cap (L_4\cup L_5)$ are $D_{10},D_{11},D_{12},D_{13}$. These divisors are sections of $\Pi$ as well as the strict transform $D_7$ of $L_7$.

 With the notation of \ref{sss:algorithm}, let us consider $I=\{11,12,13\}$ and the set of edges to be expanded $P_1=\{D_1\cap D_{9}, D_{5}\cap D_{10}, D_6\cap D_{7}\}$ as in \ref{sss:expansions}. Consider the meridians $\lambda_1^a\lambda_9^b,(\lambda_5^{\lambda_4})^c\lambda_{10}^d,\lambda_6^e\lambda_7^f$ of the respective exceptional divisors coming from $P_1$. 
 
 The pencil $\Pi$ induces an orbifold structure  $\mathcal{X}=\mathcal{X}(\mathbb{P}^1,D,r)$ with $D= 0+ 1+\infty$ and $r=(2b+a, c, e)$.

 From (\ref{eq:ExceptionalL(4)}), we have that $$\det \left(\begin{array}{rrrrrr}
0 & 1 & 1 & 1&0 &0 \\
0 & 1 & 0 &0 &1 &1\\
1 & 0 & 0 & 1 & 0 & 1 \\
a & 0 & -b & -b & -b &-b \\
d &0&d &0 &c+d &0 \\
-f & -f & -f &-f &-f &e-f 
\end{array}\right)=a c e - a c f + 2 a d e - 2 b c f + 4 b d e$$ for $M$ the matrix as in \ref{sss:algorithm}.

By considering the following solution for $|\det M|=1$: $a=1,b=2,c=7,d=1,e=2$ and $f=1$ we obtain that $\pi_1(\mathcal{X})$ is infinite. We can also obtain the following presentation for the fundamental group of the homology plane $X$: 
$$\pi_1(X)=\langle \lambda_2,\lambda_5\mid \lambda_2^4=\lambda_5\lambda_2\lambda_5, \lambda_5^7=\lambda_2\lambda_5\lambda_2^2\lambda_5\lambda_2 \rangle $$
Moreover, the image of the fibers of $\Pi$ in $\pi_1(X)$ is a cyclic group generated by the image of $\lambda_7$.

\begin{lem} The homology plane $X$ in this example is of log-general type with absolutely minimal dual graph of the divisor at infinity as follows
\begin{figure}[h]
       \centering
\includegraphics[scale=1]{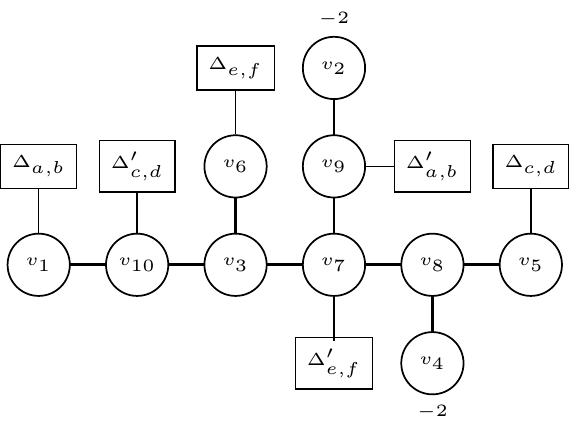}
\caption{Dual graph}
        \label{fig:DualGraphL4.1}
\end{figure}

\end{lem}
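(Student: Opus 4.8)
The plan is to follow closely the strategy used by Zaidenberg in \cite{AST_1993__217__251_0} for contractible homology planes from $L(3)$, as already done in the proof of the analogous Lemma at the end of Example \ref{exmp:L3}. Concretely, the statement has two parts: (i) the divisor at infinity of $X$ has the prescribed absolutely minimal dual graph, and (ii) $\bar k(X)=2$. I would treat these in that order, since (ii) will follow from (i) together with the classification results already quoted.

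For part (i), I would start from the dual graph $\Delta(4)$ of the total transform $D=\abs{\pi^*L(4)}$ in $\bar X=\Bl_{P_0}\mathbb{P}^2$, where $P_0$ is the set of six triple points of $L(4)$; this is explicitly readable from the wiring diagram in Figure \ref{fig:WiringL4} and the list of exceptional meridians (\ref{eq:ExceptionalL(4)}). Then I would perform the construction of \ref{sss:algorithm} and \ref{sss:expansions} with the specific data of this example: remove the components indexed by $I=\{11,12,13\}$, and blow up (and expand, with the chosen coprime weights on $P_1=\{D_1\cap D_9,\ D_5\cap D_{10},\ D_6\cap D_7\}$) to obtain a smooth projective surface $\bar X'$ together with the strict transform $D'$ of the remaining components, so that $\bar X'\setminus D'=X$. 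The expansion of each edge replaces it by a linear chain whose weights are read off from the continued fraction expansions of $(a_i+a_j)/a_j$ and $(a_i+a_j)/a_i$ as in Figure \ref{fig:ExpandingEdge}; each such chain consists of vertices of weight $\le -2$, so the $\Delta_{i,j}$-boxes in Figure \ref{fig:DualGraphL4.1} are indeed linear chains of weight $\le -2$. It then remains to check that after these blow-ups the weights of all linear and terminal vertices of the resulting graph are $\le -2$, i.e. that the graph is absolutely minimal; this is a finite bookkeeping computation tracking how the self-intersections $w_i=D_i\cdot D_i$ drop under each blow-up. Once absolute minimality is established, Proposition \ref{prop:ZaidUniqueness} guarantees that the graph displayed in Figure \ref{fig:DualGraphL4.1} is the graph of $X$ up to isomorphism, so it suffices to exhibit one such completion with that shape.

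For part (ii), I would argue exactly as in the proof at the end of Example \ref{exmp:L3}. By \cite[Theorem 3.2]{fujita1979zariski} the only homology plane with $\bar k=-\infty$ is $\mathbb{C}^2$, whose boundary graph is not of this form (e.g. by counting vertices of valence $3$, or simply because $\mathbb{C}^2$ has a contractible smooth completion graph); by \cite{miyanishiopen} there are no homology planes with $\bar k=0$; and all homology planes with $\bar k=1$ arise from $L(1,n+1)$ and have, by Lemma \ref{lem:DualGraphL1}, one of the two absolutely minimal graphs of Figures \ref{fig:Dual Graph L1} and \ref{fig:Second Dual Graph L1}. Comparing these with Figure \ref{fig:DualGraphL4.1} — again counting vertices of valence $\ge 3$, which is an invariant of the isomorphism class of the minimal boundary by Proposition \ref{prop:ZaidUniqueness} — shows $X$ is not among them, hence $\bar k(X)=2$.

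The main obstacle I expect is the explicit weight computation in part (i): one must correctly identify $\Delta(4)$ with all self-intersection numbers, then track the $-1$ curves and the successive drops in self-intersection through the blow-ups at $P_1$ and through each edge-expansion, and finally verify that no linear or terminal vertex ends up with weight $-1$ (which would force a further blow-down and contradict absolute minimality). This is routine in principle but error-prone, and it is the only place where the specific numerical choices $a=1,b=2,c=7,d=1,e=2,f=1$ genuinely enter; everything else (the comparison with $L(1,n+1)$ and with $\mathbb{C}^2$, and the appeal to uniqueness of absolutely minimal completions) is formal once the graph is known.
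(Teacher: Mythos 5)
Your proposal matches the paper's approach: the paper gives no separate proof for this lemma, but the analogous lemma at the end of Example \ref{exmp:L3} is proved exactly as you outline — the expanded edges yield linear chains of weight at most $-2$ (hence absolute minimality), valence-$3$ vertex counting distinguishes the graph from those of Lemma \ref{lem:DualGraphL1}, and the classification of homology planes with $\bar{k}=-\infty,0,1$ together with Proposition \ref{prop:ZaidUniqueness} forces $\bar{k}(X)=2$. Your extra care about the weight bookkeeping under the expansions is a correct filling-in of a step the paper leaves implicit, not a different route.
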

\end{exmp}
\begin{exmp}\label{exmp:L4.2} Using again the pencil $\Pi=C_{(12\mid 45 \mid 36)}$, we can construct homology planes fibered over $\mathbb{P}^1$ with general fiber $\mathbb{P}^1\setminus \{3-\text{points}\}$ as follows:
take $I=\{8,11,12\}$ and $P_1=\{D_1\cap D_9, D_3\cap D_{13}, D_6\cap D_7\}$. 

The associated matrix $M$ will have determinant equal to $$\det M=-a c e - a c f + 2 a d f + 4 b d e + 4 b d f$$
The pencil $\Pi$ induces a structure of orbifold $\mathcal{X}=\mathcal{X}(\mathbb{P}^1,D,r)$ with $D$ as before and $r=(a+2b,\gcd(c,e),2)$.

If we take $c=e$, a solution for $\det M=1$ is given by: $a=1,b=2,c=e=9,d=1,f=10$. Note that the meridians $\lambda_7,\lambda_{10},\lambda_{13}$ generate the fundamental group of a fiber of the induced pencil.

The fundamental group $\pi_1(X)$ admits the following presentation 
$$ \langle \lambda_1,\lambda_6,\lambda_7\mid \begin{array}{l}
 (\lambda_7, \lambda_6) = 1,  \quad 
    \lambda_1\lambda_7^{-1}\lambda_6\lambda_1^{-1}\lambda_7^{-1}\lambda_1\lambda_7\lambda_6^{-1}\lambda_1^{-1}\lambda_7 = 1 \\
    \lambda_1\lambda_6\lambda_1\lambda_7^{-1}\lambda_6\lambda_1^{-1}\lambda_6^{-1}\lambda_1^{-1}\lambda_7\lambda_6^{-1} =    1 \\
    \lambda_7\lambda_6^{-1}\lambda_1^{-1}\lambda_7\lambda_6^{-1}\lambda_1^2\lambda_7\lambda_6^{-1}\lambda_1^{-1}   \lambda_6^{-1}\lambda_7\lambda_1 = 1 \\
    \lambda_1\lambda_7\lambda_1^{-1}\lambda_6\lambda_7^{-1}\lambda_1\lambda_6^{-1}\lambda_1^{-1}\lambda_6^{-1}\lambda_7   \lambda_1^{-1}\lambda_6\lambda_7^{-1}\lambda_1 = 1, \
    \lambda_6^7\lambda_7\lambda_6\lambda_7^9\lambda_6 = 1\\
    (\lambda_6\lambda_7^{-1})^6\lambda_1\lambda_6\lambda_7^{-1}\lambda_1   \lambda_7^{-1}\lambda_1^{-1}\lambda_7\lambda_6^{-1}\lambda_1^{-1}(\lambda_6\lambda_7^{-1})^2\lambda_1   \lambda_7^{-1}\lambda_1^{-2}\lambda_7\lambda_6^{-1}\lambda_1^{-1}= 1 \end{array}  \rangle $$
In $\pi_1(X)$ the element $\lambda_{10}$ equals $\lambda_1  \lambda_7^{	-1}  \lambda_6  \lambda_1  \lambda_6  \lambda_7^{-1}
$ and $\lambda_{13}$ equals $\lambda_6^{-1}  \lambda_1^{-1}  \lambda_7  \lambda_6^{-1}  \lambda_7^{-1}  \lambda_1^{-1}$.
\begin{figure}[h]
       \centering
\includegraphics[scale=1]{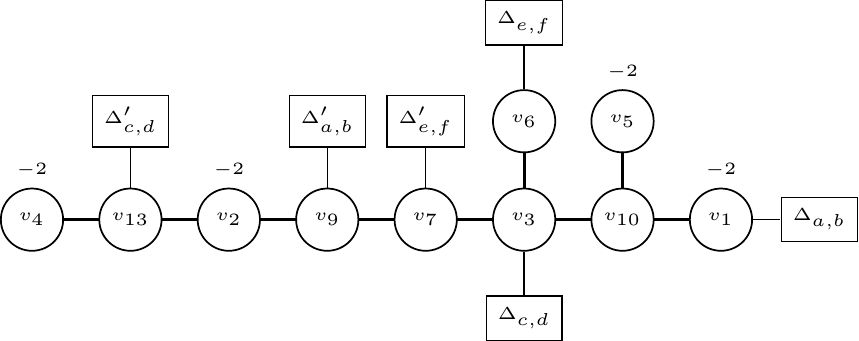}
\caption{Dual graph}
        \label{fig:DualGraphL4.2}
\end{figure}
\end{exmp}

\begin{rem} We can use the two other pencils $\Pi_2=C_{(15\mid 24\mid 67)}$ and $\Pi_3=C_{(14\mid 37 \mid 25)}$ to obtain similar groups. 
\end{rem}
\subsubsection{The arrangement $L(5)$} The arrangement $L(5)$ is the first of two arrangements of $9$ lines with $6$ double points, $6$ triple points and $2$ quadruple points, see Figure \ref{fig:LinesL5}.
 
 The fundamental group $\pi_1(\mathbb{P}^2\setminus L(5))$ admits the following presentation that is obtained using the wiring diagram in Figure \ref{fig:WiringL5}
$$ \langle \lambda_1,\ldots,\lambda_9 \mid \begin{array}{l}
[\lambda_9,\lambda_8,\lambda_7,\lambda_6], [\lambda_9,\lambda_5,\lambda_4],[\lambda_9,\lambda_3,\lambda_2,\lambda_1], [\lambda_4,\lambda_8^{\lambda_7\lambda_6}],[\lambda_8^{\lambda_7\lambda_6},\lambda_5^{\lambda_4},\lambda_1],  \\
{[\lambda_7^{\lambda_6},\lambda_4,\lambda_1],[\lambda_6,\lambda_1],[\lambda_8^{\lambda_7\lambda_6},\lambda_2^{\lambda_1}]}, [\lambda_7^{\lambda_6},\lambda_5^{\lambda_4\lambda_1},\lambda_2^{\lambda_1}],[\lambda_6,\lambda_4^{\lambda_1},\lambda_2^{\lambda_1}],\\
{[\lambda_8^{\lambda_7\lambda_6},\lambda_3^{\lambda_2\lambda_1}]}, [\lambda_7^{\gamma_6},\gamma_3^{\gamma_2\gamma_1}], [\gamma_6,\gamma_5^{\lambda_4\lambda_2\lambda_1},\lambda_3^{\lambda_2\lambda_1}], [\lambda_4,\lambda_3]
\end{array}\rangle $$

\begin{figure}[h]
\begin{subfigure}[b]{0.4\textwidth}
       \centering
     \begin{tikzpicture}[scale=4.5]

\draw[blue] (-0.3,-.033415774658) -- (.1997695,.1332565)node[above,right]{\tiny $L_1$};

\draw[brown] (-0.3,-0.06) -- (.2,.23664)node[above,right]{\tiny $L_2$};
\draw (-.24,-.1) node[below]{\tiny $L_3$} -- (0.04,.6) ;
\draw[brown] (-.07,-.1)node[below,left]{\tiny $L_4$} -- (.2,.3);
\draw[blue] (0,-.1)node[below]{\tiny $L_5$} -- (0,0.6);

\draw[blue, thick] (.24,-.1) node[below]{\tiny $L_6$} -- (-0.04,.6) ;
\draw[red](0.3,-0.06) -- (-.2,.23664)node[above,left]{\tiny $L_7$};

\draw[brown,thick] (0.3,-.033415774658) -- (-.1997695,.1332565)node[above,left]{\tiny $L_8$};

\draw[red] (-.3,0) -- (.3,0) node[right]{\tiny $L_9$};




\end{tikzpicture}
\caption{$L(5)$ in the projective plane}
\label{fig:LinesL5}
    \end{subfigure}
~ ~ ~~~  
\quad         
\begin{subfigure}[b]{0.45\textwidth}
        \centering
        \begin{tikzpicture}[scale=.6]
\draw(4.5,0)--(6,0)node [anchor=west] {$W_1$};
\draw(4.1,1.5)-- (4.5,0);
\draw(4.1,1.5)-- (3.5,1.5);
\draw(3.2,2.5)-- (3.5,1.5);
\draw(3.2,2.5)-- (3,2.5);
\draw(2.7,3.5)-- (3,2.5);
\draw(2.7,3.5)-- (2.5,3.5);
\draw(2.3,4)-- (2.5,3.5);
\draw(-1.5,4) node[anchor=east]{$W_1$} --(2.3,4);

\draw (4.5,.5)--(6,.5)node [anchor=west]{$W_2$} ;
\draw (4.1,1)  -- (4.5,.5);
\draw (4.1,.5)  -- (4.5,1);
\draw (2,1)  -- (4.1,1);
\draw (2,1)  -- (1.8,1.5);
\draw (1.5,1.5)  -- (1.8,1.5);
\draw (1.5,1.5)  -- (1.2,2.5);
\draw (1,2.5)  -- (1.2,2.5);
\draw (1,2.5)  -- (.7,3.5);
\draw(-1.5,3.5)node[anchor=east]{$W_2$}--(.7,3.5);

\draw (4.5,1)--(6,1)node [anchor=west]{$W_3$};
\draw (4.1,.5)  -- (4.5,1);
\draw (4.1,.5)  -- (.5,.5);
\draw (.3,1)  -- (.5,.5);
\draw (.3,1)  -- (.1,1);
\draw (-.1,1.5)  -- (.1,1);
\draw (-.1,1.5)  -- (-.3,1.5);
\draw (-.6,2.5)  -- (-.3,1.5);
\draw (-.6,2.5)  -- (-.8,2.5);
\draw (-1,2.5)  -- (-.8,2.5);
\draw (-1,2.5)  -- (-1.2,3);
\draw(-1.5,3) node[anchor=east]{$W_3$} --(-1.2,3);

\draw(5,1.5)--(6,1.5)node[anchor=west]{$W_4$} ;
\draw(5,1.5)  -- (4.7,2.5);
\draw(4,2.5)  -- (4.7,2.5);
\draw(4,2.5)  -- (3.8,3);
\draw(2.5,3)  -- (3.8,3);
\draw(2.5,3)  -- (.5,3);
\draw(-1,3)  -- (.5,3);
\draw(-1,3)  -- (-1.2,2.5);
\draw (-1.5,2.5)node[anchor=east]{$W_4$}--(-1.2,2.5);

\draw(-1.5,2)node[anchor=east]{$W_5$}--(6,2)node[anchor=west]{$W_5$};


\draw(5.5,2.5)--(6,2.5)node[anchor=west]{$W_6$};
\draw(5.5,2.5)-- (5.1, 4);
\draw(2.5,4)-- (5.1, 4);
\draw(2.3,3.5)-- (2.5, 4);
\draw(2.3,3.5)-- (1, 3.5);
\draw(.7,2.5)-- (1, 3.5);
\draw(.7,2.5)-- (-.3, 2.5);
\draw(-.6,1.5)-- (-.3, 2.5);
\draw (-1.5,1.5)node[anchor=east]{$W_6$}--(-.6,1.5);

\draw(5.5,3)--(6,3)node[anchor=west]{$W_7$};
\draw (5.5,3) -- (5.1,3.5);
\draw (3,3.5) -- (5.1,3.5);
\draw (3,3.5) -- (2.7,2.5);
\draw (2.7,2.5) -- (1.5,2.5);
\draw (1.2,1.5) -- (1.5,2.5);
\draw (1.2,1.5) -- (.1,1.5);
\draw (-.1,1) -- (.1,1.5);
\draw(-1.5,1)node[anchor=east]{$W_7$}--(-.1,1);

\draw(5.5,3.5)--(6,3.5)node[anchor=west]{$W_8$};
\draw(5.5,3.5)--(5.1,3);
\draw(4,3)--(5.1,3);
\draw(4,3)--(3.8,2.5);
\draw(3.8,2.5)--(3.5,2.5);
\draw(3.2,1.5)--(3.5,2.5);
\draw(3.2,1.5)--(2,1.5);
\draw(1.8,1)--(2,1.5);
\draw(1.8,1)--(2,1.5);
\draw(1.8,1)--(.5,1);
\draw(.3,.5)--(.5,1);
\draw(-1.5,.5)node[anchor=east]{$W_8$}--(.3,.5);

\draw(5.5,4)--(6,4)node[anchor=west]{$W_9$};
\draw(5.5,4)--(5.1,2.5);
\draw(5,2.5)--(5.1,2.5);
\draw(5,2.5)--(4.7,1.5);
\draw(4.5,1.5)--(4.7,1.5);
\draw(4.5,1.5)--(4.1,0);
\draw(-1.5,0)node[anchor=east]{$W_9$}--(4.1,0);

\end{tikzpicture}
        \caption{Wiring Diagram L(5)}
        \label{fig:WiringL5}
    \end{subfigure}

    \caption{The arrangement L(5)}\label{fig:L5}
\end{figure}

The expressions for the meridians around the exceptional divisors are given as follows:

$$ \begin{array}{l}
 \lambda_{10}=\lambda_9\lambda_8\lambda_7\lambda_6, \ 
\lambda_{11}=\lambda_9\lambda_5\lambda_4, \ 
\lambda_{12}=\lambda_9\lambda_3\lambda_2\lambda_1\\
\lambda_{13}=\lambda_8^{\lambda_7\lambda_6}\lambda_5^{\lambda_4}\lambda_1, \ 
\lambda_{14}=\lambda_7^{\lambda_6}\lambda_4\lambda_1; \\

\lambda_{15}=\lambda_7^{\lambda_6}\lambda_5^{\lambda_4\lambda_1}\lambda_2^{\lambda_1};
\lambda_{16}=\lambda_6\lambda_4^{\lambda_1}\lambda_2^{\lambda_1};
\lambda_{17}=\lambda_6\lambda_5^{\lambda_4\lambda_2\lambda_1}\lambda_3^{\lambda_2\lambda_1};
\end{array}
$$ 

\begin{exmp}\label{exmp:L5.1} The arrangement $L(5)$ admits as a subarrangement the so-called deleted $B_3$-arrangement \cite[Example 10.6]{inproceedings} that is depicted in Figure \ref{fig:LinesL5} in colors. The deleted $B_3$-arrangement has a special pencil $\Pi$ that is induced by the positive dimensional component of its characteristic variety that does not passes through the origin, see \cite[Example 10.6]{inproceedings} and references there-in. We will use this pencil in order to construct homology planes with infinite fundamental group.

The deleted $B_3$ arrangement is obtained from an arrangement of nine lines called $B_3$ that admits the structure of a $(3,4)$-multinet (see \cite[Example 3.6, Figure 1.b]{falk_yuzvinsky_2007}) by deleting a line of weight two. The pencil $\Pi$ is obtained by restricting that coming from $B_3$. The line that we are removing from $B_3$ will have multiplicity two in the associated pencil.

Take $I=\{10,12,13,14,15,16\}$ and $P_1=\{D_3\cap D_4, D_5\cap D_{11}\}$. Denote by $\Pi$ the pencil described above. It will have as sections $D_3$ and $D_{11}$. The pencil $\Pi$ induces the following orbifold structure on $\mathcal{X}=\mathcal{X}(\mathbb{P}^1,D,r)$ with $D= 0 + 1+\infty$ and $r=(c,2,b)$. 

It can be seen that the determinant of the matrix $M$ constructed as in \ref{sss:algorithm} equals $-2 a c + 3 b c + 6 b d$.

By choosing the weights $a=7,b=3,c=7,d=2$ we obtain a homology plane $X$ with the following presentation for $\pi_1(X)$
$$\langle \lambda_3,\lambda_4,\lambda_5\mid	   (\lambda_3, \lambda_4),
      (\lambda_5\lambda_4)^2=(\lambda_4\lambda_5)^2,
    \lambda_3\lambda_5\lambda_3\lambda_5^{-1}\lambda_4\lambda_5\lambda_3\lambda_4\lambda_5,\lambda_3^4\lambda_4\lambda_3^3\lambda_4^2, (\lambda_5^3\lambda_4\lambda_5\lambda_4)^2\lambda_5^3 \rangle$$
Note that if $\lambda_3=1$ we obtain the presentation of $\pi_1(\mathcal{X})$ a hyperbolic triangle group with weights $(2,3,7)$.

The dual graph of the divisor at infinity is absolutely minimal, see Figure \ref{fig:DualGraphL5}. It is also not equivalent to the graphs of Lemma \ref{lem:DualGraphL1} and therefore the homology plane $X$ is of general type.

\begin{figure}[h]
       \centering
 \includegraphics[scale=1]{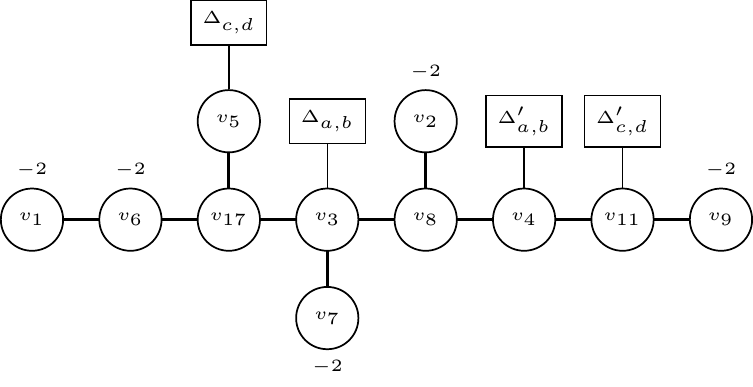}
\caption{Dual graph}
        \label{fig:DualGraphL5}
\end{figure}
\end{exmp}
\begin{rem}
By modifying the maximal tree in example \ref{exmp:L5.1} and using again the pencil induced by the deleted $B3$-arrangement we can obtain homology planes from $L(6)$ that admits a fibration with general fiber $\mathbb{P}^1\setminus\{3-\text{points}\}$ and inducing a surjective map to an infinite fundamental group of an orbicurve. This is similar to the construction in example \ref{exmp:L4.2}. The presentation of the fundamental group is complicated and we omit it.
\end{rem}
\subsubsection{The arrangement $L(6)$} The arrangement $L(6)$ is the second arrangement of nine lines with six double points, six triples and two quadruple points. It is the only arrangement in the list of \cite{Dieck1989} that is not defined over the reals, see figure \ref{fig:Lines6}.
\begin{figure}
 \centering
         \scalebox{0.90	}{

        \begin{tikzpicture}[scale=.5]

\draw(2.3,0)--(5.7,0)node [anchor=west] {\tiny $W_1$};
\draw (2.1,.5) -- (2.3,0);
\draw (1.5,.5) -- (2.1,.5);
\draw (1.3,0) -- (1.5,.5);
\draw (1.3,0) -- (-1.4,0);
\draw (-1.6,.5) -- (-1.4,0);
\draw (-1.6,.5) -- (-1.8,.5);
\draw (-2,1) -- (-1.8,.5);
\draw (-3.8, 1) -- (-2,1);
\draw (-3.8, 1) -- (-4, 1.5);
\draw (-4, 1.5) -- (-4.2,1.5);
\draw (-4.4,2) -- (-4.2,1.5);
\draw (-5,2) -- (-4.4,2);
\draw (-5,2) -- (-5.2,2.5);
\draw (-7.4,2.5) -- (-5.2,2.5);
\draw (-7.4,2.5) -- (-7.46667,2.333);
\draw (-7.6,2)--(-7.5333,2.16667);
\draw (-9.1,2)--(-7.6,2);
\draw (-9.1,2)--(-9.3,1.5);
\draw (-11,1.5)--(-9.3,1.5);
\draw (-11,1.5)--(-11.2,1);
\draw (-12.2,1)--(-11.2,1);
\draw (-12.2,1)--(-12.4,.5);
\draw (-13.5,.5)--(-12.4,.5);
\draw (-13.5,.5)--(-13.7,1);
\draw (-15,1)--(-13.7,1);
\draw (-15,1)--(-15.2,1.5);
\draw (-15.9,1.5)--(-15.2,1.5);
\draw (-15.9,1.5)--(-16.1,1);
\draw (-16.2,1)--(-16.1,1);
\draw (-16.2,1)--(-16.5,0);
\draw (-16.6,0)--(-16.5,0);
\draw (-16.6,0)--(-16.6667,.1333);
\draw (-16.8,.5)--(-16.733,.3333);
\draw (-16.9,.5)--(-16.8,.5);
\draw (-16.9,.5)--(-16.9667,.6667);
\draw (-17.1,1)--(-17.0333,.8333);
\draw (-17.2,1)--(-17.1,1);
\draw (-17.2,1)--(-17.6,2.5);
\draw (-17.7,2.5)--(-17.6,2.5);
\draw (-17.7,2.5)--(-17.7667,2.333);
\draw (-17.9,2)--(-17.8333,2.16667);
\draw (-18,2)--(-17.9,2);
\draw (-18,2)--(-18.06667,1.8333);
\draw (-18.2,1.5)--(-18.1333,1.6667);
\draw (-18.3,1.5)--(-18.2,1.5);
\draw (-18.3,1.5)--(-18.36667,1.333);
\draw (-18.5,1)--(-18.4333,1.16667);
\draw (-18.9,1)--(-18.5,1);
\draw (-18.9,1)--(-18.96667,.8333);
\draw (-19.1,.5)--(-19.0333,.66667);
\draw (-20.7,.5)--(-19.1,.5);
\draw (-20.7,.5)--(-20.76667,.3333);
\draw (-20.9,0)--(-20.8333,.16667);
\draw(-23.1,0) node[anchor=east]{\tiny $W_1$} --(-20.9,0);

\draw (4,.5)--(5.7,.5)node [anchor=west]{\tiny $W_2$} ;
\draw (3.9333,0.6667)  -- (4,.5);
\draw (3.8,1) -- (3.86667,0.8333);
\draw (3.8,1) -- (3.6,1);
\draw (3.6,1) -- (3.5333,1.16667);
\draw (3.4,1.5)-- (3.46667, 1.3333);
\draw (3.2,1.5) -- (3.4,1.5);
\draw (3.2,1.5) -- (3.1333,1.6667);
\draw (3,2) -- (3.06667,1.8333);
\draw (2.8,2) -- (3,2);
\draw (2.8,2) -- (2.7333,2.16667);
\draw (2.6,2.5) -- (2.66667,2.333);
\draw (.3,2.5) -- (2.6,2.5);
\draw (.3,2.5) -- (.23333,2.3333);
\draw (.1,2) -- (0.16667,2.16667);
\draw (.1,2) -- (-.1,2);
\draw (-.4,1) -- (-.1,2);
\draw (-.4,1) -- (-1.8,1);
\draw (-1.86667,.8333) -- (-1.8,1);
\draw (-2,.5) -- (-1.9333,.6667);
\draw (-2.2,.5) -- (-2,.5);
\draw (-2.2667,.3333) -- (-2.2,.5);
\draw (-2.4,0) -- (-2.333,.16667); 
\draw (-5.4,0) -- (-2.4,0);
\draw (-5.4,0) -- (-5.6,.5);
\draw (-5.8,.5) -- (-5.6,.5);
\draw (-5.8,.5) -- (-6,1);
\draw (-6.6,1) -- (-6,1);
\draw (-6.6,1) -- (-6.8,1.5);
\draw (-7,1.5) -- (-6.8,1.5);
\draw (-7,1.5) -- (-7.2,2);
\draw (-7.4,2) -- (-7.2,2);
\draw (-7.4,2) -- (-7.6,2.5);
\draw (-9.4,2.5) -- (-7.6,2.5);
\draw (-9.4,2.5) -- (-9.6,2);
\draw (-9.7,2) -- (-9.6,2);
\draw (-9.7,2) -- (-10,1);
\draw (-10.7,1) -- (-10,1);
\draw (-10.7,1) -- (-10.9,.5);
\draw (-11.6,.5) -- (-10.9,.5);
\draw (-11.6,.5) -- (-11.8,0);
\draw (-11.9,0) -- (-11.8,0);
\draw (-11.9,0) -- (-11.9667,0.1667);
\draw (-12.1,.5)--(-12.0333,.3333);
\draw (-12.2,.5)--(-12.1,.5);
\draw (-12.2,.5)--(-12.2667,.6667);
\draw (-12.4,1)--(-12.333,.8333);
\draw (-12.4,1)--(-12.5,1);
\draw (-12.7,1.5)--(-12.5,1);
\draw (-12.7,1.5)--(-12.8,1.5);
\draw (-13.1,2.5)--(-12.8,1.5);
\draw (-13.8,2.5)--(-13.1,2.5);
\draw (-13.8,2.5)--(-14,3);
\draw (-14,3)--(-14.4,3);
\draw (-14.6,3.5)--(-14.4,3);
\draw(-23.1,3.5)node[anchor=east]{\tiny $W_2$}--(-14.6,3.5);

\draw (4.5,1)--(5.7,1)node [anchor=west]{\tiny $W_3$};
\draw (4.2,2)--(4.5,1);
\draw (4.2,2)--(3.2,2);
\draw (3,1.5)--(3.2,2);
\draw (1.9,1.5) -- (3,1.5);
\draw (1.9,1.5) -- (1.7,1);
\draw (1.1,1) -- (1.7,1);
\draw (1.1,1) -- (.9,.5);
\draw (-1.4,.5) -- (.9,.5);
\draw (-1.46667,.333) -- (-1.4,.5);
\draw (-1.6,0) -- (-1.5333,.16667);
\draw (-2.2,0) -- (-1.6,0);
\draw (-2.2,0) -- (-2.4,.5);
\draw (-5.4,.5) -- (-2.4,.5);
\draw (-5.46667,.333)--(-5.4,.5);
\draw (-5.6,0)--(-5.5333,.16667);
\draw (-7.8,0)--(-5.6,0);
\draw (-7.8,0)--(-8.1,1);
\draw (-8.2,1)--(-8.1,1);
\draw (-8.2,1)--(-8.2667,.8333);
\draw (-8.4,.5)--(-8.3333,.66667);
\draw (-8.4,.5)--(-8.5,.5);
\draw (-8.5667,.3333)--(-8.5,.5);
\draw (-8.7,0)--(-8.6333,.16667);
\draw (-11.6,0)--(-8.7,0);
\draw (-11.6,0)--(-11.8,0.5);
\draw (-11.9,.5)--(-11.8,0.5);
\draw (-11.9,.5)--(-12.1,0);
\draw (-16.2,0)--(-12.1,0);
\draw (-16.2,0)--(-16.5,1);
\draw (-16.9,1)--(-16.5,1);
\draw (-16.9,1)--(-17.1,.5);
\draw (-18.9,.5)--(-17.1,.5);
\draw (-18.9,.5)--(-19.1,1);
\draw (-19.2,1)--(-19.1,1);
\draw (-19.2,1)--(-19.4,1.5);
\draw (-20.1,1.5)--(-19.4,1.5);
\draw (-20.1,1.5)--(-20.3,2);
\draw (-21.6,2)--(-20.3,2);
\draw (-21.6,2)--(-21.6667,1.8333);
\draw (-21.8,1.5)--(-21.7333,1.6667);
\draw (-22.5,1.5)--(-21.8,1.5);
\draw (-22.5,1.5)--(-22.7,2);
\draw (-22.8,2)--(-22.7,2);
\draw (-22.8,2)--(-23,1.5);
\draw(-23.1,1.5) node[anchor=east]{\tiny $W_3$} --(-23,1.5);

\draw(3.6,1.5)--(5.7,1.5)node[anchor=west]{\tiny $W_4$} ;
\draw(3.6,1.5)  -- (3.4,1);
\draw(1.9,1)  -- (3.4,1);
\draw(1.9,1)  -- (1.8333,1.16667);
\draw(1.7,1.5) -- (1.76667,1.3334);
\draw(.7,1.5) -- (1.7,1.5);
\draw(.7,1.5) -- (.5,2);
\draw(.3,2) -- (.5,2);
\draw(.1,2.5)--(.3,2);
\draw(.1,2.5)--(-.6,2.5);
\draw(-.8,2)--(-.6,2.5);
\draw(-.8,2)--(-4.2,2);
\draw(-4.26667,1.8333)--(-4.2,2);
\draw(-4.4,1.5)--(-4.3333,1.6667);
\draw(-6.2,1.5)--(-4.4,1.5);
\draw(-6.2,1.5)--(-6.4,2);
\draw(-7,2)--(-6.4,2);
\draw(-7,2)--(-7.06667,1.8333);
\draw(-7.2,1.5)--(-7.1333,1.66667);
\draw(-9.1,1.5)--(-7.2,1.5);
\draw(-9.1,1.5)--(-9.16667,1.6667);
\draw(-9.3,2)--(-9.2333,1.8333);
\draw(-9.3,2)--(-9.4,2);
\draw(-9.4667,2.16667)--(-9.4,2);
\draw(-9.6,2.5)--(-9.5333,2.333);
\draw(-9.6,2.5)--(-10.1,2.5);
\draw(-10.1667,2.333)--(-10.1,2.5);
\draw(-10.3,2)--(-10.2333,2.16667);
\draw(-17.2,2)--(-10.3,2);
\draw(-17.2,2)--(-17.6,1.5);
\draw(-18,1.5)--(-17.6,1.5);
\draw(-18,1.5)--(-18.2,2);
\draw(-18.6,2)--(-18.2,2);
\draw(-18.6,2)--(-18.6667,1.8333);
\draw(-18.8,1.5)--(-18.7333,1.6667);
\draw(-19.2,1.5)--(-18.8,1.5);
\draw(-19.2,1.5)--(-19.26667,1.333);
\draw(-19.4,1)--(-19.3333,1.16667);
\draw(-20.4,1)--(-19.4,1);
\draw(-20.4,1)--(-20.6,1.5);
\draw(-21.3,1.5)--(-20.6,1.5);
\draw(-21.3,1.5)--(-21.36667,1.333);
\draw(-21.5,1)--(-21.4333,1.16667);
\draw (-23.1,1)node[anchor=east]{\tiny $W_4$}--(-21.5,1);

\draw(4.9,2)--(5.7,2)node[anchor=west]{\tiny $W_5$};
\draw(4.8333,2.16667)--(4.9,2);
\draw(4.7,2.5)--(4.766667,2.33334);
\draw (4.7, 2.5) -- (2.8,2.5);
\draw (2.6, 2) -- (2.8,2.5);
\draw (.7,2)--(2.6,2);
\draw (.7,2)--(.63333,1.8333);
\draw (.5,1.5) --(.56667, 1.6667);
\draw (-3.8,1.5) -- (.5,1.5);
\draw (-3.8,1.5) -- (-3.86667,1.3333);
\draw (-3.9333,1.16667) -- (-4,1);
\draw (-5.8,1) -- (-4,1);
\draw (-5.8,1) -- (-5.86667,.8333);
\draw (-6,.5) -- (-5.9333,.6333);
\draw (-8.2,.5)--(-6,.5);
\draw (-8.2,.5)--(-8.4,1);
\draw (-8.8,1)--(-8.4,1);
\draw (-8.8,1)--(-8.8667,.8333);
\draw (-9,.5)--(-8.9333,.6667);
\draw (-10.7,.5)--(-9,.5);
\draw (-10.7,.5)--(-10.7667,.6667);
\draw (-10.9,1)--(-10.8333,.8333);
\draw (-10.9,1)--(-11,1);
\draw (-11,1)--(-11.06667,1.16667);
\draw (-11.2,1.5)--(-11.1333,1.333);
\draw (-11.2,1.5)--(-12.5,1.5);
\draw (-12.5667,1.333)--(-12.5,1.5);
\draw (-12.7,1)--(-12.6333,1.1667);
\draw (-13.2,1)--(-12.7,1);
\draw (-13.2,1)--(-13.2667,1.1667);
\draw (-13.4,1.5)--(-13.333,1.333);
\draw (-15,1.5)--(-13.4,1.5);
\draw (-15,1.5)--(-15.0667,1.333);
\draw (-15.2,1)--(-15.1333,1.16667);
\draw (-15.2,1)--(-15.3,1);
\draw (-15.5,.5)--(-15.3,1);
\draw (-15.5,.5)--(-15.6,.5);
\draw (-15.8,1)--(-15.6,.5);
\draw (-15.8,1)--(-15.9,1);
\draw (-15.9667,1.1667)--(-15.9,1);
\draw (-16.1,1.5)--(-16.0333,1.333);
\draw (-17.2,1.5)--(-16.1,1.5);
\draw (-17.2,1.5)--(-17.6,2);
\draw (-17.7,2)--(-17.6,2);
\draw (-17.7,2)--(-17.9,2.5);
\draw (-19.5,2.5)--(-17.9,2.5);
\draw (-19.5,2.5)--(-19.7,3);
\draw (-22.2,3)--(-19.7,3);
\draw (-22.2,3)--(-22.26667,2.8333);
\draw (-22.4,2.5)--(-22.3333,2.6667);
\draw(-23.1,2.5) node[anchor=east]{\tiny $W_5$} --(-22.4,2.5);

\draw(5.5,2.5)--(5.7,2.5)node[anchor=west]{\tiny $W_6$};
\draw(5.5,2.5)-- (5.1, 4);
\draw(-2.6,4)-- (5.1, 4);
\draw(-2.6,4)-- (-2.8, 3.5);
\draw(-3,3.5)-- (-2.8, 3.5);
\draw(-3,3.5)-- (-3.2, 3);
\draw(-4.6,3)-- (-3.2, 3);
\draw(-4.8,2.5)-- (-4.6, 3);
\draw(-5,2.5)--(-4.8,2.5);
\draw(-5,2.5)--(-5.06667,2.333);
\draw(-5.2,2)--(-5.1333,2.16667);
\draw(-6.2,2)--(-5.2,2);
\draw(-6.2,2)--(-6.4,1.5);
\draw(-6.6,1.5)--(-6.4,1.5);
\draw(-6.6,1.5)--(-6.6667,1.333);
\draw(-6.8,1)--(-6.7333,1.16667);
\draw(-7.8,1)--(-6.8,1);
\draw(-7.8,1)--(-8.1,0);
\draw(-8.5,0)--(-8.1,0);
\draw(-8.7,.5)--(-8.5,0);
\draw(-8.7,.5)--(-8.8,.5);
\draw(-9,1)--(-8.8,.5);
\draw(-9,1)--(-9.7,1);
\draw(-10,2)--(-9.7,1);
\draw(-10,2)--(-10.1,2);
\draw(-10.3,2.5)--(-10.1,2);
\draw(-10.3,2.5)--(-10.4,2.5);
\draw(-10.6,3)--(-10.4,2.5);
\draw(-11.3,3)--(-10.6,3);
\draw(-11.3,3)--(-11.5,3.5);
\draw(-14.1,3.5)--(-11.5,3.5);
\draw(-14.1,3.5)--(-14.3,4);
\draw(-14.4,4)--(-14.3,4);
\draw (-23.1,4)node[anchor=east]{\tiny $W_6$}--(-14.4,4);

\draw(5.5,3)--(5.7,3)node[anchor=west]{\tiny $W_7$};
\draw (5.5,3) -- (5.1,3.5);
\draw (-2.6,3.5) -- (5.1,3.5);
\draw (-2.6,3.5) -- (-2.66667,3.6667);
\draw (-2.8,4) -- (-2.7333,3.8333);
\draw (-14.1,4) -- (-2.8,4);
\draw (-14.1,4) -- (-14.1667,3.833);
\draw (-14.3,3.5)--(-14.233,3.667);
\draw (-14.3,3.5)--(-14.4,3.5);
\draw (-14.6,3)--(-14.4,3.5);
\draw (-14.6,3)--(-14.7,3);
\draw (-14.9,2.5)--(-14.7,3);
\draw (-14.9,2.5)--(-17.2,2.5);
\draw (-17.6,1)--(-17.2,2.5);
\draw (-17.6,1)--(-18.3,1);
\draw (-18.5,1.5)--(-18.3,1);
\draw (-18.6,1.5)--(-18.5,1.5);
\draw (-18.6,1.5)--(-18.8,2);
\draw (-19.8,2)--(-18.8,2);
\draw (-19.8,2)--(-20,2.5);
\draw (-20,2.5)--(-21.9,2.5);
\draw (-21.9667,2.333)--(-21.9,2.5);
\draw (-22.1,2)--(-22.0333,2.16667);
\draw (-22.5,2)--(-22.1,2);
\draw (-22.5,2)--(-22.5667,1.8333);
\draw (-22.7,1.5)--(-22.6333,1.6667);
\draw (-22.8,1.5)--(-22.7,1.5);
\draw (-22.8,1.5)--(-23,2);
\draw(-23.1,2)node[anchor=east]{\tiny $W_7$}--(-23,2);

\draw(5.5,3.5)--(5.7,3.5)node[anchor=west]{\tiny $W_8$};
\draw(5.5,3.5)--(5.1,3);
\draw(-1,3)--(5.1,3);
\draw(-1,3)--(-1.2,2.5);
\draw(-4.6,2.5) -- (-1.2,2.5);
\draw(-4.6,2.5) -- (-4.6667,2.6667);
\draw(-4.8,3) -- (-4.7333,2.8333);
\draw(-10.4,3)--(-4.8,3);
\draw(-10.4,3)--(-10.4667,2.8333);
\draw(-10.6,2.5)--(-10.533,2.6667);
\draw(-12.8,2.5)--(-10.6,2.5);
\draw(-12.8,2.5)--(-13.1,1.5);
\draw(-13.2,1.5)--(-13.1,1.5);
\draw(-13.2,1.5)--(-13.4,1);
\draw(-13.5,1)--(-13.4,1);
\draw(-13.5,1)--(-13.5667,.8333);
\draw(-13.7,.5)--(-13.6333,.6667);
\draw(-13.7,.5)--(-15.3,.5);
\draw(-15.5,1)--(-15.3,.5);
\draw(-15.5,1)--(-15.6,1);
\draw(-15.667,.8333)--(-15.6,1);
\draw(-15.8,.5)--(-15.7333,.6667);
\draw(-16.6,.5)--(-15.8,.5);
\draw(-16.6,.5)--(-16.8,0);
\draw(-20.7,0)--(-16.8,0);
\draw(-20.7,0)--(-20.9,.5);
\draw(-21,.5)--(-20.9,.5);
\draw(-21,.5)--(-21.2,1);
\draw(-21.3,1)--(-21.2,1);
\draw(-21.3,1)--(-21.5,1.5);
\draw(-21.6,1.5)--(-21.5,1.5);
\draw(-21.6,1.5)--(-21.8,2);
\draw(-21.9,2)--(-21.8,2);
\draw(-21.9,2)--(-22.1,2.5);
\draw(-22.2,2.5)--(-22.1,2.5);
\draw(-22.2,2.5)--(-22.4,3);
\draw(-23.1,3)node[anchor=east]{\tiny $W_8$}--(-22.4,3);

\draw(5.5,4)--(5.7,4)node[anchor=west]{\tiny $W_9$};
\draw(5.5,4)--(5.1,2.5);
\draw(4.9,2.5)--(5.1,2.5);
\draw(4.9,2.5)--(4.7,2);
\draw(4.5,2)--(4.7,2);
\draw(4.5,2)--(4.2,1);
\draw(4,1)--(4.2,1);
\draw(4,1)--(3.8,.5);
\draw(2.3,.5) -- (3.8,.5);
\draw(2.3,.5) -- (2.1,0);
\draw(1.5,0) -- (2.1,0);
\draw(1.5,0) -- (1.43334,.16667);
\draw (1.3,.5) -- (1.3667,.3333);
\draw (1.1,.5) -- (1.3,.5);
\draw (1.03334,.6667) -- (1.1,.5);
\draw(.9,1) -- (.96667,.8333); 
\draw(-.1,1) -- (.9,1);
\draw(-.1,1) -- (-.4,2);
\draw(-.6,2) -- (-.4,2);
\draw(-.66667,2.16667) -- (-.6,2);
\draw (-.8,2.5) -- (-.7333,2.333);
\draw (-1,2.5) -- (-.8,2.5);
\draw (-1.0667,2.6667) -- (-1,2.5);
\draw (-1.2,3) -- (-1.1333,2.8333);
\draw (-3,3) -- (-1.2,3);
\draw (-3,3) -- (-3.06667,3.16667);
\draw (-3.2,3.5)--(-3.1333,3.3333);
\draw (-11.3,3.5)--(-3.2,3.5);
\draw (-11.3,3.5)--(-11.36667,3.333);
\draw (-11.5,3)--(-11.4333,3.16667);
\draw (-13.8,3)--(-11.5,3);
\draw (-13.8,3)--(-13.8667,2.833);
\draw (-14,2.5)--(-13.933,2.6333);
\draw (-14,2.5)--(-14.7,2.5);
\draw (-14.7667,2.6667)--(-14.7,2.5);
\draw (-14.9,3)--(-14.8333,2.8333);
\draw (-14.9,3)--(-19.5,3);
\draw (-19.56667,2.8333)--(-19.5,3);
\draw (-19.7,2.5)--(-19.6333,2.6667);
\draw (-19.8,2.5)--(-19.7,2.5);
\draw (-19.8,2.5)--(-19.86667,2.3333);
\draw (-20,2)--(-19.9333,2.16667);
\draw (-20.1,2)--(-20,2);
\draw (-20.1,2)--(-20.16667,1.8333);
\draw (-20.3,1.5)--(-20.2333,1.66667);
\draw (-20.4,1.5)--(-20.3,1.5);
\draw (-20.4,1.5)--(-20.46667,1.333);
\draw (-20.6,1)--(-20.5333,1.1667);
\draw (-21,1)--(-20.6,1);
\draw (-21,1)--(-21.0667,.8333);
\draw (-21.2,.5)--(-21.1333,.6667);
\draw(-23.1,.5)node[anchor=east]{\tiny $W_9$}--(-21.2,0.5);

\end{tikzpicture}}
        \caption{Wiring Diagram L(6)}
        \label{fig:WiringL6}
\end{figure}
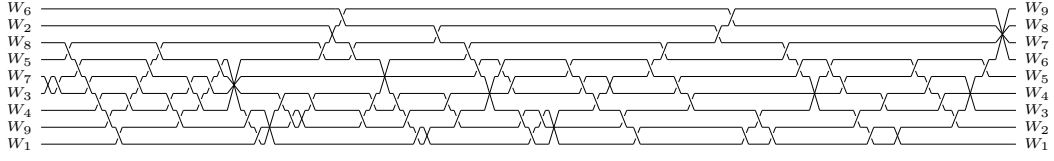
\begin{figure}[h]
\centering
\begin{tikzpicture}[scale=.7]
\draw(-2,3)node[anchor=east]{\tiny $L_6$}--(2,3); 
\draw(2,3)--(3,1);
\draw(-2,2)node[anchor=east]{\tiny $L_9$}--(3,2); 
\draw(-2,1)node[anchor=east]{\tiny $L_8$}--(2,1); 
\draw(2,1)--(3,3);

\draw(-1,-0)node[below]{\tiny $L_4$}--(-1,4);
\draw(-1,4)--(1,5);
\draw(-0,-0)node[below]{\tiny $L_5$}--(-0,5);
\draw(1,-0)node[below]{\tiny $L_1$}--(1,4);
\draw(1,4)--(-1,5);
\draw(-2,0)--(3,5)node[anchor=west]{\tiny $L_2$};
\draw(-2,.5)--(-1.5,0);
\draw[dotted](-2,.5)--(-1.5,1.5);
\draw(-1.5,1.5)--(.5,3.5);
\draw[dotted](-1.5,0)--(.75,.75);
\draw(.75,.75)--(1.25,1.25);
\draw[dotted] (.5,3.5)--(1.25,4.25);
\draw (1.25,4.25)--(2,5)node[anchor=west]{\tiny $L_3$};

\draw(-2,4.5)node[left]{\tiny $L_7$}--(1.5,4.5);
\draw  (1.5,4.5) to[out=0,in=90] 
(2.5,3.5) ;
\draw(2.5,0)--(2.5,3.5);

\end{tikzpicture}
  \caption{$L(6)$ in the projective plane}
        \label{fig:Lines6}
\end{figure}
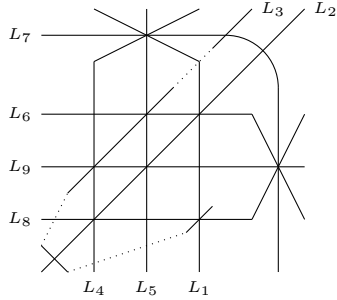

A presentation for $\pi_1(\mathbb{P}^2\setminus L(6))$ can be obtained from the wiring diagram from Figure \ref{fig:WiringL6}.

Using again this diagram we can obtain the expressions for the meridians around the exceptional divisors $D_{10},D_{11},\ldots, D_{17}$. We present here the expression of the first four, the rest can be more easily computed by using a computer (Magma):
$$
\begin{array}{lll}
\lambda_{10}:=\lambda_9\lambda_8\lambda_7\lambda_6 & \lambda_{11}:=\lambda_9^{\lambda_5}\lambda_4\lambda_3 & \lambda_{12}:= \lambda_2\lambda_5^{\lambda_2}\lambda_9^{\lambda_5\lambda_2} \\
\lambda_{13}:= \lambda_6^{\lambda_8\lambda_7\lambda_6\lambda_9^{\lambda_5\lambda_2}}\lambda_5^{\lambda_9\lambda_3\lambda_2	}\lambda_3^{\lambda_4\lambda_3\lambda_9^{\lambda_5}} & \ldots  &  \\

\end{array}
$$

\begin{exmp} Several homology planes arise from this arrangement as follows: let $I=\{10,12,13,14,16,17\}$ and $M(L(6),I)=\bar{X}\setminus D'$. It is not difficult to see that the dual graph of $D'$ has two independent cycles. We can cut these cycles by expanding the edges corresponding to $P_1=\{D_2\cap D_7, D_4\cap D_{11}\}$. In homology, this expansion can be written as $a[\lambda_2]+b[\lambda_7]=0$ and $c[\lambda_4]+d[\lambda_{11}]=0$. By letting the matrix $M$ be as in \ref{sss:algorithm}, we obtain that $\det M=-2 a c + 3 a d + 9 b d$. Thus, several homology planes can arise from this configuration.

Using Magma, the presentation of $\pi_1(\mathbb{P}^2\setminus L(6))$ and the expressions for the meridians around exceptional divisor obtained by the wiring diagram of $L(6)$, we can obtain the following presentation for $\pi_1(M(L(6),I))$.
$$\pi_1(M(L(6),I))\cong \langle \lambda_5,\lambda_6 \mid \lambda_6\lambda_5^{-1}\lambda_6=\lambda_5^{-1}(\lambda_6^2)^{\lambda_1}, \lambda_6\lambda_5^{-1}\lambda_6\lambda_5=(\lambda_6^2)^{\lambda_5^{-1}} \rangle $$

For several values of $a,b,c,d$ such that $\det M=1$ (for example $a=2,b=3,c=8,d=1$), we have checked using Magma that the associated homology planes are in fact contractible.

The dual graph of the divisor associated to the homology plane obtained with $a=2,b=3,c=8,d=1$ is presented in Figure \ref{fig:DualGraphL6}. Note that this graph is absolutely minimal and not equivalent to those of Lemma \ref{lem:DualGraphL1}, neither to those obtained by Zaidenberg \cite{AST_1993__217__251_0}.
\begin{figure}[h]
       \centering
\includegraphics[scale=1]{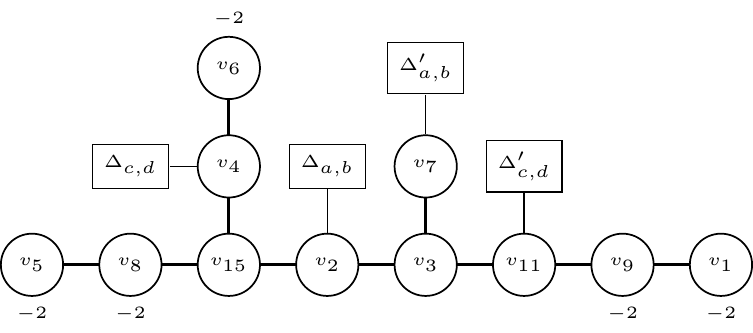}
\caption{Dual graph}
        \label{fig:DualGraphL6}
\end{figure}

We have obtained no homology plane with infinite fundamental group from this arrangement.
\end{exmp}

\subsubsection{The arrangement $L(7)$} The arrangement $L(7)$ consists of ten lines with $8$ double points, $7$ triple points, one quadruple and one quintuple point. See figure \ref{fig:LinesL7}.

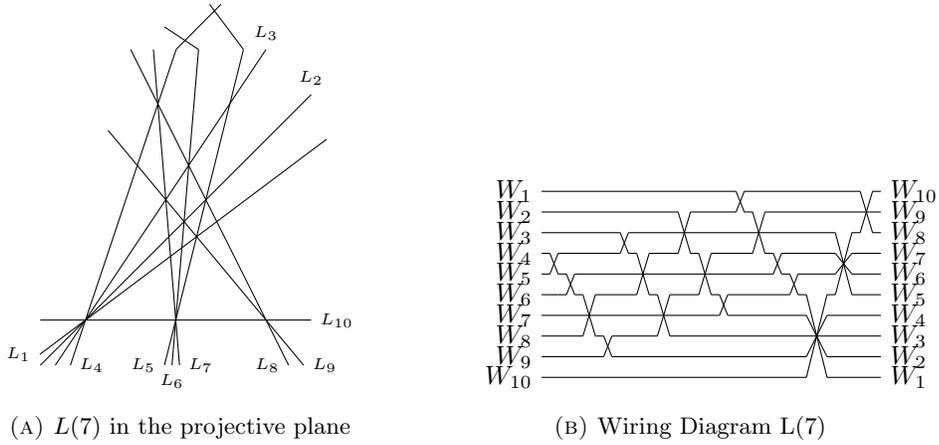
\begin{figure}[h]

\begin{subfigure}[b]{0.4\textwidth}
       \centering
     \begin{tikzpicture}[scale=3]
\draw (-0.2024947,-.15209662752)node[below,left]{\tiny $L_1$} -- (1.0683059970951,.8020136165);
\draw (-0.2,-.2) -- (1,1)node[above]{\tiny $L_2$};
\draw (-0.13371657,-.2) -- (.8,1.2)node[above]{\tiny $L_3$};
\draw (-0.06692637,-.2)node[below,right]{\tiny $L_4$} -- (.40156,1.2);
\draw (.6,1.4) -- (.40156,1.2);

\draw (0.35,-.2)node[below,left]{\tiny $L_5$} -- (0.7,1.2);
\draw (0.55,1.4) -- (0.7,1.2);

\draw (0.383181,-.2)node[below]{\tiny $L_6$} -- (0.5009132773537,1.2);
\draw (0.35,1.3) -- (0.5009132773537,1.2);

\draw (0.416539977841,-.2)node[below,right]{\tiny $L_7$} -- (0.300769,1.2);
\draw (0.9,-.2)node[below,left]{\tiny $L_8$} -- (0.2,1.2);
\draw (0.9663622687779,-.2)node[below,right]{\tiny $L_9$} -- (0.1,0.8415369724666);
\draw (-0.2,0) -- (1,0)node[right]{\tiny $L_{10}$};




\end{tikzpicture}
\caption{$L(7)$ in the projective plane}
\label{fig:LinesL7}
    \end{subfigure}
~ ~ ~~~  
\quad         
\begin{subfigure}[b]{0.45\textwidth}
        \centering
        \begin{tikzpicture}[scale=.55]
\draw(4,0)--(5.3,0)node [anchor=west] {$W_1$};
\draw(3.5,2)--(4,0);
\draw(3.5,2)--(3.3,2);
\draw(3.1,2.5)--(3.3,2);
\draw(3.1,2.5)--(2.9,2.5);
\draw(2.7,3)--(2.9,2.5);
\draw(2.7,3)--(2.5,3);
\draw(2.2,4)--(2.5,3);
\draw(2.2,4)--(2,4);
\draw(1.8,4.5)--(2,4);
\draw (-2.9,4.5) node [anchor=east] {$W_1$}  -- (1.8,4.5);

\draw (4,.5)--(5.3,.5)node [anchor=west]{$W_2$} ;
\draw(3.5,1.5)--(4,.5);
\draw(3.5,1.5)--(1.6,1.5);
\draw(1.4,2)--(1.6,1.5);
\draw(1.4,2)--(1.2,2);
\draw(.9,3)--(1.2,2);
\draw(.9,3)--(.7,3);
\draw(.4,4)--(.7,3);
\draw(.4,4)--(-2.9,4)node [anchor=east] {$W_2$} ;

\draw (0.2,1)--(5.3,1)node [anchor=west]{$W_3$};
\draw (0.2,1)--(-0.1,2);
\draw (-0.3,2)--(-0.1,2);
\draw (-0.3,2)--(-0.6,3);
\draw (-0.8,3)--(-0.6,3);
\draw (-0.8,3)--(-1,3.5);
\draw(-2.9,3.5)node [anchor=east] {$W_3$}--(-1,3.5);

\draw(4,1.5)--(5.3,1.5)node[anchor=west]{$W_4$} ;
\draw(3.5,.5)--(4,1.5);
\draw(3.5,.5)--(-1.2,.5);
\draw(-1.4,1)--(-1.2,.5);
\draw(-1.4,1)--(-1.6,1);
\draw(-1.9,2)--(-1.6,1);
\draw(-1.9,2)--(-2.1,2);
\draw(-2.3,2.5)--(-2.1,2);
\draw(-2.3,2.5)--(-2.5,2.5);
\draw(-2.7,3)--(-2.5,2.5);
\draw(-2.7,3)--(-2.9,3)node [anchor=east] {$W_4$};

\draw(4.6,2)--(5.3,2)node[anchor=west]{$W_5$};
\draw(4.6,2)--(4.2,3.5);
\draw(-.8,3.5)--(4.2,3.5);
\draw(-.8,3.5)--(-1,3);
\draw(-2.5,3)--(-1,3);
\draw(-2.5,3)--(-2.7,2.5);
\draw(-2.7,2.5)--(-2.9,2.5)node[anchor=east]{$W_5$};

\draw(4.6,2.5)--(5.3,2.5)node[anchor=west]{$W_6$};
\draw(4.6,2.5)--(4.2,3);
\draw(2.9,3)--(4.2,3);
\draw(2.7,2.5)--(2.9,3);
\draw(2.7,2.5)--(-2.1,2.5);
\draw(-2.3,2)--(-2.1,2.5);
\draw(-2.3,2)--(-2.5,2);
\draw(-2.5,2)--(-2.9,2)node[anchor=east]{$W_6$};

\draw(5.3,3)node[anchor=west]{$W_7$}--(4.6,3);
\draw(4.6,3)--(4.2,2.5);
\draw(3.3,2.5)--(4.2,2.5);
\draw(3.3,2.5)--(3.1,2);
\draw(1.6,2)--(3.1,2);
\draw(1.6,2)--(1.4,1.5);
\draw(-2.9,1.5)node[anchor=east]{$W_7$}--(1.4,1.5);

\draw(5.1,3.5)--(5.3,3.5)node[anchor=west]{$W_8$};
\draw(5.1,3.5)--(4.8,4.5);
\draw(2,4.5)--(4.8,4.5);
\draw(2,4.5)--(1.8,4);
\draw(.7,4)--(1.8,4);
\draw(.7,4)--(.4,3);
\draw(-.3,3)--(.4,3);
\draw(-.3,3)--(-.6,2);
\draw(-1.6,2)--(-.6,2);
\draw(-1.6,2)--(-1.9,1);
\draw(-1.9,1)--(-2.9,1)node[anchor=east]{$W_8$};

\draw(2.5,4)--(5.3,4)node[anchor=west]{$W_9$};
\draw(2.2,3)--(2.5,4);
\draw(1.2,3)--(2.2,3);
\draw(1.2,3)--(.9,2);
\draw(.2,2)--(.9,2);
\draw(.2,2)--(-.1,1);
\draw(-1.2,1)--(-.1,1);
\draw(-1.2,1)--(-1.4,.5);
\draw(-1.4,.5)--(-2.9,.5) node[anchor=east]{$W_9$};

\draw(5.1,4.5)--(5.3,4.5)node[anchor=west]{$W_{10}$};
\draw(5.1,4.5)--(4.8,3.5);
\draw(4.6,3.5)--(4.8,3.5);
\draw(4.6,3.5)--(4.2,2);
\draw(4.2,2)--(4,2);
\draw(3.5,0)--(4,2);
\draw(3.5,0)--(-2.9,0)node[anchor=east]{$W_{10}$};

\end{tikzpicture}
        \caption{Wiring Diagram L(7)}
        \label{fig:WiringL7}
    \end{subfigure}

    \caption{The arrangement L(7)}\label{fig:L7}
\end{figure}

The fundamental group $\pi_1(\mathbb{P}^2\setminus L(7))$ admits the following presentation (omitting redundant relations) obtained using the wiring diagram \ref{fig:WiringL7}:
$$\left\langle \lambda_1,\ldots, \lambda_9 \mid \begin{array}{l}
{[\lambda_7^{\lambda_6\lambda_5},\lambda_1]}, [\lambda_6^{\lambda_5},\lambda_1], [\lambda_9^{\lambda_8},\lambda_5,\lambda_1],[\lambda_8,\lambda_1], \\
{[\lambda_7^{\lambda_6\lambda_5},\lambda_2^{\lambda_1}]},{[\lambda_9^{\lambda_8},\lambda_6^{\lambda_5},\lambda_2^{\lambda_1}]}, [\lambda_8, \lambda_5^{\lambda_1},\lambda_2^{\lambda_1}] \\
{[\lambda_9^{\lambda_8},\lambda_7^{\lambda_6\lambda_5},\lambda_3^{\lambda_2\lambda_1}]}, [\lambda_8, \lambda_6^{\lambda_5 \lambda_2^{\lambda_1}} ,\lambda_3^{\lambda_2\lambda_1}],  [\lambda_5, \lambda_3], \\
{[\lambda_9^{\lambda_8},\lambda_4^{\lambda_3\lambda_2\lambda_1}]}, [\lambda_8,\lambda_7^{\lambda_6\lambda_5\lambda_3^{\lambda_2\lambda_1}},\lambda_4^{\lambda_3\lambda_2\lambda_1}],\\ {[\lambda_6^{\lambda_5\lambda_1^{-1}},\lambda_4]}, [\lambda_5,\lambda_4]

\end{array} \right \rangle $$
For the meridians we have:

$$\begin{array}{lll}
\lambda_{11}=\lambda_{10}\lambda_9\lambda_8, & \lambda_{12}=\lambda_{10}\lambda_7\lambda_6\lambda_5, & \lambda_{13}=\lambda_{10}\lambda_4\lambda_3\lambda_2\lambda_1, \\ \lambda_{14}=\lambda_9^{\lambda_8}\lambda_5\lambda_1, &
\lambda_{15}=\lambda_9^{\lambda_8}\lambda_6^{\lambda_5}\lambda_2^{\lambda_1} & \lambda_{16}=\lambda_8\lambda_5^{\lambda_1}\lambda_2^{\lambda_1}  \\ \lambda_{17}=\lambda_9^{\lambda_8}\lambda_7^{\lambda_6\lambda_5}\lambda_3^{\lambda_2\lambda_1} & \lambda_{18}=\lambda_8\lambda_6^{\lambda_5\lambda_2^{\lambda_1}}\lambda_3^{\lambda_2\lambda_1} &
\lambda_{19}=\lambda_8\lambda_7^{\lambda_6\lambda_5\lambda_3^{\lambda_2\lambda_1}}\lambda_4^{\lambda_3\lambda_2\lambda_1} 
\end{array} 
$$


Let $P_0\subset \Sing L(7)$ be the points of multiplicity equal or higher than three. Consider $\pi^{(0)}:\bar{X}=\Bl_{P_0}\mathbb{P}^2\to \mathbb{P}^2$ and denote by $D$ the reduced total transform of $L(7)$ by $\pi^{(0)}$. Let us denote by $\Delta$ the dual graph of $D$.
\begin{lem} There exists a unique subgraph $\Delta '\subset \Delta$ obtained by removing vertices corresponding to exceptional divisors of $D$ and their adjacent edges, such that we can obtain from $\Delta'$ maximal trees $\mathcal{T}$ corresponding to homology planes associated to $L(7)$ only by expanding some vertices of $\Delta'$.
\end{lem}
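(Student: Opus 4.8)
The plan is to turn the statement into a finite, purely combinatorial verification: make the dual graph $\Delta$ of $D$ completely explicit, rewrite the requirements of \ref{sss:algorithm} as an arithmetic condition on the set $I$ of deleted exceptional vertices, enumerate the finitely many solutions, eliminate all but one by a connectivity argument, and close with a realizability check.

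\emph{Step 1: the graph.} From the singularity data of $L(7)$ (one quintuple point, one quadruple point, seven triple points, eight double points) and the meridian expressions $\lambda_{11},\dots,\lambda_{19}$ listed above, $\Delta$ has $10+9=19$ vertices --- the line vertices $v_1,\dots,v_{10}$ and one exceptional vertex over each point of $P_0$ --- and $8+5+4+3\cdot 7=38$ edges, so $b_1(\Delta)=38-19+1=20$. The exceptional vertex over the quintuple point has valence $5$ and is joined to $v_1,v_2,v_3,v_4,v_{10}$; the one over the quadruple point has valence $4$ and is joined to $v_5,v_6,v_7,v_{10}$; the one over $L_8\cap L_9\cap L_{10}$ (the divisor $D_{n+2}$, cf.\ $\lambda_{11}=\lambda_{10}\lambda_9\lambda_8$) has valence $3$ and is joined to $v_8,v_9,v_{10}$. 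Since $L_{10}$ occurs only in these three exceptional meridians and $\{1,2,3,4\}\cup\{5,6,7\}\cup\{8,9\}=\{1,\dots,9\}$, the vertex $v_{10}$ carries no double point, so these three exceptional vertices are exactly the neighbours of $v_{10}$ in $\Delta$; the remaining six valence-$3$ exceptional vertices (over the other six triple points) are not adjacent to $v_{10}$.

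\emph{Step 2: the arithmetic constraint.} Fix $I\subset\{n+2,\dots,n+1+s_0\}$ and put $\Delta'=\Delta\setminus\{v_j:j\in I\}$ (with all adjacent edges removed), as in \ref{sss:algorithm}. Distinct exceptional divisors are disjoint, so no two deleted vertices share an edge; hence, \emph{when $\Delta'$ is connected},
$b_1(\Delta')=\bigl(38-\sum_{j\in I}\operatorname{mult}(p_j)\bigr)-\bigl(19-\lvert I\rvert\bigr)+1=20-\sum_{j\in I}\operatorname{mult}(p_j)+\lvert I\rvert$. Since $L(7)$ has $n+1=10$ lines, i.e.\ $n=9$, the numerical necessary condition $n=b_1(\Delta')+\lvert I\rvert$ drawn from Proposition \ref{prop:countthelines} becomes $\sum_{j\in I}\bigl(\operatorname{mult}(p_j)-2\bigr)=11$. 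The summands $\operatorname{mult}(p)-2$ equal $1$ for each of the seven triple points, $2$ for the quadruple point and $3$ for the quintuple point, and their total is $12$; therefore $I$ must contain the exceptional vertices over the quadruple and the quintuple point together with exactly six of the seven over triple points. This leaves precisely seven candidate subgraphs $\Delta'$, one for each choice of the retained triple point.

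\emph{Step 3: uniqueness and realizability.} If the retained triple point is not $L_8\cap L_9\cap L_{10}$, then all three neighbours of $v_{10}$ in $\Delta$ lie in $I$, so $v_{10}$ is an isolated vertex of $\Delta'$ and $\Delta'$ is disconnected; but the divisor at infinity of an affine surface is connected (the support of an ample effective divisor on a smooth projective surface is connected, by the Hodge index theorem), so none of these six candidates can occur. Hence the retained triple point must be $L_8\cap L_9\cap L_{10}$, i.e.\ $I=\{n+3,\dots,n+1+s_0\}=\{12,\dots,19\}$ (one deletes every exceptional divisor except $D_{n+2}$), and the resulting $\Delta'$ --- the subgraph of $\Delta$ spanned by $v_1,\dots,v_{11}$, which one checks is connected with $b_1(\Delta')=1$ --- is the unique candidate. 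It then remains to confirm that this $\Delta'$ does produce homology planes by expanding its vertices: its single independent cycle contains a double point (e.g.\ $L_4\cap L_9$), and expanding the corresponding edge with coprime weights as in \ref{sss:expansions} yields, via Proposition \ref{prop:MatrixCrit}, a $9\times 9$ integer matrix $M$ whose determinant is a fixed integral linear form in the weights; as in the analogous computations for $L(4),L(5),L(6)$ one verifies this form is primitive, so $\lvert\det M\rvert=1$ is attained. I expect the one genuinely non-formal point to be this final determinant check --- confirming that the matrices $M$ obtained by edge expansions alone, with no blow-up at a smooth point of $D'$, really do realise $\det M=\pm1$ --- together with the minor but essential care, already flagged in Step 2, of invoking the $b_1$-formula only for connected $\Delta'$, so that the reduction is not circular.
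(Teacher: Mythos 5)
Your graph-theoretic uniqueness argument is correct and is essentially a cleaner, more systematic version of the paper's: the paper likewise observes that $v_{10}$ meets only the three exceptional vertices over the points of $L_{10}$ (so at least one of them must be retained), that retaining the quadruple or quintuple vertex gives $b_1(\Delta')>1$ and hence violates $n=b_1(\Delta')+\lvert I\rvert$, and that re-inserting any further triple-point vertex raises $b_1$ by $2$; your global constraint $\sum_{j\in I}(\mathrm{mult}(p_j)-2)=11$ packages the same counting more transparently, and your connectivity caveat is legitimate (the paper handles it with the parenthetical remark about $D_{10}$ becoming disconnected).

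The genuine gap is in your realizability step, which the lemma does require (and which the paper defers to the example immediately following it). You propose to expand the double-point edge $L_4\cap L_9$ of the unique cycle and argue that, since the resulting determinant is a primitive integral linear form in the weights, $\lvert\det M\rvert=1$ is attained. Both claims fail. By the paper's own computation in that example, $\det M(4,9)=3a+b$, and since the expansion weights $a,b$ of \ref{sss:expansions} are \emph{positive} coprime integers, this is at least $4$; so expanding $e_{4,9}$ never produces a homology plane, only $\mathbb{Q}$-homology planes. More generally, primitivity of the form does not give attainability of $\pm1$ on the positive orthant: four of the six cycle edges ($e_{1,8},e_{1,6},e_{4,6},e_{4,9}$, with determinants $-3a-b,\,-3a,\,3a,\,3a+b$) can never yield $\lvert\det M\rvert=1$. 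Existence must instead be checked on the two edges adjacent to $v_{11}$, namely $e_{8,11}$ and $e_{9,11}$, where $\det M=\mp a$ and one takes $a=1$ (with $b$ arbitrary); this is exactly how the paper closes the argument. So your proof of uniqueness stands, but the existence half needs the witness replaced and the ``primitive form'' reasoning removed.
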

\begin{proof}
Let us first construct $\Delta'$. Remove from $\Delta$ all the vertices corresponding to exceptional divisors in $D$ but $v_{11}$ corresponding to $D_{11}$ (note that otherwise the vertex corresponding to $D_{10}$ would be disconnected). We denote this new subgraph by $\Delta'$.

Note that $\Delta'$ has only one cycle. It has eleven edges: eight corresponding to the double points of $L(7)$ and three coming from $D_{11}$, and eleven vertices: ten coming from the lines in $L(7)$ plus $D_{11}$. We have then that $H_1(\Delta)=e-v+1=1$. We will see later that we can actually have homology planes from this dual graph.

Now, if we connect $v_{10}$ either with $D_{12}$ or $D_{13}$ instead of $D_{11}$ note that $e-v+1>1$, so no homology planes can rise from this graph.

Now, note that if we plug another vertex $v$ corresponding to a triple point with edges $e_1,e_2,e_3$, we have that 
$3=2+H_1(\Delta')=H_1(\Delta\cup\{v,e_1,e_2,e_3\})$ and by \ref{sss:algorithm}, no homology planes can arise from this graph only by expanding vertices of $\Delta\cup\{v,e_1,e_2,e_3\}$.
\end{proof}

\begin{exmp}

If we let $I=\{12, 13,14,15,16,17,18,19\}$ we have that 
\begin{equation}\label{eq:presEx.7}
\pi_1(M(L(7),I))=\left\langle \lambda_3,\lambda_8\mid \lambda_8\lambda_3\lambda_8^{-1}\lambda_3\lambda_8\lambda_3^{-1}, \lambda_8\lambda_3\lambda_8^{-1}\lambda_3^{-1}\lambda_8^{-1}\lambda_3^{-1}\lambda_8\lambda_3 \right\rangle  \end{equation}

Denote by $\varphi:\pi_1(\mathbb{P}^2\setminus L(7))\to \pi_1(M(L(7)),I)$ the quotient map. We have that 
\begin{equation}\label{eq:ExpressionExL7}
 \varphi(\lambda_9)=\lambda_3^2\lambda_8\lambda_3\lambda_8\lambda_3^2\lambda_8\lambda_3^{-2}\lambda_8^{-1}, \quad \varphi(\lambda_{11})=(\lambda_3^2\lambda_8\lambda_3\lambda_8\lambda_3^2\lambda_8\lambda_3^{-2})^2 \end{equation}


Let us compute first the determinant of the matrix $M(i,j	)$ having as the first eight rows the coefficients $a_{k,r}$ in $[\lambda_{10+k}]=\sum_{r=1}^9a_{10+k,r}$ for $k=2,3,4,5,6,7,8,9$, and for the ninth row the expression for expanding an edge $(i,j)$ of $\Delta'$: 
$$\begin{array}{ll}
e_{1,8}: a[\lambda_1]+b[\lambda_8] & \det M(1,8)=-3a-b\\
e_{1,6}:a[\lambda_1]+b[\lambda_6] & \det M(1,6)=-3a \\
e_{4,6}:a[\lambda_4]+b[\lambda_6] & \det M(4,6)=3a \\
e_{4,9}:a[\lambda_4]+b[\lambda_9] & \det M(4,9)=3a+b \\
e_{8,11}:a[\lambda_8]-b[\sum_{r=1}^7 \lambda_r] & \det M(8,11)=-a \\
e_{9,11}:a[\lambda_9]-b[\sum_{r=1}^7 \lambda_r] & \det M(9,11)=a
\end{array} $$
Therefore, only from the last two rows homology planes can arise. By using the expressions in (\ref{eq:presEx.7}), (\ref{eq:ExpressionExL7}) and Magma we can show that for $a=1$ and low values of $b$ ($b\leq 1000$) we have that $\pi_1(X)$ is trivial.

\begin{figure}[h]
       \centering
\includegraphics[scale=1]{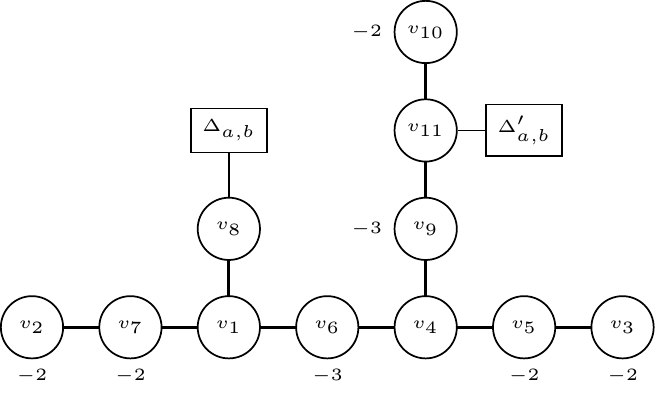}
\caption{Dual graph}
        \label{fig:DualGraphL7}
\end{figure}

The graph obtained from $\Delta'$ by expanding the edge $e_{8,11}$ is shown in Figure \ref{fig:DualGraphL7}. Note that it is absolutely minimal and not equivalent to those of Lemma \ref{lem:DualGraphL1} neither to those of \cite{AST_1993__217__251_0}.

\end{exmp}


\bibliographystyle{smfalpha}
\bibliography{sample}
Rodolfo Aguilar Aguilar \\
International Center for Mathematical Sciences,\\
 Institute of Mathematics and Informatics, \\
  Bulgarian Academy of Sciences, \\
Bulgaria, Sofia 1113, Acad. G. Bonchev St. bl.8, \\
aaguilar.rodolfo@gmail.com

\end{document}